\def\@tocline#1#2#3#4#5#6#7{\relax
  \ifnum #1>\c@tocdepth 
  \else
    \par \addpenalty\@secpenalty\addvspace{#2}%
    \begingroup \hyphenpenalty\@M
    \@ifempty{#4}{%
      \@tempdima\csname r@tocindent\number#1\endcsname\relax
    }{%
      \@tempdima#4\relax
    }%
    \parindent\z@ \leftskip#3\relax \advance\leftskip\@tempdima\relax
    \rightskip\@pnumwidth plus4em \parfillskip-\@pnumwidth
    #5\leavevmode\hskip-\@tempdima
      \ifcase #1
       \or\or \hskip 1em \or \hskip 2em \else \hskip 3em \fi%
      #6\nobreak\relax
    \hfill\hbox to\@pnumwidth{\@tocpagenum{#7}}\par
    \nobreak
    \endgroup
  \fi}
\newsavebox{\pullback}
\sbox\pullback{%
\begin{tikzpicture}%
\draw (0,0) -- (1ex,0ex);%
\draw (1ex,0ex) -- (1ex,1ex);%
\end{tikzpicture}}
\newsavebox{\pullbackdl}
\sbox\pullbackdl{%
\begin{tikzpicture}%
\draw (-1ex,0ex) -- (0ex,0ex);%
\draw (0ex,-1ex) -- (0ex,0ex);%
\end{tikzpicture}}
\newcommand{\rup}[1]{\lceil #1 \rceil}
\newcommand{\rdown}[1]{\lfloor #1 \rfloor}
\newcommand{\Z}{\mathbb{Z}}
\newcommand{\Q}{\mathbb{Q}}
\newcommand{\cHom}{\mathop{\mathcal{H}\! \mathit{om}}}
\newcommand{\bA}{\mathbb{A}}
\newcommand{\bQ}{\mathbb{Q}}
\newcommand{\bZ}{\mathbb{Z}}
\newcommand{\bolda}{\boldsymbol{a}}
\newcommand{\boldb}{\boldsymbol{b}}
\newcommand{\boldx}{\boldsymbol{x}}
\newcommand{\cA}{\mathcal{A}}
\newcommand{\cF}{\mathcal{F}}
\newcommand{\cG}{\mathcal{G}}
\newcommand{\cH}{\mathcal{H}}
\newcommand{\cO}{\mathcal{O}}
\newcommand{\cT}{\mathcal{T}}
\newcommand{\sO}{\mathcal{O}}
\newcommand{\m}{\mathfrak{m}}
\newcommand{\perf}{\mathrm{perf}}
\DeclareMathOperator{\Supp}{Supp}
\DeclareMathOperator{\Spec}{Spec}
\DeclareMathOperator{\Hom}{Hom}
\DeclareMathOperator{\Ext}{Ext}
\DeclareMathOperator{\Exc}{Exc}
\DeclareMathOperator{\eh}{eh}
\theoremstyle{plain}
\newtheorem{theorem}{Theorem}[section]
\newtheorem{proposition}[theorem]{Proposition}
\newtheorem{lemma}[theorem]{Lemma}
\newtheorem{corollary}[theorem]{Corollary}
\newtheorem{conjecture}[theorem]{Conjecture}
\newtheorem{claim}[theorem]{Claim}
\newtheorem*{claim*}{Claim}
\newtheorem{theoremA}{Theorem}
\theoremstyle{definition}
\newtheorem{definition}[theorem]{Definition}
\newtheorem{setting}[theorem]{Setting}
\newtheorem{construction}[theorem]{Construction}
\newtheorem*{setup*}{Setup}
\theoremstyle{remark}
\newtheorem{remark}[theorem]{Remark}
\numberwithin{equation}{theorem}
\title[Higher F-injective singularities]
{Higher F-injective singularities}
\keywords{Higher $F$-injective singularities; Higher Du Bois singularities.}
\subjclass[2020]{13A35,14F10,14B05}
\author{Tatsuro Kawakami}
\address{Department of Mathematics, Graduate School of Science, Kyoto University, Kyoto 606-8502, Japan} 
\email{tatsurokawakami0@gmail.com}
\author{Jakub Witaszek} 
\address{Department of Mathematics \\
Fine Hall, Washington Road\\
Princeton University\\  
Princeton NJ 08544-1000, USA}
\email{jwitaszek@princeton.edu}
\begin{document}

\begin{abstract}
We introduce the concept of higher $F$-injectivity, a generalisation of $F$-injectivity. We prove that an isolated singularity over a field of characteristic zero is $k$-Du Bois if it is $k$-$F$-injective after reductions modulo infinitely many primes $p$. Under the ordinarity conjecture, we also establish the converse. As an application, we study Frobenius liftable hypersurfaces.
\end{abstract}

\subjclass[2010]{}   
\keywords{}
\maketitle

\setcounter{tocdepth}{2}
\tableofcontents
\section{Introduction}

In recent years, considerable attention has been drawn to the study of complex singularities through the lens of Hodge theory. In this context, one of the key objects of interest is the Deligne--Du Bois complex $\underline{\Omega}^\bullet_X$ acting as the replacement of the de Rham complex $\Omega^{\bullet}_X$ when $X$ is not smooth.  Not only is this filtered complex fundamental in the study of mixed Hodge structures, but it also allows for generalisations of various statements in complex birational geometry to the singular setting\footnote{Such as Akizuki--Nakano vanishing for projective varieties: $\mathbb{H}^j(X, \underline{\Omega}^i_X \otimes \cA) = 0$ for $i+j>{\rm dim}(X)$, where $\underline{\Omega}^i_X$ are the graded pieces of 
$\underline{\Omega}^\bullet_X$ and $\cA$ is an ample line bundle. We emphasise that $\underline{\Omega}^i_X \in D^b_{\rm coh}(X)$, in contrast to $\Omega^i_X$, is not a sheaf, but a complex itself.}.

In view of the above, it is natural to wonder when $\underline{\Omega}^i_X$ and $\Omega^i_X$ agree. In this spirit, one says that a complex hypersurface singularity:
\begin{quote} 
$X$ is \emph{$k$-Du Bois} if and only if  $\underline{\Omega}^i_X \cong \Omega^i_X$ for all integers $0 \leq i \leq k$.
\end{quote}
The $0$-Du Bois singularities, satisfying $\underline{\cO}_X \cong \cO_X$, are known as \emph{Du Bois} singularities, and they play a crucial role in birational geometry and moduli theory. To shed more light on the above notion, let us point out that recently \cite{MOPW23} and \cite{JKSY22} proved that $k$-Du Bois and $k$-log-canonical singularities coincide for hypersurfaces, where the latter is another significant notion of singularities defined via Hodge modules in \cite{MP19} (see also \cite{Mustata-Popa22}). Higher Du Bois singularities also feature fundamentally in the recent work of Friedman--Laza \cites{Friedman-Laza1,Friedman-Laza2}, as well as that of Shen--Venkatesh--Vo \cite{SVV}. Finally, note that $k$-Du Bois singularities can also be defined for arbitrary, non-hypersurface, singularities; we refer to Subsection \ref{ss:prekFinj} for a more thorough discussion.

It has been speculated for some time what the correct analogue of Hodge theory for singularities $X = \Spec R$ should be in positive characteristic. In the case of the usual Du Bois singularities, the relevant notion is that of \emph{$F$-injective} rings -- those for which the Frobenius action $F : H^i_\m(R) \to H^i_\m(F_*R)$ on local cohomology is injective for every maximal ideal $\m$ and $0 \leq i \leq \dim R$. By combining the work of Schwede \cite{Schwede} and Bhatt--Schwede--Takagi \cite{Bhatt-Schwede-Takagi}, we know that a singularity in characteristic zero is Du Bois if and only if it is $F$-injective for reductions modulo infinitely many prime numbers $p > 0$, with the caveat that the implication from right to left is contingent upon the weak ordinarity conjecture.\\

The goal of our article is to introduce a positive characteristic analogue, which we call \emph{$k$-$F$-injectivity}, of the notion of $k$-Du Bois singularities and to generalise the work of Schwede and Bhatt--Schwede--Takagi to this new class of singularities. We believe that this is a significant step towards gaining a better understanding of Hodge theory of singularities and possibly, in the long run, that of Hodge ideals, Hodge modules, and Riemann-Hilbert correspondences in positive characteristic. From now on, we assume for simplicity that $\Omega^i_X$ is reflexive for every integer $0 \leq i \leq k$. This is a natural assumption in the context of hypersurfaces or complete intersections, and we provide most general definitions in Subsection \ref{ss:prekFinj}. 
\begin{definition} \label{def:intro-kFinj}
Let $X=\Spec\,R$ be a normal affine scheme of finite type over a perfect field of positive characteristic.
Fix an integer $k>0$ and assume that $\Omega^i_R$ is reflexive for every integer $0 \leq i \leq k$. We say that $X$ is \emph{$k$-$F$-injective} if and only if
\[
C^{-1} \colon H^j_\m(\Omega^{[i]}_R) \to H^j_{\m}\left(\frac{F_*\Omega^{[i]}_R}{B\Omega^{[i]}_R}\right)
\]
is injective\footnote{A priori, it is not clear if one should use $\frac{F_*\Omega^{[i]}_R}{B\Omega^{[i]}_R}$ or the reflexivisation $\left(\frac{F_*\Omega^{i}_R}{B\Omega^{i}_R}\right)^{**}$; however, we believe that the former is more natural from the viewpoint of iterating the Cartier operator as in Subsection \ref{ss:proof-intro}.} for all integers $0 \leq i \leq k$, $0 \leq j \leq \dim\,X - i$, and all maximal ideals $\m$. See Remark \ref{remark:C-1} below for the definition of $C^{-1}$, $\Omega^{[i]}_R$, and $B\Omega^{[i]}_R$.
\end{definition}
Since $B\Omega^0_R=0$, this agrees with $F$-injectivity when $k=0$. The above definition is motivated by the first author's work on the logarithmic extension theorem for differential forms \cite{Kaw4}. 
\begin{remark} \label{remark:C-1} 
First, let us assume that $R$ is a regular ring. Then by the result of Deligne and Illusie, we know that $\cH^i(F_*\Omega^\bullet_R) \cong \Omega^i_R$. This isomorphism yields the following exact sequence:
\[
0 \to B\Omega^i_R \to Z\Omega^i_R \xrightarrow{{C}} \Omega^i_R \to 0,
\]
where $B\Omega^i_R$ and $Z\Omega^i_R$ are the boundaries and cycles of the de Rham complex $F_*\Omega^\bullet_R$, respectively. 

In the non-regular case, we get the following sequence:
\[
0 \to B\Omega^{[i]}_R \to Z\Omega^{[i]}_R \xrightarrow{C} \Omega^{[i]}_R,
\]
where $B\Omega^{[i]}_R$, $Z\Omega^{[i]}_R$, and $\Omega^{[i]}_R$ are the reflexivisations of $B\Omega^i_R$, $Z\Omega^i_R$, and $\Omega^i_R$, respectively. When $\Omega^i_R$ are reflexive, the map $C \colon Z\Omega^{[i]}_R \to \Omega^{[i]}_R$ turns out to be surjective (see Proposition \ref{prop:when-omega-reflexive-basic-properties}) and we define $C^{-1}$ as the composition:
\[
C^{-1} \colon \Omega^{[i]}_R \xleftarrow{\cong} \frac{Z\Omega^{[i]}_R}{B\Omega^{[i]}_R} \hookrightarrow \frac{F_*\Omega^{[i]}_R}{B\Omega^{[i]}_R}.
\]
Of course, under our assumptions, $\Omega^{[i]}_R = \Omega^i_R$, but it is not true in general that $B\Omega^{[i]}_R = B\Omega^i_R$, and so we use the superscript $[i]$ consistently in the definition.
\end{remark}

Our main theorem is the following. Note that the most general variant of this result does not require $\Omega^i_X$ to be reflexive (see Theorems \ref{thm:k-F-inj to k-Du Bois} and \ref{thm:pre-k-F-inj type to pre-$k$-Du Bois}). We will discuss the case of non-isolated singularities in an upcoming article.
\begin{theoremA}[Theorems \ref{thm:k-F-inj to k-Du Bois}, \ref{thm:pre-k-F-inj type to pre-$k$-Du Bois} and Corollaries \ref{cor:k-F-inj to k-Du Bois}, \ref{cor:k-Du Bois to k-F-inj}] \label{thm:intro-main}
Let $X$ be a normal affine variety over a field of characteristic zero {with isolated singularities.}
Fix an integer $k>0$ and assume that $\Omega^i_X$ is reflexive for every integer $0 \leq i \leq k$. 

Then $X$ is $k$-Du Bois if and only if $X_{p=0}$ is $k$-$F$-injective for infinitely many primes $p>0$. Here, the implication from left to right is contingent upon the ordinarity conjecture.
\end{theoremA}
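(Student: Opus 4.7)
The plan is to reduce both sides of the equivalence to a common statement on a log resolution $\pi\colon Y\to X$ and to connect them through the Deligne--Illusie comparison in characteristic $p$.

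First, I would reformulate both conditions as statements about local cohomology at the singular point $x$. For an isolated singularity, $X$ is $k$-Du Bois if and only if the cone of $\Omega^i_X\to \underline{\Omega}^i_X$ has vanishing local cohomology $H^j_x$ in the expected range $0\le i\le k$; this matches the shape of the $k$-$F$-injectivity condition as stated in Definition \ref{def:intro-kFinj}. Both conditions are then visibly governed by the Hodge, respectively conjugate, filtration on $R\pi_*\Omega_Y^\bullet(\log E)$, where $E$ is the exceptional divisor of the log resolution, together with the identification of $R^{n-i}\pi_*\Omega^i_Y(\log E)$ with $\Omega^{[i]}_X$ provided by the reflexivity assumption.

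For the \emph{reverse} implication (which does not need ordinarity), I would spread $X$ out over a finitely generated subring $A\subset k$ along with a simultaneous log resolution $Y_A\to X_A$, and assume $k$-$F$-injectivity of the reduction for infinitely many closed points of $\Spec A$. In each such fibre, the hypothesis translates via the reflexive Cartier operator and the Deligne--Illusie decomposition of $F_*\Omega^\bullet_{Y_\p}(\log E_\p)$ into the statement that the natural map $\Omega^i_{X_A/A}\to\underline{\Omega}^i_{X_A/A}$ is a quasi-isomorphism on a non-empty open subset of $\Spec A$. By generic freeness and upper semicontinuity of fibre dimensions of the cone, the same vanishing descends to the generic point, giving $k$-Du Boisness in characteristic zero.

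For the \emph{forward} implication, I would assume $X$ is $k$-Du Bois and invoke the ordinarity conjecture applied to a smooth projective compactification of $Y$. Ordinarity furnishes an infinite set of primes for which Frobenius acts bijectively on the Hodge pieces of the relevant crystalline cohomology, which, via Deligne--Illusie, is tantamount to the injectivity of the inverse Cartier operator on the log de Rham complex of $Y_{\bar p}$. The $k$-Du Bois hypothesis identifies $R\pi_*\Omega^i_Y(\log E)$ with $\Omega^{[i]}_X$, and pushing this forward to $X$ and taking local cohomology yields exactly the injectivity of $C^{-1}\colon H^j_\m(\Omega^{[i]}_R)\to H^j_\m(F_*\Omega^{[i]}_R/B\Omega^{[i]}_R)$ for infinitely many $p$.

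The main obstacle I foresee is that $B\Omega^{[i]}_R$ is not in general the reflexivisation of the naive pushforward of $B\Omega^i_Y(\log E)$, so the descent from the resolution $Y$ to $X$ must be executed with care. This is precisely the role of Proposition \ref{prop:when-omega-reflexive-basic-properties} and of the logarithmic extension results of \cite{Kaw4}: they are what allow one to conclude that the map $C\colon Z\Omega^{[i]}_R\to\Omega^{[i]}_R$ is surjective and to control $B\Omega^{[i]}_R$ on the singular locus. A secondary, more routine, technicality is ensuring that formation of the Cartier operator and of local cohomology both commute with base change across the spread-out family.
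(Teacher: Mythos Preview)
Your outline has the right scaffolding, but the reverse implication contains a genuine gap. You assert that in each fibre ``the hypothesis translates via the reflexive Cartier operator and the Deligne--Illusie decomposition into the statement that the natural map $\Omega^i_{X_A/A}\to\underline{\Omega}^i_{X_A/A}$ is a quasi-isomorphism''. This is precisely the content of the theorem, and it does not follow from a single application of the Cartier isomorphism on $Y$. The condition of $k$-$F$-injectivity is an \emph{injectivity} statement on local cohomology, whereas $k$-Du Bois is a \emph{quasi-isomorphism} statement; the passage between them requires two mechanisms you do not supply. First, one needs a factorisation of $\Omega^i_X\to\underline{\Omega}^i_{X,\pi}$ through something controlled by the Cartier data. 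A single Cartier map does not give this; the paper \emph{iterates} $C^{-1}$ and uses Serre vanishing on an ample exceptional divisor so that for $n\gg 0$ the map $\underline{C}^{-1}_{n,\pi}\colon\underline{\Omega}^i_{X,\pi}\to G_n\underline{\Omega}^i_{X,\pi}$ factors through the sheaf $G_n\Omega^i_{X,\pi}$ (Lemmas~\ref{lemma:Kawakami-vanishing} and~\ref{lemma:iso-key-factorisation-main-theorem}). Even then one only obtains injectivity of $H^j_\m(\Omega^i_X)\to\mathbb{H}^j_\m(\underline{\Omega}^i_{X,\pi})$ for $j\le d-i$, i.e.\ the \emph{pseudo}-$k$-Du Bois condition. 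Second, one must prove separately that pseudo-$k$-Du Bois implies $k$-Du Bois in characteristic zero; this is a Hodge-theoretic argument (Theorem~\ref{thm:pseudo-pre-k-Du Bois=pre-k-Du Bois in char 0}) using the $E_1$-degeneration of the Hodge--Du Bois spectral sequence together with the vanishing $\mathcal{H}^j(\underline{\Omega}^i_X)=0$ for $i+j\ge d$. Your semicontinuity step cannot conclude without it.

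Two smaller points: your identification ``$R^{n-i}\pi_*\Omega^i_Y(\log E)$ with $\Omega^{[i]}_X$'' is not the right one --- what is used is $\pi_*\Omega^i_Y(\log E)(-E)=\Omega^i_X$ under the reflexivity hypothesis (Proposition~\ref{prop:omega-pullback-reflexive}), and the twist by $-E$ is essential throughout, since $\underline{\Omega}^i_{X,\pi}=R\pi_*\Omega^i_Y(\log E)(-E)$ for $i>0$. For the forward implication your sketch is closer to the paper's, but note that the actual argument does not invoke crystalline cohomology or Deligne--Illusie decomposition: it uses ordinarity as the vanishing $H^j(\overline{Y},B\Omega^i_{\overline{Y}}(\log\overline{E}+\overline{H}))=0$, combines this with Akizuki--Nakano for $W_2$-liftable pairs to get surjectivity of $C$ on $R^j\overline{\pi}_*Z\Omega^{d-i}$, and then dualises via Grothendieck duality (Lemma~\ref{lem:preliminaries-duality}) to obtain the required injectivity on local cohomology.
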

\noindent In the above, $X_{p=0}$ denotes the reduction modulo $p$ for some spread-out of $X$ over a finitely generated $\bZ$-subalgebra of $k$. The ordinarity conjecture states the following.

\begin{conjecture} \label{conj:ordinarity} Let $X$ be a smooth projective scheme defined over a field of characteristic zero. Then, after spreading-out, for infinitely many primes $p$, the reduction $X_{p=0}$ is ordinary in the sense of Bloch-Kato, that is, $H^i(X_{p=0}, B\Omega^j_{X_{p=0}}) = 0$ for all $i,j \geq 0$.
\end{conjecture}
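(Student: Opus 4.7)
The ordinarity conjecture is a major open problem in arithmetic geometry, so this proposal is necessarily speculative; my plan is to reformulate the conjecture crystalline-theoretically, transport the question to an $\ell$-adic Galois-representation statement, and then try to apply Chebotarev density.

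First, by the theory of Bloch--Kato and the refinements of Illusie--Raynaud, the vanishing of all $H^i(X_{p=0}, B\Omega^j_{X_{p=0}})$ is equivalent to the Newton polygon of crystalline Frobenius on $H^*_{\mathrm{cris}}(X_{p=0}/W)$ coinciding with the Hodge polygon determined by the Hodge numbers of the characteristic-zero model $X$. Thus the task reduces to producing infinitely many primes at which crystalline Frobenius has the maximally ordinary slope distribution.

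Second, for any $\ell \neq p$ the characteristic polynomial of Frobenius on $H^*_{\text{\'et}}(X_{\overline k}, \Q_\ell)$ agrees with the crystalline one, and the Galois representation
\[
\rho_\ell \colon \mathrm{Gal}(\overline k / k) \to \GL(H^*_{\text{\'et}}(X_{\overline k}, \Q_\ell))
\]
is conjecturally (by Mumford--Tate) Zariski-dense in $\mathrm{MT}(X) \otimes \Q_\ell$. The plan is then to identify the ``ordinary stratum'' inside $\mathrm{MT}(X)$, i.e.\ the locus where Frobenius eigenvalues have ordinary $p$-adic valuations, and apply Chebotarev density to locate primes whose Frobenius conjugacy class lands there.

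This template recovers all known cases: curves by elementary arguments, abelian varieties of dimension at most three (Ogus, Noot), $K3$ surfaces (Bogomolov--Zarhin, Nygaard), and certain complete intersections in favourable Hodge ranges. The main obstacle, and the reason the conjecture remains open, is that in general neither of the two required inputs is available: the Mumford--Tate conjecture is itself open, so one does not control the image of $\rho_\ell$; and even granting Mumford--Tate, one still needs the ordinary stratum in $\mathrm{MT}(X)$ to be non-empty and Zariski-open, which is a non-trivial group-theoretic condition on the Hodge structure (it fails, for instance, for certain CM-type motives). The hard part, for which this plan offers no real solution, is bypassing these two obstructions simultaneously — perhaps via new $p$-adic equidistribution techniques that can track slopes of crystalline Frobenius directly, going beyond what $\ell$-adic Weil II-type arguments can see.
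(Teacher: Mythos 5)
This statement is a \emph{conjecture} in the paper, not a theorem: the authors never prove it, they only assume it (it is the ordinarity hypothesis entering one direction of Theorem A), so there is no ``paper's proof'' to compare against. Your proposal, by your own admission, is a strategy sketch rather than a proof, and the gaps you flag are precisely the ones that make this a well-known open problem. Concretely: (i) the reduction to ``Newton polygon $=$ Hodge polygon'' via Bloch--Kato/Illusie--Raynaud is fine for $p \gg 0$ (after spreading out one has torsion-free crystalline cohomology and Hodge--de Rham degeneration, so Bloch--Kato ordinarity is indeed equivalent to the polygon statement), but (ii) the Chebotarev step cannot work as stated even granting the Mumford--Tate conjecture. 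Chebotarev density controls the conjugacy class of $\rho_\ell(\mathrm{Frob}_p)$ for $\ell \neq p$, which is $\ell$-adic data, whereas ordinarity at $p$ is a condition on the $p$-adic valuations of the Frobenius eigenvalues at the residue characteristic $p$ itself; as $p$ varies there is no fixed conjugation-invariant ``ordinary locus'' in a single group whose members Chebotarev could be asked to hit, so the $p$-adic slope condition is invisible to the $\ell$-adic equidistribution statement. This is exactly why the known cases (curves, abelian surfaces by Ogus, and related results of Noot and Nygaard under extra hypotheses) require genuinely $p$-adic or case-specific arguments rather than a Chebotarev template.

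So the verdict is: no genuine proof is proposed, and none is expected --- the correct reading of the paper is that Conjecture \ref{conj:ordinarity} is an input, and your write-up should present it as such (or as a conditional discussion), not as a statement admitting a proof along the lines sketched.
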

Note that we are not assuming that $X$ is connected. If $X$ is connected, then it is \emph{$k$-ordinary} if and only if the first $k$ slopes of the Newton polygon agree with those of the Hodge polygon. In particular, $0$-ordinarity, also known as \emph{weak ordinarity}, requires the Frobenius action $F \colon H^{d}(\cO_X) \to H^{d}(\cO_X)$ to be bijective, where $d$ is the dimension of $X$. Note that $F$-injectivity can be viewed as a commutative-algebraic generalisation of weak ordinarity: for instance, a cone over a strict Calabi-Yau variety $X$ is $F$-injective if and only if $X$ is weakly ordinary. Extending this analogy, $k$-$F$-injectivity may be regarded as a commutative-algebraic generalisation of $k$-ordinarity.\\

Finally, we discuss applications of the above theory to the study of Frobenius liftable singularities. In what follows, we say that a variety $X$ over a perfect field $k$ of positive characteristic is \emph{Frobenius liftable modulo $p^2$} if and only if there exists a flat lift $\widetilde{X}$ of $X$ over $W_2(k)$ and an endomorphism $\widetilde{F} \colon \widetilde{X} \to \widetilde{X}$ which reduces to $F$ modulo $p$. In general, we expect that admitting a Frobenius lift imposes strong conditions on the singularities and global geometry of $X$. For example, it is speculated that a Frobenius liftable singularity is automatically a quotient of a toric singularity (cf.\ Bourger \cite{borger2009lambdaringsfieldelement} and \cite{AWZ}). Using the theory developed in this paper, we prove the following.
\begin{theoremA}[{Theorem \ref{thm:Frobenius-application}}] \label{thm:Frobenius-application-intro}
Let $X \subseteq \bA^n$ be a rational isolated hypersurface singularity defined over a field of characteristic zero.  Suppose that, after spreading out, $X_{p=0}$ is Frobenius liftable for infinitely many primes $p>0$. Then the following statements hold.
\begin{enumerate}
\item If {$\dim\,X\geq 4$}, then $X$ is smooth.
\item If {$\dim\,X=3$}, then $X$ is analytically isomorphic to {$k\llbracket x,y,z,w \rrbracket/ (xy-zw)$}.
\end{enumerate}
\end{theoremA}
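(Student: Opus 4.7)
The strategy is to first show that Frobenius liftability modulo $p^2$ forces the higher $F$-injectivity hypothesis of Theorem A on $X_{p=0}$, and then combine the resulting higher Du Bois property with rigidity results for isolated hypersurface singularities.

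\textbf{Step 1: Frobenius liftability implies higher $F$-injectivity.} Suppose $X_{p=0}$ lifts to $\widetilde{X}$ over $W_2(\bF_p)$ together with a lift $\widetilde{F}$ of the absolute Frobenius. Then $\varphi := \tfrac{1}{p}\,d\widetilde{F}$ is well-defined, and its reduction modulo $p$ yields a chain map $\varphi^\bullet \colon \Omega^\bullet \to F_*\Omega^\bullet$ on the smooth locus $U \subset X_{p=0}$ lifting the Cartier inverse $C^{-1}$ (cf.\ Deligne--Illusie). In particular $\varphi^\bullet$ produces a splitting $F_*\Omega^\bullet_U \simeq \bigoplus_i \Omega^i_U[-i]$ in $D(U)$, so the short exact sequences
\[
0 \to Z\Omega^i_U / B\Omega^i_U \to F_*\Omega^i_U / B\Omega^i_U \to B\Omega^{i+1}_U \to 0
\]
split. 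Since $X$ has only isolated singularities, pushing forward along $\iota \colon U \hookrightarrow X$ and applying reflexivisation extends these splittings to split injections $\Omega^{[i]}_{X_{p=0}} \hookrightarrow F_*\Omega^{[i]}_{X_{p=0}} / B\Omega^{[i]}_{X_{p=0}}$, identified with $C^{-1}$. Since local cohomology preserves split injections, $X_{p=0}$ satisfies the $k$-$F$-injectivity condition of the paper in every relevant range.

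\textbf{Step 2: Apply Theorem A.} Because $X_{p=0}$ is $k$-$F$-injective for infinitely many primes $p$, the implication of Theorem A from $F$-type to characteristic zero (Corollary \ref{cor:k-F-inj to k-Du Bois}, using the general Theorem \ref{thm:pre-k-F-inj type to pre-$k$-Du Bois} which does not require $\Omega^i_X$ to be reflexive) gives that $X$ is $k$-Du Bois for every $k$ in the relevant range.

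\textbf{Step 3: Rigidity of higher Du Bois hypersurface singularities.} By the characterisation of the higher Du Bois property via the minimal exponent (Jung--Kim--Saito--Yoon; Mustata--Olano--Popa), an isolated hypersurface singularity $X = V(f)$ of dimension $n$ is $k$-Du Bois if and only if $\widetilde{\alpha}(f) \geq k+1$. On the other hand, by Saito's bound (and Friedman--Laza-type rigidity) a non-smooth isolated hypersurface singularity satisfies $\widetilde{\alpha}(f) \leq (n+1)/2$, with equality characterising, after analytic change of coordinates, the ordinary double point. Hence for $\dim X = n \geq 4$, choosing $k = \lceil n/2 \rceil$ forces $\widetilde{\alpha}(f) > (n+1)/2$, which is impossible unless $X$ is smooth. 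For $\dim X = 3$, choosing $k = 1$ forces $\widetilde{\alpha}(f) \geq 2 = (n+1)/2$, and by rigidity $X$ is either smooth or analytically isomorphic to $\Spec k\llbracket x,y,z,w\rrbracket/(xy-zw)$.

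The principal obstacle is Step 1 in the singular setting: one must carefully justify that the Deligne--Illusie splitting on the smooth locus passes through reflexivisation at the isolated singular point to produce a split injection of reflexive sheaves in the sense of the paper's definition of $k$-$F$-injectivity. The rationality hypothesis should play a crucial role by guaranteeing enough depth of $\Omega^i_X$ at the singular point for the pushforward-and-reflexivise step to behave as in the smooth case.
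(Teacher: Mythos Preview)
Your Steps 1 and 2 are essentially what the paper does. Step 1 matches Lemma~\ref{lem:F-lift to pre-k-F-inj}: the Frobenius lift makes $C\colon Z\Omega^{[i]}_X\to\Omega^{[i]}_X$ split surjective on the smooth locus, and pushing forward (equivalently, dualising) gives the split injection $C^{-1}$ on reflexive forms. The rationality hypothesis enters exactly where you suspect: via Kebekus--Schnell \cite[Corollary 1.12]{KS21} one has $\Omega^i_{X,\pi}=\Omega^{[i]}_X$, so the reflexive split injection is precisely the map in Definition~\ref{def:prekFinj-iso}. Step 2 is Theorem~\ref{thm:k-F-inj to k-Du Bois}.

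Step 3 is where your argument diverges from the paper, and it contains a genuine gap when $\dim X\geq 5$. Theorem~\ref{thm:k-F-inj to k-Du Bois} delivers only \emph{pre}-$k$-Du Bois for all $k$: that is, $\mathcal{H}^{>0}(\underline{\Omega}^i_X)=0$. The equivalence $\tilde\alpha(f)\geq k+1\Leftrightarrow X$ is $k$-Du Bois in \cite{JKSY22,MOPW23} uses the strict condition $\Omega^i_X\xrightarrow{\sim}\underline{\Omega}^i_X$, which additionally requires $\Omega^i_X$ to be reflexive for $i\leq k$. For $k=\lceil\dim X/2\rceil$ with $\dim X\geq 5$ this means $i\geq 3$, and Graf's result \cite[Theorem 1.11]{Graf15} only supplies reflexivity for $i\leq 2$. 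So you cannot conclude $\tilde\alpha(f)\geq k+1$ for the $k$ you want, and the Saito-type bound $\tilde\alpha\leq(\dim X+1)/2$ does not bite.

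The paper avoids this by never needing strict $k$-Du Bois beyond $k=2$. From $2$-Du Bois (using Graf for $i\leq 2$) one gets $\tilde\alpha(f)\geq 3>2$, and then \cite[Theorem 1.5]{MOPW23} gives the explicit identification
\[
\mathcal{H}^{1}(\underline{\Omega}^{\,n-2}_{X})_x \;\cong\; \cO_{X,x}/(J_f + (f)),
\]
where $n$ is the number of ambient variables. Since $X$ is pre-$(n-2)$-Du Bois, the left side vanishes, hence the Jacobian ideal is the unit ideal and $X$ is smooth. For $\dim X=3$ the same logic gives $\tilde\alpha(f)\geq 2$, and then \cite[Corollary 6.3]{BM23} identifies the boundary case as the ordinary double point. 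Your rigidity statement at $\tilde\alpha=(n+1)/2$ is correct in spirit, but the paper's route through the Jacobian description is what makes the argument work uniformly without higher reflexivity inputs.
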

In particular, (2) shows that three-dimensional Gorenstein terminal singularities which are Frobenius liftable for reductions modulo infinitely many primes $p>0$ are either smooth or are cones over a quadric (see Corollary \ref{cor:Frobenius-applications}). In an upcoming article, we extend this result to non-isolated singularities showing that if ${\rm codim}_X {\rm Sing}(X) \geq 4$ and $X_{p=0}$ is Frobenius liftable for infinitely many $p>0$, then $X$ is smooth.

Note that Frobenius liftability of complete intersections have been extensively studied in \cite{Zda18} (see also \cite{AWZ}, \cite{AWZ2}, \cite{JS23}, and \cite{Kawakami-Takamatsu}). The key components of the proof of the above theorem is the study of pre-$k$-$F$-injectivity, discussed below, and \cite[Theorem 1.5]{MOPW23} which appeals to deep properties of the $V$-filtration on Hodge modules. For this reason, we do not know if the above result holds for a Frobenius liftable hypersurface singularity in a fixed characteristic $p>0$.

\subsection{A few words on the proof of Theorem \ref{thm:intro-main}} \label{ss:proof-intro}
In what follows, we sketch the idea of the proof of the implication in Theorem \ref{thm:intro-main} that if a reduction $X$ modulo some large enough $p>0$ of a complex singularity $\mathcal{X}$ is $k$-$F$-injective, then $\mathcal{X}$ is $k$-Du Bois. To simplify the explanation below, we shall assume the existence of strong resolution of singularities (see Condition $R(d)$ in Definition \ref{def:strong resolutions}), so that the Deligne--Du Bois complex $\underline{\Omega}^\bullet_X$ can be defined in positive characteristic as in characteristic zero. 

Suppose that $X$ is $k$-$F$-injective. To prove the theorem we would like to iterate the inverse Cartier operator $C^{-1} \colon \Omega^i_X \to \frac{F_*\Omega^i_X}{B\Omega^i_X}$. To this end, we construct 
\[
C^{-1}_n \colon \Omega^i_X \to \frac{F^n_*\Omega^i_X}{B_n\Omega^{[i]}_X} \qquad \text{ and the colimit} \qquad  \Omega^{i,{\rm perf}}_{X} \coloneqq \varinjlim_n \frac{F^n_*\Omega^i_X}{B_n\Omega^{[i]}_X},
\] 
where $B_n\Omega^{[i]}_X$ are reflexivisations of, so called, higher boundaries (see \ref{sss:higher-Cartier}). 
As in the case of usual $F$-injectivity, we deduce from $k$-$F$-injectivity of $X$ that
\[
H^j_\m(\Omega^i_X) \to H^j_{\m}\left(\Omega^{i,{\rm perf}}_{X}\right)
\]
is injective for all $0 \leq i \leq k$ and $0 \leq j \leq \dim(X)-i$. Moreover, using Serre's vanishing, we show that we have a factorisation
\[
\Omega^i_X \to \underline{\Omega}^i_X \to \Omega^{i,{\rm perf}}_{X}.
\]
In particular, we get that
\[
H^j_\m(\Omega^i_X) \to \mathbb{H}^j_{\m}\left(\underline{\Omega}^i_{X}\right)
\]
is injective for all $0 \leq i \leq k$ and $0 \leq j \leq \dim(X)-i$. We call varieties satisfying the above statement \emph{pseudo-$k$-Du Bois}. It turns out that this notion of singularities agrees with $k$-Du Bois in characteristic zero.
\begin{theoremA}[{see Theorem \ref{thm:pseudo-pre-k-Du Bois=pre-k-Du Bois in char 0} for a stronger version}]
    Let $\mathcal{X}$ be a normal variety over a field of characteristic zero {with isolated singularities}.
    Assume that $\Omega^i_\mathcal{X}$ is reflexive for every integer $0 \leq i \leq k$.
    Then $\mathcal{X}$ is $k$-Du Bois if and only if $\mathcal{X}$ is pseudo-$k$-Du Bois.
\end{theoremA}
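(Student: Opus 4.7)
The forward direction is immediate: if $\Omega^i_{\mathcal{X}} \to \underline{\Omega}^i_{\mathcal{X}}$ is a quasi-isomorphism for every $0 \leq i \leq k$, then the induced maps on local (hyper)cohomology are isomorphisms, hence in particular injective, so $\mathcal{X}$ is pseudo-$k$-Du Bois. The content of the theorem therefore lies in the converse, which I plan to establish by fixing $0 \leq i \leq k$ and showing that the cone $C^i := \mathrm{cone}(\Omega^i_{\mathcal{X}} \to \underline{\Omega}^i_{\mathcal{X}})$ is quasi-isomorphic to zero.

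Since $\mathcal{X}$ is normal with only isolated singularities and $\Omega^i_{\mathcal{X}} \to \underline{\Omega}^i_{\mathcal{X}}$ is a quasi-isomorphism over the smooth locus, each cohomology sheaf $\mathcal{H}^j(C^i)$ is coherent with zero-dimensional support. The crucial input is that for a complex $F$ whose cohomology sheaves are $\m$-power torsion, the local cohomology spectral sequence $E_2^{p,q} = H^p_\m(\mathcal{H}^q(F)) \Rightarrow \mathbb{H}^{p+q}_\m(F)$ degenerates to give $\mathbb{H}^j_\m(F) \cong \mathcal{H}^j(F)_\m$. Applied to $C^i$, the long exact sequence of local cohomology at a singular point $\m$ reads
\[
\cdots \to H^j_\m(\Omega^i_{\mathcal{X}}) \xrightarrow{\alpha_j} \mathbb{H}^j_\m(\underline{\Omega}^i_{\mathcal{X}}) \xrightarrow{\beta_j} \mathcal{H}^j(C^i)_\m \xrightarrow{\gamma_j} H^{j+1}_\m(\Omega^i_{\mathcal{X}}) \to \cdots,
\]
and the pseudo-$k$-Du Bois hypothesis---injectivity of $\alpha_j$ for $0 \leq j \leq \dim\mathcal{X} - i$---is equivalent to the vanishing $\gamma_j = 0$ for $-1 \leq j \leq \dim\mathcal{X} - i - 1$, i.e.\ to surjectivity of $\beta_j$ in this range.

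At degree $j=0$ this already yields $\mathcal{H}^0(C^i) = 0$. Indeed, $\mathbb{H}^0_\m(\underline{\Omega}^i_{\mathcal{X}}) = H^0_\m(\mathcal{H}^0(\underline{\Omega}^i_{\mathcal{X}})) = 0$ because $\mathcal{H}^0(\underline{\Omega}^i_{\mathcal{X}})$ is torsion-free (realized inside $\pi_*\Omega^i_Y$ for a strong resolution $\pi\colon Y \to \mathcal{X}$ in the sense of Definition~\ref{def:strong resolutions}), so surjectivity of $\beta_0$ forces $\mathcal{H}^0(C^i)_\m = 0$; combined with reflexivity of $\Omega^i_{\mathcal{X}}$, this recovers the identification $\Omega^i_{\mathcal{X}} \cong \mathcal{H}^0(\underline{\Omega}^i_{\mathcal{X}})$. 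For $j \geq 1$, the cone triangle gives $\mathcal{H}^j(C^i) \cong \mathcal{H}^j(\underline{\Omega}^i_{\mathcal{X}})$, and I would conclude by combining two ingredients: a Steenbrink-type vanishing for the graded pieces of the Deligne--Du Bois complex of an isolated singularity (killing $\mathcal{H}^j(\underline{\Omega}^i_{\mathcal{X}})$ in the top range $j \geq \dim\mathcal{X} - i$), and, in the middle range, a reinterpretation of the injectivity of $\alpha_j$ as a depth statement on $\Omega^i_{\mathcal{X}}$ via the identification $\Omega^i \cong \mathcal{H}^0(\underline{\Omega}^i)$ just established.

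The main obstacle is precisely the intermediate range $1 \leq j \leq \dim\mathcal{X} - i - 1$: the vanishing $\gamma_j = 0$ only makes $\beta_j$ surjective, not an isomorphism, so it does not by itself kill $\mathcal{H}^j(\underline{\Omega}^i_{\mathcal{X}})$. To bridge this gap I expect to appeal to local duality---Matlis-dualizing the injectivity of $\alpha_j$ against the dualizing complex of $\mathcal{X}$ to produce surjections on the corresponding $\Ext$-groups---together with Grauert--Riemenschneider-type vanishing on a strong resolution. Organizing this duality argument cleanly, verifying the required Steenbrink vanishing in the strong resolution framework of the paper, and controlling the boundary case $j = \dim\mathcal{X} - i$ is where I expect most of the technical work to concentrate.
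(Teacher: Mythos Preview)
Your framework is right and matches the paper's: the cone $C^i$ has zero-dimensional support, so $\mathbb{H}^j_\m(C^i)\cong\mathcal{H}^j(C^i)_\m$; the pseudo hypothesis makes $\beta_j$ surjective for $j\leq d-i-1$; torsion-freeness of $\mathcal{H}^0(\underline{\Omega}^i_\mathcal{X})$ kills $\mathcal{H}^0(C^i)$; and Steenbrink-type vanishing handles $j\geq d-i$. You have also correctly located the obstacle in the range $1\leq j\leq d-i-1$.

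But your proposed fix---local duality plus Grauert--Riemenschneider---does not close this gap. Matlis-dualizing the injectivity of $\alpha_j$ produces surjectivity of the dual map $\Ext^{-j}(\underline{\Omega}^i_\mathcal{X},\omega^\bullet_\mathcal{X})\to\Ext^{-j}(\Omega^i_\mathcal{X},\omega^\bullet_\mathcal{X})$, which is the \emph{same} information restated; it does not manufacture the missing \emph{surjectivity of $\alpha_j$ itself}. Grauert--Riemenschneider controls $R^j\pi_*\omega_Y$, not the intermediate $\Omega^i$'s, and no depth statement on $\Omega^i_\mathcal{X}$ alone can substitute for a comparison with $\underline{\Omega}^i_\mathcal{X}$.

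What the paper actually does is prove directly that $\alpha_j\colon H^j_\m(\Omega^i_{\mathcal{X},h})\to\mathbb{H}^j_\m(\underline{\Omega}^i_\mathcal{X})$ is \emph{surjective} for every $j\geq 0$ (Lemma~\ref{lem:inj of delta}); combined with your injectivity this gives isomorphisms for $j\leq d-i$, hence $\mathbb{H}^j_\m(\tau^{>0}\underline{\Omega}^i_\mathcal{X})=0$ and thus $\mathcal{H}^j(\underline{\Omega}^i_\mathcal{X})=0$ in that range. This surjectivity is a genuinely Hodge-theoretic input absent from your plan. One compactifies $\mathcal{X}\hookrightarrow\overline{\mathcal{X}}$ and uses $E_1$-degeneration for the filtered Du~Bois complex of the proper variety $\overline{\mathcal{X}}$ (via \cite[Proposition~2.3]{SVV}) to get global surjectivity of $\mathbb{H}^j(\overline{\mathcal{X}},\Omega^{\leq i}_{\overline{\mathcal{X}},h})\to\mathbb{H}^j(\overline{\mathcal{X}},\underline{\Omega}^{\leq i}_{\overline{\mathcal{X}}})$; excision then localizes this to $\m$ (Lemmas~\ref{lem:E_1-deg} and~\ref{lem:inj of delta'}). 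There is also a structural point you are missing: the argument is by induction on $i$, assuming $\mathcal{X}$ is already pre-$(i-1)$-Du Bois so that $\Omega^{\leq i-1}_{\mathcal{X},h}\simeq\underline{\Omega}^{\leq i-1}_\mathcal{X}$, which is what lets one extract surjectivity for the single graded piece $\Omega^i$ from that for the truncation $\Omega^{\leq i}$ via a diagram chase.
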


To sum up the strategy of proof of Theorem \ref{thm:intro-main}, from the reduction $X$ being $k$-$F$-injective, we deduce that $\mathcal{X}$ is pseudo-$k$-Du Bois and hence $k$-Du Bois by the above discussion, concluding the proof. Interestingly, the above proof suggests that $\Omega^{i,{\rm perf}}_{X}$ plays the role of $\underline{\Omega}^i_X$ in positive characteristic. This should be compared with the fact that $R_{\rm perf}$ plays the role of $\underline{\cO}_R$ in characteristic $p>0$ as in the work of Bhatt--Schwede--Takagi \cite{Bhatt-Schwede-Takagi}.

\subsection{Pre-$k$-$F$-injective singularities} \label{ss:prekFinj}

Since $\Omega^i_X$ is rarely reflexive for singular varieties beyond the complete intersection case, it is not clear how to define the $k$-Du Bois property in general (see \cite[Definition 1.2]{SVV} for one candidate for such a definition). However, the following weaker definition from op.cit.\ is of substantial interest for arbitrary varieties; we say that a characteristic zero variety:
\begin{quote}
$X$ is \emph{pre-$k$-Du Bois} if $\underline{\Omega}^i_X \cong \cH^0(\underline{\Omega}^i_X)[0]$ for all integers $0 \leq i \leq k$.
\end{quote}
Here $\cH^0(\underline{\Omega}^i_X)$ is nothing but the sheafification $\Omega^i_{X, {\rm \eh}}$ of $\Omega^i_X$ in the eh-topology as studied in \cite{Huber-Kebekus-Kelly}. We can then define.
\begin{definition} \label{def:intro-pre-kFinj}
We say that $X = \Spec R$ is \emph{pre-$k$-$F$-injective} if 
\begin{enumerate}
\item $C \colon Z\Omega^i_{R, \eh} \to \Omega^i_{R, \eh}$ is surjective for all integers $0 \leq i \leq k$, and
\item $C^{-1} \colon H^j_\m(\Omega^i_{R, \eh}) \to H^j_{\m}\left(\frac{F_*\Omega^i_{R, \eh}}{B\Omega^i_{R, \eh}}\right)$ is injective for all integers $0 \leq i \leq k$, $0 \leq j \leq \dim(R) - i$, and all maximal ideals $\m$.
\end{enumerate}
\end{definition}
\noindent Here, one needs $(1)$ to even define the map $C^{-1}$ from $(2)$. Note that (1) is automatic when $\Omega^i_X$ are reflexive (see Proposition \ref{prop:when-omega-reflexive-basic-properties}).

However, instead of working directly with Definition \ref{def:intro-pre-kFinj}, in this paper we define the notions of \emph{pre-$k$-$F$-injectivity along a fixed resolution $\pi \colon Y \to \Spec R$} (see Definition \ref{def:prekFinj-iso}).
Using \cite{Huber-Kebekus-Kelly}, one can check that the definition is independent of the choice of $\pi$ in the case of isolated singularities and in fact agrees with Definition \ref{def:intro-pre-kFinj}. 

\subsection{Non-isolated singularities}
Most of the results in this paper extend to the case of non-isolated singularities. However, due to additional subtleties and difficulties, we have chosen to address non-isolated singularities in a separate article.

\subsection{Acknowlegements}
The authors thank Bhargav Bhatt, Brad Dirks, Eamon Quinlan-Gallego, Mircea Musta{\c{t}}{\u{a}}, Mihnea Popa, Supravat Sarkar, Wanchun Shen, Karl Schwede,  Teppei Takamatsu, Sridhar Venkatesh, Duc Vo, Shou Yoshikawa, and Bogdan Zavyalov for valuable conversations related to the content of the paper.
Kawakami was supported by JSPS KAKENHI Grant number JP22KJ1771 and JP24K16897.
Witaszek was supported by NSF research grants DMS-2101897 and DMS-2401360.

\section{Preliminaries}
\subsection{Notation and terminology}
Throughout this paper, we use the following notation:
\begin{itemize}
\item A \textit{variety} is an integral separated scheme of finite type over a field. 
\item Given a proper birational morphism $\pi\colon Y\to X$, we denote by $\Exc(\pi)$ the reduced exceptional divisor. 
\item Given an integral normal Noetherian scheme $X$, a proper birational morphism $\pi \colon Y \to X$ is called \emph{a log resolution of $X$} if $Y$ is regular and $\mathrm{Exc}(\pi)$ is a simple normal crossing divisor.
\item For a Noetherian scheme $X$, we denote by $D^b_{\mathrm{coh}}(X)$ the derived category consisting of bounded complexes of coherent $\cO_X$-modules.
\item For  $\mathcal{F}^{\bullet}\in D^b_{\mathrm{coh}}(X)$, we denote the canonical truncation of $\mathcal{F}^{\bullet}$ by $\tau^{>j}\mathcal{F}^{\bullet}$ (cf.\ \cite[\href{https://stacks.math.columbia.edu/tag/0118}{Tag 0118}]{stacks-project})
\item For a proper birational morphism $\pi\colon Y\to X$ of Noetherian schemes,
we denote $\tau^{>j}R\pi_{*}\mathcal{F}$ by $R^{>j}\pi_{*}\mathcal{F}$.
\item 
Given an integral normal Noetherian scheme $X$ and a $\bQ$-divisor $D$, 
we define the subsheaf $\sO_X(D)$ of sheaf $K(X)$ of rational functions on $X$ 
by the following formula
\[
\Gamma(U, \sO_X(D)) = 
\{ \varphi \in K(X) \mid 
\left({\rm div}(\varphi)+D\right)|_U \geq 0\}
\]
{for every} open subset $U$ of $X$. 
In particular, 
$\cO_X(\rdown{{D}}) = \cO_X({D})$.
\end{itemize}
\begin{definition}
    Let $X=\Spec\,R$ be an affine scheme of finite type over a perfect field of positive characteristic.
    We say that $X$ is \emph{$F$-injective} if the Frobenius map
    \[
    F\colon H^i_{\m}(R) \to H^i_{\m}(R)
    \]
    is injective for all $j\geq 0$ and for all maximal ideals $\m\subset R$.
\end{definition}

\begin{definition}\label{def:strong resolutions}
    Let $k$ be a perfect field.
    For a positive integer $d>0$, we denote by $R(d)$ the condition on the existence of strong resolutions for varieties up to dimension $d$, that is, that the following statements hold:
    \begin{itemize}
        \item for every integral separated scheme $X$ of finite type over $k$ and dimension at most $d$, there is a proper, birational
        morphism $f\colon Y\to X$ over $k$ such that $Y$ is smooth.
        \item for every smooth scheme $X$ over $k$ of dimension at most $d$ and every proper birational morphism $f\colon Y\to X$, there is a sequence $X_{n}\to X_{n-1}\to \cdots \to X_{0}=X$ of blowups along smooth centers such that the composition $X_{n}\to X$ factors through $f$.
    \end{itemize}
\end{definition}
Note that we do not want to assume the condition $R(d)$ throughout the paper as it is may not be true that a reduction of a characteristic zero variety modulo $p \gg 0$ satisfies $R(d)$. However, such a reduction at least admits a resolution of singularities and that will be enough for our applications.

\subsection{Derived categories}

Let $\mathcal{T}$ be a triangulated category with a $t$-structure and let $\cF, \cG, \cH \in \mathcal{T}^{\geq 0}$ be such that they fit in an exact triangle
\[
\cF \to \cG \to \cH \xrightarrow{+1}.
\]
Then we have the following diagram:
\begin{equation} \label{eq:fgh-derived}
\begin{tikzcd}
\mathcal{H}^0(\mathcal{F}) \arrow[r] \arrow[d] & \mathcal{H}^0(\mathcal{G}) \arrow[r] \arrow[d] & \mathcal{H}^0(\mathcal{G})/\mathcal{H}^0(\mathcal{F}) \arrow[r, "+1"] \arrow[d] & {} \\
\mathcal{F} \arrow[r] \arrow[d] & \mathcal{G} \arrow[r] \arrow[d] & \mathcal{H} \arrow[r, "+1"] \arrow[d] & {} \\
\tau^{>0}\mathcal{F} \arrow[r] \arrow[d, "+1"] & \tau^{>0}\mathcal{G} \arrow[r] \arrow[d, "+1"] & \text{Cone}(\tau^{>0}\mathcal{F} \to \tau^{>0}\mathcal{G}) \arrow[r, "+1"] \arrow[d, "+1"] & {} \\
{} & {} & {} & {}
\end{tikzcd}
\end{equation}

\begin{lemma} \label{lem:derived-cat-factor}
Let $\mathcal{T}$ be a triangulated category with a $t$-structure and let
\[\mathcal{K} \to \cG \to \cH \xrightarrow{+1}\] be an exact triangle in $\cT$.
Let $f \colon \cF \to \cG$ be a map in $\cT$ such that the composition
\[
\cF \xrightarrow{f} \cG \to \cH
\]
is a zero map. Then $f$ factorises as follows:
\[
f \colon \cF \to \mathcal{K} \to \cG.
\]
In particular, when we take $\cG \to \cH$ to be the natural map $\cG \to \tau^{>0}\cG$, we get the following factorisation of $f$:
\[
f \colon \cF \to H^0(\cG) \xrightarrow{\rm nat} \cG.
\]
\end{lemma}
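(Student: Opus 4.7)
The plan is to reduce everything to the long exact sequence of $\Hom$ groups attached to an exact triangle. Rotating the given triangle to
\[
\mathcal{H}[-1] \to \mathcal{K} \to \mathcal{G} \to \mathcal{H}
\]
and applying the cohomological functor $\Hom_{\mathcal{T}}(\mathcal{F}, -)$ produces an exact sequence of abelian groups
\[
\Hom_{\mathcal{T}}(\mathcal{F}, \mathcal{K}) \to \Hom_{\mathcal{T}}(\mathcal{F}, \mathcal{G}) \xrightarrow{\,g_*\,} \Hom_{\mathcal{T}}(\mathcal{F}, \mathcal{H}),
\]
where $g \colon \mathcal{G} \to \mathcal{H}$ denotes the middle arrow of the triangle. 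The hypothesis that $g \circ f = 0$ says exactly that $g_*(f) = 0$, so exactness at the middle term provides an element $\widetilde{f} \in \Hom_{\mathcal{T}}(\mathcal{F}, \mathcal{K})$ mapping to $f$. This $\widetilde{f}$ is precisely the sought factorisation $\mathcal{F} \to \mathcal{K} \to \mathcal{G}$.

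For the ``in particular'' clause, I would specialise the construction above to the canonical truncation triangle
\[
\tau^{\leq 0}\mathcal{G} \to \mathcal{G} \to \tau^{>0}\mathcal{G} \xrightarrow{+1},
\]
which exists by the definition of the $t$-structure. Applying the first part of the lemma with $\mathcal{K} = \tau^{\leq 0}\mathcal{G}$ and $\mathcal{H} = \tau^{>0}\mathcal{G}$ yields a factorisation $f \colon \mathcal{F} \to \tau^{\leq 0}\mathcal{G} \to \mathcal{G}$. In the relevant setting (namely, when $\mathcal{G} \in \mathcal{T}^{\geq 0}$, which is the case throughout the applications of interest and is implicit from the diagram \eqref{eq:fgh-derived} preceding the lemma), the truncation $\tau^{\leq 0}\mathcal{G}$ is canonically identified with $\mathcal{H}^0(\mathcal{G})$ viewed as a complex concentrated in degree zero, and the map $\tau^{\leq 0}\mathcal{G} \to \mathcal{G}$ becomes the natural map $\mathcal{H}^0(\mathcal{G}) \to \mathcal{G}$.

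There is essentially no obstacle here: the whole content reduces to the standard fact that $\Hom_{\mathcal{T}}(\mathcal{F}, -)$ is a cohomological functor, together with the definition of canonical truncations. The only subtlety worth flagging is ensuring that the identification $\tau^{\leq 0}\mathcal{G} \cong \mathcal{H}^0(\mathcal{G})$ is justified in each use; this holds automatically whenever $\mathcal{G}$ is connective for the given $t$-structure, which is the case in the paper's applications to the Deligne--Du Bois complex and its Frobenius analogues.
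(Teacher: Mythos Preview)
Your proof is correct and follows exactly the same approach as the paper: apply the cohomological functor $\Hom_{\mathcal{T}}(\mathcal{F},-)$ to the exact triangle and read off the factorisation from exactness. Your treatment of the ``in particular'' clause is actually more detailed than the paper's (which leaves it implicit), and your remark about needing $\mathcal{G} \in \mathcal{T}^{\geq 0}$ for the identification $\tau^{\leq 0}\mathcal{G} \cong \mathcal{H}^0(\mathcal{G})$ is a valid observation.
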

\begin{proof}
By applying  $\Hom_{\cT}(\cF, -)$ 
to the exact triangle (cf.\ the proof of \cite[Theorem 0.3]{Bhatt_2012}):
\[
\mathcal{K} \to \cG \to \cH\xrightarrow{+1},
\]
we get an exact sequence of abelian groups:
\[
\Hom_{\cT}(\cF, \mathcal{K}) \to 
\Hom_{\cT}(\cF, \cG) \to 
\Hom_{\cT}(\cF, \cH). 
\]
This immediately implies the statement of the lemma.
\end{proof}

\subsection{Cartier operators}
We briefly review the theory of Cartier operators. Let $X$ be a separated scheme of finite type over a perfect field $k$ of characteristic $p>0$. Then $F_*\Omega^\bullet_X$ is a complex of coherent sheaves with $\cO_X$-linear maps. We define
\begin{align*}
B\Omega^i_X &\coloneqq {\rm im}(d \colon F_*\Omega^{i-1}_X \to F_*\Omega^i_X), \text{ and }\\
Z\Omega^i_X &\coloneqq \ker(d \colon F_*\Omega^{i}_X \to F_*\Omega^{i+1}_X).
\end{align*}
The following map
\begin{equation} \label{eq:inverse-Cartier-basic}
C^{-1} \colon \Omega^i_X \to \frac{Z\Omega^i_X}{B\Omega^i_X},
\end{equation}
called the \emph{inverse Cartier operator} is constructed as follows (see \cite[Section 1.3]{fbook}). As definition is local, we assume that $X = \Spec A$ for a ring $A$ of finite type over $k$, and consider the function $\gamma \colon A \to \Omega^1_A$ given by $\gamma(a) \coloneqq a^{p-1}da$ for $a \in A$. Since 
\begin{enumerate}
\item $\gamma(ab) = a^p\gamma(b) + \gamma(a)b^p$,
\item $d\gamma(a)=0$, and
\item $\gamma(a+b) - \gamma(a) - \gamma(b) \in B\Omega^1_A$,
\end{enumerate}
for $a,b\in A$ (see \cite[Lemma 1.3.3]{fbook}), we get that $\gamma$ induces a map
\begin{align*}
\gamma \colon \Omega^1_A &\to \frac{Z\Omega^{1}_A}{B\Omega^{1}_A}\\
adb &\mapsto a^pb^{p-1}db
\end{align*}
by the universal property of K\"ahler differentials. Given that $\gamma(da) \wedge \gamma(da) = 0$, taking exterior derivatives yields $C^{-1}$ as in \eqref{eq:inverse-Cartier-basic}. When $X$ is regular, $C^{-1} \colon \Omega^i_X \to \frac{Z\Omega^i_X}{B\Omega^i_X}$ is an isomorphism (see \cite[Theorem 1.3.4]{fbook}), and so we have a short exact sequence:
\[
0 \to B\Omega^i_X \to Z\Omega^i_X \xrightarrow{C} \Omega^i_X \to 0,
\]
where the rightmost nontrivial arrow is denoted by $C$ and called the \emph{Cartier operator}. Below in Proposition \ref{prop:when-omega-reflexive-basic-properties}, we will show that the same is true when the differential forms are reflexive even when $X$ is not regular.

From now on, assume that $X$ is normal and integral. Define $\Omega^{[i]}_X$ as the reflexivisations of $\Omega^{i}_X$.
Define $Z\Omega^{[i]}_X$ and $B\Omega^{[i]}_X$ as follows:
\[Z\Omega^{[i]}_X\coloneqq \ker(F_{*}\Omega^{[i]}_X\xrightarrow{d} \Omega^{[i+1]}_X)\quad\text{and}\quad
 B\Omega^{[i]}_X\coloneqq \ker(Z\Omega^{[i]}_X\xrightarrow{C} \Omega^{[i]}_X),\]
where the maps $d$ and $C$ are defined by pushing forward from the regular locus. Since the kernel of a map of reflexive sheaves is reflexive, $Z\Omega^{[i]}_X$ and $B\Omega^{[i]}_X$ are reflexive.
Note that the equality $B\Omega^{[i]}_X= \mathrm{im}(F_{*}\Omega^{[i-1]}_X\xrightarrow{d} \Omega^{[i]}_X)$ need not hold in general.

\begin{proposition} \label{prop:when-omega-reflexive-basic-properties}
Fix an integer $k \geq 0$ and let $X$ be a normal variety over a perfect field $k$ of characteristic $p>0$. Moreover assume that $\Omega^i_X$ is reflexive for all $0 \leq i \leq k$. Then the following statements hold true for $i \leq k$:
\begin{enumerate}
\item $\Omega^i_X = \Omega^{[i]}_X$;
\item  $Z\Omega^i_X = Z\Omega^{[i]}_X$ {when $i\leq k-1$ and $Z\Omega_X^{k}$ is torsion-free};
\item $B\Omega^{i}_X \subseteq B\Omega^{[i]}_X$ is torsion-free {when $i\leq {k}$};
\item the reflevisation $C \colon Z\Omega^{[i]}_X \to \Omega^{[i]}_X$ of the Cartier map on the regular locus is surjective, and so there exists an isomorphism $$C^{-1} \colon\Omega^{[i]}_X \xrightarrow{\cong} Z\Omega^{[i]}_X / B\Omega^{[i]}_X.$$
\item the natural map $Z\Omega^i_X / B\Omega^i_X \to Z\Omega^{[i]}_X / B\Omega^{[i]}_X \cong \Omega^{[i]}_X$ is split surjective with kernel being torsion.
\end{enumerate}
\end{proposition}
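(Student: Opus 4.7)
Parts (1)--(3) are formal consequences of the reflexivity hypothesis together with the classical identity $C\circ d = 0$ on the regular locus. Part (1) is the hypothesis itself. For (2), observe that $Z\Omega^i_X$ embeds in $F_*\Omega^i_X = F_*\Omega^{[i]}_X$, which is reflexive and thus torsion-free; and when $i \le k-1$ the additional reflexivity of $\Omega^{i+1}_X$ forces $\ker(d \colon F_*\Omega^i_X \to F_*\Omega^{i+1}_X)$ to coincide with $\ker(d \colon F_*\Omega^{[i]}_X \to \Omega^{[i+1]}_X)$, so $Z\Omega^i_X = Z\Omega^{[i]}_X$ directly from the definitions. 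For (3), $B\Omega^i_X \hookrightarrow F_*\Omega^i_X$ is torsion-free for the same reason. The inclusion $B\Omega^i_X \subseteq B\Omega^{[i]}_X$ reduces, via $B\Omega^i_X \subseteq Z\Omega^i_X \subseteq Z\Omega^{[i]}_X$, to checking that the composition $B\Omega^i_X \hookrightarrow Z\Omega^{[i]}_X \xrightarrow{C} \Omega^{[i]}_X$ vanishes; it vanishes over $X_{\mathrm{reg}}$ by the classical identity $C\circ d = 0$, and extends to $X$ by the torsion-freeness of $\Omega^{[i]}_X$.

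The substance of the proposition is in (4). The plan is to construct an explicit inverse $\widetilde{C}^{-1}$ of the Cartier map $C \colon Z\Omega^{[i]}_X / B\Omega^{[i]}_X \to \Omega^{[i]}_X$, noting that $C$ is automatically injective on this quotient since $B\Omega^{[i]}_X = \ker C$ by definition. The explicit formula $adb \mapsto a^p b^{p-1}db$ produces a map $C^{-1} \colon \Omega^i_X \to Z\Omega^i_X / B\Omega^i_X$ without any regularity assumption. Composing with the natural map $Z\Omega^i_X / B\Omega^i_X \to Z\Omega^{[i]}_X / B\Omega^{[i]}_X$---well-defined by (2) and (3)---and using $\Omega^i_X = \Omega^{[i]}_X$, I obtain $\widetilde{C}^{-1} \colon \Omega^{[i]}_X \to Z\Omega^{[i]}_X / B\Omega^{[i]}_X$. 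The composition $C \circ \widetilde{C}^{-1}$ agrees with the classical Cartier round-trip over the regular locus, where it is the identity. Since $X$ is normal, $X \setminus X_{\mathrm{reg}}$ has codimension $\ge 2$; as $\Omega^{[i]}_X$ is reflexive, any endomorphism of $\Omega^{[i]}_X$ that restricts to the identity over $X_{\mathrm{reg}}$ equals the identity globally. Thus $C$ is split by $\widetilde{C}^{-1}$, and injectivity upgrades this to the desired isomorphism.

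For (5), the same map $C^{-1} \colon \Omega^{[i]}_X \to Z\Omega^i_X / B\Omega^i_X$ splits the natural surjection onto $Z\Omega^{[i]}_X / B\Omega^{[i]}_X \cong \Omega^{[i]}_X$: its composition with the natural map is exactly the endomorphism of $\Omega^{[i]}_X$ analyzed in (4), hence the identity. The kernel of the natural surjection equals $(Z\Omega^i_X \cap B\Omega^{[i]}_X) / B\Omega^i_X$, which vanishes on $X_{\mathrm{reg}}$ where $B\Omega^i_X$ and $B\Omega^{[i]}_X$ coincide, so is supported in codimension $\ge 2$ and has zero generic stalk, making it a torsion sheaf. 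The principal obstacle, as I anticipate it, will lie in (4): specifically in keeping straight the three incarnations of the Cartier data---the naive $C^{-1}$ defined by the formula on all of $X$, the reflexive Cartier $C \colon Z\Omega^{[i]}_X \to \Omega^{[i]}_X$, and the classical Cartier isomorphism on $X_{\mathrm{reg}}$---and verifying their mutual compatibility. Once the $S_2$-property of reflexive sheaves is invoked, the problem reduces to the classical Cartier isomorphism.
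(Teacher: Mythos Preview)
Your proof is correct and follows essentially the same route as the paper's. Both arguments hinge on the observation that the explicit inverse Cartier $C^{-1}\colon \Omega^i_X \to Z\Omega^i_X/B\Omega^i_X$ is defined without any regularity hypothesis, and that the composition $\Omega^{[i]}_X \xrightarrow{C^{-1}} Z\Omega^i_X/B\Omega^i_X \to \Omega^{[i]}_X$ is the identity because it is so over $X_{\mathrm{reg}}$ and the target is torsion-free; the paper packages (4) and (5) together via the resulting splitting $Z\Omega^i_X/B\Omega^i_X \cong \Omega^{[i]}_X \oplus \text{torsion}$, while you treat them separately, but the content is the same. One small point: in (3) and (4) you implicitly use the inclusion $Z\Omega^i_X \subseteq Z\Omega^{[i]}_X$ for $i=k$, which (2) does not give as an equality; this inclusion is immediate from composing $d$ with $\Omega^{k+1}_X \to \Omega^{[k+1]}_X$, but it would be cleaner to say so explicitly.
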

\noindent If $\Omega^{k+1}_X$ is torsion-free, then the proof below also shows that  $B\Omega_X^k$ is torsion-free and $Z\Omega^k_X$ is reflexive.
\begin{proof}
(1) and (2) hold by assumption.
Since $B\Omega^i_X={\rm coker}(Z\Omega^{i-1}_X \hookrightarrow F_*\Omega^{i-1}_X)$, it is torsion-free, and the natural map $B\Omega^i_X\hookrightarrow B\Omega^{[i]}_X$ to its reflexivisation is injective when $i\leq k$. Thus (3) holds. As for (4) and (5), consider the following diagram
\[
\begin{tikzcd}
0 \arrow[r] & B\Omega^{[i]}_X \arrow[r]  & Z\Omega^{[i]}_X \arrow[r]  & \Omega^{[i]}_X \\
0 \arrow[r] & B\Omega^i_X \arrow[u, ""] \arrow[r] & Z\Omega^i_X \arrow[r]\arrow[u] & Z\Omega^i_X / B\Omega^i_X \arrow[r] \arrow[u, "\tau"] & 0
\end{tikzcd}
\]
in which the upper row comes from pushing forward the Cartier operator from the smooth locus. All the vertical maps are identities on the smooth locus. Now look at the map  $C^{-1} \colon \Omega^i_X \to Z\Omega^i_X / B\Omega^i_X$. 
The composition
\[
\Omega^i_X \xrightarrow{C^{-1}} Z\Omega^i_X / B\Omega^i_X \xrightarrow{\tau} \Omega^{[i]}_X, 
\]
is an isomorphism, as it is so on the smooth locus and $\Omega^i_X$ is reflexive. Therefore, 
\[
Z\Omega^i_X / B\Omega^i_X \cong \Omega^i_X \oplus \text{torsion}.
\]
This immediately implies (4) and (5).
\end{proof}

\begin{definition}
    Let $X$ be a normal integral separated scheme of finite type over a perfect field $k$ of characteristic $p>0$.
    We define a coherent $\sO_X$-module $G\Omega^{[i]}_X := F_*\Omega^{[i]}_X / B\Omega^{[i]}_X$.
    Since $B\Omega^{[i]}_X$ and $F_*\Omega^{[i]}_X$ are reflexive, $G\Omega^{[i]}_X$ is torsion-free.
\end{definition}

\subsubsection{Higher Cartier operator}  \label{sss:higher-Cartier}
In what follows we review iterated Cartier operators. We refer to \cite[Section 2.4]{KTTWYY1} for a more detailed recollection. As before, $X$ is a smooth scheme of finite type over a perfect field $k$ of characteristic $p>0$.

Note that the Cartier operator may be though of as a partial map\footnote{a \emph{partial map} of coherent sheaves $f \colon \cF \rightharpoonup \cG$ consists of a data of a subsheaf $\cH \subseteq \cF$ and an $\cO_X$-linear map $f_{\cH} \colon \cH \rightharpoonup \cG$.} $C \colon F_*\Omega_X^i \rightharpoonup \Omega_X^i$. In this case $B\Omega^i_X$ is the kernel of $C$ and $Z\Omega^i_X$ is the domain of $C$. By composing this partial map $n$ times we get:
\[
C_{n} \colon F^n_*\Omega_X^i \rightharpoonup \Omega_X^i.
\]
We define $B_n\Omega_X^i$ and $Z_n\Omega_X^i$ to be the kernel and domain of $C_{n}$, respectively, so that we get a short exact sequence
\[
0 \rightarrow B_n\Omega_X^i \rightarrow Z_n\Omega_X^i \xrightarrow{C_{n}} \Omega_X^i \rightarrow 0.
\]

\subsubsection{Hara's Cartier operator} \label{sss:Hara}
Let $X$ be a separated scheme of finite type over a perfect field $k$ of characteristic $p>0$. Assume that $X$ is smooth and let $E$ be a simple normal crossing divisor on $X$. Similarly to the previous subsection, we can define the logarithmic de Rham complex $F_*\Omega^\bullet(\log E)$ and the logarithmic Cartier operator \cite{Kat70} so that we have a short exact sequence
\begin{equation}\label{eq:log Cartier operator}
    0 \to B\Omega^i_X(\log E) \to Z\Omega^i_X(\log E) \xrightarrow{C} \Omega^i_X(\log E) \to 0.
\end{equation}

This construction can be extended to $\bQ$-divisors as in \cite[Section 3]{Hara98}.
Let $\Delta$ be a $\bQ$-divisor on $X$ such that $\Supp(\Delta) \subseteq E$. Then $F_*(\Omega^\bullet_X(\log E)(\Delta))$ is a complex\footnote{following our convention, this means that we consider $F_*(\Omega^\bullet_X(\log E)(\rdown{\Delta}))$}, which for simplicity of notation we denote by
\[
F_*\Omega^\bullet_X(\log E)(\Delta).
\]
Caution is advised here, as this notation can be easily misinterpreted. 
Set
\begin{align*}
B\Omega^i_X(\log E)(\Delta) &\coloneqq {\rm im}\left(d \colon F_*\Omega^{i-1}_X(\log E)(\Delta) \to F_*\Omega^i_X(\log E)(\Delta)\right), \text{ and }\\
Z\Omega^i_X(\log E)(\Delta) &\coloneqq \ker\left(d \colon F_*\Omega^{i}_X(\log E)(\Delta) \to F_*\Omega^{i+1}_X(\log E)(\Delta)\right).
\end{align*}
Then we have isomorphisms
\[
\Omega_X^i(\log E)(\Delta)\cong \mathcal{H}^i(F_*\Omega^{\bullet}_X(\log E)(p\lfloor \Delta \rfloor)) \cong \mathcal{H}^i(F_*\Omega^{\bullet}_X(\log E)(p\Delta)),
\]
where the first ones follows from the usual log Cartier operator \eqref{eq:log Cartier operator} and the second one follows from \cite[Lemma 3.3]{Hara98} and is induced by the natural inclusion of complexes.
This yields an exact sequence
\begin{equation} \label{eq:Hara-ses}
0 \to B\Omega^i_X(\log E)(p\Delta) \to Z\Omega^i_X(\log E)(p\Delta) \xrightarrow{C_{\Delta}} \Omega^i_X(\log E)(\Delta) \to 0.
\end{equation}

\begin{remark}
In this remark we discuss the functoriality of the above definitions under the change of $\Delta$. For $\Delta'\leq \Delta$ with $\Supp(\Delta')\subseteq E$, we obtain the following commutative diagram:
\[
\begin{tikzcd}
    \Omega_X^i(\log E)(\Delta') \arrow[r,"\cong"]\arrow[d] & \mathcal{H}^i(F_*\Omega^{\bullet}_X(\log E)(p\lfloor\Delta'\rfloor)) \arrow[r,"\cong"]\arrow[d] & \mathcal{H}^i(F_*\Omega^{\bullet}_X(\log E)(p\Delta'))\arrow[d]\\
    \Omega_X^i(\log E)(\Delta) \arrow[r,"\cong"] &\mathcal{H}^i(F_*\Omega^{\bullet}_X(\log E)(p\lfloor \Delta \rfloor)) \arrow[r,"\cong"] & \mathcal{H}^i(F_*\Omega^{\bullet}_X(\log E)(p\Delta)),
\end{tikzcd}
\]
which yields the commutative diagram:
\[
\begin{tikzcd}
    0\arrow[r] & B\Omega_X^i(\log E)(p\Delta') \arrow[r]\arrow[hook]{d} & Z\Omega_X^i(\log E)(p\Delta') \arrow[r]\arrow[hook]{d} & \Omega_X^i(\log E)(\Delta')\arrow[r] \arrow[hook]{d} & 0\\
    0\arrow[r] & B\Omega_X^i(\log E)(p\Delta) \arrow[r] & Z\Omega_X^i(\log E)(p\Delta) \arrow[r] & \Omega_X^i(\log E)(\Delta)\arrow[r] & 0.
\end{tikzcd}
\]
\end{remark}
\vspace{1em}
Next, we set
\[
G\Omega^i_X(\log E)(\Delta) \coloneqq \frac{F_*\Omega^{i}_X(\log E)(\Delta)}{B\Omega^{i}_X(\log E)(\Delta)}.
\]
Then we have the inverse Cartier operator
\begin{align}
\label{eq:dual-Cartier-Hara} C^{-1}_{\Delta}\colon \Omega^i_X(\log E)(\Delta)&\underset{\cong}{\xleftarrow{C_{\Delta}}} \frac{Z\Omega^{i}_X(\log E)(p\Delta)}{B\Omega^{i}_X(\log E)(p\Delta)} \\[0.2em]
&\xhookrightarrow{\hphantom{C_{\Delta}}} \frac{F_*\Omega^{i}_X(\log E)(p\Delta)}{B\Omega^{i}_X(\log E)(p\Delta)}=G\Omega^i_X(\log E)(p\Delta). \nonumber
 \end{align}

\subsubsection{Hara's higher Cartier operators} \label{sss:higher-Cartier-Hara}
We continue with the notation as in the above subsection.
As in \ref{sss:higher-Cartier}, by composing $C_\Delta$ with itself as a partial map, one constructs the iterated Cartier operator 
\[
C_{n,\Delta} \colon F^n_*\Omega^i_X(\log E)(p^n\Delta) \rightharpoonup \Omega^i_X(\log E)(\Delta).
\]
With $Z_n\Omega^i_X(\log E)(p^n\Delta)$ and $B_n\Omega^i_X(\log E)(p^n\Delta)$ defined to be the cycles and boundaries in the complex 
\begin{equation} \label{eq:logDeltadeRhamHigher}
F^n_*\Omega^\bullet_X(\log E)(p^n\Delta),
\end{equation}
respectively, we get a short exact sequence:
\begin{equation}\label{eq:Hara-ses(iterated)}
    0 \rightarrow B_n\Omega_X^i(\log E)(p^n \Delta) \rightarrow Z_n\Omega_X^i(\log E)(p^n \Delta) \xrightarrow{C_{n,\Delta}} \Omega_X^i(\log E)(\Delta) \rightarrow 0.
\end{equation} 

\begin{remark} \label{eq:Hara-ses(iterated) commuative} \label{remark:Hara-ses(iterated) commuative}
For $\Delta'\leq \Delta$ with $\Supp(\Delta')\subseteq E$, we obtain the following commutative diagram:
\begin{equation*}
    \begin{tikzcd}
    0\arrow[r] & B_n\Omega_X^i(\log E)(p^n\Delta') \arrow[r]\arrow[hook]{d} & Z_n\Omega_X^i(\log E)(p^n\Delta') \arrow[r]\arrow[hook]{d} & \Omega_X^i(\log E)(\Delta')\arrow[r]\arrow[hook]{d} & 0\\
    0\arrow[r] & B_n\Omega_X^i(\log E)(p^n\Delta) \arrow[r] & Z_n\Omega_X^i(\log E)(p^n\Delta) \arrow[r] & \Omega_X^i(\log E)(\Delta)\arrow[r] & 0.
\end{tikzcd}
\end{equation*}
\end{remark}
Set
\[
G_n\Omega^i_X(\log E)(\Delta) \coloneqq \frac{F^n_*\Omega^{i}_X(\log E)(\Delta)}{B_n\Omega^{i}_X(\log E)(\Delta)}.
\]
We have the inverse Cartier operator
\begin{align}
\label{eq:dual-Cartier-Hara-higher} 
C^{-1}_{n,\Delta}\colon \Omega^i_X(\log E)(\Delta)&\underset{\cong}{\xleftarrow{C_{n,\Delta}}} \frac{Z_n\Omega^{i}_X(\log E)(p^n\Delta)}{B_n\Omega^{i}_X(\log E)(p^n\Delta)} \\[0.2em]
&\xhookrightarrow{\hphantom{C_{n,\Delta}}} \frac{F^n_*\Omega^{i}_X(\log E)(p^n\Delta)}{B_n\Omega^{i}_X(\log E)(p^n\Delta)}=G_n\Omega^i_X(\log E)(p^n\Delta). \nonumber
\end{align}

\subsubsection{Duality for boundaries and cycles} \label{sss:duality}
We continue with the same assumptions as above: $X$ is a smooth variety over a perfect field $k$ of positive characteristic $p>0$ and $E$ is a simple normal crossing divisor. In this article, we are particularly interested in the complex $F_*\Omega^\bullet_X(\log E)(-E)$, which is constructed by taking, for example, $\Delta = -\frac{1}{p}E$ in Construction \ref{sss:Hara}. We warn the reader again that this complex is \emph{not} the same as $\left(F_*\Omega^\bullet_X(\log E)\right) \otimes \cO_X(-E)$.

An important reason why $F_*\Omega^\bullet_X(\log E)(-E)$ shows up in this article is that it is the dual of $F_*\Omega^\bullet_X(\log E)$.

\begin{lemma} \label{lem:preliminaries-duality} With notation as above, 
\begin{equation} \label{eq:dual-of-log-de-rham}
R\Hom_{\cO_X}(F_*\Omega^\bullet_X(\log E), \omega_X)[-d] \cong F_*\Omega^\bullet_X(\log E)(-E).
\end{equation}
Moreover,
\begin{enumerate}
\item $\cHom_{\cO_X}(F_*\Omega^i_X(\log E), \omega_X) \cong F_*\Omega^{n-i}_X(\log E)(-E)$,\\[-0.8em]
\item $\cHom_{\cO_X}(B\Omega^{i}_X(\log E), \omega_X) \cong B\Omega_X^{n-i+1}(\log E)(-E)$, and\\[-0.8em] 
\item $\cHom_{\cO_X}(Z\Omega^i_X(\log E), \omega_X) \cong G\Omega^{n-i}_X(\log E)(-E).$
\end{enumerate}
\end{lemma}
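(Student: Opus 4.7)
The strategy is to bootstrap from a sheaf-level wedge pairing and Grothendieck duality for Frobenius to the complex-level duality, and then read off (2) and (3) by tracking images of differentials.

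I would begin with (1). Set $d = \dim X = n$. The pointwise wedge pairing
\[
\Omega^i_X(\log E) \otimes_{\cO_X} \Omega^{d-i}_X(\log E)(-E) \xrightarrow{\wedge} \Omega^d_X(\log E)(-E) \cong \omega_X,
\]
using $\Omega^d_X(\log E) = \omega_X(E)$, is perfect, so $\cHom_{\cO_X}(\Omega^i_X(\log E), \omega_X) \cong \Omega^{d-i}_X(\log E)(-E)$. To push through $F_*$, invoke Grothendieck duality for the finite flat Frobenius $F$: $\cHom_{\cO_X}(F_*\cF, \omega_X) \cong F_*\cHom_{\cO_X}(\cF, F^!\omega_X)$, together with $F^!\omega_X \cong \omega_X$ on the smooth variety $X$ (a consequence of $\omega_F \cong \omega_X^{1-p}$ and $F^*\omega_X \cong \omega_X^p$, or equivalently of the surjective Cartier trace $C \colon F_*\omega_X \to \omega_X$, which by adjunction corresponds to an isomorphism $\omega_X \xrightarrow{\sim} F^!\omega_X$).

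For the derived isomorphism, note that each $F_*\Omega^i_X(\log E)$ is locally free on smooth $X$, so $R\cHom(-, \omega_X)$ is computed termwise. I would then verify that, under the identification from (1), the dual of the de Rham differential $d$ corresponds (up to sign) to the de Rham differential of the twisted complex $F_*\Omega^\bullet_X(\log E)(-E)$. This matching follows from applying the Leibniz rule $d(\eta \wedge \alpha) = d\eta \wedge \alpha + (-1)^{|\eta|}\eta \wedge d\alpha$ inside the pairing, combined with $C \circ d = 0$ (the Cartier trace annihilates exact top forms); the shift $[-d]$ accounts for the reversal of degree indexing.

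For (2), the dual of $d^{d-i}\colon F_*\Omega^{d-i}_X(\log E) \to F_*\Omega^{d-i+1}_X(\log E)$, namely $d^\ast\colon \phi \mapsto \phi \circ d$, has image precisely $\cHom(B\Omega^{d-i+1}_X(\log E), \omega_X)$ (maps factoring through the quotient $F_*\Omega^{d-i}/Z\Omega^{d-i} = B\Omega^{d-i+1}$). By the matching from the previous paragraph, this image corresponds under (1) to the image of the twisted de Rham differential, which is $B\Omega^i_X(\log E)(-E)$, proving (2) after reindexing. For (3), apply $\cHom(-, \omega_X)$ to $0 \to Z\Omega^i_X(\log E) \to F_*\Omega^i_X(\log E) \to B\Omega^{i+1}_X(\log E) \to 0$; local freeness of $B\Omega^{i+1}_X(\log E)$ on smooth $X$ (an induction via the Cartier short exact sequences) kills $\cExt^1$, and combining (1) with (2) identifies the cokernel $\cHom(Z\Omega^i_X(\log E), \omega_X)$ with $F_*\Omega^{d-i}_X(\log E)(-E)/B\Omega^{d-i}_X(\log E)(-E) = G\Omega^{d-i}_X(\log E)(-E)$.

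The main obstacle will be the two inputs to Grothendieck duality: verifying $F^!\omega_X \cong \omega_X$ for Frobenius on smooth varieties, and the sign-correct matching of the dualized de Rham differential with the de Rham differential of the twisted complex under the wedge pairing of (1). Once these technical points are pinned down, (2) and (3) follow by transporting the natural images of $d$ across the identification.
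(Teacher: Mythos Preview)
Your proposal is correct and follows essentially the same route as the paper: both establish (1) via the wedge pairing together with Grothendieck duality for the finite Frobenius (using $F^{!}\omega_X\cong\omega_X$), deduce the complex-level statement from the termwise identification, and then read off (2) and (3) by dualizing the short exact sequences linking $Z$, $B$, and $F_*\Omega$. The paper is terser---it simply dualizes the chain $Z\Omega^i\hookrightarrow F_*\Omega^i\twoheadrightarrow B\Omega^{i+1}\hookrightarrow F_*\Omega^{i+1}$ and identifies the resulting image and cokernel---while you spell out the Leibniz-rule check for matching differentials and the $\cExt^1$ vanishing more explicitly; one small point is that your claim in (2) that $\mathrm{im}(d^\ast)=\cHom(B\Omega^{d-i+1}_X(\log E),\omega_X)$ also needs the extension step (vanishing of $\cExt^1(G\Omega^{d-i+1}_X(\log E),\omega_X)$), which follows from the same local-freeness argument you invoke in (3).
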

\begin{proof}
First, note that 
\[
{\cHom}_{\cO_X}(F_*\Omega^i_X(\log E), \omega_X) \cong F_*{\cHom}_{\cO_X}(\Omega^i_X(\log E), \omega_X) \cong F_*\Omega^{n-i}_X(\log E)(-E),
\]
where the last isomorphism follows from the fact that we have a perfect pairing: $\Omega^i_X(\log E) \times \Omega^{n-i}_X(\log E) \to \omega_X(E)$ given by the wedge product. In particular, 
\[
{R\Hom}_{\cO_X}(F_*\Omega^\bullet_X(\log E), \omega_X)[-d] \cong F_*\Omega^\bullet_X(\log E)(-E).
\]
This concludes the proof of (\ref{eq:dual-of-log-de-rham}) and (1). 

By applying ${\cHom}_{\cO_X}(-, \omega_X)$ to the sequence of maps:
\[
Z\Omega^i_X(\log E) \hookrightarrow F_*\Omega^i_X(\log E) \twoheadrightarrow B^{i+1}_X(\log E) \hookrightarrow F_*\Omega^{i+1}_X(\log E)
\]
we get:
\begin{multline*}
{\cHom}_{\cO_X}(Z\Omega^i_X(\log E), \omega_X) \twoheadleftarrow F_*\Omega^{n-i}_X(\log E)(-E) \\ \hookleftarrow  {\cHom}_{\cO_X}(B^{i+1}_X(\log E), \omega_X)  \twoheadleftarrow F_*\Omega^{n-i-1}_X(\log E)(-E),
\end{multline*}

and so we immediately obtain that:
\begin{align*}
{\cHom}_{\cO_X}(B^{i+1}_X(\log E), \omega_X) &\cong B\Omega_X^{n-i}(\log E)(-E), \text{ and } \\
{\cHom}_{\cO_X}(Z\Omega^i_X(\log E), \omega_X) &\cong \frac{F_*\Omega^{n-i}_X(\log E)(-E)}{B\Omega_X^{n-i}(\log E)(-E)} = G\Omega^{n-i}_X(\log E)(-E),
\end{align*}
concluding the proof of (2) and (3).
\end{proof}

\subsubsection{Residue exact sequences}
We continue with the same assumptions as above.  In what follows, we generalise the residue exact sequences to sheaves $B\Omega_X^i(\log E)$, $Z\Omega_X^i(\log E)$, and $G\Omega_X^i(\log E) = \frac{F_*\Omega^i_X(\log E)}{B\Omega^i_X(\log E)}$.

\begin{lemma}\label{lem:residue exact sequences} Let $X$ be a smooth variety over a perfect field $k$ of positive characteristic $p>0$ and let $E$ be a simple normal crossing divisor. Pick an irreducible component $D \subseteq E$. Then there exist exact sequences:
\begin{enumerate}
    \item $0\to \Omega^{i}_X(\log E-D)\to \Omega^{i}_X(\log E) \to \Omega_D^{i-1}(\log \, (E-D)|_D)\to 0$,\\[-1em]
    \item $0\to B\Omega^{i}_X(\log E-D)\to B\Omega^{i}_X(\log E) \to B\Omega_D^{i-1}(\log \, (E-D)|_D)\to 0$,\\[-1em]
    \item $0\to Z\Omega^{i}_X(\log E-D)\to Z\Omega^{i}_X(\log E) \to Z\Omega_D^{i-1}(\log \, (E-D)|_D)\to 0$, and\\[-1em]
    \item $0\to G\Omega^{i}_X(\log E-D)\to G\Omega^{i}_X(\log E) \to G\Omega_D^{i-1}(\log \, (E-D)|_D)\to 0$.
\end{enumerate}
\end{lemma}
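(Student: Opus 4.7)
The plan is to treat (1) as the classical Poincar\'e residue short exact sequence and to deduce (2)--(4) from a local splitting of the residue sequence as a short exact sequence of complexes.

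For (1), work in local coordinates $x_1,\ldots,x_n$ with $E = V(x_1\cdots x_r)$ and $D = V(x_1)$. Every section of $\Omega^i_X(\log E)$ decomposes uniquely as $\omega = \alpha + \frac{dx_1}{x_1}\wedge\beta$ with $\alpha \in L_i$ and $\beta \in L_{i-1}$, where $L_j \subseteq \Omega^j_X(\log E - D)$ denotes the $\cO_X$-span of length-$j$ wedges of the $1$-forms $\{dx_k/x_k,\,dx_\ell\}_{k=2,\ldots,r;\,\ell=r+1,\ldots,n}$; the residue is $\omega \mapsto \beta|_D \in L_{i-1}|_D \cong \Omega^{i-1}_D(\log(E-D)|_D)$, which proves (1).

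The key step is to show that, locally, the residue short exact sequence
\[
0 \to F_*\Omega^\bullet_X(\log E-D) \to F_*\Omega^\bullet_X(\log E) \to F_*\Omega^{\bullet-1}_D(\log(E-D)|_D) \to 0
\]
splits as a short exact sequence of complexes. A local section $\gamma$ of $\Omega^{i-1}_D(\log(E-D)|_D)$ admits a canonical lift $\tilde\gamma \in L_{i-1}$ obtained by treating its coefficients as polynomials in $x_2,\ldots,x_n$ alone; I set $s(\gamma) := \frac{dx_1}{x_1}\wedge\tilde\gamma$. Since $d(dx_1/x_1)=0$, the Leibniz rule gives $d(s(\gamma)) = -\frac{dx_1}{x_1}\wedge d\tilde\gamma$, and since $\tilde\gamma$ is $x_1$-independent, the form $d\tilde\gamma \in L_i$ is itself the canonical lift of $d_D\gamma$, so $\frac{dx_1}{x_1}\wedge d\tilde\gamma = s(d_D\gamma)$. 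Hence $d\circ s = -s\circ d_D$, so $s$ is a chain map into the quotient complex (with its differential appropriately sign-adjusted) and splits the residue sequence as complexes locally.

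Since passing to cycles, boundaries, and to $G\Omega^i = F_*\Omega^i/B\Omega^i$ commutes with direct sums of complexes, each of (2), (3), (4) holds locally; as these are sheaf-level statements, local exactness implies global exactness. The main obstacle is the chain-map computation $d\circ s = -s\circ d_D$, whose success depends crucially on choosing the $x_1$-independent lift $\tilde\gamma$ so that the $\partial/\partial x_1$-contribution to $d\tilde\gamma$ vanishes; with this specific lift in hand, the verification is a direct application of the Leibniz rule.
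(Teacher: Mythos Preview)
Your argument is correct and takes a genuinely different route from the paper. The paper proves (2) and (3) by descending induction on $i$: starting from the Cartier short exact sequence $0\to B\Omega^d_X(\log E)\to F_*\Omega^d_X(\log E)\xrightarrow{C}\Omega^d_X(\log E)\to 0$ together with (1) for $i=d$, the nine lemma yields (2) for $i=d$; then the sequence $0\to Z\Omega^{d-1}\to F_*\Omega^{d-1}\xrightarrow{d}B\Omega^d\to 0$ gives (3) for $i=d-1$; and so on. Finally (4) follows from (2) and the defining sequence $0\to B\Omega^i\to F_*\Omega^i\to G\Omega^i\to 0$. Your approach instead produces a local splitting of the residue sequence \emph{as complexes}, from which (2)--(4) drop out simultaneously because taking cycles, boundaries, and $G$ commutes with direct sums. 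Your method is more elementary in that it never invokes the Cartier isomorphism and is characteristic-free; the paper's method is coordinate-free and fits naturally into the Cartier-operator framework already assembled there.

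One small point to make precise: your ``canonical lift'' $\tilde\gamma$ relies on a $k$-linear section $\cO_D\hookrightarrow\cO_X$ sending $\bar f$ to its $x_1$-free representative. On a general Zariski or \'etale chart with coordinates $x_1,\dots,x_n$ such a section need not exist. The cleanest fix is to observe that your splitting is globally defined on $\mathbb{A}^n$ with its standard coordinates (via $k[x_2,\dots,x_n]\hookrightarrow k[x_1,\dots,x_n]$), so (2)--(4) are exact there; since $B$, $Z$, $G$ commute with flat base change and every smooth $X$ with SNC divisor is \'etale-locally of this form, exactness descends. Alternatively, pass to formal completions at closed points and use faithful flatness. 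Either way the gap is purely technical and your idea goes through; just flag where the section lives.
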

\begin{proof}
(1) is the usual residue exact sequence \cite[2.3 Properties (b)]{EV92}.
The existence of  (2) and (3) is proven by descending induction on $i$.
Specifically, by the short exact sequence
\[
0\to B\Omega^d_X(\log E) \to F_{*}\Omega^d_X(\log E) \xrightarrow{C} \Omega^d_X(\log E) \to 0,
\]
we can deduce (2) for $i=d$, where $d\coloneqq \dim X$.
Then 
by the short exact sequence
\[
0\to Z\Omega^{d-1}_X(\log E) \to F_{*}\Omega^{d-1}_X(\log E) \xrightarrow{d} B\Omega^d_X(\log E) \to 0,
\]
we obtain (3) for $i=d-1$. By repeating this argument, we get (2) and (3) in general.
Finally, (4) follows from (2) and the short exact sequence
\[
0\to B\Omega^{i}_X(\log E) \to F_{*}\Omega^{i}_X(\log E) \to G\Omega^{i}_X(\log E) \to 0.
\]
\end{proof}

\subsubsection{Vanishing theorem}
The following vanishing result will be key in the proof of the main theorem. Note that we will only use this lemma for $l=0$, but the statement for $l>0$ will be needed in the upcoming work on non-isolated singularities.
\begin{lemma} \label{lemma:Kawakami-vanishing}
Let $X$ be a normal variety over a perfect field $k$ of characteristic $p>0$. Let $\pi \colon Y \to X$ be a log resolution of $X$ with exceptional divisor $E$. Let $A$ be an ample anti-effective exceptional $\bQ$-divisor on $Y$ such that $\rdown{A}=-E$. Fix an integer $l\geq 0$ and assume that 
\begin{equation} \label{eq:assumption-Kawakami-vanishing}
    R^{>l}\pi_*\Omega^i_Y(\log E)(p^rA) = 0
\end{equation}
for all $0 \leq i \leq k-1$ and $r \geq 0$. Then for $n \gg 0$ and all $0 \leq i \leq k$ we have that
\begin{enumerate}
    \item $R^{>0}\pi_*F^n_*\Omega^i_Y(\log E)(p^nA) = 0$, \\[-0.9em]
    \item $R^{>l}\pi_*B_n\Omega^i_Y(\log E)(p^nA) = 0$, and \\[-0.9em]
    \item $R^{>l}\pi_*G_n\Omega^i_Y(\log E)(p^nA) = 0$.
\end{enumerate}
\end{lemma}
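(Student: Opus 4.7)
The plan is to prove (1) by a quick Serre vanishing argument, deduce (3) formally from (1) and (2), and concentrate the real work on (2), reducing it to the case $n=1$ via a Frobenius-iterated filtration.

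For (1), the key observation is that $\pi$ commutes with the absolute Frobenius, so $R^{j}\pi_{*}(F^{n}_{*}\cF)\cong F^{n}_{X,*}R^{j}\pi_{*}\cF$ for every coherent $\cF$ on $Y$. Because $A$ is $\pi$-ample, $\lfloor p^{n}A\rfloor$ becomes arbitrarily positive as $n$ grows, and Serre vanishing yields $R^{>0}\pi_{*}\Omega^{i}_{Y}(\log E)(p^{n}A)=0$ for every $i$; applying $F^{n}_{X,*}$ gives (1). Once (1) and (2) are in hand, (3) follows from the defining short exact sequence
\[
0 \to B_{n}\Omega^{i}_{Y}(\log E)(p^{n}A) \to F^{n}_{*}\Omega^{i}_{Y}(\log E)(p^{n}A) \to G_{n}\Omega^{i}_{Y}(\log E)(p^{n}A) \to 0
\]
by chasing the associated long exact sequence of $R\pi_{*}$.

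The heart of the proof is (2). My first move is to establish the short exact sequence
\[
0 \to F^{n-1}_{*}B\Omega^{i}_{Y}(\log E)(p^{n}A) \to B_{n}\Omega^{i}_{Y}(\log E)(p^{n}A) \xrightarrow{C} B_{n-1}\Omega^{i}_{Y}(\log E)(p^{n-1}A) \to 0
\]
obtained by restricting the single-step Hara--Cartier operator $C$ to $B_{n}$: surjectivity onto $B_{n-1}$ comes from surjectivity of $C$ at each stage, while the kernel consists precisely of sections annihilated by one application of $C$, i.e.\ $F^{n-1}_{*}B\Omega^{i}_{Y}(\log E)(p^{n}A)$. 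Iterating this sequence endows $B_{n}\Omega^{i}_{Y}(\log E)(p^{n}A)$ with a finite filtration whose associated graded pieces are $F^{j}_{*}B\Omega^{i}_{Y}(\log E)(p^{j+1}A)$ for $j=0,\dots,n-1$. Since Frobenius pushforward commutes with $R\pi_{*}$, the vanishing of $R^{>l}\pi_{*}B_{n}\Omega^{i}_{Y}(\log E)(p^{n}A)$ reduces to
\[
R^{>l}\pi_{*}B\Omega^{i}_{Y}(\log E)(p^{m}A)=0 \quad \text{for all } 0\leq i\leq k \text{ and } m\geq 1.
\]

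This final claim I prove by induction on $i$, with trivial base $B\Omega^{0}=0$. For the inductive step, I combine the Hara--Cartier sequence \eqref{eq:Hara-ses} with $\Delta=p^{m-1}A$,
\[
0 \to B\Omega^{i-1}(\log E)(p^{m}A) \to Z\Omega^{i-1}(\log E)(p^{m}A) \xrightarrow{C} \Omega^{i-1}_{Y}(\log E)(p^{m-1}A) \to 0,
\]
with the $d$--sequence coming from the de Rham complex
\[
0 \to Z\Omega^{i-1}(\log E)(p^{m}A) \to F_{*}\Omega^{i-1}_{Y}(\log E)(p^{m}A) \xrightarrow{d} B\Omega^{i}(\log E)(p^{m}A) \to 0.
\]
Both outer terms $\Omega^{i-1}_{Y}(\log E)(p^{m-1}A)$ and $F_{*}\Omega^{i-1}_{Y}(\log E)(p^{m}A)$ have vanishing $R^{>l}\pi_{*}$ by the hypothesis of the lemma, applied with $r=m-1$ and $r=m$ (both permissible since $i-1\leq k-1$), while the induction hypothesis controls $B\Omega^{i-1}(\log E)(p^{m}A)$; running the two long exact sequences in succession yields the desired vanishing for $B\Omega^{i}$. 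The step I expect to be the main obstacle is the careful verification of the iterated short exact sequence for $B_{n}$ above — checking surjectivity of $C\colon B_{n}\to B_{n-1}$ and precisely identifying the kernel with $F^{n-1}_{*}B$, all while correctly tracking the $\bQ$-divisor coefficient shifts $p^{n}A\rightsquigarrow p^{n-1}A$ in Hara's logarithmic framework.
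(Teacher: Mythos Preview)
Your proposal is correct and follows essentially the same route as the paper: (1) by Serre vanishing, (3) from (1) and (2) via the defining short exact sequence, and (2) by the filtration $0\to F^{n-1}_{*}B\Omega^{i}(p^{n}A)\to B_{n}\Omega^{i}(p^{n}A)\to B_{n-1}\Omega^{i}(p^{n-1}A)\to 0$ reducing to the case $n=1$, which is then handled by ascending induction on $i$ using the two short exact sequences involving $Z\Omega^{i-1}$ and the hypothesis~\eqref{eq:assumption-Kawakami-vanishing}. The paper simply cites \cite[(5.6.3)]{KTTWYY1} for the iterated $B_{n}$-sequence you propose to verify directly, but otherwise the arguments match.
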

\noindent {In fact, in the proof below we show that (2) holds for all $n \geq 1$.}

\begin{proof}
Vanishing (1) is clear by Serre vanishing, and (3) follows immediately from (1) and (2) in view of the exact triangle 
\[
R\pi_*B_n\Omega^i_Y(\log E)(p^nA) \to R\pi_*F^n_*\Omega^i_Y(\log E)(p^nA) \to R\pi_*G_n\Omega^i_Y(\log E)(p^nA) \xrightarrow{+1}.
\]
Thus we can focus on showing (2):
\[
R^{>l}_*B_n\Omega^i_Y(\log E)(p^nA) = 0.
\]
To this end, first consider the short exact sequence (see \cite[(5.6.3)]{KTTWYY1}):
\[
0 \to F^{n-1}_*B\Omega^i_Y(\log E)(p^nA) \to B_{n}\Omega^i_Y(\log E)(p^nA) \to B_{n-1}\Omega^i_Y(\log E)(p^{n-1}A) \to 0,
\]
which reduces the problem to showing that
\begin{equation} \label{eq:n1:Kawakami-vanishing}
R^{>l}\pi_*B\Omega^i_Y(\log E)(p^rA) = 0
\end{equation}
for all $r \geq 1$ and $0 \leq i \leq k$. 
To prove this, we argue by ascending induction on $i$. The statement holds for $i=0$ by definition, and so we may assume that it holds for $i-1$:
\begin{equation} \label{eq:n1:Kawakami-vanishing-induction}
R^{>l}\pi_*B\Omega^{i-1}_Y(\log E)(p^rA) = 0
\end{equation}
and aim for showing that it holds for $0 < i \leq k$. Now the short exact sequence
\[
0 \to Z\Omega^{i-1}_Y(\log E)(p^rA) \to F_*\Omega^{i-1}_Y(\log E)(p^rA) \to B\Omega^i_Y(\log E)(p^rA) \to 0
\]
and (\ref{eq:assumption-Kawakami-vanishing}) reduces (\ref{eq:n1:Kawakami-vanishing}) to showing that
\[
R^{>l}\pi_*Z\Omega^{i-1}_Y(\log E)(p^rA) = 0,
\]
which follows by (\ref{eq:n1:Kawakami-vanishing-induction}) and the short exact sequence
\[
0 \to B\Omega^{i-1}_Y(\log E)(p^rA) \to Z\Omega^{i-1}_Y(\log E)(p^rA) \xrightarrow{C_{p^{r-1}A}} \Omega^{i-1}_Y(\log E)(p^{r-1}A) \to 0. \qedhere
\]
\end{proof}

\subsection{Deligne--Du Bois complex via eh-topology} 
For the usual definition of the Deligne--Du Bois complex using simplicial resolutions we refer the reader to \cite{DuBois}, \cite{GNPP}, and \cite[Section 7.3]{Peter-Steenbrink(Book)}.
Here, we define the Deligne--Du Bois complex via eh-differentials following \cite{Geisser}, \cite{Huber-Jorder}, \cite{Huber-Kebekus-Kelly}, and \cite{Huber-Kelly}. 
In this section, $k$ is a perfect field of arbitrary characteristic.
We denote by $\mathrm{Sch}_k$ the category of separated schemes of finite type over $k$.

\begin{definition}\label{def:abstract blow-up}
    Consider a Cartesian diagram
    \[
    \begin{tikzcd}
        E \arrow[r]\arrow[d] & Y\arrow[d,"\pi"]\\
 Z\arrow[r,"\iota"] & X
    \end{tikzcd}
    \]
in $\mathrm{Sch}_k$.
 We say that this diagram is an \textit{abstract blow-up square} if $\pi$ is proper, $\iota$ is a closed immersion, and $\pi|_{Y\setminus E}\colon Y\setminus E \to X\setminus Z $ is an isomorphism. 
 We denote the diagram of the abstract blow-up by $(Y\to X, E\to Z)$.
\end{definition}

\begin{definition}
    The \textit{eh-topology} on $\mathrm{Sch}_k$ is the Grothendieck topology generated by:
    \begin{enumerate}
        \item  \'etale coverings and
        \item the induced map $Z \coprod Y \to X$ for every abstract blow-up $(Y\to X, E\to Z)$.
    \end{enumerate}
    We denote by $\mathrm{Sch_{eh}}$ the site defined by the eh-topology.
\end{definition}

\begin{definition}
    Let $X\in \mathrm{Sch_{k}}$.
    Let $\rho_{X}\colon \mathrm{Sch_{eh}}\to X_{\mathrm{Zar}}$ be the natural morphism of sites.
    We define the Du Bois complexes by 
    \[
    \underline{\Omega}^i_{X,\eh} \coloneqq R\rho_{X*}\Omega^i_{\mathrm{eh}}.
    \]
    Further, we denote $\rho_{X*}\Omega^i_{\mathrm{eh}}(=\mathcal{H}^0(\underline{\Omega}^i_{X,\eh}))$ by $\Omega^i_{X,{\rm eh}}$.

    When $k$ is of characteristic zero, the complex $R\rho_{X*}\Omega^i_{\mathrm{eh}}$ coincides with the usual Deligne--Du Bois complex by \cite[Theorem 7.12]{Huber-Jorder}.
    Moreover, in this case, it is equal to $R\rho_{X*}\Omega^i_{\mathrm{h}}$, where $\Omega^i_{\mathrm{h}}$ denotes the sheafification in the $h$-topology \cite[Theorem 3.6]{Huber-Jorder}.
    In view of this, when $k$ is of characteristic zero, we denote $\Omega^i_{X,{\rm eh}}$ by $\Omega^i_{X,h}$ and $\underline{\Omega}^i_{X,\eh}$ by just $\underline{\Omega}^i_{X}$. 
\end{definition}

\begin{proposition}\label{prop:blow-up}
    Let $(Y\to X, E\to Z)$ be an abstract blow-up square. 
    Then we have the following exact triangle:
    \[\underline{\Omega}^i_{X,\eh} \to \underline{\Omega}^i_{Z,\eh}\oplus R\pi_{*}\underline{\Omega}^i_{Y,\eh} \to R\pi_{*}\underline{\Omega}^i_{E,\eh} \xrightarrow{+1}.
    \]
\end{proposition}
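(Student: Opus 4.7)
The plan is to deduce the triangle from the descent property of the eh-topology along abstract blow-up squares, which is a special case of Voevodsky's cd-structure formalism.

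By the very definition of the eh-topology, $Z \sqcup Y \to X$ is an eh-covering, and abstract blow-up squares form a complete, regular, and bounded cd-structure in the sense of Voevodsky (the so-called proper cd-structure). A standard consequence of this formalism is that every eh-sheaf satisfies a Mayer--Vietoris property along such squares. Applied to the sheaf $\Omega^i_{\eh}$ and restricted to a Zariski open $U \subseteq X$ (over which the pulled-back square $(Y_U \to U,\ E_U \to Z_U)$ is again an abstract blow-up), this yields a distinguished triangle
\[
R\Gamma_{\eh}(U, \Omega^i_{\eh}) \to R\Gamma_{\eh}(Z_U, \Omega^i_{\eh}) \oplus R\Gamma_{\eh}(Y_U, \Omega^i_{\eh}) \to R\Gamma_{\eh}(E_U, \Omega^i_{\eh}) \xrightarrow{+1}.
\]

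Next, I would identify each of these terms with Zariski $R\Gamma$s on $X$. By adjunction, $R\Gamma_{\eh}(U, \Omega^i_{\eh}) = R\Gamma(U, \underline{\Omega}^i_{X,\eh})$. Similarly, since $\iota \colon Z \hookrightarrow X$ and $\iota' \colon E \hookrightarrow Y$ are closed immersions and $\pi$ is proper, the change-of-site functor $R\rho_{*}$ commutes in the derived sense with the corresponding pushforwards. This yields $R\Gamma_{\eh}(Z_U, \Omega^i_{\eh}) = R\Gamma(U, \iota_*\underline{\Omega}^i_{Z,\eh})$, $R\Gamma_{\eh}(Y_U, \Omega^i_{\eh}) = R\Gamma(U, R\pi_*\underline{\Omega}^i_{Y,\eh})$, and $R\Gamma_{\eh}(E_U, \Omega^i_{\eh}) = R\Gamma(U, R\pi_*\iota'_*\underline{\Omega}^i_{E,\eh})$. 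The maps in the triangle are compatible with restriction along inclusions of opens in $X$, so the triangles assemble into the desired exact triangle of complexes of Zariski sheaves on $X$ (with $\iota_*$ suppressed on the $Z$-term by slight abuse of notation).

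The main step to handle carefully is the Mayer--Vietoris descent for eh-sheaves along abstract blow-up squares, which follows from Voevodsky's theorem that the proper cd-structure on $\mathrm{Sch}_k$ is complete, regular, and bounded; this is implicit in the references cited in the paper (Geisser, Huber--J\"order, Huber--Kebekus--Kelly), so one can simply invoke it. The remaining bookkeeping points -- the commutations $R\rho_{X*}\iota_* \cong \iota_* R\rho_{Z*}$ and $R\rho_{X*}\pi_* \cong R\pi_* R\rho_{Y*}$ -- are standard, since eh-coverings restrict to closed subschemes and lift along proper maps, making both $\iota_*$ and $\pi_*$ exact in the eh-topology in the appropriate derived sense.
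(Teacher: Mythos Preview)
Your proposal is correct and follows essentially the same approach as the paper, which simply cites \cite[Proposition 3.2]{Geisser} and \cite[proof of Proposition 6.15]{Huber-Jorder}; those references prove the triangle precisely via the cd-structure/\v{C}ech-descent argument you outline. You are giving a more detailed account of what the cited proofs actually do, including the identification of $R\Gamma_{\eh}$ on the pieces with Zariski pushforwards via the compatibility of $R\rho_*$ with proper pushforward and closed immersions.
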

\begin{proof}
    The assertion follows from the same argument as in \cite[Proposition 3.2]{Geisser} (cf.\ \cite[proof of Proposition 6.15]{Huber-Jorder}).
\end{proof}

\begin{theorem}[\textup{\cite[Theorem 4.7]{Geisser} and \cite[Theorem 1.1 (1.1.1)]{Huber-Kebekus-Kelly}}]\label{thm:eh for smooth}
    Let $X$ be a $d$-dimensional smooth scheme over $k$.
    Then $\Omega^i_{X,{\rm eh}}=\Omega^{i}_X$.
    Moreover, if the $R(d)$-condition holds, then $\underline{\Omega}_{X,\eh}^{i}=\Omega^{i}_X$.
\end{theorem}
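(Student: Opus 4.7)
The result is attributed to Geisser \cite{Geisser} and Huber–Kebekus–Kelly \cite{Huber-Kebekus-Kelly}, so the plan is to indicate how their arguments specialise to the claims we need, using the tools already assembled in the preliminaries.

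\textbf{First assertion.} For the equality $\Omega^i_{X,\eh} = \Omega^i_X$ on smooth $X$, I would first recall that Kähler differentials commute with étale base change, so $\Omega^i$ is already an étale sheaf on the category of smooth $k$-schemes. Only descent along abstract blow-up squares $(Y\to X, E\to Z)$ remains, and by an argument in the spirit of \cite[Section 3]{Geisser} it suffices to verify
\[
\Omega^i_X \;\cong\; \ker\!\bigl(\Omega^i_Z \oplus \pi_{*}\Omega^i_Y \to \pi_{*}\Omega^i_E\bigr).
\]
When $\pi$ is a blow-up along a smooth centre this is classical via the normal bundle description of $E$; arbitrary abstract blow-up squares are then handled via alterations or the refinement arguments of \cite{Huber-Kebekus-Kelly}. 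Crucially, this step needs only presheaf-level identities and so does not require $R(d)$.

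\textbf{Second assertion.} Assuming $R(d)$, one additionally needs to control the higher derived functors $R\rho_{X*}\Omega^i_{\eh}$. The plan is to argue by ascending induction on $\dim X$, applying Proposition \ref{prop:blow-up} to an abstract blow-up square in which $\pi\colon Y \to X$ is a composition of blow-ups along smooth centres (supplied by $R(d)$). Since $Z$ and $E$ have strictly smaller dimension, the inductive hypothesis identifies $\underline{\Omega}^i_{Z,\eh}$ and $R\pi_*\underline{\Omega}^i_{E,\eh}$ with their sheaf-level counterparts, and the triangle of Proposition \ref{prop:blow-up} collapses to
\[
\underline{\Omega}^i_{X,\eh} \to \Omega^i_Z \oplus R\pi_{*}\Omega^i_Y \to R\pi_{*}\Omega^i_E \xrightarrow{+1}.
\]
The remaining input is the classical computation $R\pi_{*}\Omega^i_Y \cong \Omega^i_X \oplus (\text{summand supported on } Z)$ for a blow-up of a smooth variety along a smooth centre, which fits into a residue exact sequence with $R\pi_{*}\Omega^i_E$ so as to recover precisely $\Omega^i_X$ as the fibre of the above triangle. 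Combining these pieces in the induction delivers $\underline{\Omega}^i_{X,\eh} \cong \Omega^i_X$.

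\textbf{Main obstacle.} The technical heart is the computation of $R\pi_{*}\Omega^i_Y$ for a smooth-centre blow-up in arbitrary characteristic, which drives the inductive step; the $R(d)$-condition enters precisely so that every abstract blow-up of a smooth $X$ can be refined by a tower of such smooth-centre blow-ups. Without $R(d)$ one can still carry out the $\mathcal{H}^0$-level argument of the first assertion, but the derived statement breaks down because one no longer has a handle on the singular modifications that compute $R\rho_{X*}\Omega^i_{\eh}$.
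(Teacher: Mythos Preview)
The paper does not give its own proof of this theorem; it is stated with attribution to \cite[Theorem 4.7]{Geisser} and \cite[Theorem 1.1 (1.1.1)]{Huber-Kebekus-Kelly} and then used as a black box, so there is no in-paper argument to compare your sketch against. Your outline for the first assertion (\'etale descent plus blow-up descent at the level of $\mathcal{H}^0$) is in the spirit of the cited references.

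For the second assertion, however, your inductive argument has a genuine gap. In the triangle you extract from Proposition~\ref{prop:blow-up} you replace $R\pi_*\underline{\Omega}^i_{Y,\eh}$ by $R\pi_*\Omega^i_Y$, which presupposes $\underline{\Omega}^i_{Y,\eh}\cong\Omega^i_Y$. But $Y$ is a smooth scheme of the \emph{same} dimension $d$ as $X$, so this is precisely the statement you are trying to prove and is not supplied by an induction on dimension. The induction only handles $Z$ and $E$. The actual arguments in \cite{Geisser} and \cite{Huber-Kebekus-Kelly} avoid this circularity: rather than inducting scheme-by-scheme, they show (using $R(d)$) that every eh-cover of a smooth scheme admits a refinement by \'etale covers and smooth-centre blow-ups, and then prove a general cohomological descent theorem to the effect that any presheaf satisfying \'etale descent and the blow-up Mayer--Vietoris property (which $\Omega^i$ does, via the classical blow-up formula you allude to) has its eh-cohomology computed by Zariski cohomology on smooth schemes. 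This is a statement about the topology itself, not an induction on the dimension of individual schemes, and that is what breaks the circularity.
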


\begin{corollary}\label{cor:eh for SNC}
    Let $E$ be a simple normal crossing $d$-dimensional reduced scheme over $k$.
    Then $\Omega^i_{E,\eh}\cong \Omega^i_E/{\rm tors}$.
    
    Moreover, if the $R(d)$-condition holds, then $\underline{\Omega}^{i}_{E,\eh}=\Omega^{i}_E/{\rm tors}$.
\end{corollary}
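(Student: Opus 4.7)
The plan is to prove both statements simultaneously by induction on the number of irreducible components of $E$. If $E$ is irreducible then $E$ is smooth of dimension at most $d$, and the conclusion is exactly Theorem \ref{thm:eh for smooth}. For the inductive step, fix an irreducible component $D \subset E$, let $E'$ denote the union of the remaining components, and observe that $E'$ and $D \cap E'$ are again SNC and have strictly fewer components than $E$.

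The natural square
\[
\begin{tikzcd}
D \cap E' \arrow[r]\arrow[d] & D \arrow[d] \\
E' \arrow[r] & E
\end{tikzcd}
\]
is an abstract blow-up square in the sense of Definition \ref{def:abstract blow-up}, since $D \setminus (D \cap E') \to E \setminus E'$ is an isomorphism. Proposition \ref{prop:blow-up} then yields the exact triangle
\[
\underline{\Omega}^i_{E,\eh} \to \underline{\Omega}^i_{E',\eh} \oplus \underline{\Omega}^i_{D,\eh} \to \underline{\Omega}^i_{D \cap E',\eh} \xrightarrow{+1},
\]
where pushforwards along the closed immersions are suppressed. The inductive hypothesis identifies $\Omega^i_{E',\eh} \cong \Omega^i_{E'}/{\rm tors}$ and $\Omega^i_{D \cap E',\eh} \cong \Omega^i_{D \cap E'}/{\rm tors}$, while Theorem \ref{thm:eh for smooth} gives $\underline{\Omega}^i_{D,\eh} \cong \Omega^i_D$.

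The key auxiliary input is a Zariski-local Mayer--Vietoris short exact sequence for SNC schemes:
\[
0 \to \Omega^i_E/{\rm tors} \to \Omega^i_{E'}/{\rm tors} \oplus \Omega^i_D \to \Omega^i_{D \cap E'}/{\rm tors} \to 0.
\]
This is a local calculation carried out via the conormal sequences for each component of $E$, and may be viewed as the two-term truncation of the \v{C}ech-type stratification resolution of $\Omega^i_E/{\rm tors}$ by differentials on strata. Taking $\mathcal{H}^0$ of the exact triangle and comparing with this sequence identifies $\Omega^i_{E,\eh}$ with $\Omega^i_E/{\rm tors}$, which gives the first assertion.

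For the second assertion, assume $R(d)$. Then by the inductive hypothesis the three right-hand terms in the exact triangle are concentrated in degree zero, and the surjectivity on the right of the Mayer--Vietoris sequence above forces $\mathcal{H}^j(\underline{\Omega}^i_{E,\eh}) = 0$ for $j \geq 1$ via the long exact sequence of cohomology sheaves; hence $\underline{\Omega}^i_{E,\eh} \cong \Omega^i_E/{\rm tors}$ is also concentrated in degree zero. The main obstacle is the verification of the Zariski Mayer--Vietoris sequence, and in particular the surjectivity on the right after quotienting by torsion; once this local calculation is carried out, everything else is a formal consequence of Proposition \ref{prop:blow-up}, Theorem \ref{thm:eh for smooth}, and the inductive setup.
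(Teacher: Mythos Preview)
Your approach is essentially identical to the paper's: the same abstract blow-up square, the same appeal to Proposition~\ref{prop:blow-up} and Theorem~\ref{thm:eh for smooth}, and the same comparison with the Mayer--Vietoris sequence (which the paper proves explicitly as Theorem~\ref{thm:torsion exact sequence} in the appendix).

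There is one small gap in your induction setup. You claim that $D \cap E'$ has strictly fewer irreducible components than $E$, but this need not be true: if $E$ has components $E_1,\ldots,E_m$ and $D=E_1$, then $D\cap E' = \bigcup_{j\geq 2}(E_1\cap E_j)$, and each $E_1\cap E_j$, while smooth, may have many connected components. The paper handles this by running the induction on pairs $(d,m)$ with the ordering $d'<d$ \emph{or} $m'<m$; then $E'$ falls under $m'<m$ while $D\cap E'$ falls under $d'<d$, and the induction goes through. Once you adjust the induction scheme in this way, your argument is complete and matches the paper's.
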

\begin{proof}
    Let $m$ be the number of the irreducible components of $E$.
    By induction, we may assume that the assertions hold for $d'$-dimensional simple normal crossing reduced schemes over $k$ with $m'$-irreducible components with $d'<d$ or $m'<m$.
    By a direct computation (Theorem \ref{thm:torsion exact sequence}), we have a short exact sequence
    \[
    0\to \Omega^i_{E}/{\rm tors} \to \Omega^i_{E_1^c}/{\rm tors} \oplus \Omega^i_{E_1} \to {\Omega}^i_{E_1^c\cap E_1}/{\rm tors} \to 0.
    \]
    By applying $\mathcal{H}^0$ to the exact triangle in Proposition \ref{prop:blow-up} and by induction, we thus get the diagram:
    \[
    \begin{tikzcd}
    0 \arrow[r]&\Omega^i_{E}/{\rm tors} \arrow[r] \arrow[d] & \Omega^i_{E_1^c}/{\rm tors} \oplus \Omega^i_{E_1} \arrow[r] \ar[d,,equal] & {\Omega}^i_{E_1^c\cap E_1}/{\rm tors} \arrow[r]\ar[d,,equal]&0 \\
0 \arrow[r]& \Omega^i_{E,\eh} \arrow[r]  & \Omega^i_{E_1^c,\eh}\oplus \Omega^i_{E_1,\eh} \arrow[r]  & \Omega^i_{E_1^c\cap E_1,\eh} & {} 
\end{tikzcd}
    \]
    and the first assertion holds.
    If the $R(d)$-condition is satisfied, then we have 
    \[
    \begin{tikzcd}
    \Omega^i_{E}/{\rm tors} \arrow[r] \arrow[d] & \Omega^i_{E_1^c}/{\rm tors} \oplus \Omega^i_{E_1} \arrow[r] \ar[d,,equal] & {\Omega}^i_{E_1^c\cap E_1}/{\rm tors} \arrow[r, "+1"]\ar[d,,equal]& {} \\
\underline{\Omega}^i_{E,\eh} \arrow[r]  & \underline{\Omega}^i_{E_1^c,\eh}\oplus \Omega^i_{E_1,\eh} \arrow[r]  & \underline{\Omega}^i_{E_1^c\cap E_1,\eh} \arrow[r, "+1"]& {} 
\end{tikzcd}
    \]
    by Theorem \ref{thm:eh for smooth}, and the latter assertion is valid. \qedhere
\end{proof}

\begin{corollary}\label{cor:blow-up triangle}
Let $X$ be a $d$-dimensional reduced separated scheme of finite type over $k$ and let $Z\hookrightarrow X$ be a closed immersion over $k$.
Suppose that the $R(d)$-condition holds.
Let $\pi\colon Y\to X$ be a proper birational morphism such that $Y$ is smooth and the reduced preimage $E$ of $Z$ is simple normal crossing.

Then we have the following diagram consisting of two exact triangles:
\[
\begin{tikzcd}
R\pi_*\Omega^i_Y(\log E)(-E) \arrow[r] \arrow[d, equal] & R\pi_*\Omega^i_Y \arrow[r]  & R\pi_*\frac{\Omega^i_E}{\rm tors} \arrow[r, "+1"]  & {} \\
\underline{\Omega}^i_{X,Z,\eh} \arrow[r] &  \underline{\Omega}^i_{X,\eh} \arrow[r] \arrow[u] & \underline{\Omega}^i_{Z,\eh} \arrow[u] \arrow[r, "+1"] & {}, 
\end{tikzcd}
\]
where $\underline{\Omega}^i_{X,Z,\eh} \coloneqq \mathrm{Cocone} (\underline{\Omega}^i_{X,\eh} \to \underline{\Omega}^i_{Z,\eh})$.
\end{corollary}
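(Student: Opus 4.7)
The plan is to obtain the top row by pushing forward a standard short exact sequence on the snc pair $(Y,E)$, and to identify the leftmost cocones via the Mayer--Vietoris triangle arising from the abstract blow-up square $(Y \to X, E \to Z)$.

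First, I would establish the short exact sequence
\[
0 \to \Omega^i_Y(\log E)(-E) \to \Omega^i_Y \to \Omega^i_E/\mathrm{tors} \to 0.
\]
This is a standard local computation: in coordinates where $E = (x_1 \cdots x_r = 0)$, one checks that the kernel of the natural surjection $\Omega^i_Y \twoheadrightarrow \Omega^i_E/\mathrm{tors}$ is precisely $\Omega^i_Y(\log E)(-E)$. Applying $R\pi_*$ then yields the top exact triangle. The bottom row is exact tautologically, by the definition of $\underline{\Omega}^i_{X,Z,\eh}$ as the cocone of $\underline{\Omega}^i_{X,\eh} \to \underline{\Omega}^i_{Z,\eh}$.

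Next, I would construct the two right-hand vertical maps using functoriality of the morphism of sites $\rho_X$. The natural comparison $\underline{\Omega}^i_{X,\eh} \to R\pi_* \underline{\Omega}^i_{Y,\eh}$ combined with Theorem \ref{thm:eh for smooth} (invoking the $R(d)$-condition and the smoothness of $Y$) yields the middle vertical arrow $\underline{\Omega}^i_{X,\eh} \to R\pi_* \Omega^i_Y$; the rightmost arrow is built analogously, using $\pi|_E \colon E \to Z$ and Corollary \ref{cor:eh for SNC} to identify $\underline{\Omega}^i_{E,\eh}$ with $\Omega^i_E/\mathrm{tors}$. To identify the leftmost cocones, I would invoke Proposition \ref{prop:blow-up} for the abstract blow-up square $(Y \to X, E \to Z)$, which together with the above identifications produces the Mayer--Vietoris triangle
\[
\underline{\Omega}^i_{X,\eh} \to \underline{\Omega}^i_{Z,\eh} \oplus R\pi_* \Omega^i_Y \to R\pi_*(\Omega^i_E/\mathrm{tors}) \xrightarrow{+1}.
\]
Such a triangle is equivalent to the corresponding commutative square being homotopy bicartesian, so its horizontal cocones agree. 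By definition the left horizontal cocone is $\underline{\Omega}^i_{X,Z,\eh}$, and by the first paragraph the right horizontal cocone is $R\pi_*\Omega^i_Y(\log E)(-E)$; this yields the leftmost vertical isomorphism.

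The main obstacle will be establishing honest commutativity of the right-hand square in the derived category rather than merely a comparison of abstract triangles. Concretely, one has to verify that the map $\underline{\Omega}^i_{Z,\eh} \to R\pi_*(\Omega^i_E/\mathrm{tors})$ defined via $\rho$-functoriality agrees with the one extracted from the Mayer--Vietoris triangle of Proposition \ref{prop:blow-up}. This reduces to unpacking the construction of that proposition (which itself follows Geisser's argument) and tracking naturality carefully. Once this compatibility is in hand, the isomorphism produced by the bicartesian square argument lifts to the required morphism of exact triangles, completing the diagram.
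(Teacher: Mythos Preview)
Your proposal is correct and follows essentially the same route as the paper: both use Proposition \ref{prop:blow-up} together with the identifications $\underline{\Omega}^i_{Y,\eh}\cong\Omega^i_Y$ and $\underline{\Omega}^i_{E,\eh}\cong\Omega^i_E/\mathrm{tors}$ from Theorem \ref{thm:eh for smooth} and Corollary \ref{cor:eh for SNC}, the short exact sequence identifying $\Omega^i_Y(\log E)(-E)$ as $\ker(\Omega^i_Y\to\Omega^i_E/\mathrm{tors})$, and then a triangulated-category comparison of cocones. The paper phrases the last step as an application of the octahedral axiom whereas you phrase it via the homotopy bicartesian square, but these are the same argument.
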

\begin{proof}
    Since we are assuming the $R(d)$-condition, we have $\underline{\Omega}^i_{Y,\eh}=\Omega^i_Y$ and $\underline{\Omega}^i_{E,\eh}=\Omega^i_E/{\rm tors}$ by Theorem \ref{thm:eh for smooth} and Corollary \ref{cor:eh for SNC}.
    By Proposition \ref{prop:blow-up}, we thus get the exact triangle
    \[
    \underline{\Omega}^i_{X,\eh}\to \underline{\Omega}^i_{Z,\eh}\oplus R\pi_{*}\Omega^i_{Y} \to R\pi_{*}\Omega^i_{E}/{\rm tors} \xrightarrow{+1}.
    \]
    Recall that $\Omega^i_Y(\log\,E)(-E)={\rm ker}(\Omega^i_Y\to \Omega^i_E/{\rm tors})$.
    Now, the assertion follows by the octahedral axiom. \qedhere
\end{proof}

\section{$k$-$F$-injective and $k$-Du Bois singularities}
In this section we introduce the notion of $k$-$F$-injective and pseudo-$k$-Du Bois singularities in characteristic $p>0$ in the case of isolated singularities.

Throughout this section, we work in the following setting. 

\begin{setting} \label{setting:iso}
Let $X = \Spec R$ be a $d$-dimensional normal affine variety over a perfect field of arbitrary characteristic. Let $x$ be a closed point corresponding to a maximal ideal $\m$. Assume that $X \,\backslash\, \{x\}$ is regular. Further, assume there exists $\pi \colon Y \to X$ a projective birational morphism which is an isomorphism away from $\{x\}$ and such that $Y$ is regular and $E = {\rm Exc}(\pi)$ is simple normal crossing.
\end{setting}

\subsection{$\underline{\cO}_{X,\pi}$ and pseudo-Du Bois singularities}
Since the definitions of the Deligne Du Bois sheaves $\underline{\Omega}^k_{X,\pi}$ for $k=0$ and $k>0$ will be significantly different, we focus on the former case first.

Note that the short exact sequence 
\[
0 \to \cO_Y(-E) \to \cO_Y \to \cO_E \to 0
\]
induces a natural restriction map $R\pi_*\cO_Y \to R\pi_*\cO_E$. 
\begin{definition}\label{def:0thDu Bois cpx(isolated)}
With notation of Setting \ref{setting:iso}, 
\[\underline{\cO}_{X,\pi} \coloneqq {\rm Cocone}(R\pi_*\cO_Y \to R\pi_*\cO_E \to R^{>0}\pi_*\cO_E)\,\,\text{with}\,\,\sO_{X,\pi}\coloneqq \mathcal{H}^0(\underline{\cO}_{X,\pi}).\]
\end{definition}

In particular, by the following diagram
\[
\begin{tikzcd}
\hphantom{a} & \hphantom{a} &  \hphantom{a} & \hphantom{a} \\
0 \arrow[r] \arrow[u, "+1"] & R^{>0}\pi_*\cO_E \arrow[r,equal] \arrow[u, "+1"] & R^{>0}\pi_*\cO_E \arrow[r, "+1"] \arrow[u, "+1"] & {} \\
R\pi_*\cO_Y(-E) \arrow[r] \arrow[u] & R\pi_*\cO_Y \arrow[r] \arrow[u] & R\pi_*\cO_E \arrow[r, "+1"] \arrow[u] & {} \\
R\pi_*\cO_Y(-E) \arrow[r] \arrow[u, "="] & \underline{\cO}_{X,\pi} \arrow[r] \arrow[u] & \pi_*\cO_E \arrow[r, "+1"] \arrow[u] & {}
\end{tikzcd}
\]
we get an exact triangle
\begin{equation} \label{eq:0iso-exact-triangle}
R\pi_*\cO_Y(-E) \to \underline{\cO}_{X,\pi} \to k \xrightarrow{+1}
\end{equation}
where $\pi_*\cO_E=\cO_{x} = k$. Moreover, the natural pullback map decomposes as $\cO_X \to \underline{\cO}_{X,\pi} \to R\pi_*\cO_Y$, which is an isomorphism on $\cH^0$, hence
\begin{equation} \label{eq:iso-du-bois-structure-sheaf-0}
\sO_{X,\pi}=\cH^0(\underline{\cO}_{X,\pi}) = \cO_X.
\end{equation}
This also shows that the map $\cH^0(\underline{\cO}_{X,\pi}) \to k$ induced by the above exact triangle is surjective, hence the connecting homomorphism $k \to R^1\pi_*\cO_Y(-E)$ is zero and
\begin{equation}\label{eq:du bois isolated}
    \tau^{>0}\underline{\sO}_{X,\pi} \cong  R^{>0}\pi_{*}\sO_Y(-E).
\end{equation}

\begin{definition}\label{def:Du Bois(isolated)}
With the notation of Setting \ref{setting:iso}, we say that $X$ is \emph{Du Bois along $\pi$} if the natural map induced by \eqref{eq:iso-du-bois-structure-sheaf-0}:
\[
\cO_X[0] \to \underline{\cO}_{X,\pi}
\]
is a quasi-isomorphism.
\end{definition}
\begin{definition}\label{def:pseudo-du-bois(isolated)}
With the notation of Setting \ref{setting:iso}, we say that $X$ is \emph{pseudo-Du Bois along $\pi$} if the natural map induced by (\ref{eq:iso-du-bois-structure-sheaf-0}): 
\[
H^j_\m(\cO_X) \to \mathbb{H}^j_\m(\underline{\cO}_{X,\pi})
\]
is injective for every $0 \leq j \leq d$.
\end{definition}

\begin{lemma}\label{lem:independence of resolutions,i=0,iso}
    With the notation of Setting \ref{setting:iso}, the following is true.
    \begin{enumerate}
        \item We have that $\sO_{X,\pi}=\sO_{X}$.  In particular, $\sO_{X,\pi}$ does not depend on $\pi$.
        \item Suppose that the the $R(d)$-condition holds. Then $\underline{\sO}_{X,\pi}=\underline{\Omega}_{X,\eh}^{0}$.  In particular, the definitions of $\underline{\sO}_{X,\pi}$ in Definition \ref{def:0thDu Bois cpx(isolated)}, Du Bois along $\pi$ in Definition \ref{def:Du Bois(isolated)}, and pseudo-Du Bois along $\pi$ in Definition \ref{def:pseudo-du-bois(isolated)} do not depend on $\pi$.
        \item Suppose that $k$ is of characteristic zero. Then pseudo-Du Bois is equivalent to Du Bois.
    \end{enumerate}
\end{lemma}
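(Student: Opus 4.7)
The proof naturally splits into the three parts. For (1), the identity $\mathcal{H}^0(\underline{\mathcal{O}}_{X,\pi}) = \mathcal{O}_X$ has already been established in \eqref{eq:iso-du-bois-structure-sheaf-0} as a direct consequence of $\pi_*\mathcal{O}_Y = \mathcal{O}_X$, which holds by normality. Since the right-hand side is manifestly independent of $\pi$, no further work is required.

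For (2), under the $R(d)$-condition I plan to compare the two defining triangles. Specialising Corollary \ref{cor:blow-up triangle} to $Z = \{x\}$ and $i = 0$ produces the exact triangle $R\pi_*\mathcal{O}_Y(-E) \to \underline{\Omega}^0_{X,\eh} \to k \xrightarrow{+1}$, using $\Omega^0_Y(\log E)(-E) = \mathcal{O}_Y(-E)$, $\Omega^0_E/\mathrm{tors} = \mathcal{O}_E$ (as $E$ is reduced), and $\underline{\Omega}^0_{Z,\eh} = k$ from Theorem \ref{thm:eh for smooth}. This matches the triangle \eqref{eq:0iso-exact-triangle} defining $\underline{\mathcal{O}}_{X,\pi}$. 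To promote this formal coincidence into a canonical quasi-isomorphism, I identify both complexes with a common derived fibre product. Since $\pi_*\mathcal{O}_E = k$ (so $k \simeq \tau^{\leq 0}R\pi_*\mathcal{O}_E$), the defining cocone of $\underline{\mathcal{O}}_{X,\pi}$ yields $\underline{\mathcal{O}}_{X,\pi} \simeq R\pi_*\mathcal{O}_Y \times^h_{R\pi_*\mathcal{O}_E} k$, while the Mayer--Vietoris triangle in Proposition \ref{prop:blow-up}, applied to the abstract blow-up $(Y\to X,\, E\to\{x\})$, identifies $\underline{\Omega}^0_{X,\eh}$ with the same derived fibre product. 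This yields $\underline{\Omega}^0_{X,\eh} \simeq \underline{\mathcal{O}}_{X,\pi}$, from which all three independence statements follow.

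For (3), Du Bois trivially implies pseudo-Du Bois. For the converse in characteristic zero, the goal is to upgrade the cohomological injectivity to the vanishing $R^{>0}\pi_*\mathcal{O}_Y(-E) = 0$, which is equivalent to Du Bois via \eqref{eq:du bois isolated}. My plan is to combine Steenbrink's vanishing $R^{>0}\pi_*\omega_Y(\log E) = 0$ (equivalently $R^{>0}\pi_*\omega_Y(E) = 0$) with relative Grothendieck duality to obtain $R\mathcal{H}om_X(R\pi_*\mathcal{O}_Y(-E), \omega_X^\bullet) \simeq \pi_*\omega_Y(E)[d]$; in particular, the Grothendieck dual of $R\pi_*\mathcal{O}_Y(-E)$ is concentrated in a single cohomological degree. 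Matlis-dualising the pseudo-Du Bois injectivities at $\m$ translates them into surjectivities on the corresponding $\mathrm{Ext}$-sheaves, and combining these with the concentration statement forces $R^{>0}\pi_*\mathcal{O}_Y(-E) = 0$ in a neighbourhood of $x$.

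The main obstacle is Part (3): it is precisely where characteristic-zero Hodge theory (Steenbrink vanishing, together with the careful Grothendieck/Matlis duality bookkeeping) enters essentially, and the equivalence between pseudo-Du Bois and Du Bois is known to fail in positive characteristic. By contrast, Parts (1) and (2) are essentially formal manipulations of cocone constructions once the eh-formalism from Section 2 is in hand.
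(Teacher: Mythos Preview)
Your arguments for parts (1) and (2) are correct and match the paper's approach: (1) is exactly \eqref{eq:iso-du-bois-structure-sheaf-0}, and for (2) both you and the paper identify $\underline{\cO}_{X,\pi}$ with $\underline{\Omega}^0_{X,\eh}$ via the diagram in Corollary~\ref{cor:blow-up triangle} (your derived fibre product phrasing is a clean way to package that identification).

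Part (3), however, contains a genuine error. The vanishing $R^{>0}\pi_*\omega_Y(E)=0$ that you invoke is \emph{false} in general, even for isolated singularities in characteristic zero. Take $X$ to be the cone over an elliptic curve $C$; then $d=2$, $E\cong C$, and the exact sequence $0\to\omega_Y\to\omega_Y(E)\to\omega_E\to 0$ together with Grauert--Riemenschneider gives $R^1\pi_*\omega_Y(E)\cong R^1\pi_*\omega_E = H^1(C,\omega_C)=k\neq 0$. (Note that $X$ is log canonical, hence Du Bois, so this does not contradict the lemma---only your intermediate claim.) Consequently $\mathbb{D}(R\pi_*\cO_Y(-E))=R\pi_*\omega_Y(E)[d]$ is \emph{not} concentrated in a single degree, and the ``concentration plus dualised injectivity'' mechanism you describe cannot force $R^{>0}\pi_*\cO_Y(-E)=0$.

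The paper proceeds differently: it defers (3) to Theorem~\ref{thm:pseudo-pre-k-Du Bois=pre-k-Du Bois in char 0}. The Hodge-theoretic input there is not a vanishing of $R^{>0}\pi_*\omega_Y(E)$ but rather the \emph{surjectivity} of $H^j_\m(\Omega^i_{X,h})\to\mathbb{H}^j_\m(\underline{\Omega}^i_X)$ for all $j$ (Lemma~\ref{lem:inj of delta}), which follows from the $E_1$-degeneration statement of Lemma~\ref{lem:E_1-deg}. Combining this surjectivity with the pseudo-Du Bois injectivity for $j\le d-i$ yields isomorphisms on local cohomology in that range, whence $\cH^j(\underline{\Omega}^i_X)=0$ for $0<j\le d-i-1$; the remaining range $j\ge d-i$ is handled by the standard bound on the amplitude of the Du Bois complex. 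If you want to salvage a duality-based argument for $i=0$, the correct replacement for your concentration claim is precisely this surjectivity (equivalently, after local duality, the injectivity of $\mathrm{Ext}^{-j}(\underline{\cO}_{X,\pi},\omega_X^\bullet)\to\mathrm{Ext}^{-j}(\cO_X,\omega_X^\bullet)$), not a one-sided vanishing of $R\pi_*\omega_Y(E)$.
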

\begin{proof}
(1) follows from \eqref{eq:iso-du-bois-structure-sheaf-0}.
    Now, suppose that the $R(d)$-condition holds.
    By Corollary \ref{cor:blow-up triangle}, we have the following diagram
    \[
\begin{tikzcd}
R\pi_*\cO_Y(-E) \arrow[r] \arrow[d, equal] & R\pi_*\cO_Y \arrow[r]  & R\pi_*\cO_E \arrow[r, "+1"]  & {} \\
\underline{\cO}_{X,x,\eh} \arrow[r] & \underline{\cO}_{X,\eh} \arrow[r] \arrow[u] & \cO_{x}=\pi_{*}\sO_E \arrow[u] \arrow[r, "+1"] & {}, 
\end{tikzcd}
\]
which shows that $\underline{\cO}_{X,\eh}={\rm Cocone}(R\pi_*\cO_Y \to R\pi_*\cO_E \to \tau^{>0}R\pi_*\cO_E)$.
Thus, we obtain $\underline{\cO}_{X,\pi}=\underline{\cO}_{X,\eh}$ and (2) holds.
Finally, (3) follows from Theorem \ref{thm:pseudo-pre-k-Du Bois=pre-k-Du Bois in char 0}.
\end{proof}

The reader should draw an analogy between the above definitions and those of rational ($R\pi_*\cO_Y = \cO_X$) and pseudo-rational (CM, $\pi_*\omega_Y=\omega_X$) singularities. The two notions coincide in characteristic zero, but that is not the case in positive characteristic due to the lack of the Grauert--Riemenschneider vanishing.

Before proceeding, observe that $\underline{\cO}_{X,\pi}$ is endowed with a natural Frobenius map when $X$ is of characteristic $p>0$. Indeed, we have the following diagram
\[
\begin{tikzcd}
\underline{\cO}_{X,\pi} \ar[d] \arrow[r] & R\pi_*\cO_Y \ar{d} \arrow[r, "F"] & F_*R\pi_*\cO_Y \arrow[d] & \\
\pi_*\cO_E \arrow[bend right = 15]{rrr}[swap]{0} \arrow[r] & R\pi_*\cO_E \arrow[r, "F"] & F_*R\pi_*\cO_E \ar{r} & F_*R^{>0}\pi_*\cO_E,
\end{tikzcd}
\]
which shows that the map $\underline{\cO}_{X,\pi} \to R\pi_*\cO_Y \xrightarrow{F} F_*R\pi_*\cO_Y$ factors through
\[
{\rm Cocone}(F_*R\pi_*\cO_Y \to F_*R^{>0}\pi_*\cO_E) \cong F_*\underline{\cO}_{X,\pi}
\]
by Lemma \ref{lem:derived-cat-factor}.
This yields the Frobenius map
\begin{equation}\label{eq:Frobenius of  O_{pi}}
    F \colon \underline{\cO}_{X,\pi} \to F_*\underline{\cO}_{X,\pi}
\end{equation}
which fits into the following commutative diagram:
\[
\begin{tikzcd}
\underline{\cO}_{X,\pi} \arrow[r,"F"] \arrow[d] & F_{*}\underline{\cO}_{X,\pi}\arrow[d]  \\
R\pi_{*}\sO_Y \arrow[r,"F"] & R\pi_{*}\sO_Y.
\end{tikzcd}
\]
Motivated by \cite{Bhatt-Schwede-Takagi}, we prove the following.
\begin{proposition}\label{prop:BST}
With notation of Setting \ref{setting:iso}, suppose that
$k$ is of characteristic $p>0$ and
$X$ is $F$-injective. Then $X$ is pseudo-Du Bois along $\pi$.
\end{proposition}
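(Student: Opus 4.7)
The plan is to analyze the cocone $C \coloneqq \mathrm{Cocone}(\cO_X \to \underline{\cO}_{X,\pi})$. By the long exact sequence associated to the triangle $C \to \cO_X \to \underline{\cO}_{X,\pi} \xrightarrow{+1}$, the desired injectivity of $H^j_{\m}(\cO_X) \to \mathbb{H}^j_{\m}(\underline{\cO}_{X,\pi})$ for $0 \leq j \leq d$ is equivalent to the vanishing of the connecting map $\mathbb{H}^j_{\m}(C) \to H^j_{\m}(\cO_X)$ in the same range. Combining \eqref{eq:iso-du-bois-structure-sheaf-0}, \eqref{eq:du bois isolated}, and the long exact sequence of cohomology sheaves, one computes $\cH^j(C) = 0$ for $j \leq 1$ and $\cH^j(C) = R^{j-1}\pi_{*}\cO_Y(-E)$ for $j \geq 2$. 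Since $\pi$ is an isomorphism away from $x$, these higher direct images are skyscrapers at $x$, so the local-cohomology spectral sequence degenerates and $\mathbb{H}^j_{\m}(C) = \cH^j(C)$; in particular the statement is automatic for $j \leq 1$, reducing us to the case $j \geq 2$.

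The main step is to show that the iterated Frobenius on $\mathbb{H}^j_{\m}(C) = R^{j-1}\pi_{*}\cO_Y(-E)$ vanishes for $n$ large enough. The cocone $C$ inherits a Frobenius compatible with those on $\cO_X$ and $\underline{\cO}_{X,\pi}$ (the latter defined in \eqref{eq:Frobenius of  O_{pi}}; the Frobenius equivariance of $\cO_X \to \underline{\cO}_{X,\pi}$ follows formally from Lemma \ref{lem:derived-cat-factor}), and on $\cH^j(C)$ this Frobenius agrees with the natural one induced by $t \mapsto t^p \colon \cO_Y(-E) \to F_{*}\cO_Y(-E)$, which crucially factors through the subsheaf $F_{*}\cO_Y(-pE)$. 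Pick an anti-effective $\pi$-ample $\bQ$-divisor $A$ on $Y$ supported on $E$ with $\lfloor A \rfloor = -E$: starting from any effective Cartier divisor $H$ supported on $E$ with $-H$ being $\pi$-ample (which exists by the projectivity of $\pi$), one sets $A = -H/\max_i h_i$. Then $-p^n E \leq \lfloor p^n A \rfloor \leq -E$ for all $n \geq 1$, so the iterated Frobenius factors as
\[
\cO_Y(-E) \xrightarrow{F^n} F^n_{*}\cO_Y(-p^n E) \hookrightarrow F^n_{*}\cO_Y(\lfloor p^n A\rfloor) \hookrightarrow F^n_{*}\cO_Y(-E).
\]
Because $\tfrac{1}{p^n}\lfloor p^n A\rfloor \to A$ in $N^1(Y/X)_{\bR}$ and the $\pi$-ample cone is open, $\lfloor p^n A\rfloor$ is $\pi$-ample for $n \gg 0$; Serre vanishing then gives $R^{j-1}\pi_{*}\cO_Y(\lfloor p^n A\rfloor) = 0$ for $j \geq 2$, so the iterated Frobenius on $R^{j-1}\pi_{*}\cO_Y(-E)$ factors through zero.

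Finally, the Frobenius compatibility yields a commutative square
\begin{center}
\begin{tikzcd}
\mathbb{H}^j_{\m}(C) \arrow[r] \arrow[d, "F^n"'] & H^j_{\m}(\cO_X) \arrow[d, "F^n"] \\
\mathbb{H}^j_{\m}(F^n_{*}C) \arrow[r] & H^j_{\m}(F^n_{*}\cO_X) = H^j_{\m}(\cO_X),
\end{tikzcd}
\end{center}
in which the right vertical arrow is injective by $F$-injectivity of $R$, while the left vertical arrow is zero for $n \gg 0$ by the previous paragraph. A one-line diagram chase then forces every element of $\mathbb{H}^j_{\m}(C)$ to map to zero in $H^j_{\m}(\cO_X)$, proving the proposition. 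The principal technical obstacle is the Serre-vanishing step together with the construction of the auxiliary divisor $A$; the Frobenius compatibility of the cocone $C$ is merely a formal consequence of the definitions.
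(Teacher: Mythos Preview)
Your argument is essentially the paper's, repackaged through the cocone $C=\mathrm{Cocone}(\cO_X\to\underline{\cO}_{X,\pi})$ instead of the direct factorisation $\cO_X\to\underline{\cO}_{X,\pi}\to F^e_*\cO_X$. Both routes reduce to the same technical heart: the iterated Frobenius on $\tau^{>0}\underline{\cO}_{X,\pi}\cong R^{>0}\pi_*\cO_Y(-E)$ eventually vanishes because it factors through the higher direct images of a sufficiently negative twist along $E$.

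There is one genuine imprecision in your vanishing step. The sentence ``$\lfloor p^nA\rfloor$ is $\pi$-ample for $n\gg 0$; Serre vanishing then gives $R^{j-1}\pi_*\cO_Y(\lfloor p^nA\rfloor)=0$'' does not follow: Serre vanishing concerns high tensor powers of a \emph{fixed} ample line bundle, not arbitrary ample ones. Your openness-of-the-ample-cone argument only shows each $\lfloor p^nA\rfloor$ is ample, which is not enough. The paper sidesteps this entirely by working with a Cartier divisor from the start: pick $-H$ $\pi$-ample and Cartier, choose $m_0$ with $R^{>0}\pi_*\cO_Y(-m_0H)=0$ by Serre, and then for $e$ large one has $-p^eE\le -m_0H\le -E$, so $F^e$ on $R^{>0}\pi_*\cO_Y(-E)$ factors through $R^{>0}\pi_*F^e_*\cO_Y(-m_0H)=0$. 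Your $\bQ$-divisor $A$ with $\lfloor A\rfloor=-E$ is an unnecessary complication; if you want to keep it, you can rescue the step either via relative Fujita vanishing, or by observing that $\lfloor p^nA\rfloor=q_n(MA)+\lfloor s_nA\rfloor$ (writing $p^n=q_nM+s_n$, $0\le s_n<M$) takes only finitely many values modulo multiples of the Cartier divisor $MA$, so ordinary Serre vanishing applied to each of these finitely many twists suffices.

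A smaller point: the Frobenius equivariance of $\cO_X\to\underline{\cO}_{X,\pi}$ is not really a consequence of Lemma~\ref{lem:derived-cat-factor}; it holds because any map $\cO_X\to F_*\underline{\cO}_{X,\pi}$ is determined by its effect on $\cH^0$, and both ways around the square give the $p$-th power map there.
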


\begin{proof}
Pick an ample exceptional Cartier divisor $A$.
By negativity lemma, we have $\Supp A = E$ and $A\leq 0$. We may assume that $\rup{A}=0$. 
By Serre vanishing we may pick $m \gg 0$ such that
\begin{equation}
R^{>0}\pi_*\cO_Y(mA)=0.\label{eq:Serre-Frob0}
\end{equation}
Now pick $e\gg 0$ so that $-E \geq mA \geq -p^eE$. In particular,
\begin{equation}
F^e \colon R\pi_*\cO_Y(-E) \to F^e_*R\pi_*\cO_Y(-E) \ \text{ factors through\ \ $F^e_*R\pi_*\cO_Y(mA)$. } \label{eq:factor-Frob0}
\end{equation}
Now consider the following diagram
\[
\begin{tikzcd}
\mathllap{\cO_X =\ } \cO_{X,\pi} \ar{r} & \underline{\cO}_{X,\pi} \ar[d] \arrow[r,"F^e"] & F^e_*\underline{\cO}_{X,\pi} \ar{d}  \\
& \tau^{>0}\underline{\cO}_{X,\pi} \arrow[r,"F^e"] & \tau^{>0}F^e_*\underline{\cO}_{X,\pi}.
\end{tikzcd}
\]
Since $\tau^{>0}\underline{\cO}_{X,\pi}\cong R^{>0}\pi_*\cO_Y(-E)$ (see \eqref{eq:du bois isolated}), the lower horizontal arrow in the above diagram may be identified with:
\[
F^e \colon R^{>0}\pi_*\cO_Y(-E) \to R^{>0}\pi_*F^e_*\cO_Y(-E)
\]
which is zero by \eqref{eq:factor-Frob0} and \eqref{eq:Serre-Frob0}. Then $F^e \colon \underline{\cO}_{X,\pi} \to F^e_*\underline{\cO}_{X,\pi}$
factors into
\[
\underline{\cO}_{X,\pi} \to F^e_{*}\sO_X \xrightarrow{\text{nat.}} F^e_*\underline{\cO}_{X,\pi}
\]
by Lemma \ref{lem:derived-cat-factor}. By taking $\mathcal{H}^0$, we obtain a factorisation
\[
F^e \colon \cO_X\to \underline{\cO}_{X,\pi} \to F^e_*\cO_X.
\]
This immediately implies the statement of the proposition.
\end{proof}

Let us try to understand what being pseudo-Du Bois means. By local duality  \cite[\href{https://stacks.math.columbia.edu/tag/0AAK}{Tag 0AAK}]{stacks-project},
Definition \ref{def:pseudo-du-bois(isolated)} is equivalent to the surjectivity of
\begin{equation} \label{eq:equivalent-def-pseudo-Du-Bois}
{\rm Ext}^j(\underline{\cO}_{X,\pi}, \omega^\bullet_X) \to {\rm Ext}^j(\cO_X, \omega^\bullet_X) = \cH^j(\omega^\bullet_X).
\end{equation}
\begin{proposition}[\textup{cf.~\cite[Theorem 3.1]{Kovacs-Schwede-Smith}}]
With the notation of Setting \ref{setting:iso}, suppose that $X$ is Cohen-Macaulay. Then $X$ is pseudo-Du Bois if and only if $\pi_*\omega_Y(E) = \omega_X$.
\end{proposition}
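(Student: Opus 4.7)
The plan is to translate the pseudo-Du Bois condition into a surjectivity statement via local duality, and then use Grothendieck duality along $\pi$ to identify the dualised object as $\pi_*\omega_Y(E)$.

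Since $X$ is Cohen--Macaulay, $H^j_\m(\cO_X) = 0$ for $j < d$, so Definition \ref{def:pseudo-du-bois(isolated)} reduces to the injectivity of $H^d_\m(\cO_X) \to \mathbb{H}^d_\m(\underline{\cO}_{X,\pi})$. By local duality with dualising complex $\omega^\bullet_X \cong \omega_X[d]$, this is equivalent to surjectivity of the natural map
\[
\cHom_{\cO_X}(\underline{\cO}_{X,\pi}, \omega_X) \longrightarrow \cHom_{\cO_X}(\cO_X, \omega_X) = \omega_X
\]
induced by $\cO_X \to \underline{\cO}_{X,\pi}$.

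To compute the left-hand side, I apply $R\cHom_{\cO_X}(-, \omega_X)$ to the exact triangle \eqref{eq:0iso-exact-triangle}. Grothendieck duality along $\pi$ gives
\[
R\cHom_{\cO_X}(R\pi_*\cO_Y(-E), \omega_X) \cong R\pi_*\omega_Y(E),
\]
using $\pi^!\omega_X = \omega_Y$ for the proper birational morphism $\pi$ between CM schemes of the same dimension with $Y$ smooth. Furthermore, $R\cHom_{\cO_X}(k, \omega_X) \cong k[-d]$. Assuming $d \geq 2$ (the cases $d \leq 1$ being trivial as $X$ is then regular), both $\cHom(k, \omega_X)$ and $\Ext^1(k, \omega_X)$ vanish, so taking $\cH^0$ of the dualised triangle yields an isomorphism
\[
\cHom_{\cO_X}(\underline{\cO}_{X,\pi}, \omega_X) \xrightarrow{\ \cong\ } \pi_*\omega_Y(E).
\]

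Under this identification, the map $\cHom(\underline{\cO}_{X,\pi}, \omega_X) \to \omega_X$ coincides with the natural inclusion $\pi_*\omega_Y(E) \hookrightarrow \omega_X$: both maps restrict to the identity on the smooth locus $X \setminus \{x\}$, hence agree globally because $\omega_X$ is $S_2$ and thus equals $j_*(\omega_X|_{X \setminus \{x\}})$ where $j \colon X \setminus \{x\} \hookrightarrow X$. This inclusion is injective, as $\pi_*\omega_Y(E)$ is torsion-free (being the pushforward of a torsion-free sheaf under a proper morphism to the normal scheme $X$) and agrees generically with $\omega_X$. Consequently, surjectivity of this inclusion (i.e.\ the pseudo-Du Bois condition) is equivalent to the equality $\pi_*\omega_Y(E) = \omega_X$.

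The main delicate point I expect is the last step: one has to verify that the Grothendieck duality isomorphism carries the map induced by $\cO_X \to \underline{\cO}_{X,\pi}$ to the standard inclusion $\pi_*\omega_Y(E) \hookrightarrow \omega_X$. This should follow from the compatibility of the duality counit with the composition $\cO_X \to \underline{\cO}_{X,\pi} \to R\pi_*\cO_Y$, together with the fact that any map into the $S_2$-sheaf $\omega_X$ is determined by its restriction to the smooth locus.
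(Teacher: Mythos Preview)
Your proof is correct and follows essentially the same approach as the paper: dualise the exact triangle \eqref{eq:0iso-exact-triangle} via Grothendieck duality, use $d\geq 2$ to kill the contribution from $k$, and reduce to the surjectivity statement \eqref{eq:equivalent-def-pseudo-Du-Bois} in degree $-d$. Your version is in fact slightly more explicit than the paper's in identifying the resulting map $\pi_*\omega_Y(E)\to\omega_X$ with the natural inclusion via restriction to the smooth locus and the $S_2$-property of $\omega_X$; the paper simply invokes \eqref{eq:equivalent-def-pseudo-Du-Bois} without spelling this out.
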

\noindent In particular, an isolated Gorenstein singularity is pseudo-Du Bois if and only if it is log canonical.
\begin{proof}
We may assume that the dimension $d \geq 2$. Applying $R{\rm Hom}(-,\omega^\bullet_X)$ to the exact triangle (\ref{eq:0iso-exact-triangle}):
\[
R\pi_*\cO_Y(-E) \to \underline{\cO}_{X,\pi} \to k \xrightarrow{+1}
\]
yields the triangle:
\[
R{\rm Hom}(k,\omega^\bullet_X) \to R{\rm Hom}(\underline{\cO}_{X,\pi},\omega^\bullet_X) \to R{\rm Hom}(R\pi_*\cO_Y(-E),\omega^\bullet_X) \xrightarrow{+1}
\]
which simplifies to:
\[
k \to R{\rm Hom}(\underline{\cO}_{X,\pi},\omega^\bullet_X) \to R\pi_*\omega^\bullet_Y(E) \xrightarrow{+1}.
\]
Since $d \geq 2$, we have that 
\[
{\rm Ext}^{-d}(\underline{\cO}_{X,\pi},\omega^\bullet_X) \cong R^{-d}\pi_*\omega^\bullet_Y(E) =  \pi_*\omega_Y(E). 
\]
Therefore, as $X$ is Cohen-Macaulay, \eqref{eq:equivalent-def-pseudo-Du-Bois} implies that being pseudo-Du Bois is equivalent to:
\[
\pi_*\omega_Y(E) = \omega_X. \qedhere
\]  
\end{proof}

\subsection{Deligne--Du Bois complex along a resolution}

\begin{definition}\label{def:Du Bois Complex(isolated)}
With the notation of Setting \ref{setting:iso},
we define the \textit{Deligne--Du Bois complexes along $\pi$} by
\[
\underline{\Omega}^i_{X,\pi} \coloneqq R\pi_*\Omega^i_Y(\log E)(-E)\quad \text{ with }\quad \Omega^i_{X,\pi}\coloneqq  \pi_*\Omega^i_Y(\log E)(-E)
\]
for $i>0$. For $i=0$, we set $\underline{\Omega}^0_{X,\pi} \coloneqq \underline{\cO}_{X,\pi}$ and  $\Omega^0_{X,\pi} \coloneqq \cO_X$ (see Definition \ref{def:0thDu Bois cpx(isolated)}).
\end{definition}
{\noindent Note that $\Omega^i_{X,\pi}$ is torsion-free for all $i\geq 0$ essentially by definition.}

Next, we define a variant of pre-$k$-Du Bois singularities and pseudo-pre-$k$-Du Bois singularities in arbitrary characteristic:

\begin{definition}\label{def:pre-k-Du Bois(isolated)}
With the notation of Setting \ref{setting:iso}, pick an integer $k \geq 0$.
We say that $X$ is \emph{pre-$k$-Du Bois along $\pi$} if the natural map 
\[
\Omega^i_{X,\pi}[0] \to \underline{\Omega}^i_{X,\pi}
\]
is a quasi-isomorphism for every integer $0 \leq i \leq k$.
\end{definition}

In practice, the notion of pre-$k$-Du-Bois singularities along a resolution is a bit too restrictive in positive characteristic. Instead it is more natural to work with the following variant.

\begin{definition}\label{def:pseudo-pre-k-Du Bois(isolated)}
With the notation of Setting \ref{setting:iso}, pick an integer $k \geq 0$. 
We say that $X$ is \emph{pseudo-pre-$k$-Du Bois along $\pi$} if the map
\[
H^j_\m(\Omega^i_{X,\pi}) \to \mathbb{H}^j_\m(\underline{\Omega}^i_{X,\pi})
\]
is injective for every $0 \leq i \leq k$ and every $j \leq d-i$.
\end{definition}

\begin{lemma}\label{lem:independence of resolutions,i>0,iso}
 With the notation of Setting \ref{setting:iso}, the following statements hold.
\begin{enumerate}
    \item $\Omega^i_{X,\pi}$ in Definition \ref{def:Du Bois Complex(isolated)} does not depend on the choice of $\pi$ and, in fact, \[\Omega^i_{X,\pi}=\Omega^i_{X,\eh}.\]
    \item Suppose that the $R(d)$-condition holds. Then $\underline{\Omega}^i_{X,\pi}=\underline{\Omega}^i_{X,\eh}$.
    In particular, $\underline{\Omega}^i_{X,\pi}$ in Definition \ref{def:Du Bois Complex(isolated)}, pre-$k$-Du Bois along $\pi$ in Definition \ref{def:pre-k-Du Bois(isolated)}, and pseudo-pre-$k$-Du Bois along $\pi$ in Definition \ref{def:pseudo-pre-k-Du Bois(isolated)} do not depend on $\pi$.
    \item Suppose that $k$ is of characteristic zero. Then pseudo-pre-$k$-Du Bois is equivalent to pre-$k$-Du Bois.
\end{enumerate}
\end{lemma}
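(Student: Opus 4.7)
The plan is to derive (1) and (2) directly from the abstract blow-up comparison already established in Proposition \ref{prop:blow-up} and Corollary \ref{cor:blow-up triangle}, and to reduce (3) to the forthcoming Theorem \ref{thm:pseudo-pre-k-Du Bois=pre-k-Du Bois in char 0}. The case $i=0$ will in each case be disposed of by invoking Lemma \ref{lem:independence of resolutions,i=0,iso}, so it suffices to handle $i \geq 1$.

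For (1), I would apply Proposition \ref{prop:blow-up} to the abstract blow-up square $(Y \to X, E \to \{x\})$, yielding the exact triangle
\[
\underline{\Omega}^i_{X,\eh} \to \underline{\Omega}^i_{\{x\},\eh} \oplus R\pi_*\underline{\Omega}^i_{Y,\eh} \to R\pi_*\underline{\Omega}^i_{E,\eh} \xrightarrow{+1}.
\]
Since each term sits in non-negative cohomological degrees, taking $\cH^0$ produces a left-exact sequence. By Theorem \ref{thm:eh for smooth} we have $\Omega^i_{Y,\eh} = \Omega^i_Y$, by Corollary \ref{cor:eh for SNC} we have $\Omega^i_{E,\eh} = \Omega^i_E/{\rm tors}$, and $\Omega^i_{\{x\},\eh} = 0$ for $i \geq 1$ since $\{x\} = \Spec k(x)$ is zero-dimensional and smooth over $k$. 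This simplifies the sequence to
\[
0 \to \Omega^i_{X,\eh} \to \pi_*\Omega^i_Y \to \pi_*(\Omega^i_E/{\rm tors}).
\]
Applying $\pi_*$ to the short exact sequence $0 \to \Omega^i_Y(\log E)(-E) \to \Omega^i_Y \to \Omega^i_E/{\rm tors} \to 0$ identifies the kernel on the right with $\pi_*\Omega^i_Y(\log E)(-E) = \Omega^i_{X,\pi}$, which gives $\Omega^i_{X,\pi} = \Omega^i_{X,\eh}$.

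For (2), assuming the $R(d)$-condition, Corollary \ref{cor:blow-up triangle} applied with $Z = \{x\}$ provides a canonical isomorphism
\[
\underline{\Omega}^i_{X,\{x\},\eh} \cong R\pi_*\Omega^i_Y(\log E)(-E) = \underline{\Omega}^i_{X,\pi}.
\]
For $i \geq 1$ we have $\underline{\Omega}^i_{\{x\},\eh} = 0$ (again by Theorem \ref{thm:eh for smooth} applied in the trivial zero-dimensional setting), so the cocone $\underline{\Omega}^i_{X,\{x\},\eh}$ coincides with $\underline{\Omega}^i_{X,\eh}$. This establishes $\underline{\Omega}^i_{X,\pi} \cong \underline{\Omega}^i_{X,\eh}$ and, in particular, independence from the choice of $\pi$ of $\underline{\Omega}^i_{X,\pi}$ as well as the notions of pre-$k$-Du Bois and pseudo-pre-$k$-Du Bois along $\pi$ (since the latter are formulated entirely in terms of $\underline{\Omega}^i_{X,\pi}$ and $\Omega^i_{X,\pi}$). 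Finally, (3) is a direct invocation of Theorem \ref{thm:pseudo-pre-k-Du Bois=pre-k-Du Bois in char 0}.

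There is no serious obstacle: the argument is a bookkeeping exercise on top of the eh-topology machinery already developed. The one point that deserves care is that, in (1), one wants the identification $\Omega^i_{X,\pi} = \Omega^i_{X,\eh}$ \emph{without} assuming $R(d)$; this is why the argument is organised at the level of $\cH^0$, where Theorem \ref{thm:eh for smooth} and Corollary \ref{cor:eh for SNC} apply unconditionally, rather than at the derived level which requires $R(d)$.
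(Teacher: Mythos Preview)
Your proposal is correct and follows essentially the same approach as the paper: both arguments reduce $i=0$ to Lemma~\ref{lem:independence of resolutions,i=0,iso}, obtain (1) by comparing the left-exact sequence from $\cH^0$ of the blow-up triangle (Proposition~\ref{prop:blow-up}) with the sequence $0\to \Omega^i_Y(\log E)(-E)\to \Omega^i_Y\to \Omega^i_E/{\rm tors}$, derive (2) from Corollary~\ref{cor:blow-up triangle} together with $\underline{\Omega}^i_{\{x\},\eh}=0$ for $i\geq 1$, and invoke Theorem~\ref{thm:pseudo-pre-k-Du Bois=pre-k-Du Bois in char 0} for (3). Your remark that (1) is organised at the $\cH^0$ level precisely to avoid the $R(d)$-hypothesis is exactly the point the paper exploits as well.
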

\begin{proof}
By Lemma \ref{lem:independence of resolutions,i=0,iso}, we may assume that $i>0$.
    We have the following diagram \[
\begin{tikzcd}
0\arrow[r] &\Omega^i_{X,\pi} \arrow[r]\arrow[d]  & \pi_{*}\Omega^i_{Y} \arrow[r]\arrow[d, equal]  & \pi_{*}\Omega^{i}_E/{\rm tors} \arrow[d, equal,"\text{Cor }\ref{cor:eh for SNC}"]  \\
0\arrow[r] & \Omega^i_{X,\eh} \arrow[r] & \pi_{*}\Omega^i_{Y,\eh} \arrow[r] & \pi_{*}\Omega^i_{E,\eh}
\end{tikzcd}
\]
where {we used the equality $\Omega^i_Y(\log E)(-E)=\ker(\Omega^i_{Y}\to \Omega^{i}_E/{\rm tors})$ for the upper exact sequence}, while the lower exact sequence exists by Proposition \ref{prop:blow-up} and the vanishing $\Omega^i_{x,\eh}=0$.
Thus $\Omega^i_{X,\pi}=\Omega^i_{X,\eh}$ and (1) holds.

We next prove (2).
Suppose that the $R(d)$-condition holds.
    By Corollary \ref{cor:blow-up triangle}, we have the following diagram
    \[
\begin{tikzcd}
R\pi_*\Omega^i_Y(\log E)(-E) \arrow[r] \arrow[d, equal] & R\pi_*\Omega^i_Y \arrow[r]  & R\pi_*\Omega^i_E/{\rm tors} \arrow[r, "+1"]  & {} \\
\underline{\Omega}^i_{X,x, {\eh}} \arrow[r,equal] & \underline{\Omega}^i_{X,{\eh}} \arrow[r] \arrow[u] & \underline{\Omega}^i_{x,{\eh}}=0 \arrow[u] \arrow[r, "+1"] & {}.
\end{tikzcd}
\]
Thus $\underline{\Omega}^i_{X,\pi}\coloneqq R\pi_*\Omega^i_Y(\log E)(-E)=\underline{\Omega}^i_{X,x,\eh}=\underline{\Omega}^i_{X,\eh}$.
Moreover, the map $\Omega^i_{X,\pi}=\mathcal{H}^0(\underline{\Omega}^i_{X,\pi})\to \underline{\Omega}^i_{X,\pi}$ is nothing but the natural map $\Omega^i_{X,\eh}=\mathcal{H}^0(\underline{\Omega}^i_{X,\eh})\to \underline{\Omega}^i_{X,\eh}$. Thus (2) holds.

Finally, (3) follows from Theorem \ref{thm:pseudo-pre-k-Du Bois=pre-k-Du Bois in char 0}.
\end{proof}

We state and prove the following proposition, which we shall need later on.
\begin{proposition} \label{prop:omega-pullback-reflexive}
With the notation of Setting \ref{setting:iso}, fix an integer $i>0$. Then the {reflexivisation} $\Omega^i_X \to \Omega^{[i]}_X$ factorises as follows 
\[\Omega^i_X \to {\Omega^i_{X,\pi}} \xhookrightarrow{(-)^{**}} \Omega^{[i]}_X.\]
In particular, if $\Omega^i_X$ is reflexive, then 
\[
\pi_*\Omega^i_Y(\log E)(-E) = \Omega^i_X.
\]
\end{proposition}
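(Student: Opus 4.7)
The plan is to build a natural map $f\colon \Omega^i_X \to \Omega^i_{X,\pi}$ from the pullback of differentials, identify $(\Omega^i_{X,\pi})^{**}$ with $\Omega^{[i]}_X$, and then verify the asserted factorisation by comparing the resulting composite with the reflexivisation map on the smooth locus of $X$.

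For the first step, I will use the equality $\Omega^i_Y(\log E)(-E) = \ker(\Omega^i_Y \to \Omega^i_E/\mathrm{tors})$ already invoked in the proof of Lemma~\ref{lem:independence of resolutions,i>0,iso}. Since $\pi$ contracts the entire exceptional locus $E$ to the single closed point $x$, for any $a\in R$ the pullback $\pi^*a$ restricts to the constant function $a(x)$ on $E$, so $d(\pi^*a|_E) = 0$ in $\Omega^1_E$. Consequently, for a local generator $a_0\,da_1\wedge\cdots\wedge da_i$ of $\Omega^i_X$, its pullback $\pi^*(a_0)\,d\pi^*(a_1)\wedge\cdots\wedge d\pi^*(a_i)$ restricts to zero in $\Omega^i_E/\mathrm{tors}$, and hence lies in $\Omega^i_Y(\log E)(-E)$. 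Thus the natural morphism $\pi^*\Omega^i_X \to \Omega^i_Y$ factors through $\Omega^i_Y(\log E)(-E)$; pushing forward and composing with the adjunction unit yields the desired map $f\colon \Omega^i_X \to \pi_*\Omega^i_Y(\log E)(-E) = \Omega^i_{X,\pi}$.

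Next, I will observe that $(\Omega^i_{X,\pi})^{**}$ and $\Omega^{[i]}_X$ are canonically isomorphic via the identity on the smooth locus $U = X\setminus\{x\}$. The sheaf $\Omega^i_{X,\pi}$ is torsion-free, being the pushforward of a locally free sheaf, so the natural map $\Omega^i_{X,\pi} \hookrightarrow (\Omega^i_{X,\pi})^{**}$ is injective. Over $U$, the morphism $\pi$ is an isomorphism and $\Omega^i_Y(\log E)(-E)$ coincides with $\Omega^i_Y$, so $\Omega^i_{X,\pi}|_U = \Omega^i_X|_U$. Assuming $\dim X \geq 2$ (the case $\dim X = 1$ is trivial by normality), the complement of $U$ has codimension at least two, so both $(\Omega^i_{X,\pi})^{**}$ and $\Omega^{[i]}_X$ are reflexive extensions of $\Omega^i_X|_U$, and the identity on $U$ extends uniquely to a canonical isomorphism $(\Omega^i_{X,\pi})^{**} \cong \Omega^{[i]}_X$.

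It then remains to verify that the composite $\Omega^i_X \xrightarrow{f} \Omega^i_{X,\pi} \hookrightarrow \Omega^{[i]}_X$ equals the reflexivisation map $r$. Both restrict to the identity on $U$, and $\cHom_{\cO_X}(\Omega^i_X, \Omega^{[i]}_X)$ is reflexive (its target being reflexive on the normal scheme $X$), so its global sections are determined by restriction to $U$; hence the two maps agree. Finally, if $\Omega^i_X$ is reflexive then $r$ is an isomorphism, which together with injectivity of $\Omega^i_{X,\pi} \hookrightarrow \Omega^{[i]}_X$ forces both $f$ and the inclusion to be isomorphisms, yielding $\pi_*\Omega^i_Y(\log E)(-E) = \Omega^i_X$. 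The main technical point is the first step, where one must verify that the pullback of differentials from $X$ to $Y$ lands inside $\Omega^i_Y(\log E)(-E)$; this is where the isolated-singularity assumption is used decisively, through the fact that pullbacks of regular functions on $X$ become locally constant on $E$.
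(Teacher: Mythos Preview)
Your proof is correct and follows essentially the same approach as the paper: both arguments use the identification $\Omega^i_Y(\log E)(-E) = \ker(\Omega^i_Y \to \Omega^i_E/\mathrm{tors})$ together with the observation that pullbacks of differentials from $X$ die on $E$ because $E$ maps to a point, and then deduce the factorisation via torsion-freeness of $\Omega^i_{X,\pi}$. Your version is slightly more explicit in verifying the vanishing on $E$ via local generators and in justifying that the composite agrees with the reflexivisation map, but these are minor elaborations of the same argument.
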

\begin{proof}
We have a factorisation
$\Omega^i_X \xrightarrow{\pi^{*}} \pi_{*}\Omega^i_Y \xhookrightarrow{(-)^{**}} \Omega^{[i]}_X$
of the reflexivisation. 
We will show below that the pullback map $\pi^{*}\colon \Omega^i_X\to \pi_{*}\Omega^i_Y$ factors through $\Omega^i_X\to \Omega^i_{X,\pi}\coloneqq \pi_{*}\Omega^i_Y(\log E)(-E)$.
To this end, by considering the following diagram:
\[
\begin{tikzcd}
E \arrow[r] \arrow[d] & Y \arrow[d,"\pi"] \\
\{x\} \arrow[r] & X,
\end{tikzcd}
\]
we see that the composition $\Omega^i_X \xrightarrow{\pi^*} \pi_{*}\Omega^i_Y \to \pi_{*}\Omega^i_E\to \pi_{*}(\Omega^i_E / {\rm tors}) $ is zero for $i>0$ since $\{x\}$ has dimension zero. By the exact sequence
\[
0 \to \Omega^i_{X,\pi} \to \pi_{*}\Omega^i_Y \to \pi_{*}(\Omega^i_E / {\rm tors}), 
\]
we conclude that $\Omega^i_X\to \pi_{*}\Omega^i_Y$ factors through $\Omega^i_X\to \Omega^i_{X,\pi}$.
Finally, the injectivity of $\Omega^i_{X,\pi} \xhookrightarrow{(-)^{**}} \Omega^{[i]}_X$ follows from the torsion-freeness of $\Omega^i_{X,\pi}=\pi_{*}\Omega^i_Y(\log E)(-E)$. 
\end{proof}

\subsection{Cartier operators and $k$-$F$-injectivity}

We will define $k$-$F$-injectivity using Cartier operators. For now on, we will work entirely in characteristic $p>0$.
\begin{setting} \label{setting:iso-charp}
We make the same assumptions as in Setting \ref{setting:iso} but assume further that $k$ is {of characteristic $p>0$}.
\end{setting}

Consider the complex $F_*\Omega^{\bullet}_Y(\log E)(-E)$ from Section \ref{sss:duality}. We warn the reader again that 
\[
F_*\Omega^i_Y(\log E)(-E) \not \cong (F_*\Omega^i_Y(\log E)) \otimes_{\cO_Y} \cO_Y(-E).
\]
By definition and (\ref{eq:Hara-ses}), 
we have the short exact sequences
\begin{align} 
\label{eq:dlogEE}&0 \to Z\Omega^i_Y(\log E)(-E) \to F_*\Omega^i_Y(\log E)(-E) \xrightarrow{d} B\Omega^{i+1}_Y(\log E)(-E) \to 0 \\
\label{eq:Hara-sesEE}
&0 \to B\Omega^i_Y(\log E)(-E) \to Z\Omega^i_Y(\log E)(-E) \xrightarrow{C_{-(1/p)E}} \Omega^i_Y(\log E)(-E) \to 0.
\end{align}
We also have the following map, see (\ref{eq:dual-Cartier-Hara}):
\begin{equation} \label{eq:dual-Cartier-Hare-EE}
C^{-1}_{-(1/p)E}\colon \Omega^i_Y(\log E)(-E) \to G\Omega^i_Y(\log E)(-E).
\end{equation}
\begin{definition} \label{def:DDB-iso}
With the notation of Setting \ref{setting:iso-charp}, we define the boundaries and cycles in the Deligne Du-Bois complex along the resolution $\pi$ for $i>0$ as follows:
\begin{alignat*}{5}
B\underline{\Omega}^i_{X,\pi} &\coloneqq R\pi_*B\Omega^i_Y(\log E)(-E) \quad &&\text{ with }\quad  &B{\Omega}^i_{X,\pi}&\coloneqq \pi_*B\Omega^i_Y(\log E)(-E),  \\
Z\underline{\Omega}^i_{X,\pi} &\coloneqq R\pi_*Z\Omega^i_Y(\log E)(-E) \quad &&\text{ with }\quad  &Z{\Omega}^i_{X,\pi}&\coloneqq \pi_*Z\Omega^i_Y(\log E)(-E),\\
G\underline{\Omega}^i_{X,\pi} &\coloneqq R\pi_*G\Omega^i_Y(\log E)(-E) \quad &&\text{ with }\quad  &G{\Omega}^i_{X,\pi}&\coloneqq F_{*}{\Omega}^i_{X,\pi}/{B{\Omega}^i_{X,\pi}}. \qquad\qquad\quad\ \,
\end{alignat*}
For $i=0$, we set
$B\underline{\Omega}^0_{X,\pi} \coloneqq 0$, $B\Omega^0_{X,\pi} \coloneqq 0$, $Z\underline{\Omega}^0_{X,\pi} \coloneqq \underline{\cO}_{X,\pi}$, $Z\Omega^0_{X,\pi} \coloneqq \cO_X$, $G\underline{\Omega}^0_{X,\pi}\coloneqq F_{*}\underline{\cO}_{X,\pi}$, and $G\Omega^0_{X,\pi}\coloneqq F_{*}\cO_{X}$.
\end{definition}
\noindent {Note that we have a map $G\Omega^{i}_{X,\pi} \to \mathcal{H}^0(G\underline{\Omega}^i_{X,\pi}) =\pi_{*}G\Omega^i_Y(\log E)(-E)$, but this is not isomorphism in general.}

Consider the following maps \eqref{eq:Hara-sesEE} and \eqref{eq:dual-Cartier-Hare-EE}:
\begin{align*}
    &C_{-(1/p)E}\colon  Z\Omega^i_Y(\log E)(-E) \to \Omega^i_Y(\log E)(-E),\\
    &C^{-1}_{-(1/p)E}\colon \Omega^i_Y(\log E)(-E)\to G\Omega^i_Y(\log E)(-E).
\end{align*}

\begin{definition}\label{def:Cartier operator along a resolutuon}
    With the notation of Setting \ref{setting:iso-charp}, we define the \emph{Cartier operators along $\pi$} by applying $R\pi_*$ to the above maps so that for {$i>0$} we get\footnote{Specifically, $\underline{C}_{\pi}\coloneqq R\pi_{*}C_{-(1/p)E}$, $C_{\pi}\coloneqq \mathcal{H}^0(\underline{C}_{\pi})$, and $\underline{C}^{-1}_{\pi}\coloneqq R\pi_{*}C^{-1}_{-(1/p)E}$.}:
    \[\underline{C}_{\pi} \colon Z\underline{\Omega}^i_{X,\pi} \to \underline{\Omega}^i_{X,\pi},\quad  C_{\pi} \colon Z\Omega^i_{X,\pi} \to \Omega^i_{X,\pi},\quad\text{and}\quad \underline{C}^{-1}_{\pi} \colon \underline{\Omega}^i_{X,\pi} \to G\underline{\Omega}^i_{X,\pi}. \]
{For $i=0$, we set $\underline{C}_{\pi}=\mathrm{id}_{\underline{\sO}_{X,\pi}}$,  $C_{\pi}=\mathrm{id}_{\sO_X}$, and $\underline{C}^{-1}_{\pi}= \underline{\sO}_{X,\pi} \xrightarrow{F} F_{*}\underline{\sO}_{X,\pi}$ (cf.\  \eqref{eq:Frobenius of  O_{pi}}).}\\

Further, if $C_{\pi} \colon Z\Omega^i_{X,\pi} \to \Omega^i_{X,\pi}$ is surjective, then we define $C^{-1}_{\pi}$ as the following composition
\begin{equation} \label{eq:C-1pi}
C^{-1}_{\pi} \colon \Omega^i_{X,\pi} \underset{\cong}{\xleftarrow{C_{\pi}}} \frac{Z\Omega^i_{X,\pi}}{B\Omega^i_{X,\pi}} \lhook\joinrel\longrightarrow  \frac{F_*\Omega^i_{X,\pi}}{B\Omega^i_{X,\pi}} =G\Omega^i_{X,\pi}.
\end{equation}
\end{definition}
Note that by pushing forward the short exact sequences \eqref{eq:dlogEE} and \eqref{eq:Hara-sesEE} we get the following sequences:
\begin{align} 
\label{eq:pidlogEE} &0 \to Z\Omega^i_{X, \pi} \to F_*\Omega^{i}_{X,\pi} \xrightarrow{d} {B\Omega^{i+1}_{X,\pi}\, (\subseteq F_*\Omega^{i+1}_{X,\pi})}\\
\label{eq:Cpi-iso-sequence}
&0 \to B\Omega^i_{X,\pi} \to Z\Omega^i_{X,\pi} \xrightarrow{C_{\pi}} \Omega^i_{X,\pi}.
\end{align}
The second sequence was implicitly used in \eqref{eq:C-1pi}.

The maps $C^{-1}_{\pi}$ and $\mathcal{H}^0(\underline{C}^{-1}_\pi)$ agree with each other outside of $\{x\}$. Thus, when $C_{\pi} \colon Z\Omega^i_{X,\pi} \to \Omega^i_{X,\pi}$ is surjective, we obtain the following commutative diagram:
\begin{equation} \label{eq:diagram-main-iso}
\begin{tikzcd}
{} & \Omega^i_{X,\pi} \arrow[r]\arrow[ld,hookrightarrow,"C^{-1}_{\pi}"']\arrow[d,hookrightarrow,"\mathcal{H}^0(\underline{C}^{-1}_\pi)"] & \underline{\Omega}^i_{X,\pi} \arrow[d, "\underline{C}^{-1}_\pi"] \\
G\Omega^i_{X,\pi} \arrow[r,hookrightarrow] & \mathcal{H}^0(G\underline{\Omega}^i_{X,\pi})\arrow[r] & G\underline{\Omega}^i_{X,\pi}.
\end{tikzcd}
\end{equation}

In what follows we show that if $\underline{\Omega}^i_{X,\pi}$ are concentrated in degree $0$, then so are $B\underline{\Omega}^i_{X,\pi}$,  $Z\underline{\Omega}^i_{X,\pi}$, and $G\underline{\Omega}^i_{X,\pi}$.
\begin{proposition} \label{prop:iso-k-Du-Bois-C-surjective}
With the notation of Setting \ref{setting:iso-charp}, suppose that $X$ is pre-$k$-Du Bois along $\pi$ for some integer $k \geq 0$. Then
\begin{enumerate}
    \item $B\underline{\Omega}^i_{X,\pi} = B\Omega^{i}_{X,\pi}[0]$,\\[-0.9em]
    \item $Z\underline{\Omega}^i_{X,\pi} = Z\Omega^{i}_{X,\pi}[0]$,\\[-0.9em]
    \item $G\underline{\Omega}^i_{X,\pi}  = G\Omega^{i}_{X,\pi}[0]$,\\[-0.9em]
    \item $C_{\pi} \colon Z\Omega^i_{X,\pi} \to \Omega^i_{X,\pi}$ is surjective
\end{enumerate}
for every integer $0 \leq i \leq k$.
\end{proposition}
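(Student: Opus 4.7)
The plan is to induct on $i$ and cascade degree-zero concentration through the three standard short exact sequences on $Y$ relating $B\Omega^{\bullet}_Y(\log E)(-E)$, $Z\Omega^{\bullet}_Y(\log E)(-E)$, $F_*\Omega^{\bullet}_Y(\log E)(-E)$, $\Omega^{\bullet}_Y(\log E)(-E)$, and $G\Omega^{\bullet}_Y(\log E)(-E)$, by applying $R\pi_*$ and reading off the long exact sequences of cohomology sheaves. The base case $i=0$ is immediate from the definitions: (1) and (4) hold since $B\underline{\Omega}^0_{X,\pi} = 0$ and $C_\pi$ is the identity; and since $Z\underline{\Omega}^0_{X,\pi} = \underline{\cO}_{X,\pi}$ and $G\underline{\Omega}^0_{X,\pi} = F_*\underline{\cO}_{X,\pi}$, the pre-$0$-Du Bois hypothesis $\underline{\cO}_{X,\pi} \cong \cO_X[0]$ yields (2) and (3) at once.

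For the inductive step, assume (1)--(4) hold for every $j < i$, where $1 \leq i \leq k$. First I would apply $R\pi_*$ to \eqref{eq:dlogEE} at index $i-1$ to get the exact triangle
\[
Z\underline{\Omega}^{i-1}_{X,\pi} \to F_*\underline{\Omega}^{i-1}_{X,\pi} \to B\underline{\Omega}^i_{X,\pi} \xrightarrow{+1}.
\]
The leftmost term is concentrated in degree $0$ by the inductive hypothesis (2) at level $i-1$, and the middle term is concentrated in degree $0$ by pre-$(i-1)$-Du Bois (which holds since $i-1 \leq k$), so the long exact sequence forces $B\underline{\Omega}^i_{X,\pi}$ to be concentrated in degree $0$, establishing (1) at level $i$. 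Next I would apply $R\pi_*$ to \eqref{eq:Hara-sesEE} to obtain
\[
B\underline{\Omega}^i_{X,\pi} \to Z\underline{\Omega}^i_{X,\pi} \to \underline{\Omega}^i_{X,\pi} \xrightarrow{+1},
\]
whose outer terms are now both in degree $0$ (the rightmost by pre-$i$-Du Bois), so $Z\underline{\Omega}^i_{X,\pi}$ is in degree $0$, proving (2). The same triangle produces a short exact sequence on $\mathcal{H}^0$ precisely because $\mathcal{H}^1(B\underline{\Omega}^i_{X,\pi}) = 0$, which is exactly the surjectivity of $C_\pi \colon Z\Omega^i_{X,\pi} \to \Omega^i_{X,\pi}$, giving (4). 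Applying the analogous reasoning to the tautological sequence $0 \to B\Omega^i_Y(\log E)(-E) \to F_*\Omega^i_Y(\log E)(-E) \to G\Omega^i_Y(\log E)(-E) \to 0$ then yields (3), along with the identification $\mathcal{H}^0(G\underline{\Omega}^i_{X,\pi}) = F_*\Omega^i_{X,\pi}/B\Omega^i_{X,\pi} = G\Omega^i_{X,\pi}$.

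I do not expect any substantive obstacle: the proposition reduces to a formal cascade once the induction is organised correctly. The only care required is bookkeeping, namely verifying that (1) at level $i$ is deduced before (2), (3), and (4) at the same level, and noting that the triangle used for (1) requires only pre-$(i-1)$-Du Bois rather than pre-$i$-Du Bois, which is why the induction closes cleanly within the range $0 \leq i \leq k$.
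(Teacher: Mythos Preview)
Your proposal is correct and follows essentially the same argument as the paper: an induction on $i$ that cascades degree-zero concentration through the pushforwards of the short exact sequences \eqref{eq:dlogEE} and \eqref{eq:Hara-sesEE}, then deduces (3) from the defining sequence for $G$. The only cosmetic difference is that the paper carries only (1), (2), (4) in the inductive hypothesis (since (3) is not needed to close the loop) and writes the first triangle directly with the already-known sheaves $Z\Omega^{i-1}_{X,\pi}$ and $F_*\Omega^{i-1}_{X,\pi}$ in place of their underlined versions.
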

\begin{proof}
We start by proving (1), (2), and {(4)}. These statements are clear for $i=0$. Thus by induction, we may assume that:
\begin{enumerate}
    \item $B\underline{\Omega}^{i-1}_{X,\pi} = B\Omega^{i-1}_{X,\pi}[0]$,\\[-0.9em]
    \item $Z\underline{\Omega}^{i-1}_{X,\pi} = Z\Omega^{i-1}_{X,\pi}[0]$, and\\[-0.9em]
    \item $C_{\pi} \colon Z\Omega^{i-1}_{X,\pi} \to \Omega^{i-1}_{X,\pi}$ is surjective
\end{enumerate}
in order to show (1), (2), and {(4)} for $i>0$. By pushing forward the exact sequence \eqref{eq:dlogEE}
we get the exact triangle
\[
Z\Omega^{i-1}_{X,\pi} \to F_*\Omega^{i-1}_{X,\pi} \to B\underline{\Omega}^i_{X,\pi} \xrightarrow{+1}.
\]
In particular $B\underline{\Omega}^i_{X,\pi} = B\Omega^{i}_{X,\pi}[0]$.
Subsequently, by applying $R\pi_{*}$ to \eqref{eq:Hara-sesEE},
we get
\[
B\Omega^i_{X,\pi} \to Z\underline{\Omega}^i_{X,\pi} \xrightarrow{\underline{C}_{\pi}} \Omega^i_{X,\pi} \xrightarrow{+1}.
\]
Therefore, $Z\underline{\Omega}^{i}_{X,\pi} = Z\Omega^{i}_{X,\pi}[0]$, and $C_\pi \colon Z\Omega^{i}_{X,\pi} \to \Omega^{i}_{X,\pi}$ is surjective.

Finally, by (1) and the assumption $\underline{\Omega}^i_{X,\pi} = \Omega^i_{X,\pi}[0]$,
we obtain (3) as follows:
\[G\underline{\Omega}^i_{X,\pi}  = \mathrm{Cone}(B\underline{\Omega}^i_{X,\pi}\to F_{*}\underline{\Omega}^i_{X,\pi})=\frac{F_{*}\Omega^i_{X,\pi}}
{B\Omega^i_{X,\pi}}[0]=G\Omega^{i}_{X,\pi}[0]. \qedhere \]
\end{proof}

We often compare the above definitions with their reflexified versions.
\begin{proposition} \label{prop:pi-iso-reflexive-agree-basic} With the notation of Setting \ref{setting:iso-charp}, suppose that $\Omega^i_{X, \pi} = \Omega^{[i]}_X$ for every integer $0 \leq i \leq k$. Then the exact sequence $0\to B\Omega^i_{X,\pi} \to Z\Omega^i_{X,\pi} \xrightarrow{C_{\pi}} \Omega^i_{X,\pi}$ agrees with $0\to B\Omega^{[i]}_X \to Z\Omega^{[i]}_X \xrightarrow{C} \Omega^{[i]}_{X}$.
\end{proposition}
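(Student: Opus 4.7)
The plan is to exploit the fact that $X$ has only an isolated singularity at $x$, so all of the relevant sheaves agree with their classical smooth-case counterparts on $X\setminus\{x\}$, and we can extend identifications across the singular point using reflexivity or torsion-freeness. The case $i=0$ is immediate from the definitions since both sequences reduce to $0\to 0\to \sO_X\xrightarrow{\mathrm{id}}\sO_X$, so I focus on $i>0$.

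First I would identify the cycles. By hypothesis $F_*\Omega^i_{X,\pi}=F_*\Omega^{[i]}_X$, so $Z\Omega^i_{X,\pi}$ and $Z\Omega^{[i]}_X$ are both subsheaves of the same ambient sheaf and can be compared directly. For the inclusion $Z\Omega^{[i]}_X\subseteq Z\Omega^i_{X,\pi}$, I would lift a local section $s$ of $F_*\Omega^{[i]}_X$ with $ds=0$ in $\Omega^{[i+1]}_X$ to a section $\widetilde{s}$ of $F_*\Omega^i_Y(\log E)(-E)$ on the resolution; since $d\widetilde{s}$ vanishes on $Y\setminus E$ and the target $F_*\Omega^{i+1}_Y(\log E)(-E)$ is locally free hence torsion-free, one concludes $d\widetilde{s}=0$ on all of $Y$, so $\widetilde{s}\in Z\Omega^i_Y(\log E)(-E)$. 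For the reverse inclusion, a section $s\in Z\Omega^i_{X,\pi}$ satisfies $ds=0$ on the regular locus; since $\Omega^{[i+1]}_X$ is reflexive and hence determined by its restriction to $X\setminus\{x\}$, this forces $ds=0$ as a section of $\Omega^{[i+1]}_X$.

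Next I would match up the Cartier operators. Both $C_\pi$ and $C$ are constructed by pushing forward the classical Cartier operator from the regular locus of $X$, and they take values in the torsion-free (reflexive) sheaf $\Omega^{[i]}_X=\Omega^i_{X,\pi}$; uniqueness of extension from a dense open into a torsion-free sheaf forces them to coincide on their common source. Finally, by left-exactness of $\pi_*$ applied to the short exact sequence \eqref{eq:Hara-sesEE}, one has
\[
B\Omega^i_{X,\pi}=\ker\bigl(C_\pi\colon Z\Omega^i_{X,\pi}\to \Omega^i_{X,\pi}\bigr),
\]
which coincides with $\ker(C\colon Z\Omega^{[i]}_X\to \Omega^{[i]}_X)=B\Omega^{[i]}_X$ by the previous two steps, completing the identification of the entire sequence.

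I do not anticipate a substantial obstacle: the argument is essentially bookkeeping that tracks which torsion-free or reflexive sheaf contains each object. The most delicate point will be the first inclusion above, where it is essential that $d$ takes values in a sheaf that is torsion-free \emph{on $Y$} itself and not merely after pushforward, which is why the logarithmic complex $F_*\Omega^\bullet_Y(\log E)(-E)$ of locally free sheaves is the natural framework.
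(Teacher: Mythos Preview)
Your proof is correct and follows essentially the same route as the paper: both exploit that the sheaves in question agree on $X\setminus\{x\}$ and are torsion-free or reflexive, so they are determined by their restriction to the regular locus. The paper streamlines your execution by avoiding the lift to $Y$ entirely: rather than prove the two inclusions for $Z$ by hand, it observes via \eqref{eq:pidlogEE} that $Z\Omega^i_{X,\pi}$ is the kernel of a map from the reflexive sheaf $F_*\Omega^i_{X,\pi}$ to the torsion-free sheaf $F_*\Omega^{i+1}_{X,\pi}$, hence is itself reflexive, and therefore coincides with $Z\Omega^{[i]}_X$ since both are reflexive and agree on the big open; the same trick then identifies $B\Omega^i_{X,\pi}$ with $B\Omega^{[i]}_X$.
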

\begin{proof}
Recall that the exact sequence $0\to B\Omega^i_{X,\pi} \to Z\Omega^i_{X,\pi} \xrightarrow{C_{\pi}} \Omega^i_{X,\pi}$ coincides with 
$0\to B\Omega^{[i]}_X \to Z\Omega^{[i]}_X \xrightarrow{C} \Omega^{[i]}_{X}$ outside of $\{x\}$.
Moreover, $Z\Omega^{[i]}_X$ and $B\Omega^{[i]}_X$ are reflexive by definition.
Since two morphisms between torsion-free coherent sheaves on an integral scheme coincide if they agree on the generic point, it suffices to show that $Z\Omega^{i}_{X,\pi}$ and $B\Omega^{i}_{X,\pi}$ are reflexive.
Fix $0\leq i\leq k$.
Consider the following sequence \eqref{eq:pidlogEE}:
\[
0 \to Z\Omega^i_{X, \pi} \to F_*\Omega^{i}_{X,\pi} \xrightarrow{d} F_*\Omega^{i+1}_{X,\pi}.
\]
Since $F_*\Omega^{i}_{X,\pi}$ is reflexive and $F_*\Omega^{i+1}_{X,\pi}$ is torsion-free, we get that $Z\Omega^i_{X,\pi}$ is reflexive.
Next, consider the following sequence \eqref{eq:Cpi-iso-sequence}:
\[
0 \to B\Omega^i_{X,\pi} \to Z\Omega^i_{X,\pi} \xrightarrow{C_{\pi}} \Omega^i_{X,\pi}.
\]
Since $Z\Omega^i_{X,\pi}$ and $\Omega^i_{X,\pi}$ are reflexive, $B\Omega^i_{X, \pi}$ is reflexive too, concluding the proof.
\end{proof}

\subsubsection{$k$-$F$-injective singularities}
\label{ss:definition-pre-k-iso-and-iterated}
\begin{definition} \label{def:prekFinj-iso}
With the notation of Setting \ref{setting:iso-charp} pick an integer $k \geq 0$. We say that $X$ is \emph{pre-$k$-F-injective} if for every integer $0 \leq i \leq k$:
\begin{enumerate}
\item $C_{\pi} \colon Z\Omega^i_{X,\pi} \to \Omega^i_{X,\pi}$ is surjective, and\\[-0.9em]
\item $C_\pi^{-1} \colon H^j_\m(\Omega^{i}_{X,\pi}) \to H^j_\m(G\Omega^{i}_{X,\pi})$ is injective for all $0 \leq j \leq d - i$.
\end{enumerate}
\end{definition}

\begin{proposition}
    With the notation of Setting \ref{setting:iso-charp},
    the pre-$k$-$F$-injectivity of $X$ is independent of the choice of $\pi$. 
\end{proposition}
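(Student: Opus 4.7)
The plan is to show that every piece of data used in Definition \ref{def:prekFinj-iso} --- namely the sheaves $\Omega^i_{X,\pi}$, $Z\Omega^i_{X,\pi}$, $B\Omega^i_{X,\pi}$, $G\Omega^i_{X,\pi}$, together with the maps $C_\pi$ and $C^{-1}_\pi$ --- is intrinsic to $X$, i.e.\ independent of the resolution $\pi$. The mechanism will be a single density/torsion-freeness principle: an $\cO_X$-linear map between two fixed coherent $\cO_X$-modules with torsion-free target is determined by its restriction to any dense open of the integral scheme $X$. Since $X \setminus \{x\}$ is smooth and dense, and since all our $\pi$-constructions restrict there to the classical de Rham / Cartier data, this principle will let me transfer intrinsicness from the smooth case up to $X$.

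First, I would record that $\Omega^i_{X,\pi} = \Omega^i_{X,\eh}$ by Lemma \ref{lem:independence of resolutions,i>0,iso}(1), so $F_*\Omega^i_{X,\pi}$ is intrinsic and (being the pushforward of a locally free sheaf on $Y$ under $F \circ \pi$) torsion-free. Next, I would verify that the pushed-forward de Rham differential $d \colon F_*\Omega^i_{X,\pi} \to F_*\Omega^{i+1}_{X,\pi}$ is $\cO_X$-linear --- this is the one place that needs a line, and it follows because $d(a^p \omega) = a^p d\omega$ in characteristic $p$, so that $d$ is already $\cO_Y$-linear upstairs. Since $d$ restricts to the usual de Rham differential on the smooth locus and the target is torsion-free, $d$ is intrinsic, and \eqref{eq:pidlogEE} then identifies $Z\Omega^i_{X,\pi} = \ker d$ as an intrinsic subsheaf.

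Repeating the same trick for the Cartier map $C_\pi \colon Z\Omega^i_{X,\pi} \to \Omega^i_{X,\pi}$ --- whose target is intrinsic and torsion-free and whose restriction to the smooth locus is the classical Cartier operator --- shows that $C_\pi$ is intrinsic. The short exact sequence \eqref{eq:Cpi-iso-sequence} then makes $B\Omega^i_{X,\pi} = \ker C_\pi$ intrinsic; consequently the quotient $G\Omega^i_{X,\pi} = F_*\Omega^i_{X,\pi} / B\Omega^i_{X,\pi}$ and the composite \eqref{eq:C-1pi} defining $C^{-1}_\pi$ (when $C_\pi$ is surjective) are intrinsic as well. Since both condition (1) of Definition \ref{def:prekFinj-iso} (surjectivity of $C_\pi$) and condition (2) (injectivity of $H^j_\m(C^{-1}_\pi)$ for $0 \leq j \leq d-i$) are now statements about intrinsic data, they are manifestly independent of $\pi$.

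I do not anticipate any genuine obstacle: the one substantive input, intrinsicness of $\Omega^i_{X,\pi}$, is already packaged in Lemma \ref{lem:independence of resolutions,i>0,iso}(1), and the rest of the argument is a mechanical iteration of the density/torsion-freeness principle. The only real check is the $\cO_X$-linearity of the pushed-forward differential $d$ mentioned above.
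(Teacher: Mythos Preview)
Your proposal is correct and follows essentially the same route as the paper's own proof: both invoke Lemma \ref{lem:independence of resolutions,i>0,iso}(1) to make $\Omega^i_{X,\pi}$ intrinsic, then use the principle that a map into a torsion-free sheaf on an integral scheme is determined by its restriction to a dense open (equivalently, to the generic point) to propagate intrinsicness through $d$, $Z\Omega^i_{X,\pi}$, $C_\pi$, $B\Omega^i_{X,\pi}$, $G\Omega^i_{X,\pi}$, and $C^{-1}_\pi$ in turn. Your explicit check that the pushed-forward $d$ is $\cO_X$-linear is a small elaboration the paper leaves implicit, but otherwise the arguments are the same.
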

\begin{proof}
    By Lemma \ref{lem:independence of resolutions,i>0,iso}(1), $\Omega^i_{X,\pi}$ does not depend on $\pi$, and thus $d\colon F_{*}\Omega^i_{X,\pi}\to F_{*}\Omega^{i+1}_{X,\pi}$ does not depend on $\pi$ either. 
    Recall that two morphisms between torsion-free coherent sheaves on an integral scheme coincide if they agree on the generic point.
    Thus, \[ Z\Omega^i_{X,\pi}=\ker(F_{*}\Omega^i_{X,\pi}\xrightarrow{d} F_{*}\Omega^{i+1}_{X,\pi}) \] does not depend on the choice of $\pi$. 
    
    Similarly, $C\colon Z\Omega^i_{X,\pi}\to \Omega^i_{X,\pi}$, $B\Omega^i_{X,\pi}=\ker(Z\Omega^i_{X,\pi}\xrightarrow{C} \Omega^i_{X,\pi})$, $G\Omega^{i}_{X,\pi}$, and
    $C^{-1}_\pi \colon \Omega^{i}_{X,\pi}\to G\Omega^{i}_{X,\pi}$ do not depend on the choice of $\pi$ either.
\end{proof}

In many situations $\Omega^i_{X,\pi}$ agrees with $\Omega^{[i]}_X$; for example when we reduce a rational singularity modulo $p \gg 0$ \cite[Corollary 1.12]{KS21}. In this case, pre-$F$-injectivity can be defined through the reflexified Cartier operator.

\begin{proposition} \label{prop:pi-iso-reflexive-agree} With the notation of Setting \ref{setting:iso-charp}, suppose that $\Omega^i_{X, \pi} = \Omega^{[i]}_X$ for every integer $0 \leq i \leq k$. Then $X$ is pre-$k$-$F$-injective if and only if for every integer $0 \leq i \leq k$:
\begin{enumerate}
\item $C \colon Z\Omega^{[i]}_{X} \to \Omega^{[i]}_{X}$ is surjective, and\\[-0.9em]
\item $C^{-1} \colon H^j_\m(\Omega^{[i]}_{X}) \to H^j_\m(G\Omega^{[i]}_{X})$ is injective for all integers $0 \leq j \leq d - i$.
\end{enumerate}
\end{proposition}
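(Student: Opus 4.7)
The plan is to deduce this proposition directly from Proposition \ref{prop:pi-iso-reflexive-agree-basic}, which does essentially all of the work.

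First, I would invoke Proposition \ref{prop:pi-iso-reflexive-agree-basic}, whose hypothesis is exactly our hypothesis $\Omega^i_{X,\pi} = \Omega^{[i]}_X$ for $0 \leq i \leq k$, to identify the two four-term sequences
\[
0 \to B\Omega^i_{X,\pi} \to Z\Omega^i_{X,\pi} \xrightarrow{C_\pi} \Omega^i_{X,\pi}
\quad \text{and} \quad
0 \to B\Omega^{[i]}_X \to Z\Omega^{[i]}_X \xrightarrow{C} \Omega^{[i]}_X.
\]
In particular this gives $Z\Omega^i_{X,\pi} = Z\Omega^{[i]}_X$, $B\Omega^i_{X,\pi} = B\Omega^{[i]}_X$, and the Cartier operators $C_\pi$ and $C$ agree as maps of sheaves.

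Next, I would propagate these identifications to the $G$-sheaves and to the inverse Cartier operators. By definition,
\[
G\Omega^i_{X,\pi} = \frac{F_*\Omega^i_{X,\pi}}{B\Omega^i_{X,\pi}} \quad \text{and} \quad G\Omega^{[i]}_X = \frac{F_*\Omega^{[i]}_X}{B\Omega^{[i]}_X},
\]
so the identifications $\Omega^i_{X,\pi} = \Omega^{[i]}_X$ and $B\Omega^i_{X,\pi} = B\Omega^{[i]}_X$ yield $G\Omega^i_{X,\pi} = G\Omega^{[i]}_X$. Unwinding the formula \eqref{eq:C-1pi} for $C^{-1}_\pi$ shows that it coincides with the corresponding composition defining $C^{-1}$, and each is well defined precisely when the surjectivity condition on $C_\pi$ (equivalently on $C$) holds.

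Finally, with all sheaves and all relevant maps identified, conditions (1) and (2) of Definition \ref{def:prekFinj-iso} translate verbatim into conditions (1) and (2) of the proposition: for (2) the local cohomology groups $H^j_\m(\Omega^i_{X,\pi}) = H^j_\m(\Omega^{[i]}_X)$ and $H^j_\m(G\Omega^i_{X,\pi}) = H^j_\m(G\Omega^{[i]}_X)$ are literally the same, and the two maps between them coincide. There is no serious obstacle here; the only non-formal content is the reflexivity/torsion-freeness argument already carried out in Proposition \ref{prop:pi-iso-reflexive-agree-basic}, and the remainder of the proof is just unwinding definitions.
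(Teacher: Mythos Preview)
Your proof is correct and follows exactly the paper's approach: the paper's proof is the single line ``This is automatic by Proposition \ref{prop:pi-iso-reflexive-agree-basic},'' and you have simply unwound in detail what that automation consists of.
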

\noindent We remind the reader that $G\Omega^{[i]}_X = \frac{F_*\Omega^{[i]}_{X}}{B\Omega^{[i]}_{X}}$ is not necessarily equal to the reflexivisation of $G\Omega^i_X$ (cf.\ Proposition \ref{prop:when-omega-reflexive-basic-properties}).
\begin{proof}
This is automatic by Proposition \ref{prop:pi-iso-reflexive-agree-basic}.
\end{proof}

\begin{corollary}\label{cor:k-F-inj of reflexvie case(isorated)}
With the notation of Setting \ref{setting:iso-charp}, suppose that
$\Omega^i_{X}$ is reflexive for every integer $0 \leq i \leq k$. Then $X$ is pre-$k$-$F$-injective if and only if it is $k$-$F$-injective in the sense of Definition \ref{def:intro-kFinj}. 
\end{corollary}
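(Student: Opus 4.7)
The plan is to combine the three results that have already been established earlier in the section, so essentially the corollary is a bookkeeping assembly. First, I would invoke Proposition \ref{prop:omega-pullback-reflexive}: since $\Omega^i_X$ is reflexive for $0 \leq i \leq k$, we get
\[
\Omega^i_{X,\pi} = \pi_*\Omega^i_Y(\log E)(-E) = \Omega^i_X = \Omega^{[i]}_X.
\]
This lets me translate the definition of pre-$k$-$F$-injectivity, which a priori depends on $\pi$, into a statement purely in terms of reflexivised Cartier data on $X$.

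Next, with this identification in hand, I would apply Proposition \ref{prop:pi-iso-reflexive-agree} (which needs exactly the hypothesis $\Omega^i_{X,\pi} = \Omega^{[i]}_X$) to rewrite pre-$k$-$F$-injectivity as the conjunction of:
\begin{enumerate}
    \item $C \colon Z\Omega^{[i]}_X \to \Omega^{[i]}_X$ is surjective, and
    \item $C^{-1} \colon H^j_\m(\Omega^{[i]}_X) \to H^j_\m(G\Omega^{[i]}_X)$ is injective for all $0 \leq j \leq d-i$,
\end{enumerate}
for every $0 \leq i \leq k$. The second condition is exactly the definition of $k$-$F$-injectivity given in Definition \ref{def:intro-kFinj}, so one direction is immediate.

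For the converse, I need to verify that condition (1) is automatic under the reflexivity hypothesis, so that $k$-$F$-injectivity (which is condition (2) only) implies pre-$k$-$F$-injectivity. This is precisely part (4) of Proposition \ref{prop:when-omega-reflexive-basic-properties}: when $\Omega^i_X$ is reflexive for all $0 \leq i \leq k$, the map $C \colon Z\Omega^{[i]}_X \to \Omega^{[i]}_X$ is surjective. This closes the loop and finishes the proof. There is no genuine obstacle here — the real work is carried by Propositions \ref{prop:omega-pullback-reflexive}, \ref{prop:pi-iso-reflexive-agree}, and \ref{prop:when-omega-reflexive-basic-properties}, and the only thing to check is that the map $C^{-1}$ appearing in Definition \ref{def:intro-kFinj} (built via the splitting from Proposition \ref{prop:when-omega-reflexive-basic-properties}(4)) agrees with the map $C^{-1}_\pi$ from \eqref{eq:C-1pi}, which follows since both are constructed as the inverse of the same surjection $C \colon Z\Omega^{[i]}_X \to \Omega^{[i]}_X$ followed by the natural inclusion into $G\Omega^{[i]}_X$.
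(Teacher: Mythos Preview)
Your proposal is correct and follows essentially the same approach as the paper's proof: use Proposition \ref{prop:omega-pullback-reflexive} to get $\Omega^i_X \cong \Omega^i_{X,\pi} \cong \Omega^{[i]}_X$, then conclude by Proposition \ref{prop:pi-iso-reflexive-agree} and Proposition \ref{prop:when-omega-reflexive-basic-properties}. One tiny quibble: Proposition \ref{prop:omega-pullback-reflexive} is stated for $i>0$ only, and for $i=0$ the identity $\Omega^0_{X,\pi}=\cO_X$ holds by definition (see Definition \ref{def:Du Bois Complex(isolated)}), not via that proposition; but this is cosmetic and the argument is fine.
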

\begin{proof}
By Proposition \ref{prop:omega-pullback-reflexive}, we have the following natural factorisation for $i>0$:
\[
\Omega^i_X \to \Omega^i_{X, \pi} \hookrightarrow \Omega^{[i]}_X.
\]
Therefore, $\Omega^i_X \cong  \Omega^i_{X, \pi} \cong \Omega^{[i]}_X$ for $0 < i\leq k$ by assumption.
Moreover, the isomorphism $\sO_X\cong \sO_{X,\pi}$ is always valid.
Now, we can conclude immediately by Proposition \ref{prop:pi-iso-reflexive-agree} and Proposition \ref{prop:when-omega-reflexive-basic-properties}.
\end{proof}

\subsection{Higher Cartier operators and $k$-$F$-injectivity}

We generalise the above definitions to the case of higher Cartier operators as that will be needed in the proof of the main theorem.

Consider the complex $F^n_*\Omega^{\bullet}_Y(\log E)(-E)$ constructed by taking $\Delta=-\frac{1}{p^n}E$ in \eqref{eq:logDeltadeRhamHigher}. By \eqref{eq:Hara-ses(iterated)}, 
we have the short exact sequence
\begin{equation} 
\label{eq:Hara-sesEE-higher}
0 \to B_n\Omega^i_Y(\log E)(-E) \to Z_n\Omega^i_Y(\log E)(-E) \xrightarrow{\tiny C_{n,-(1/p^n)E}} \Omega^i_Y(\log E)(-E) \to 0.
\end{equation}
We also have the following map, see (\ref{eq:dual-Cartier-Hara-higher}):
\begin{equation} \label{eq:dual-Cartier-Hare-EE-higher}
C^{-1}_{-(1/p^n)E}\colon \Omega^i_Y(\log E)(-E) \to G_n\Omega^i_Y(\log E)(-E).
\end{equation}
\vspace{-0.8em}
\begin{definition} \label{def:DDB-iso-higher}
With the notation of Setting \ref{setting:iso-charp}, define for $i>0$:
\begin{alignat*}{5}
B_n\underline{\Omega}^i_{X,\pi} &\coloneqq R\pi_*B_n\Omega^i_Y(\log E)(-E) \quad &&\text{ with }\quad  &B_n{\Omega}^i_{X,\pi}&\coloneqq \pi_*B_n\Omega^i_Y(\log E)(-E),  \\
Z_n\underline{\Omega}^i_{X,\pi} &\coloneqq R\pi_*Z_n\Omega^i_Y(\log E)(-E) \quad &&\text{ with }\quad  &Z_n{\Omega}^i_{X,\pi}&\coloneqq \pi_*Z_n\Omega^i_Y(\log E)(-E), \\
G_n\underline{\Omega}^i_{X,\pi} &\coloneqq R\pi_*G_n\Omega^i_Y(\log E)(-E) \quad &&\text{ with }\quad  &G_n{\Omega}^i_{X,\pi}&\coloneqq {F^n_{*}{\Omega}^i_{X,\pi}}/{B_n{\Omega}^i_{X,\pi}}.  \qquad\qquad\quad\
\end{alignat*}
For $i=0$, we set
$B_n\underline{\Omega}^0_{X,\pi} \coloneqq 0$, $B_n\Omega^0_{X,\pi} \coloneqq 0$, $Z_n\underline{\Omega}^0_{X,\pi} \coloneqq \underline{\cO}_{X,\pi}$, $Z_n\Omega^0_{X,\pi} \coloneqq \cO_X$, {$G_n\underline{\Omega}^0_{X,\pi} \coloneqq F_{*}\underline{\cO}_{X,\pi}$, and $G_n\Omega^0_{X,\pi} \coloneqq F_{*}\cO_X$.}
\end{definition}

\noindent Consider the following maps \eqref{eq:Hara-sesEE-higher} and \eqref{eq:dual-Cartier-Hare-EE-higher}:
\begin{align*}
    &C_{-(1/p^n)E}\colon  Z_n\Omega^i_Y(\log E)(-E) \to \Omega^i_Y(\log E)(-E) \,\,\text{and}\,\,\\
    &C^{-1}_{-(1/p^n)E}\colon \Omega^i_Y(\log E)(-E)\to G_n\Omega^i_Y(\log E)(-E).
\end{align*}

\begin{definition}\label{def:Cartier operator along a resolutuon-higher}
    With the notation with Setting \ref{setting:iso-charp} we define the \emph{higher Cartier operators along $\pi$} by applying $R\pi_*$ to the above maps so that for $i>0$ we get\footnote{Specifically, $\underline{C}_{n,\pi}\coloneqq R\pi_{*}C_{-(1/p^n)E}$, $C_{n,\pi}\coloneqq \mathcal{H}^0(\underline{C}_{n,\pi})$, and $\underline{C}^{-1}_{n,\pi}\coloneqq R\pi_{*}C^{-1}_{-(1/p^n)E}$.}:
    \[\underline{C}_{n,\pi} \colon Z_n\underline{\Omega}^i_{X,\pi} \to \underline{\Omega}^i_{X,\pi}, \quad
C_{n,\pi} \colon Z_n\Omega^i_{X,\pi} \to \Omega^i_{X,\pi}, \quad \text{ and }\quad
\underline{C}^{-1}_{n,\pi} \colon \underline{\Omega}^i_{X,\pi} \to G_n\underline{\Omega}^i_{X,\pi}.\]
For $i=0$, set $\underline{C}_{n,\pi}=\mathrm{id}_{\underline{\sO}_{X,\pi}}$, $C_{n,\pi}=\mathrm{id}_{\sO_X}$,
and $\underline{C}^{-1}_{n,\pi}=F^n\colon \underline{\sO}_{X,\pi} \to F^n_{*}\underline{\sO}_{X,\pi}$ (\eqref{eq:Frobenius of  O_{pi}}).
\end{definition}

Note that by pushing forward the short exact sequence \eqref{eq:Hara-sesEE-higher} we get:
\begin{equation}
0 \to B_n\Omega^i_{X,\pi} \to Z_n\Omega^i_{X,\pi} \xrightarrow{C_{n,\pi}} \Omega^i_{X,\pi}.
\end{equation}

\begin{remark}
An analogous construction to the one above yields maps:
\[
C_{n,m,\pi} \colon Z_{n}\Omega^i_{X,\pi} \to Z_{m}\Omega^i_{X,\pi}
\]
for all integers $n \geq m > 0$.
\end{remark}

\subsubsection{$k$-$F$-injectivity for iterated Cartier operators}
As in the case of $F$-injectivity and the Frobenius morphism, we would like to iterate the formation of $C^{-1}$ in Definition \ref{def:prekFinj-iso}. To this end, we first study assumption (1) therein.

\begin{lemma}\label{lem:surjectivity of iterated Cartier operator}
    With the notation of Setting \ref{setting:iso-charp}, the following are equivalent:
    \begin{enumerate}
        \item $C_{\pi}\colon Z\Omega^i_{X,\pi} \to \Omega^i_{X,\pi}$ is surjective.
        \item $C_{n,n-1,\pi}\colon Z_n\Omega^i_{X,\pi} \to Z_{n-1}\Omega^i_{X,\pi}$ is surjective for every integer $n>0$.
        \item $C_{n,\pi}\colon Z_n\Omega^i_{X,\pi} \to \Omega^i_{X,\pi}$ is surjective for every integer $n>0$.
    \end{enumerate}
\end{lemma}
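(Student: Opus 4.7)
My plan is to prove the easy implications $(2) \Rightarrow (3)$ and $(3) \Rightarrow (1)$ directly, and then focus on the main step $(1) \Rightarrow (2)$, which together close the cycle of equivalences.

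For $(3) \Rightarrow (1)$, I would specialise to $n = 1$: since $Z_1\Omega^i_{X,\pi} = Z\Omega^i_{X,\pi}$ and $C_{1,\pi} = C_\pi$, surjectivity of $C_{1,\pi}$ is exactly that of $C_\pi$. For $(2) \Rightarrow (3)$, the iterated Cartier factors on the smooth $Y$ as $C_n = C_{n-1} \circ C_{n,n-1}$ as maps $Z_n\Omega^i_Y(\log E)(-E) \to \Omega^i_Y(\log E)(-E)$. Applying $\pi_*$ gives $C_{n,\pi} = C_{n-1,\pi} \circ C_{n,n-1,\pi}$, and a short induction on $n$ (with base case $C_{1,\pi} = C_\pi$, coming from the $n=1$ instance of (2) since $C_{1,0,\pi} = C_\pi$) shows $C_{n,\pi}$ is surjective for every $n$.

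The main step is $(1) \Rightarrow (2)$. Here I would first establish the short exact sequence on the smooth $Y$
\begin{equation*}
0 \to F^{n-1}_*B\Omega^i_Y(\log E)(-E) \to Z_n\Omega^i_Y(\log E)(-E) \xrightarrow{C_{n,n-1}} Z_{n-1}\Omega^i_Y(\log E)(-E) \to 0,
\end{equation*}
analogous to the sequence for $B_n$ used in the proof of Lemma~\ref{lemma:Kawakami-vanishing} (see \cite[(5.6.3)]{KTTWYY1}). This follows from the recursive description $Z_n = \{\alpha \in F^{n-1}_*Z\Omega^i_Y(\log E)(-E) : C(\alpha) \in Z_{n-1}\Omega^i_Y(\log E)(-E)\}$, using that the outermost Cartier $C$ is surjective on cycles on smooth $Y$ and vanishes precisely on the boundaries, so the kernel of $C_{n,n-1}$ is $F^{n-1}_*B$.

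I would then compare this sequence with the $F^{n-1}_*$-pushforward of the single Cartier sequence (all with the $(\log E)(-E)$ twist) via the morphism of short exact sequences given by the identity on kernels and the inclusions $Z_n \hookrightarrow F^{n-1}_*Z\Omega^i_Y(\log E)(-E)$ and $Z_{n-1} \hookrightarrow F^{n-1}_*\Omega^i_Y(\log E)(-E)$ on the right. After pushing both sequences forward by $\pi_*$ and using $R^i\pi_* F^{n-1}_* = F^{n-1}_* R^i\pi_*$ (from absolute Frobenius being compatible with $\pi$ and being affine), naturality of the connecting homomorphism yields a commutative diagram showing that the boundary map $\partial_n : Z_{n-1}\Omega^i_{X,\pi} \to R^1\pi_*F^{n-1}_*B\Omega^i_Y(\log E)(-E)$ for $C_{n,n-1,\pi}$ factors as $Z_{n-1}\Omega^i_{X,\pi} \hookrightarrow F^{n-1}_*\Omega^i_{X,\pi} \xrightarrow{F^{n-1}_*\partial_1} F^{n-1}_*R^1\pi_*B\Omega^i_Y(\log E)(-E)$, where $\partial_1$ is the boundary map of the single Cartier sequence. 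Condition (1) says $\partial_1 = 0$, hence $\partial_n = 0$, giving surjectivity of $C_{n,n-1,\pi}$.

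The hardest part will be establishing the short exact sequence on $Y$ and the commutativity of the comparison diagram; both amount to careful bookkeeping with the twisted iterated Hara Cartier structure, but pose no conceptual difficulty once the identifications (in particular $Z^{(n)} = F^{n-1}_*Z$ and $\ker(C|_{Z_n}) = F^{n-1}_*B$ with the correct $(\log E)(-E)$ twists) are made.
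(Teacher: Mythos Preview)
Your argument is correct. For the main implication $(1)\Rightarrow(2)$, the paper pushes forward the pullback square
\[
\begin{tikzcd}
Z_n\Omega^i_Y(\log E)(-E)\arrow[r,hook]\arrow[d,"C_{n,n-1}"] & F_*Z_{n-1}\Omega^i_Y(\log E)(-E)\arrow[d,"F_*C_{n-1,n-2}"]\\
Z_{n-1}\Omega^i_Y(\log E)(-E)\arrow[r,hook] & F_*Z_{n-2}\Omega^i_Y(\log E)(-E)
\end{tikzcd}
\]
along $\pi_*$ (which preserves pullbacks, being left exact) and argues by induction on $n$: surjectivity of the right vertical arrow implies that of the left. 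Your map of short exact sequences with common kernel $F^{n-1}_*B$ is equivalently the statement that the square with $Z_n\hookrightarrow F^{n-1}_*Z$ and $Z_{n-1}\hookrightarrow F^{n-1}_*\Omega^i$ is a pullback; after $\pi_*$ the bottom map becomes $F^{n-1}_*C_\pi$, surjective directly from (1), so your connecting-map computation bypasses the induction entirely. What you gain is a one-step argument; what the paper's version buys is that it never unwinds $Z_n$ all the way down to $F^{n-1}_*Z$, working one level at a time, which is slightly more economical on the sheaf side. A minor terminological point: the single Cartier realising $C_{n,n-1}$ is $F^{n-1}_*C$ restricted to $Z_n$ (the \emph{innermost} factor in $C_n=C\circ F_*C\circ\cdots\circ F^{n-1}_*C$), not the outermost one; your key identification $\ker(C_{n,n-1})=F^{n-1}_*B$ is nonetheless correct.
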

\begin{proof}
    We only show that (1) implies (2) as other implications are easy.
    By pushing forward the pullback diagram
    \[
    \begin{tikzcd}[column sep = huge]
Z_n\Omega^i_Y(\log E)(-E)\arrow[rd, phantom, "\usebox\pullback" , very near start, yshift=-0.3em, xshift=-0.6em, color=black] \arrow[d,"C_{n,n-1}"]\arrow[r,hookrightarrow] & F_* Z_{n-1}\Omega^i_Y(\log E)(-E)\arrow[d,"C_{n-1,n-2}"]\\
Z_{n-1}\Omega^i_Y(\log E)(-E)   \arrow[r,hookrightarrow] & F_{*}Z_{n-2}\Omega^i_Y(\log E)(-E),  
\end{tikzcd}
\]
we obtain the pullback diagram (here we are using that $\pi_*$ is left exact)
\[
    \begin{tikzcd}[column sep = huge]
Z_n\Omega^i_{X,\pi}\arrow[rd, phantom, "\usebox\pullback" , very near start, yshift=-0.3em, xshift=-0.6em, color=black] \arrow[d,"C_{n,n-1,\pi}"]\arrow[r,hookrightarrow] & F_* Z_{n-1}\Omega^i_{X,\pi}\arrow[d,"C_{n-1,n-2,\pi}"]\\
Z_{n-1}\Omega^i_{X,\pi}   \arrow[r,hookrightarrow] & F_{*}Z_{n-2}\Omega^i_{X,\pi}. 
\end{tikzcd}
\]
Then we obtain the surjectivity of $C_{n,n-1,\pi}\colon Z_{n}\Omega^i_{X,\pi}\to Z_{n-1}\Omega^i_{X,\pi}$ from that of
\[C_{n-1,n-2,\pi}\colon Z_{n-1}\Omega^i_{X,\pi}\to Z_{n-2}\Omega^i_{X,\pi} \]
which can be assumed by induction.
\end{proof}

\begin{remark} \label{rem:surjectivity of iterated Cartier operator}
Similarly, we have the following pullback diagram
\begin{equation}\label{diagram:BZpullback}
    \begin{tikzcd}[column sep = huge]
B_n\Omega^i_{X,\pi}\arrow[rd, phantom, "\usebox\pullback" , very near start, yshift=-0.3em, xshift=-0.6em, color=black] \arrow[d,"C_{n,n-1,\pi}"]\arrow[r,hookrightarrow] & Z_{n}\Omega^i_{X,\pi}\arrow[d,twoheadrightarrow,"C_{n,n-1,\pi}"]\\
B_{n-1}\Omega^i_{X,\pi}   \arrow[r,hookrightarrow] & Z_{n-1}\Omega^i_{X,\pi},  
\end{tikzcd}
\end{equation}
and thus $C_{n,n-1,\pi}\colon B_n\Omega^i_{X,\pi} \to B_{n-1}\Omega^i_{X,\pi}$ is surjective for every integer $n>0$ if the assumptions of Lemma \ref{lem:surjectivity of iterated Cartier operator} are satisfied.
\end{remark}

In what follows, we construct the $n$-composed inverse Cartier operator $C^{-1}_{n,\pi}\colon \Omega^i_{X,\pi}\to G_{n}\Omega^i_{X,\pi}$ assuming that $C_{\pi}\colon Z\Omega^i_{X,\pi} \to \Omega^i_{X,\pi}$ is surjective.
\begin{construction} \label{cons-c-1-n-iso}
Suppose that $C_{\pi}\colon Z\Omega^i_{X,\pi} \to \Omega^i_{X,\pi}$ is surjective.
Note that then $C_{n,\pi}\colon Z_n\Omega^i_{X,\pi} \to \Omega^i_{X,\pi}$
is surjective by Lemma \ref{lem:surjectivity of iterated Cartier operator}. 
We define $C^{-1}_{n,\pi}\colon \Omega^i_{X,\pi}\to G_n\Omega^i_{X,\pi}$ as the composition 
\[
C^{-1}_{n,\pi}\colon \Omega^i_{X,\pi}\underset{\cong}{\xleftarrow{C_{n,\pi}}} \frac{Z_n\Omega^i_{X,\pi}}{B_n\Omega^i_{X,\pi}} \lhook\joinrel\longrightarrow \frac{F_{*}^n\Omega^i_{X,\pi}}{B_n\Omega^i_{X,\pi}}=G_n\Omega^i_{X,\pi}.
\]
{As in \eqref{eq:diagram-main-iso}, we obtain the following commutative diagram
\begin{equation} \label{eq:diagram-main-iso(iterated)}
\begin{tikzcd}
{} & \Omega^i_{X,\pi} \arrow[r]\arrow[ld,hookrightarrow,"C^{-1}_{n,\pi}"']\arrow[d,hookrightarrow,"\mathcal{H}^0(\underline{C}^{-1}_{n,\pi})"] & \underline{\Omega}^i_{X,\pi} \arrow[d, "\underline{C}^{-1}_\pi"] \\
G_n\Omega^i_{X,\pi} \arrow[r,hookrightarrow] & \mathcal{H}^0(G_n\underline{\Omega}^i_{X,\pi})\arrow[r] & G_n\underline{\Omega}^i_{X,\pi}.
\end{tikzcd}
\end{equation}}

Recall that we have isomorphisms {(cf.~ \eqref{diagram:BZpullback})}: \[
\Omega^i_{X,\pi} \underset{\cong}{\xleftarrow{C_{\pi}}} \frac{Z\Omega^i_{X,\pi}}{B\Omega^i_{X,\pi}}\underset{\cong}{\xleftarrow{C_{2,1,\pi}}}\cdots
\underset{\cong}{\xleftarrow{C_{n,n-1,\pi}}} \frac{Z_n\Omega^i_{X,\pi}}{B_n\Omega^i_{X,\pi}}.\] Thus,
we have a factorisation
\begin{equation}\label{eq:factirisation of iterated inverse Cartier operator}
    C^{-1}_{n,\pi} \colon \Omega^{i}_{X,\pi} \xrightarrow{C^{-1}_{\pi}} G\Omega^{i}_{X,\pi} = G_1\Omega^{i}_{X,\pi} \xrightarrow{C^{-1}_{2,1,\pi}} G_2\Omega^{i}_{X,\pi} \longrightarrow \cdots \xrightarrow{C^{-1}_{n,n-1,\pi}} G_n\Omega^{i}_{X,\pi}.
\end{equation}
\end{construction}

\begin{lemma} \label{lem:iso-pre-k-cn-1n}
With the notation of Setting \ref{setting:iso-charp}, fix integers $k \geq 0$, $n > 0$, and suppose that $X$ is pre-$k$-$F$-injective. Then
\[
C^{-1}_{n-1,n,\pi} \colon H^j_\m(G_{n-1}\Omega^{i}_{X,\pi}) \to H^j_\m(G_{n}\Omega^{i}_{X,\pi})
\]
is injective for all integers $0 \leq i \leq k$ and all integers $0 \leq j \leq d - i$.
\end{lemma}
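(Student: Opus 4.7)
The plan is to produce a short exact sequence
\[
0 \to G_{n-1}\Omega^i_{X,\pi} \xrightarrow{C^{-1}_{n,n-1,\pi}} G_n\Omega^i_{X,\pi} \to F^{n-1}_{*}\mathcal{K} \to 0,
\]
with $\mathcal{K} := F_{*}\Omega^i_{X,\pi}/Z\Omega^i_{X,\pi}$ the cokernel of $C^{-1}_{\pi}\colon \Omega^i_{X,\pi}\hookrightarrow G\Omega^i_{X,\pi}$, fit it into a morphism of short exact sequences coming from the $F^{n-1}_{*}$-twist of the $n=1$ case, and conclude by a connecting-homomorphism chase using pre-$k$-$F$-injectivity.

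First I would apply the snake lemma to
\[
\begin{tikzcd}[column sep=small]
0 \arrow[r] & B_{n-1}\Omega^i_{X,\pi} \arrow[r] \arrow[d, "\alpha"] & F^{n-1}_{*}\Omega^i_{X,\pi} \arrow[r] \arrow[d, "F^{n-1}_{*}(C^{-1}_{\pi})"] & G_{n-1}\Omega^i_{X,\pi} \arrow[r] \arrow[d, "C^{-1}_{n,n-1,\pi}"] & 0 \\
0 \arrow[r] & B_{n-1}\Omega^i_{X,\pi} \arrow[r] & F^{n-1}_{*}G\Omega^i_{X,\pi} \arrow[r] & G_{n}\Omega^i_{X,\pi} \arrow[r] & 0,
\end{tikzcd}
\]
where the bottom row is exact because $F^{n-1}_{*}G\Omega^i_{X,\pi}=F^n_{*}\Omega^i_{X,\pi}/F^{n-1}_{*}B\Omega^i_{X,\pi}$ surjects onto $G_n\Omega^i_{X,\pi}=F^n_{*}\Omega^i_{X,\pi}/B_n\Omega^i_{X,\pi}$ with kernel $B_n\Omega^i_{X,\pi}/F^{n-1}_{*}B\Omega^i_{X,\pi}$, which the pullback square in Remark \ref{rem:surjectivity of iterated Cartier operator} and the surjectivity of $C_{\pi}$ (Definition \ref{def:prekFinj-iso}(1)) identify with $B_{n-1}\Omega^i_{X,\pi}$ via $C$; this identification is exactly $\alpha^{-1}$, so $\alpha$ is an isomorphism. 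Since the middle column $F^{n-1}_{*}(C^{-1}_{\pi})$ is injective, the snake lemma yields that $C^{-1}_{n,n-1,\pi}$ is injective on sheaves with $\mathrm{coker}(C^{-1}_{n,n-1,\pi}) \cong \mathrm{coker}(F^{n-1}_{*}(C^{-1}_{\pi})) = F^{n-1}_{*}\mathcal{K}$, proving the displayed sequence.

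Next, applying the exact functor $F^{n-1}_{*}$ to the sequence $0\to \Omega^i_{X,\pi}\xrightarrow{C^{-1}_{\pi}} G\Omega^i_{X,\pi}\to \mathcal{K}\to 0$ yields another short exact sequence with the same right-hand term, and the two assemble into a morphism of short exact sequences (identity on the right, quotients by $B_{n-1}\Omega^i_{X,\pi}$ on the left and middle); applying $H^{\bullet}_{\m}$ gives the commutative square
\[
\begin{tikzcd}
H^{j-1}_{\m}(F^{n-1}_{*}\mathcal{K}) \arrow[r, "\delta_{\mathrm{top}}"] \arrow[d, equal] & H^j_{\m}(F^{n-1}_{*}\Omega^i_{X,\pi}) \arrow[d] \\
H^{j-1}_{\m}(F^{n-1}_{*}\mathcal{K}) \arrow[r, "\delta_{\mathrm{bot}}"] & H^j_{\m}(G_{n-1}\Omega^i_{X,\pi}),
\end{tikzcd}
\]
so that the image of $\delta_{\mathrm{bot}}$---which is precisely the kernel of $H^j_{\m}(G_{n-1}\Omega^i_{X,\pi})\to H^j_{\m}(G_n\Omega^i_{X,\pi})$---is contained in the image of $\delta_{\mathrm{top}}$ under the right vertical.

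Finally, since the absolute Frobenius is finite (hence $F^{n-1}_{*}$ commutes with $H^{\bullet}_{\m}$), one has $\delta_{\mathrm{top}}=F^{n-1}_{*}(\delta_1)$ where $\delta_1\colon H^{j-1}_{\m}(\mathcal{K})\to H^j_{\m}(\Omega^i_{X,\pi})$ is the $n=1$ connecting map; pre-$k$-$F$-injectivity (Definition \ref{def:prekFinj-iso}(2)) forces $\delta_1=0$ for $0\le j\le d-i$ via the long exact sequence of the $n=1$ short exact sequence, whence $\delta_{\mathrm{top}}=\delta_{\mathrm{bot}}=0$ and the required injectivity follows. The main obstacle is verifying that $\alpha$ is an isomorphism and that the defining commutative diagram really does produce the map $C^{-1}_{n,n-1,\pi}$ appearing in the factorisation \eqref{eq:factirisation of iterated inverse Cartier operator}; both amount to an unwinding of the construction using the pullback square in Remark \ref{rem:surjectivity of iterated Cartier operator}, after which the argument is a purely formal snake-lemma and long-exact-sequence chase.
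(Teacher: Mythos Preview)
Your argument is correct and is essentially the same as the paper's: both set up the identical map of short exact sequences (the paper writes the lower-left term as $B_n\Omega^i_{X,\pi}/F^{n-1}_*B\Omega^i_{X,\pi}$ and invokes the surjectivity of $C_{n,n-1,\pi}$ on $B$'s, which is precisely your identification via $\alpha$), and both use pre-$k$-$F$-injectivity to make the middle vertical $F^{n-1}_*(C^{-1}_\pi)$ injective on $H^j_\m$ for $j\le d-i$. The only cosmetic difference is that the paper finishes in one line with the four lemma, whereas you pass through the snake lemma to extract the cokernel $F^{n-1}_*\mathcal{K}$ and then kill the connecting map; the four-lemma route is shorter but your version makes the role of the $n=1$ case more transparent.
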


\begin{proof}
Consider the following diagram:
\begin{equation}\label{C between G_n}
\begin{tikzcd}
0 \arrow[r] & B_{n-1}\Omega^{i}_{X,\pi} \arrow[r] \arrow{d}{C^{-1}_{n,n-1,\pi}}[swap]{\cong} & F^{n-1}_*\Omega^{i}_{X,\pi} \arrow[r] \arrow[d, "C^{-1}_{n,n-1,\pi}"] & G_{n-1}\Omega^{i}_{X,\pi} \arrow[r] \arrow[d, "{C^{-1}_{n,n-1,\pi}}"] & 0 \\
0 \arrow[r] & \frac{B_n\Omega^{i}_{X,\pi}}{F^{n-1}_*B\Omega^{i}_{X,\pi}} \arrow[r] & F^{n-1}_*\left(\frac{F_*\Omega^{i}_{X,\pi}}{B\Omega^{i}_{X,\pi}}\right) \arrow[r] & G_{n}\Omega^{i}_{X,\pi} \arrow[r] & 0.
\end{tikzcd}
\end{equation}
By {the surjectivity of $C_{n,n-1,\pi}\colon B_{n}\Omega^{i}_{X,\pi} \to B_{n-1}\Omega^{i}_{X,\pi}$ (see Remark \ref{rem:surjectivity of iterated Cartier operator})}, the leftmost vertical arrow is an isomorphism. After applying local cohomology $H^j_\m$, the middle vertical arrow is an injection. Hence, by carefully tracing through this diagram (or applying the \emph{four lemma}), we also get that the rightmost vertical arrow is an injection on local cohomology.
\end{proof}

\begin{proposition} \label{prop:iso-iterated-def-pre-k-F-injective}
With the notation of Setting \ref{setting:iso-charp} fix an integer $k \geq 0$ and an integer $n>0$. Then $X$ is pre-$k$-F-injective if and only if for every integer $0 \leq i \leq k$:
\begin{itemize}
    \item $C_{n, \pi} \colon Z_n\Omega^i_{X,\pi} \to \Omega^i_{X,\pi}$ is surjective, and \\[-0.9em]
    \item $C^{-1}_{n,\pi} \colon H^j_\m(\Omega^{i}_{X,\pi}) \to H^j_\m(G_{n}\Omega^{i}_{X,\pi})$ is injective for all integers $0 \leq j \leq d - i$.
\end{itemize}
\end{proposition}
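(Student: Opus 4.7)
The plan is to exploit two compatible factorisations. The first is the factorisation of the iterated inverse Cartier operator recorded in \eqref{eq:factirisation of iterated inverse Cartier operator}:
\[
C^{-1}_{n,\pi} = C^{-1}_{n,n-1,\pi} \circ \cdots \circ C^{-1}_{2,1,\pi} \circ C^{-1}_\pi \colon \Omega^i_{X,\pi} \longrightarrow G_n\Omega^i_{X,\pi}.
\]
The second, obtained directly from the $n$-fold nature of $C_n$ at the $Y$-level upon applying $\pi_*$, is the analogous factorisation of the iterated forward Cartier operator
\[
C_{n,\pi} = C_\pi \circ C_{n,1,\pi} \colon Z_n\Omega^i_{X,\pi} \longrightarrow Z\Omega^i_{X,\pi} \longrightarrow \Omega^i_{X,\pi},
\]
where $C_{n,1,\pi} \coloneqq C_{2,1,\pi} \circ \cdots \circ C_{n,n-1,\pi}$.

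For the forward direction, I will assume $X$ is pre-$k$-$F$-injective and fix $0 \leq i \leq k$. Surjectivity of $C_\pi$ then gives surjectivity of $C_{n,\pi}$ via Lemma \ref{lem:surjectivity of iterated Cartier operator}, so $C^{-1}_{n,\pi}$ is well-defined by Construction \ref{cons-c-1-n-iso}. Applying $H^j_\m(-)$ to the first factorisation: the leftmost map $C^{-1}_\pi$ is injective on $H^j_\m$ by the very definition of pre-$k$-$F$-injectivity, and each subsequent map $C^{-1}_{l,l-1,\pi}$ for $2 \leq l \leq n$ is injective on $H^j_\m$ by Lemma \ref{lem:iso-pre-k-cn-1n}. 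A composition of injections is an injection, so $C^{-1}_{n,\pi}$ is injective on $H^j_\m$ in the required range.

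For the reverse direction, I will assume the iterated conditions hold for some $n>0$ and fix $0 \leq i \leq k$. The inclusion $\image(C_{n,\pi}) \subseteq \image(C_\pi)$ from the second factorisation forces $C_\pi$ to be surjective out of the surjectivity of $C_{n,\pi}$, so $C^{-1}_\pi$ is defined as in \eqref{eq:C-1pi}. Applying $H^j_\m(-)$ to the first factorisation shows that $C^{-1}_\pi$ is the leftmost factor of $C^{-1}_{n,\pi}$, whence injectivity of $C^{-1}_{n,\pi}$ on $H^j_\m$ immediately implies injectivity of $C^{-1}_\pi$ on $H^j_\m$.

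No serious obstacle arises: both factorisations are essentially already in the paper, and Lemmas \ref{lem:surjectivity of iterated Cartier operator} and \ref{lem:iso-pre-k-cn-1n} supply the key ingredients. The only minor verification needed is that $\pi_*$ respects the $Y$-level factorisation $C_n = C \circ C_{n,1}$ after pushforward, which is automatic because $Z_n\Omega^i_{X,\pi}$, $B_n\Omega^i_{X,\pi}$, $G_n\Omega^i_{X,\pi}$, and the maps $C_{l,l-1,\pi}$ are all defined as $\pi_*$-pushforwards of their $Y$-level counterparts and $\pi_*$ is functorial.
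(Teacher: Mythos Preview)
Your proof is correct and follows essentially the same approach as the paper: both directions rely on Lemma \ref{lem:surjectivity of iterated Cartier operator} and the factorisation \eqref{eq:factirisation of iterated inverse Cartier operator}, with Lemma \ref{lem:iso-pre-k-cn-1n} supplying the step-by-step injectivity for the forward direction. Your reverse-direction surjectivity argument via $\image(C_{n,\pi}) \subseteq \image(C_\pi)$ is just the easy implication $(3)\Rightarrow(1)$ of Lemma \ref{lem:surjectivity of iterated Cartier operator} spelled out, so there is no substantive difference.
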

\begin{proof}
 The implication from right to left is clear by Lemma \ref{lem:surjectivity of iterated Cartier operator} and the factorisation \eqref{eq:factirisation of iterated inverse Cartier operator}.
 The implication from left to right follows from Lemma \ref{lem:surjectivity of iterated Cartier operator}, the same factorisation \eqref{eq:factirisation of iterated inverse Cartier operator}, and Lemma \ref{lem:iso-pre-k-cn-1n}.
\end{proof}

The proposition above allows for a nice reinterpretation of $k$-F-injectivity.
\begin{definition}
With the notation of Setting \ref{setting:iso-charp}, define
\begin{align*}
\Omega^{i,{\rm perf}}_{X,\pi} \coloneqq \varinjlim_n G_n\Omega^{i}_{X,\pi} \quad \text{ and } \quad
Z\Omega^{i,{\rm perf}}_{X,\pi} \coloneqq \varprojlim_n Z_n\Omega^{i}_{X,\pi}
\end{align*}
where the colimit is taken with respect to
\[
C^{-1}_{{n,n-1},\pi} \colon G_{n-1}\Omega^{i}_{X,\pi} \to G_n\Omega^{i}_{X,\pi}
\]
and the limit is taken with respect to 
\[
C_{n,n-1,\pi} \colon Z_{n}\Omega^{i}_{X,\pi} \to Z_{n-1}\Omega^{i}_{X,\pi}.
\]
We denote the induced maps from and to $\Omega^i_{X,\pi}$ by
\[
C^{-1}_{\perf,\pi} \colon H^j_\m(\Omega^{i}_{X,\pi}) \to H^j_\m(\Omega^{i, {\rm perf}}_{X,\pi}) \quad \text{ and } \quad  C_{\perf, \pi} \colon Z\Omega^{i,\perf}_{X,\pi} \to \Omega^i_{X,\pi}.
\]
\end{definition}
\begin{proposition} \label{prop:iso-iterated-def-pre-k-F-injective-perfection}
With the notation of Setting \ref{setting:iso-charp}, fix an integer $k \geq 0$ and an integer $n>0$. Then $X$ is pre-$k$-F-injective if and only if for every integer $0 \leq i \leq k$:
\begin{itemize}
    \item $C_{\perf, \pi} \colon Z\Omega^{i,\perf}_{X,\pi} \to \Omega^i_{X,\pi}$ is surjective, and \\[-0.9em]
    \item $C^{-1}_{\perf,\pi} \colon H^j_\m(\Omega^{i}_{X,\pi}) \to H^j_\m(\Omega^{i, {\rm perf}}_{X,\pi})$ is injective for all integers $0 \leq j \leq d - i$.
\end{itemize}
\end{proposition}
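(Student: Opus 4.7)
The plan is to deduce the proposition directly from Proposition \ref{prop:iso-iterated-def-pre-k-F-injective}, which already gives an equivalent reformulation of pre-$k$-$F$-injectivity for every fixed $n > 0$. The perfection version is essentially the limit of those statements, and the translation relies on two standard ingredients: local cohomology commutes with filtered colimits on a Noetherian scheme, and for coherent sheaves on an affine scheme the classical Mittag--Leffler lemma can be applied to inverse systems with surjective transition maps.

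First I would record two factorisations. From the identification of $\Omega^{i,\perf}_{X,\pi}$ as the colimit of $G_n\Omega^i_{X,\pi}$ along the maps $C^{-1}_{n,n-1,\pi}$, together with \eqref{eq:factirisation of iterated inverse Cartier operator}, one has
\[
C^{-1}_{\perf,\pi} = H^j_\m(\iota_n) \circ C^{-1}_{n,\pi},
\]
where $\iota_n \colon G_n\Omega^i_{X,\pi} \to \Omega^{i,\perf}_{X,\pi}$ is the canonical map to the colimit. Dually, using the compatibilities $C_{n-1,\pi} \circ C_{n,n-1,\pi} = C_{n,\pi}$ and the definition of $Z\Omega^{i,\perf}_{X,\pi}$ as an inverse limit, one obtains $C_{\perf,\pi} = C_{n,\pi} \circ p_n$, where $p_n \colon Z\Omega^{i,\perf}_{X,\pi} \to Z_n\Omega^i_{X,\pi}$ is the projection.

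The reverse direction of the proposition is then immediate from these factorisations at $n=1$: if $C_{\perf,\pi}$ is surjective, then so is $C_\pi$; if $H^j_\m(C^{-1}_{\perf,\pi})$ is injective, then so is $H^j_\m(C^{-1}_{\pi})$. Together these are the defining conditions of pre-$k$-$F$-injectivity. For the forward direction, assume $X$ is pre-$k$-$F$-injective. Proposition \ref{prop:iso-iterated-def-pre-k-F-injective} applied for every $n > 0$ yields that $C_{n,\pi}$ is surjective and $H^j_\m(C^{-1}_{n,\pi})$ is injective for every $n$. Since local cohomology commutes with filtered colimits on a Noetherian scheme, $H^j_\m(\Omega^{i,\perf}_{X,\pi}) = \varinjlim_n H^j_\m(G_n\Omega^i_{X,\pi})$, so any class in $H^j_\m(\Omega^i_{X,\pi})$ killed by $H^j_\m(C^{-1}_{\perf,\pi})$ is already killed by some $H^j_\m(C^{-1}_{n,\pi})$ and therefore vanishes.

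The one place I expect real work is surjectivity of $C_{\perf,\pi}$: I need that the projection $p_1$ is surjective as a map of sheaves. By Lemma \ref{lem:surjectivity of iterated Cartier operator} and Remark \ref{rem:surjectivity of iterated Cartier operator}, pre-$k$-$F$-injectivity guarantees that all the transition maps $C_{n,n-1,\pi} \colon Z_n\Omega^i_{X,\pi} \to Z_{n-1}\Omega^i_{X,\pi}$ in the inverse system defining $Z\Omega^{i,\perf}_{X,\pi}$ are surjective. Since $X$ is affine and each $Z_n\Omega^i_{X,\pi}$ is coherent, taking global sections commutes with the inverse limit, and the problem reduces to the Mittag--Leffler statement that the projection from the inverse limit of a system of modules with surjective transition maps is surjective onto each term. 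Combined with the factorisation $C_{\perf,\pi} = C_\pi \circ p_1$ and the surjectivity of $C_\pi$, this concludes the forward direction.
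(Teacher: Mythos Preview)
Your proof is correct and follows the same approach as the paper's, which is a one-line appeal to Lemma \ref{lem:surjectivity of iterated Cartier operator}, Proposition \ref{prop:iso-iterated-def-pre-k-F-injective}, and the fact that local cohomology commutes with filtered colimits. The extra detail you supply for the surjectivity of $C_{\perf,\pi}$---lifting through the surjective transition maps of the inverse system---is precisely what the paper's citation of Lemma \ref{lem:surjectivity of iterated Cartier operator} leaves implicit.
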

\begin{proof}
This follows immediately from Lemma \ref{lem:surjectivity of iterated Cartier operator} and Proposition \ref{prop:iso-iterated-def-pre-k-F-injective} as local cohomology commutes with colimits.
\end{proof}

\subsection{$k$-$F$-injective implies $k$-Du Bois} \label{ss:towards-proof-1-iso}
In this subsection, we prove that pre-$k$-$F$-injective singularites are pseudo-pre-$k$-Du Bois under some additional assumptions (Theorem \ref{thm:intro-main}).

With the notation of Setting \ref{setting:iso-charp}, 
we have the following exact triangle:
\begin{equation}\label{eq:G}
G_n\Omega^{i}_{X,\pi} \xrightarrow{\text{nat.}} G_n\underline{\Omega}^i_{X,\pi} \xrightarrow{r} \mathrm{Cone}\Big(G_n\Omega^{i}_{X,\pi} \to G_n\underline{\Omega}^i_{X,\pi} \Big) \xrightarrow{+1},
\end{equation}
where the first map is the composition $G_n\Omega^{i}_{X,\pi}\hookrightarrow \mathcal{H}^0(G_n\underline{\Omega}^i_{X,\pi}) \to G_n\underline{\Omega}^i_{X,\pi}$
of the natural maps.

\begin{lemma} \label{lemma:iso-key-factorisation-main-theorem}
With the notation of Setting \ref{setting:iso-charp}, fix an integer {$k > 0$}, and let $A$ be an ample anti-effective exceptional $\bQ$-divisor on $Y$ such that $\rdown{A}=-E$. Assume that 
\[
R^{>0}\pi_{*}\Omega^i_Y(\log E)(p^rA) = 0
\]
for all $0 \leq i \leq k-1$ and $r \geq 0$. Then for every $n \gg 0$ and $0<i\leq k$, the composition
    \begin{equation*}
    \underline{\Omega}^i_{X,\pi} \xrightarrow{\underline{C}^{-1}_{n,\pi}} G_n\underline{\Omega}^i_{X,\pi} \xrightarrow{r} \mathrm{Cone}\Big(G_n\Omega^{i}_{X,\pi} \xrightarrow{\text{nat.}} G_n\underline{\Omega}^i_{X,\pi}  \Big)
    \end{equation*}
    is zero, where the map $r$ is defined in \eqref{eq:G}.
\end{lemma}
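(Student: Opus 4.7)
My plan is to factor the inverse Cartier map $\underline{C}^{-1}_{n,\pi}\colon \underline{\Omega}^i_{X,\pi} \to G_n\underline{\Omega}^i_{X,\pi}$ through the natural map $G_n\Omega^i_{X,\pi}[0] \to G_n\underline{\Omega}^i_{X,\pi}$ for $n \gg 0$. Granting such a factorisation, Lemma \ref{lem:derived-cat-factor} applied to the defining exact triangle
\[
G_n\Omega^i_{X,\pi} \to G_n\underline{\Omega}^i_{X,\pi} \to \mathrm{Cone}\bigl(G_n\Omega^i_{X,\pi} \to G_n\underline{\Omega}^i_{X,\pi}\bigr) \xrightarrow{+1}
\]
immediately yields the vanishing of the composition with $r$, which is exactly what is to be proved.

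Writing $A = \sum a_i E_i$, the assumption $\lfloor A \rfloor = -E$ combined with anti-effectivity forces $a_i \in [-1,0)$, so I can choose $n \gg 0$ such that $a_i \leq -1/p^n$ for every $i$, i.e.\ $A \leq -\tfrac{1}{p^n}E$. Since both divisors have round-down $-E$, Remark \ref{remark:Hara-ses(iterated) commuative} then produces a sheaf-level factorisation on $Y$
\[
\Omega^i_Y(\log E)(-E) \xrightarrow{C^{-1}_{n,A}} G_n\Omega^i_Y(\log E)(p^nA) \xhookrightarrow{\iota} G_n\Omega^i_Y(\log E)(-E),
\]
whose composite is the standard inverse Cartier $C^{-1}_{n,-(1/p^n)E}$ defining $\underline{C}^{-1}_{n,\pi}$. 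Applying $R\pi_{*}$ and invoking Lemma \ref{lemma:Kawakami-vanishing}(3) with $l=0$ to obtain $R^{>0}\pi_{*}G_n\Omega^i_Y(\log E)(p^nA) = 0$ for $n \gg 0$, I get
\[
\underline{C}^{-1}_{n,\pi}\colon \underline{\Omega}^i_{X,\pi} \xrightarrow{\ f\ } \pi_{*}G_n\Omega^i_Y(\log E)(p^nA)[0] \xrightarrow{\ g\ } G_n\underline{\Omega}^i_{X,\pi},
\]
in which the middle term is a sheaf in degree $0$. Because $G_n\underline{\Omega}^i_{X,\pi} \in D^{\geq 0}$, any such $g$ is uniquely determined by, and canonically factors through, the underlying sheaf map $\pi_{*}(\iota)\colon \pi_{*}G_n\Omega^i_Y(\log E)(p^nA) \to \mathcal{H}^0(G_n\underline{\Omega}^i_{X,\pi}) = \pi_{*}G_n\Omega^i_Y(\log E)(-E)$.

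The key remaining step is to show that the image of $\pi_{*}(\iota)$ lies in the subsheaf $G_n\Omega^i_{X,\pi} \subseteq \pi_{*}G_n\Omega^i_Y(\log E)(-E)$, and I expect this to be the main obstacle. The idea is a diagram chase on the pushforward of the morphism of short exact sequences $0 \to B_n \to F^n_{*}\Omega^i_Y(\log E)(\cdot) \to G_n \to 0$ from $p^nA$ to $-E$: Lemma \ref{lemma:Kawakami-vanishing}(2) gives $R^1\pi_{*}B_n\Omega^i_Y(\log E)(p^nA) = 0$ for $n \gg 0$, which keeps the top row exact after $\pi_{*}$. Any local section of $\pi_{*}G_n\Omega^i_Y(\log E)(p^nA)$ then lifts to $\pi_{*}F^n_{*}\Omega^i_Y(\log E)(p^nA)$, includes into $\pi_{*}F^n_{*}\Omega^i_Y(\log E)(-E) = F^n_{*}\Omega^i_{X,\pi}$, and projects into $G_n\Omega^i_{X,\pi}$ by the very definition of the latter. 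The subtlety here is that $R^1\pi_{*}B_n\Omega^i_Y(\log E)(-E)$ may well be nonzero, so the chase fails with $-E$ in place of $p^nA$; the whole purpose of the auxiliary ample $\mathbb{Q}$-divisor $A$ is to supply enough positivity to kill that obstructing higher pushforward. Assembling the two factorisations exhibits $\underline{C}^{-1}_{n,\pi}$ as the composite $\underline{\Omega}^i_{X,\pi} \to \pi_{*}G_n\Omega^i_Y(\log E)(p^nA)[0] \to G_n\Omega^i_{X,\pi}[0] \xrightarrow{\mathrm{nat.}} G_n\underline{\Omega}^i_{X,\pi}$, completing the plan.
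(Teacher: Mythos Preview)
Your argument is correct and follows essentially the same route as the paper: both factor $\underline{C}^{-1}_{n,\pi}$ through $R\pi_*G_n\Omega^i_Y(\log E)(p^nA)$ via Remark~\ref{remark:Hara-ses(iterated) commuative}, then use Lemma~\ref{lemma:Kawakami-vanishing} to pass through $G_n\Omega^i_{X,\pi}$. The only difference is packaging---the paper encodes your diagram chase in cone language via diagram~\eqref{eq:fgh-derived}, and your invocation of Lemma~\ref{lem:derived-cat-factor} in the first paragraph is unnecessary (the vanishing of $r\circ\underline{C}^{-1}_{n,\pi}$ follows immediately from $r\circ\mathrm{nat.}=0$ once the factorisation is in hand).
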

\begin{proof} 
{Fix an integer $0<i\leq k$
Take $n\gg 0$ such that $A\leq -(1/p^n)E$.
By taking $\Delta'\coloneqq A$ and $\Delta\coloneqq -(1/p^n)E$ in \eqref{eq:Hara-ses(iterated) commuative},
we get the following factorisation
\begin{multline*}
    \underline{C}^{-1}_{n,\pi} \colon \underline{\Omega}^i_{X,\pi}\coloneqq R\pi_*\Omega^i_Y(\log E)(-E)=R\pi_*\Omega^i_Y(\log E)(A)\\
    \xrightarrow{R\pi_{*}C^{-1}_{n,A}} R\pi_*G_n\Omega^i_Y(\log E)(p^nA)\xrightarrow{R\pi_{*}\iota} R\pi_*G_n\Omega^i_Y(\log E)(-E)=G_n\underline{\Omega}^i_{X,\pi},
\end{multline*}
where \begin{multline*}\iota\colon G_n\Omega^i_Y(\log E)(p^nA)\coloneqq \frac{F^n_{*}\Omega^i_Y(\log E)(p^nA)}{B_n\Omega^i_Y(\log E)(p^nA)}\\ \longrightarrow G_n\Omega^i_Y(\log E)(-E)\coloneqq \frac{F^n_{*}\Omega^i_Y(\log E)(-E)}{B_n\Omega^i_Y(\log E)(-E)}\end{multline*}
is the map induced by inclusions.
In the same manner, we define $\iota_{\pi}\colon \frac{F^n_*\pi_{*}\Omega^i_Y(\log E)(p^nA)}{\pi_{*}B_n\Omega^i_Y(\log E)(p^nA)}\to G_n\Omega^i_{X,\pi}\coloneqq \frac{F^n_*\pi_{*}\Omega^i_Y(\log E)(-E)}{\pi_{*}B_n\Omega^i_Y(\log E)(-E)}$.

Consider the following commutative diagram induced by natural inclusions:
\[
\begin{tikzcd}
    R\pi_*G_n\Omega^i_Y(\log E)(p^nA)\arrow[r,"r'"]\arrow[d,"R\pi_{*}\iota"] &\mathrm{Cone}\Big(\frac{F^n_*\pi_{*}\Omega^i_Y(\log E)(p^nA)}{\pi_{*}B_n\Omega^i_Y(\log E)(p^nA)} \xrightarrow{\text{nat.}} R\pi_*G_n{{\Omega}}^i_Y(\log E)(p^nA) \Big)\arrow[d]\\
    G_{n}\underline{\Omega}^i_{X,\pi}\arrow[r,"r"] &\mathrm{Cone}\Big(G_n\Omega^{i}_{X,\pi} \xrightarrow{\text{nat.}} G_n\underline{\Omega}^i_{X,\pi} \Big).
\end{tikzcd}
\]
Then, the map 
\[
\underline{\Omega}^i_{X,\pi} \xrightarrow{\underline{C}^{-1}_{n,\pi}} G_n\underline{\Omega}^i_{X,\pi} \xrightarrow{r} \mathrm{Cone}\Big(G_n\Omega^{i}_{X,\pi} \xrightarrow{\text{nat.}} G_n\underline{\Omega}^i_{X,\pi} \Big)
\]
in the statement factorises through:
\begin{multline*}
    \underline{\Omega}^i_{X,\pi} \xrightarrow{R\pi_{*}C^{-1}_{n,A}} R\pi_*G_n\Omega^i_Y(\log E)(p^nA)\\
\xrightarrow{r'}\mathrm{Cone}\Big(\frac{F^n_*\pi_{*}\Omega^i_Y(\log E)(p^nA)}{\pi_{*}B_n\Omega^i_Y(\log E)(p^nA)} \xrightarrow{\text{nat.}} R\pi_*G_n\Omega^i_Y(\log E)(p^nA) \Big).
\end{multline*}
By applying Diagram \ref{eq:fgh-derived} to 
\[\cF \coloneqq R\pi_*B_n\Omega^i_Y(\log E)(p^nA),\,\,\cG \coloneqq R\pi_*F^n_*\Omega^i_Y(\log E)(p^nA),\,\,\text{and}\,\,\cH \coloneqq R\pi_*G_n\Omega^i_Y(\log E)(p^nA),
\]
we obtain a (non-canonical) isomorphism
\begin{multline*}
    \mathrm{Cone}\Big(\frac{F^n_*\pi_{*}\Omega^i_Y(\log E)(p^nA)}{\pi_{*}B_n\Omega^i_Y(\log E)(p^nA)} \xrightarrow{\text{nat.}} R\pi_*G_n\Omega^i_Y(\log E)(p^nA) \Big)\\
    \cong \mathrm{Cone}\Big(R^{>0}\pi_*B_n\Omega^i_Y(\log E)(p^nA)\to R^{>0}\pi_*F^n_*\Omega^i_Y(\log E)(p^nA) \Big).
\end{multline*}
By assumption and Lemma \ref{lemma:Kawakami-vanishing}, we have
\[
R^{>0}\pi_*B_n\Omega^i_Y(\log E)(p^nA)=0 \quad\text{and}\quad
R^{>0}\pi_*F^n_*\Omega^i_Y(\log E)(p^nA) = 0 
\]
for all $n \gg0$. 
Thus,
\[
\text{Cone}\Big(R^{>0}\pi_*B_n\Omega^i_Y(\log E)(p^nA)\\ \to R^{>0}\pi_*F^n_*\Omega^i_Y(\log E)(p^nA) \Big) =0
\]
for $n\gg0$, which concludes the proof.}
\end{proof}

\begin{theorem} \label{thm:main(isolated)}
With the notation of Setting \ref{setting:iso-charp}, suppose that  $C_{\pi}\colon Z\Omega^i_{X,\pi}\to \Omega^i_{X,\pi}$ is surjective.
Fix an integer $k \geq 0$ and let $A$ be an ample anti-effective exceptional $\bQ$-divisor on $Y$ such that $\rdown{A}=-E$. Assume that 
\begin{enumerate}
    \item $X$ is pre-$k$-$F$-injective, and 
    \item $R^{>0}\pi_{*}\Omega^i_Y(\log E)(p^rA) = 0$ for all $0 \leq i \leq k-1$ and $r \geq 0$.
\end{enumerate}
Then $X$ is pseudo-pre-$k$-Du Bois {along $\pi$}.
\end{theorem}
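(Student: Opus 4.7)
The plan is to use the iterated inverse Cartier operators from Construction~\ref{cons-c-1-n-iso} together with the factorisation provided by Lemma~\ref{lemma:iso-key-factorisation-main-theorem} to obtain, for $n \gg 0$, a factorisation
\[
C^{-1}_{n,\pi}\colon H^j_\m(\Omega^i_{X,\pi}) \longrightarrow \mathbb{H}^j_\m(\underline{\Omega}^i_{X,\pi}) \longrightarrow H^j_\m(G_n\Omega^i_{X,\pi})
\]
of the iterated inverse Cartier on local cohomology. Since pre-$k$-$F$-injectivity forces $C^{-1}_{n,\pi}$ to be injective for $j\leq d-i$ (Proposition~\ref{prop:iso-iterated-def-pre-k-F-injective}), the first map in the factorisation will be injective as well, which is precisely the pseudo-pre-$k$-Du Bois condition.

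The case $i=0$ is special: pre-$0$-$F$-injectivity reduces to ordinary $F$-injectivity of $R$ (since $C_\pi = \mathrm{id}_{\cO_X}$ and $C_\pi^{-1}$ is the Frobenius $\cO_X \to F_*\cO_X$), and Proposition~\ref{prop:BST} directly yields that $X$ is pseudo-Du Bois along $\pi$. So assume $0 < i \leq k$ and fix $n \gg 0$ as furnished by Lemma~\ref{lemma:iso-key-factorisation-main-theorem}. That lemma asserts that $\underline{C}^{-1}_{n,\pi}$ composed with the projection $G_n\underline{\Omega}^i_{X,\pi} \to \mathrm{Cone}(G_n\Omega^i_{X,\pi} \to G_n\underline{\Omega}^i_{X,\pi})$ vanishes; Lemma~\ref{lem:derived-cat-factor} then produces a (non-canonical) morphism $\phi \colon \underline{\Omega}^i_{X,\pi} \to G_n\Omega^i_{X,\pi}[0]$ in the derived category such that $\underline{C}^{-1}_{n,\pi}$ factors as $\underline{\Omega}^i_{X,\pi} \xrightarrow{\phi} G_n\Omega^i_{X,\pi}[0] \to G_n\underline{\Omega}^i_{X,\pi}$.

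The delicate verification is that the composed sheaf map $\Omega^i_{X,\pi} \to \underline{\Omega}^i_{X,\pi} \xrightarrow{\phi} G_n\Omega^i_{X,\pi}$ agrees with $C^{-1}_{n,\pi}$. By diagram~\eqref{eq:diagram-main-iso(iterated)}, both maps become equal after postcomposition with $G_n\Omega^i_{X,\pi} \to G_n\underline{\Omega}^i_{X,\pi}$. Since $G_n\underline{\Omega}^i_{X,\pi}$ is concentrated in non-negative degrees, we have the canonical identification
\[
\Hom_{D(X)}(\Omega^i_{X,\pi}[0], G_n\underline{\Omega}^i_{X,\pi}) \;\cong\; \Hom(\Omega^i_{X,\pi}, \cH^0(G_n\underline{\Omega}^i_{X,\pi})),
\]
and the inclusion $G_n\Omega^i_{X,\pi} \hookrightarrow \cH^0(G_n\underline{\Omega}^i_{X,\pi})$ (again from diagram~\eqref{eq:diagram-main-iso(iterated)}) forces the two sheaf maps to coincide.

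Applying $H^j_\m$ to the resulting factorisation $\Omega^i_{X,\pi} \to \underline{\Omega}^i_{X,\pi} \xrightarrow{\phi} G_n\Omega^i_{X,\pi}$, whose composition is $C^{-1}_{n,\pi}$, concludes the argument: Proposition~\ref{prop:iso-iterated-def-pre-k-F-injective} ensures the composed map is injective for $j \leq d-i$, hence so is the first arrow, yielding pseudo-pre-$k$-Du Bois along $\pi$. The main technical hurdle is the compatibility verification above, i.e.\ pinning down that the non-canonical derived-category lift $\phi$ restricts on $\cH^0$ to the canonically defined $C^{-1}_{n,\pi}$; the remainder of the proof is a formal combination of the preceding lemmas.
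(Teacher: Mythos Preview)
Your proposal is correct and follows essentially the same route as the paper's proof: treat $i=0$ via Proposition~\ref{prop:BST}, then for $0<i\leq k$ combine Lemma~\ref{lemma:iso-key-factorisation-main-theorem} with Lemma~\ref{lem:derived-cat-factor} to factor $\underline{C}^{-1}_{n,\pi}$ through $G_n\Omega^i_{X,\pi}[0]$, pass to $\cH^0$, and conclude using Proposition~\ref{prop:iso-iterated-def-pre-k-F-injective}. The paper's argument is terser at the compatibility step (it simply writes ``by taking $\cH^0$ and looking at~\eqref{eq:diagram-main-iso(iterated)}''), whereas you spell out why the non-canonical lift $\phi$ must restrict to $C^{-1}_{n,\pi}$ on $\cH^0$ via the injectivity of $G_n\Omega^i_{X,\pi}\hookrightarrow \cH^0(G_n\underline{\Omega}^i_{X,\pi})$; this is a welcome clarification but not a different strategy.
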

\noindent 
Note that (2) for $r=0$ is equivalent to $X$ being pre-$(k-1)$-Du Bois along $\pi$ 
{since $\tau^{>0}\underline{\sO}_{X,\pi}=R^{>0}\pi_{*}\sO_Y(-E)$ by \eqref{eq:du bois isolated} and 
$\tau^{>0}\underline{\Omega}^i_{X,\pi}=R^{>0}\pi_{*}\Omega^i_Y(\log E)(-E)$ for all $i>0$ by Definition \ref{def:Du Bois Complex(isolated)}.}
In general, (2) is satisfied when we reduce a pre-$(k-1)$-Du Bois singularity from characteristic zero modulo $p \gg 0$, and so this result will fit into an inductive argument proving one direction of Theorem \ref{thm:intro-main}.

\begin{proof}
{If $k=0$, then the assertion follows from Proposition \ref{prop:BST}.}
Fix $0 < i \leq  k$. By \eqref{eq:G}, we have the following exact triangle:
\begin{equation*}
G_n\Omega^{i}_{X,\pi} \to G_n\underline{\Omega}^i_{X,\pi} \xrightarrow{r} \mathrm{Cone}\Big(G_n\Omega^{i}_{X,\pi} \to G_n\underline{\Omega}^i_{X,\pi}  \Big) \xrightarrow{+1}.
\end{equation*}
Since the composition
    \begin{equation*}
    \underline{\Omega}^i_{X,\pi} \xrightarrow{\underline{C}^{-1}_{n,\pi}} G_n\underline{\Omega}^i_{X,\pi} \xrightarrow{r} \mathrm{Cone}\Big(G_n\Omega^{i}_{X,\pi} \to G_n\underline{\Omega}^i_{X,\pi}  \Big)
    \end{equation*}
    is zero by Lemma \ref{lemma:iso-key-factorisation-main-theorem}, we have a factorisation
\[
\underline{C}^{-1}_{n,\pi} \colon \underline{\Omega}^i_{X,\pi} \to G_n\Omega^{i}_{X,\pi} \to G_n\underline{\Omega}^i_{X,\pi}
\]
by Lemma \ref{lem:derived-cat-factor}.

By taking $\mathcal{H}^0$ and looking at \eqref{eq:diagram-main-iso(iterated)}, we get a factorisation:
\[
C_{n,\pi}^{-1}\colon \Omega^i_{X,\pi} \to\underline{\Omega}^i_{X,\pi}\to G_n\Omega^{i}_{X,\pi}\, (\subseteq \mathcal{H}^0(G_n\underline{\Omega}^{i}_{X,\pi})).
\]
This immediately implies the sought-after statement thanks to Proposition \ref{prop:iso-iterated-def-pre-k-F-injective}. 
\end{proof}

\begin{lemma}\label{lem:tersor and higher direct image}
    Let $X_A$ be an affine scheme of finite type over a Noetherian ring $A$.
    Let $\pi_A\colon Y_A\to X_A$ be a proper morphism over $A$, and let $\cF_A$ be a coherent $\sO_{Y_A}$-module which is free over $A$.
    Suppose that we can represent $R\pi_{A,*}\cF_A$ as a bounded complex of $\cO_{X_A}$-coherent sheaves which are free over $A$ and such that all images and kernels of the maps in the complex are free over  $A$. Further, assume that $R^j\pi_{A,*}\cF_A$ is free over $A$ for all $j\geq 0$.
    Then, for every point $s\in \Spec\,A$, we have a quasi-isomorphism
    \[
    R\pi_{A,*}\cF_A\otimes_A k(s) \cong R\pi_{s,*}\cF_s,
    \]
    where $Y_s\coloneqq Y\times_A \Spec\,k(s)$, $\pi_s\coloneqq \pi_A|_{Y_s}$, and $\cF_s\coloneqq \cF_A|_{Y_s}$.
\end{lemma}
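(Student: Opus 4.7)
The plan is to identify the given complex $\cM^\bullet$ representing $R\pi_{A,*}\cF_A$, after tensoring with $k(s)$ over $A$, with $R\pi_{s,*}\cF_s$, using the strong $A$-flatness built into the hypotheses as the sole input beyond standard machinery.

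First, writing $Z^j$, $B^j$, $H^j = R^j\pi_{A,*}\cF_A$ for the kernels, images, and cohomology sheaves of $\cM^\bullet$, the hypotheses yield short exact sequences
\[
0 \to Z^j \to \cM^j \to B^{j+1} \to 0 \qquad\text{and}\qquad 0 \to B^j \to Z^j \to H^j \to 0
\]
of $A$-free, hence $A$-flat, coherent sheaves. Tensoring with $k(s)$ over $A$ preserves their exactness, and a short diagram chase gives
\[
H^j\bigl(\cM^\bullet \otimes_A k(s)\bigr) \;\cong\; H^j \otimes_A k(s) \;=\; R^j\pi_{A,*}\cF_A \otimes_A k(s).
\]
Since each $\cM^j$ is $A$-flat, the bounded complex $\cM^\bullet$ is $K$-flat over $A$, so the natural map $\cM^\bullet \otimes^L_A k(s) \to \cM^\bullet \otimes_A k(s)$ is a quasi-isomorphism, and $\cM^\bullet \otimes_A k(s)$ represents the derived base change $R\pi_{A,*}\cF_A \otimes^L_A k(s)$.

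Second, I would invoke derived base change for proper morphisms of Noetherian schemes applied to the Cartesian square
\[
\begin{tikzcd}
Y_s \arrow[r, "v"] \arrow[d, "\pi_s"'] & Y_A \arrow[d, "\pi_A"] \\
X_s \arrow[r, "u"] & X_A,
\end{tikzcd}
\]
which supplies a quasi-isomorphism $Lu^* R\pi_{A,*}\cF_A \cong R\pi_{s,*} Lv^*\cF_A$. Since $\cF_A$ is $A$-free we have $Lv^*\cF_A = \cF_s$; and $Lu^*$ on complexes of $\sO_{X_A}$-modules agrees with $(-)\otimes^L_A k(s)$ via $\sO_{X_s} = \sO_{X_A} \otimes_A k(s)$. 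Combining with the first step gives
\[
R\pi_{A,*}\cF_A \otimes_A k(s) \;\cong\; R\pi_{A,*}\cF_A \otimes^L_A k(s) \;\cong\; R\pi_{s,*}\cF_s,
\]
as required.

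The main technical input is derived base change for proper morphisms, which is classical. A self-contained alternative bypasses it by working with \v{C}ech complexes on a finite affine cover $\{U_i\}$ of $Y_A$ (available since $\pi_A$ is proper and $X_A$ is affine, so $Y_A$ is separated and quasi-compact): the \v{C}ech complex $\check{C}^\bullet(\{U_i\}, \cF_A)$ is a bounded complex of $A$-flat quasi-coherent $\sO_{X_A}$-modules representing $R\pi_{A,*}\cF_A$, its tensor product with $k(s)$ over $A$ agrees with the \v{C}ech complex $\check{C}^\bullet(\{U_i \times_A \Spec\,k(s)\}, \cF_s)$ representing $R\pi_{s,*}\cF_s$ by affine base change, and the comparison with $\cM^\bullet \otimes_A k(s)$ follows because any quasi-isomorphism between the $K$-flat complexes $\cM^\bullet$ and $\check{C}^\bullet$ over $A$ remains a quasi-isomorphism after $-\otimes_A k(s)$.
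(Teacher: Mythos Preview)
Your \v{C}ech alternative is correct and is essentially what underlies the paper's proof: the paper obtains a comparison map from \stacksproj{08HY}, reduces to checking $R^j\pi_{A,*}\cF_A \otimes_A k(s) \cong R^j\pi_{s,*}\cF_s$ in each degree, and then cites \cite[Lemma~4.1]{Hara98} for that isomorphism---which is precisely your \v{C}ech computation. So on that route you and the paper agree, your version being more self-contained.

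Your primary route via derived base change, however, has a gap as written. You identify $Lu^{*}$ with $(-)\otimes^L_A k(s)$ and assert $Lv^{*}\cF_A=\cF_s$; both require hypotheses not present in the lemma. The first needs $\cO_{X_A}$ flat over $A$ (so that $\cO_{X_s}=\cO_{X_A}\otimes_A k(s)$ agrees with $\cO_{X_A}\otimes^L_A k(s)$), and the second needs Tor-independence of the square. For a concrete failure take $A=k[t]$, $X_A=Y_A=\Spec\,k[t,x]/(tx)$ with $\pi_A$ the identity, and let $\cF_A$ be the structure sheaf of the closed subscheme $\{x=0\}$, which is free of rank one over $A$; all the hypotheses of the lemma are then trivially satisfied. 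At $s=(t)$ one computes that $Lu^{*}\cF_A$ has nonzero cohomology in infinitely many degrees (the Tor-groups $\mathrm{Tor}^R_{2m}(R/(x),R/(t))\cong k$ are all nonzero), whereas $R\pi_{A,*}\cF_A\otimes^L_A k(s)=k=\cF_s=R\pi_{s,*}\cF_s$. Thus $Lu^{*}$ and $(-)\otimes^L_A k(s)$ genuinely differ here, and $Lv^{*}\cF_A\neq\cF_s$. In the paper's applications all models are taken flat over $A$, so your main route would succeed there; but for the lemma as stated, lead with the \v{C}ech argument.
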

\begin{proof}
    By \cite[\href{https://stacks.math.columbia.edu/tag/08HY}{Tag 08HY}]{stacks-project}, we have a natural map 
   \[
   R\pi_{A,*}\cF_A\otimes_A k(s) \to R\pi_{s,*}\cF_s. 
   \]
   Since we assumed that {$R\pi_{A,*}$ can be represented by a complex with all the finitely many} terms and all images and kernels being free over $A$, we have
   \[
   \cH^j(R\pi_{A,*}\cF_A\otimes^L_A k(s))\cong R^j\pi_{A,*}\cF_A\otimes_{A} k(s).
   \]
   Thus, it suffices to show that a natural map
   \[
   R^j\pi_{A,*}\cF_A\otimes_{A} k(s) \to R^j\pi_{s,*}\cF_s
   \]
   is an isomorphism.
   Since $X_A$ is affine, we have $R^j\pi_{A,*}\cF_A=H^j(Y_A,\cF_A)$ and $R^j\pi_{s,*}\cF_s=H^j(Y_s,\cF_s)$.
   Now, the assertion follows from \cite[Lemma 4.1]{Hara98}.
\end{proof}

In what follows, we explain how to use the above result and reduction modulo $p>0$ to show that singularities of pre-$k$-$F$-injective type are pre-$k$-Du-Bois.
\begin{proposition}\label{prop:reduction of pseudo-pre Du Bois}
    With the notation of Setting \ref{setting:iso}, suppose that $k$ is of characteristic zero and $X$ is pre-$(k-1)$-Du Bois.
    {Given a model $X_A$ (resp.~$\pi_A$, $x_A$) of $X$ (resp.~$\pi$, $x$)} over a finitely generated $\mathbb{Z}$-subalgebra $A$ of $k$,
    there exists a Zariski-open set $S\subseteq \Spec\,A$ of closed points such that
    \[
    H^j_{x_s}(\Omega^i_{X_s,\pi_s})\to \mathbb{H}^j_{x_s}(\underline{\Omega}^i_{X_s,\pi_s})
    \]
    is surjective for all closed points $s \in S$,  $0<i\leq k$, and $j\geq 0$, {where $X_s\coloneqq X_A\times_A k(s)$, $\pi_s\coloneqq \pi_A\times_A k(s)$, and $x_s\coloneqq x_A\times_A k(s)$.}
\end{proposition}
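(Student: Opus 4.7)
The plan is to spread out the characteristic-zero data to a finitely generated model over $A$ and then transfer the cohomology vanishing encoded by pre-$(k-1)$-Du Boisness to positive characteristic via flat base change. First, I would fix a model $X_A$, $\pi_A\colon Y_A\to X_A$, $E_A$, $x_A$ of the given data over a finitely generated $\mathbb{Z}$-subalgebra $A\subseteq k$. After replacing $\Spec A$ by a Zariski-open subset, standard spreading-out guarantees that $Y_A$ is smooth over $A$, $E_A$ is relatively SNC, $\pi_A$ is proper birational and is an isomorphism on $Y_A\setminus E_A$, and the coherent sheaves $\Omega^i_{Y_A/A}(\log E_A)(-E_A)$ are $A$-flat.

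Combining Lemmas \ref{lem:independence of resolutions,i=0,iso}(2) and \ref{lem:independence of resolutions,i>0,iso}(2) with Definitions \ref{def:Du Bois Complex(isolated)} and \ref{def:pre-k-Du Bois(isolated)}, the hypothesis that $X$ is pre-$(k-1)$-Du Bois is equivalent to the vanishing $R^{>0}\pi_*\Omega^i_Y(\log E)(-E)=0$ for all $0\leq i\leq k-1$, where we interpret $\Omega^0_Y(\log E)(-E)$ as $\cO_Y(-E)$. After further shrinking $A$, this vanishing lifts to the relative setting, and the complexes $R\pi_{A,*}\Omega^i_{Y_A/A}(\log E_A)(-E_A)$ meet the freeness hypotheses of Lemma \ref{lem:tersor and higher direct image}. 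That lemma then produces a Zariski-open $S\subseteq\Spec A$ of closed points such that for every $s\in S$ and every $0\leq i\leq k-1$, the base-changed complex $R\pi_{s,*}\Omega^i_{Y_s}(\log E_s)(-E_s)$ is concentrated in degree zero; equivalently, $\underline{\Omega}^i_{X_s,\pi_s}\cong\Omega^i_{X_s,\pi_s}[0]$. Consequently, the map $H^j_{x_s}(\Omega^i_{X_s,\pi_s})\to\mathbb{H}^j_{x_s}(\underline{\Omega}^i_{X_s,\pi_s})$ is an isomorphism, and hence surjective, in the range $0<i\leq k-1$.

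The main obstacle is the boundary case $i=k$, where pre-$(k-1)$-Du Boisness does not directly provide a vanishing of $R^{>0}\pi_{s,*}\Omega^k_{Y_s}(\log E_s)(-E_s)$. Here I would proceed via the long exact local-cohomology sequence attached to the triangle $\Omega^k_{X_s,\pi_s}[0]\to\underline{\Omega}^k_{X_s,\pi_s}\to\tau^{>0}\underline{\Omega}^k_{X_s,\pi_s}\xrightarrow{+1}$. Because $\tau^{>0}\underline{\Omega}^k_{X_s,\pi_s}$ is supported at $x_s$, one has $\mathbb{H}^j_{x_s}(\tau^{>0}\underline{\Omega}^k_{X_s,\pi_s})\cong R^j\pi_{s,*}\Omega^k_{Y_s}(\log E_s)(-E_s)$ for $j\geq 1$, so the desired surjectivity reduces to injectivity of the connecting maps $R^j\pi_{s,*}\Omega^k_{Y_s}(\log E_s)(-E_s)\to H^{j+1}_{x_s}(\Omega^k_{X_s,\pi_s})$ for $j\geq 1$. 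I would establish these injectivities by dualising via Lemma \ref{lem:preliminaries-duality} and Grothendieck duality on $Y_s$, leveraging the vanishings $R^{>0}\pi_{s,*}\Omega^i_{Y_s}(\log E_s)(-E_s)=0$ for $i<k$ already obtained to control the Ext terms appearing in the spectral sequence computing $\mathbb{H}^{\bullet}_{x_s}(\underline{\Omega}^k_{X_s,\pi_s})$.
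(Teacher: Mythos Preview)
Your treatment of the range $0<i\leq k-1$ is fine and essentially agrees with what the paper does implicitly: spread out the vanishing $R^{>0}\pi_*\Omega^i_Y(\log E)(-E)=0$ and apply Lemma~\ref{lem:tersor and higher direct image}. The serious gap is at $i=k$.

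For $i=k$ you reduce to injectivity of the connecting maps $R^j\pi_{s,*}\Omega^k_{Y_s}(\log E_s)(-E_s)\to H^{j+1}_{x_s}(\Omega^k_{X_s,\pi_s})$ and then say you will obtain this ``by dualising via Lemma~\ref{lem:preliminaries-duality} and Grothendieck duality on $Y_s$, leveraging the vanishings $R^{>0}\pi_{s,*}\Omega^i_{Y_s}(\log E_s)(-E_s)=0$ for $i<k$''. This does not work. After dualising, the relevant object is $R^{d-j}\pi_{s,*}\Omega^{d-k}_{Y_s}(\log E_s)$, whereas your vanishings concern $\Omega^i_{Y_s}(\log E_s)(-E_s)$ for $i<k$, which dualise to $\Omega^{d-i}_{Y_s}(\log E_s)$ with $d-i>d-k$; there is simply no overlap. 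More conceptually, the spectral sequence you mention for $\mathbb{H}^{\bullet}_{x_s}(\underline{\Omega}^k_{X_s,\pi_s})$ involves only the single sheaf $\Omega^k_{Y_s}(\log E_s)(-E_s)$, so vanishings for other $i$ cannot feed into it.

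What is missing is a genuine characteristic-zero input. The paper first invokes Lemma~\ref{lem:inj of delta} (which ultimately rests on $E_1$-degeneration, Lemma~\ref{lem:E_1-deg}) to prove that the map $H^j_\m(\Omega^i_{X,\pi})\to\mathbb{H}^j_\m(\underline{\Omega}^i_{X,\pi})$ is already surjective over the field $k$ for every $0<i\leq k$, \emph{including} $i=k$. Only then does one dualise (getting injectivity of $\Ext$-groups over $k$), spread out, and pass to closed fibres via Claims~\ref{claim:ext vanishing} and~\ref{claim:ext isomorphisms}. The Hodge-theoretic step (Lemma~\ref{lem:inj of delta}) is exactly what produces new information at level $i=k$ from the pre-$(k-1)$-Du Bois hypothesis; there is no substitute for it working directly on $X_s$.
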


\begin{proof}
   Fix $0< i\leq k$ and $j \geq 0$.
   {By Lemma \ref{lem:inj of delta} and local duality},
   the map
   \begin{equation}\label{eq:ext injective}
       \Ext^{-j}(\underline{\Omega}^{i}_{X,\pi},\omega_{X}^{\bullet}) \to \Ext^{-j}(\Omega^{i}_{X,\pi},\omega_{X}^{\bullet}).
   \end{equation}
   is injective. 
   We need to show that
   \[
   \Ext^{-j}(\underline{\Omega}^{i}_{X_s,\pi},\omega_{X_s}^{\bullet}) \to \Ext^{-j}(\Omega^{i}_{X_s,\pi},\omega_{X_s}^{\bullet})
   \]
   is injective as well.
    By enlarging $A$, we can take flat models  
    \[
   \begin{tikzcd}
   E_A \arrow[r] \arrow[d] & Y_A \arrow[d,"\pi_A"] \\
\{x_A\} \arrow[r] & X_A
\end{tikzcd}
\]
over $A$ such that
    \begin{enumerate}
        \item $X_A=\Spec\,R_A$ is an affine scheme such that
        $X_A\setminus x_A$ is smooth over $\Spec\,A$,
        \item $\pi_A\colon Y_A\to X_A$ is a projective birational morphism with $\Exc(\pi_A)=E_A$,
        \item $Y_A$ is smooth over $\Spec\,A$,
        \item $E_A$ is simple normal crossing over $\Spec\,A$, and
        \item $\mathcal{H}^b(\underline{\Omega}^a_{X_A,\pi_A})$ is free over $A$ for all $a>0$ and $b\geq 0$ by generic freeness \cite[\href{https://stacks.math.columbia.edu/tag/051S}{Tag 051S}]{stacks-project}, {where $\underline{\Omega}^{a}_{X_A/A,\pi_A}\coloneqq R\pi_{A,*}\Omega^a_{Y_A/A}(\log\,E_A)(-E_A)$.}
    \end{enumerate}
    
    We show the following two claims:

    \begin{claim}\label{claim:ext vanishing}
     For any flat $\sO_{X_A}$-module $M$ {which is free over $A$}, we have 
   \[
   \Ext^l_{\sO_{X_A}}(\sO_{X_s}, M)=0
   \]
   for every $l\neq c \coloneqq \mathrm{codim}_{X_A}({X_s})$.
   \end{claim}
   \begin{proof}[Proof of Claim \ref{claim:ext vanishing}]
       This is contained in the proof of \cite[Proposition 7.4]{Schwede}. 
   \end{proof}

    \begin{claim}\label{claim:ext isomorphisms}
    We have isomorphisms
    \begin{align*}
        & \Ext^{-j}(\Omega^{i}_{X_s,\pi},\omega_{X_s}^{\bullet})\cong \Ext^c_{\sO_{X_A}}(\sO_{X_s}, \Ext^{-j-c}_{\sO_{X_A}}(\Omega^{i}_{X_A,\pi_A}, \omega^{\bullet}_{X_A}))\\
        & \Ext^{-j}(\underline{\Omega}^{i}_{X_s,\pi},\omega_{X_s}^{\bullet})\cong \Ext^c_{\sO_{X_A}}(\sO_{X_s}, \Ext^{-j-c}_{\sO_{X_A}}(\underline{\Omega}^{i}_{X_A,\pi_A}, \omega^{\bullet}_{X_A})),
    \end{align*}
    where $c \coloneqq \mathrm{codim}_{X_A}(X)$.
    \end{claim}
    \begin{proof}[Proof of Claim \ref{claim:ext isomorphisms}]
   We only prove the latter isomorphism since the proof of the former is analogous.
   By applying Lemma \ref{lem:tersor and higher direct image} to $\cF_A=\Omega^a_{Y/A}(\log E_A)(-E_A)$, we get a quasi-isomorphism
   \[
\underline{\Omega}^{a}_{X_A/A,\pi}\otimes^{L}_{A} k(s)\to R\pi_{s,*}\Omega^a_{Y_s/s}(\log\,E_s)(-E_s)\cong \underline{\Omega}^a_{X_s,{\pi_s}}.
   \]
Then we obtain
   \begin{align*}
       \Ext^{-j}_{\sO_{X_s}}(\underline{\Omega}_{X_s,{\pi_s}}^i, \omega_{X_s}^{\bullet})
       &=R^{-j}\Hom_{\sO_{X_A}}(\underline{\Omega}^i_{X_s,{\pi_s}}, \omega^{\bullet}_{X_A})\\
       &=R^{-j}\Hom_{\sO_{X_A}}(\underline{\Omega}^{i}_{X_A/A,\pi_A}\otimes^{L}_{A}k(s), \omega^{\bullet}_{X_A})\\
       &=R^{-j}\Hom_{\sO_{X_A}}(\underline{\Omega}^{i}_{X_A/A,\pi_A}\otimes^{L}_{\sO_{X_A}}\sO_{X_s}, \omega^{\bullet}_{X_A})\\
       &=R^{-j}\Hom_{\sO_{X_A}}(\sO_{X_s}, R\Hom_{\sO_{X_A}}(\underline{\Omega}^{i}_{X_A/A,\pi_A}, \omega^{\bullet}_{X_A}))
   \end{align*}
   where we used Grothendieck duality for the first equality,  \cite[\href{https://stacks.math.columbia.edu/tag/0661}{Tag 0661}]{stacks-project} for the third equality, and \cite[\href{https://stacks.math.columbia.edu/tag/0A65}{Tag 0A65}]{stacks-project} for the fourth equality.
   Consider the spectral sequence:
   \begin{multline*}
   R^a\Hom_{\sO_{X_A}}(\sO_{X_s}, R^b\Hom_{\sO_{X_A}}(\underline{\Omega}^{i}_{X_A/A,\pi_A}, \omega^{\bullet}_{X_A}))\Rightarrow\\
   R^{a+b}\Hom_{\sO_{X_A}}(\sO_{X_s}, R\Hom_{\sO_{X_A}}(\underline{\Omega}^{i}_{X_A/A,\pi_A}, \omega^{\bullet}_{X_A})).
   \end{multline*}
   By enlarging $A$ if necessary, we may assume that $R^b\Hom_{\sO_{X_A}}(\underline{\Omega}^{i}_{X_A/A,\pi_A}, \omega^{\bullet}_{X_A})$ is a free $A$-module for every $b\geq 0$.
   Then, by Claim \ref{claim:ext vanishing}, we obtain
   \begin{multline*}
       R^{-j}\Hom_{\sO_{X_A}}(\sO_{X_s}, R\Hom_{\sO_{X_A}}(\underline{\Omega}^{i}_{X_A,\pi_A}, \omega^{\bullet}_{X_A}))\cong\\
       R^c\Hom_{\sO_{X_A}}(\sO_{X_s}, R^{-j-c}\Hom_{\sO_{X_A}}(\underline{\Omega}^{i}_{X_A,\pi_A}, \omega^{\bullet}_{X_A})),
   \end{multline*}
   as desired.
   \end{proof}
   By the injectivity of \eqref{eq:ext injective} and by enlarging $A$ if necessary, we can assume that 
   \[
   \Ext^{-j-c}_{\sO_{X_A}}(\underline{\Omega}^{i}_{X_A,\pi_A},\omega_{X_A}^{\bullet}) \hookrightarrow \Ext^{-j-c}_{\sO_{X_A}}(\Omega^{i}_{X_A,\pi_A},\omega_{X_A}^{\bullet})
   \]
   is injective. 
   Let $C$ be the cokernel of the above map. {By enlarging $A$, we may assume that $C$ is free over $A$.}
   Then we have an exact sequence
   \begin{multline*}
       0\overset{\text{Claim\,\ref{claim:ext vanishing}}}{=}\Ext^{c-1}_{\sO_{X_A}}(\sO_{X_s}, C)\to\\ \Exc^c_{\sO_{X_A}}(\sO_{X_s}, \Ext^{-j-c}_{\sO_{X_A}}(\underline{\Omega}^{i}_{X_A,\pi_A},\omega_{X_A}^{\bullet}))\to 
   \Exc^c_{\sO_{X_A}}(\sO_{X_s}, \Ext^{-j-c}_{\sO_{X_A}}(\Omega^{i}_{X_A,\pi_A},\omega_{X_A}^{\bullet})).
   \end{multline*}
   Thus we obtain the desired injectivity by Claim \ref{claim:ext isomorphisms}.
\end{proof}

\begin{theorem}\label{thm:k-F-inj to k-Du Bois}
    With the notation of Setting \ref{setting:iso}, fix a positive integer $k\geq 0$.
    Suppose that $k$ is of characteristic zero.
    Furthermore, suppose that
    given a model of $X$ over a finitely generated $\mathbb{Z}$-subalgebra $A$ of $k$,
    there exists a Zariski-dense set of closed points $S\subseteq \Spec\,A$ such that 
    $X_s$ is pre-$k$-$F$-injective for every $s\in S$. 
    Then $X$ is pre-$k$-Du Bois.
\end{theorem}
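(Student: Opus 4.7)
The plan is to induct on $k$. For the base case $k=0$, pre-$0$-$F$-injectivity reduces to ordinary $F$-injectivity (since $B\Omega^{0}_{X,\pi}=0$), and the conclusion follows from Bhatt--Schwede--Takagi. Equivalently, Proposition \ref{prop:BST} gives that each $X_s$ is pseudo-Du Bois along $\pi_s$ for $s \in S$; spreading the relevant injectivity back to the generic fibre (by exactly the argument in the last paragraph below) shows that $X$ is pseudo-Du Bois, and we conclude via Lemma \ref{lem:independence of resolutions,i=0,iso}(3).

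For the inductive step, assume the statement for $k-1$, so by the inductive hypothesis $X$ is pre-$(k-1)$-Du Bois. The strategy is to verify the hypotheses of Theorem \ref{thm:main(isolated)} for $X_s$ for all $s$ in a dense subset $S' \subseteq S$, deduce that each such $X_s$ is pseudo-pre-$k$-Du Bois along $\pi_s$, and then transfer this conclusion back to $X$. Fix compatible flat models $X_A$, $Y_A$, $\pi_A$, $E_A$, $A_A$ over a finitely generated $\bZ$-subalgebra $A_0 \subseteq k$, as in the proof of Proposition \ref{prop:reduction of pseudo-pre Du Bois}. Hypothesis (1) of Theorem \ref{thm:main(isolated)} is precisely our standing assumption, so only the vanishing (2), namely $R^{>0}\pi_{s,*}\Omega^i_{Y_s}(\log E_s)(p^r A_s) = 0$ for $0 \leq i \leq k-1$ and all $r \geq 0$, requires verification.

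This vanishing is the main technical obstacle. For $r$ large enough that $\lfloor p^r A \rfloor$ is sufficiently anti-effective, it follows from Serre vanishing applied to the ample $\bQ$-divisor $p^r A$, uniformly in $s \in \Spec A_0$ after shrinking. For the remaining finitely many small values of $r$ one proves the vanishing first in characteristic zero on $Y$: the case $r = 0$ is exactly pre-$(k-1)$-Du Boisness (as remarked after Theorem \ref{thm:main(isolated)}), while for $1 \leq r < r_0$ one invokes a logarithmic Kodaira--Akizuki--Nakano vanishing of Esnault--Viehweg type for the ample anti-effective $\bQ$-divisor $p^r A$ on the smooth pair $(Y,E)$. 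Generic freeness applied to the coherent $A_0$-modules $R^j\pi_{A,*}\Omega^i_{Y_A/A}(\log E_A)(p^r A_A)$ then propagates the characteristic-zero vanishing to a dense open subset of $\Spec A_0$, and hence to $X_s$ for $s$ in a dense subset $S'' \subseteq \Spec A_0$.

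Applying Theorem \ref{thm:main(isolated)} to $X_s$ for $s \in S' \coloneqq S \cap S''$ yields that $X_s$ is pseudo-pre-$k$-Du Bois along $\pi_s$. To transfer this back to $X$, by local duality the injectivity defining pseudo-pre-$k$-Du Boisness is equivalent to surjectivity of
\[
\Ext^{-j}_{\cO_{X_s}}(\underline{\Omega}^i_{X_s,\pi_s}, \omega^{\bullet}_{X_s}) \to \Ext^{-j}_{\cO_{X_s}}(\Omega^i_{X_s,\pi_s}, \omega^{\bullet}_{X_s})
\]
for $0 \leq i \leq k$ and $0 \leq j \leq d-i$. Arguing as in the proof of Proposition \ref{prop:reduction of pseudo-pre Du Bois}, using Lemma \ref{lem:tersor and higher direct image} and generic freeness, the formation of these $\Ext$ groups commutes with base change after possibly enlarging $A_0$, so surjectivity on a dense subset of closed fibres forces surjectivity on the generic fibre, that is, on $X$. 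Hence $X$ is pseudo-pre-$k$-Du Bois, and by Theorem \ref{thm:pseudo-pre-k-Du Bois=pre-k-Du Bois in char 0} (also recorded in Lemma \ref{lem:independence of resolutions,i>0,iso}(3)) it is pre-$k$-Du Bois, completing the induction.
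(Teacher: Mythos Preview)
Your inductive scheme and invocation of Theorem \ref{thm:main(isolated)} match the paper, and the approach is essentially sound; two points deserve comment. (For the base case the paper simply cites \cite{Schwede}; the reference \cite{Bhatt-Schwede-Takagi} goes the other direction.)

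\emph{Condition (2).} The divisor $p^rA$ depends on $p=p(s)$, so there is no uniform $r_0$ across $S$, and your appeal to log Kodaira--Akizuki--Nakano in characteristic zero for ``$1\leq r<r_0$'' does not parse (nor would log KAN give $R^{>0}\pi_*$ vanishing for all $i\leq k-1$ in any case). The paper's argument is simpler: relative Serre vanishing on the family $\pi_A$ produces an $M$ such that the higher direct images vanish once the twist is at least $M$ times the ample divisor; after shrinking $S$ to $p(s)>M$ one has $p(s)^r\geq M$ for every $r\geq 1$, so only $r=0$ remains, and that case is pre-$(k-1)$-Du Boisness spread out via Lemma \ref{lem:tersor and higher direct image}, exactly as you say.

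\emph{The transfer.} Your route---spread pseudo-pre-$k$-Du Bois from $X_s$ to $X$ and then apply Theorem \ref{thm:pseudo-pre-k-Du Bois=pre-k-Du Bois in char 0}---is valid, but it is not ``as in Proposition \ref{prop:reduction of pseudo-pre Du Bois}'': that proposition pushes \emph{injectivity} of an Ext map from the generic fibre to closed fibres, whereas you need \emph{surjectivity} from closed fibres back to the generic fibre. The missing observation is that once the relevant Ext modules over $X_A$ are free over $A$, the Koszul resolution of $\cO_{X_s}$ identifies $\Ext^c_{\cO_{X_A}}(\cO_{X_s},-)$ in Claim \ref{claim:ext isomorphisms} with $(-)\otimes_A k(s)$, so the cokernel base-changes correctly and vanishing on a dense set of fibres forces generic vanishing. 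The paper's endgame is different: it combines the injectivity from Theorem \ref{thm:main(isolated)} with the \emph{surjectivity} on $X_s$ supplied by Proposition \ref{prop:reduction of pseudo-pre Du Bois} (which rests on Lemma \ref{lem:inj of delta} in characteristic zero), together with the Hodge-theoretic vanishing $\mathcal{H}^j(\underline{\Omega}^k_X)=0$ for $j\geq d-k$ from \cite{Peter-Steenbrink(Book)} and \cite{Friedman-Laza2} spread out, to conclude directly that $X_s$ is already pre-$k$-Du Bois along $\pi_s$; this is a pure vanishing of higher direct images and transfers to $X$ without further work.
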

\begin{proof}
{We take flat models over $A$ as in the proof of Proposition \ref{prop:reduction of pseudo-pre Du Bois}.
We prove the assertion by induction on $k$. The case where $k=0$ follows from \cite[Theorem 6.1]{Schwede}. 
Assume that $k>0$.
Since $X_s$ is pre-$(k-1)$-$F$-injective for every $s\in S$,
it follows that $X$ is pre-$(k-1)$-Du Bois by induction hypothesis. Then, by Lemma \ref{lem:inj of delta}, the map
    \[
    H^j_{x}(\Omega^k_{X,\pi})\to \mathbb{H}^j_{x}(\underline{\Omega}^k_{X,\pi})
    \]
    is surjective for all $j\geq 0$. As $k>0$, we can apply Proposition \ref{prop:reduction of pseudo-pre Du Bois}
     to deduce that
     \begin{equation}\label{eq:map of local coh}
        H^j_{x_s}(\Omega^k_{X_s,\pi_s})\to \mathbb{H}^j_{x_s}(\underline{\Omega}^k_{X_s,\pi_s})
    \end{equation}
    is surjective for all $j\geq 0$.}

    {Next, we check the conditions (1) and (2) in Theorem \ref{thm:main(isolated)} are satisfied.}
    The condition (1) is satisfied by assumption.
     To verify (2), we take an ample $\Q$-divisor $B$ on $Y$ such that $\lfloor B\rfloor =-E$.
    By enlarging $A$ if necessary, we can take a flat model $B_A$ of $B$.
    By relative Serre vanishing, there exists $M\gg0$ such that
    \[R^{>0}\pi_{A,*}\Omega^i_{Y_A/A}(\log E_A)(mB_A)=0\]
    for all $m \geq M$ and $0\leq i \leq k-1$.
    Then, by taking the characteristics $p(s)$ of the residue fields $k(s)$  to be sufficiently large, we may assume that 
    \[
    R^{>0}\pi_{s,*}\Omega^i_{Y_s}(\log E_s)(p(s)^rB_s)=0
    \]
    {for all $0\leq i \leq k-1$ and $r>0$.
    
    Moreover, $R^{>0}\pi_{*}\Omega^i_{Y}(\log E)(-E)=0$ since $X$ is pre-$(k-1)$-Du Bois. Indeed, if $i=0$, then this follows from the isomorphism $R^{>0}\pi_{*}\sO_Y(-E)\cong \tau^{>0}\underline{\sO}_X$ (see \eqref{eq:du bois isolated} and Lemma \ref{lem:independence of resolutions,i=0,iso} (2)), and
    if $i>0$, then we have $\underline{\Omega}^i_{X,x}=\underline{\Omega}^i_{X}$ and thus $R^{>0}\pi_{*}\Omega^i_{Y}(\log E)(-E)=\mathcal{H}^{>0}(\underline{\Omega}^i_{X})=0$ for all $0<i\leq k-1$.
    
    Now by enlarging $A$, we can assume that $R^{>0}\pi_{*,s}\Omega^i_{Y_s}(\log E_s)(-E_s)=0$ for $0\leq i<k-1$ {by Lemma \ref{lem:tersor and higher direct image}}.
    Thus, we can apply Theorem \ref{thm:main(isolated)} to conclude $X_s$ is pseudo-pre-$k$-Du Bois along $\pi_s$, and the map \eqref{eq:map of local coh} is injective for all $j\leq d-k$.
    Now, essentially the same argument as in Theorem \ref{thm:pseudo-pre-k-Du Bois=pre-k-Du Bois in char 0} shows that $\mathcal{H}^j(\underline{\Omega}^k_{X_s,\pi_s})=0$ for $j\leq d-k-1$.
    }
    Moreover, by \cite[Theorem 7.29 (b)]{Peter-Steenbrink(Book)} and \cite[Lemma 2.5]{Friedman-Laza2}, we have $\mathcal{H}^j(\underline{\Omega}^k_X)=0$ for all $j\geq d-k$, which implies $\mathcal{H}^j(\underline{\Omega}^k_{X_s,\pi_s})=0$ after enlarging $A$.
    
    Therefore $X_s$ is pre-$k$-Du Bois along $\pi_s$, and we can easily see that $X$ is also pre-$k$-Du Bois.
\end{proof}

\begin{corollary}\label{cor:k-F-inj to k-Du Bois}
    In Theorem \ref{thm:k-F-inj to k-Du Bois}, further assume that $\Omega^i_{X}$ and $\Omega^i_{X_s}$ are reflexive for all $i\leq k$, and that  
    $X_s$ is $k$-$F$-injective (see Definition \ref{def:intro-kFinj}). Then $X$ is $k$-Du Bois. 
\end{corollary}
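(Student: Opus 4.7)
The plan is to reduce directly to Theorem~\ref{thm:k-F-inj to k-Du Bois} and then identify the eh-sheaves $\Omega^i_{X,\eh}$ with the ordinary K\"ahler differentials $\Omega^i_X$ using the reflexivity hypothesis, so that the pre-$k$-Du Bois conclusion of that theorem automatically becomes the $k$-Du Bois conclusion we want.

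First, I would convert the $F$-side hypothesis. Since $\Omega^i_{X_s}$ is assumed to be reflexive for every $0 \leq i \leq k$, Corollary~\ref{cor:k-F-inj of reflexvie case(isorated)} implies that $k$-$F$-injectivity of $X_s$ in the sense of Definition~\ref{def:intro-kFinj} is equivalent to pre-$k$-$F$-injectivity of $X_s$ in the sense of Definition~\ref{def:prekFinj-iso}. Hence the Zariski-dense set of closed points $S \subseteq \Spec A$ over which $X_s$ is $k$-$F$-injective produces a Zariski-dense set of pre-$k$-$F$-injective fibres, and Theorem~\ref{thm:k-F-inj to k-Du Bois} applies to give that $X$ is pre-$k$-Du Bois; equivalently, the natural map $\Omega^i_{X,\eh}[0] \to \underline{\Omega}^i_X$ is a quasi-isomorphism for every $0 \leq i \leq k$.

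Second, I would upgrade pre-$k$-Du Bois to $k$-Du Bois by identifying $\Omega^i_{X,\eh}$ with $\Omega^i_X$. By Lemma~\ref{lem:independence of resolutions,i>0,iso}(1), valid without the $R(d)$-condition, we have $\Omega^i_{X,\eh} = \Omega^i_{X,\pi} = \pi_*\Omega^i_Y(\log E)(-E)$. The reflexivity of $\Omega^i_X$ combined with Proposition~\ref{prop:omega-pullback-reflexive} then identifies $\pi_*\Omega^i_Y(\log E)(-E)$ with $\Omega^i_X$ itself. Composing these two isomorphisms with the pre-$k$-Du Bois quasi-isomorphism above shows that the natural map $\Omega^i_X[0] \to \underline{\Omega}^i_X$ is a quasi-isomorphism for every $0 \leq i \leq k$, which is exactly the $k$-Du Bois property.

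There is no real obstacle to this argument: every substantive step is packaged inside Theorem~\ref{thm:k-F-inj to k-Du Bois} and the two reflexive-case comparisons (Corollary~\ref{cor:k-F-inj of reflexvie case(isorated)} and Proposition~\ref{prop:omega-pullback-reflexive}), and the only thing to check tacitly is that the natural maps $\Omega^i_X \to \Omega^i_{X,\pi} \to \Omega^i_{X,\eh}$ used throughout are compatible, which is immediate from their constructions as pullback along $\pi$ and sheafification in the eh-topology, respectively.
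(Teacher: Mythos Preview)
Your proposal is correct and follows essentially the same approach as the paper: apply Theorem~\ref{thm:k-F-inj to k-Du Bois} (after converting $k$-$F$-injectivity to pre-$k$-$F$-injectivity via Corollary~\ref{cor:k-F-inj of reflexvie case(isorated)}) to obtain pre-$k$-Du Bois, then use reflexivity of $\Omega^i_X$ and Proposition~\ref{prop:omega-pullback-reflexive} to identify $\Omega^i_X \cong \Omega^i_{X,h}$ and upgrade to $k$-Du Bois. The paper's proof is terser but uses exactly the same ingredients.
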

\noindent Recall that local freeness and torsion-freeness are constructible properties in a family \cite[Appendix E]{Ulrich-Torsten}.
Together with this and \cite[Proposition 1.1]{Har80}, we know that reflexivity is also a constructible property. Thus, we can assume that $\Omega^i_{X_s}$ is reflexive in the above.
\begin{proof}
    Firstly, $X$ is pre-$k$-Du Bois by Theorem \ref{thm:k-F-inj to k-Du Bois}. Moreover, 
    $\Omega_X^{i}\to \Omega^{i}_{X,h}$ is an isomorphism for all $0 \leq i \leq k$, because $\Omega^i_X$ is reflexive. Hence $X$ is $k$-Du-Bois.  
\end{proof}

\subsection{$k$-Du Bois implies $k$-$F$-injective}

In order to show that $k$-Du Bois singularities are $k$-$F$-injective for reductions modulo infinitely many $p>0$, we prove the following result.
\begin{lemma}\label{lem:ordinarity conj}
    Let $Y$ be a $d$-dimensional smooth projective variety over a field of characteristic zero and let $E$ be a reduced divisor with simple normal crossing support.
     Suppose that the ordinarity conjecture \ref{conj:ordinarity} holds.
     Then given a model of $Y$ over a finitely generated $\mathbb{Z}$-subalgebra $A$ of $k$,
    there exists a Zariski-dense set of closed points $S\subseteq \Spec\,A$ such that $H^{j}(Y_s, B^i_{Y_s}(\log E_s))=0$ for all $i,j\geq 0$.
\end{lemma}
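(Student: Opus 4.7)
The plan is to deduce the logarithmic vanishing from the ordinarity conjecture applied to the (finitely many) closed strata of $E$, using the residue short exact sequence to induct on the number of components. Let $D_1,\dots,D_m$ be the irreducible components of $E$, and for each $I\subseteq\{1,\dots,m\}$ set $E_I\coloneqq\bigcap_{i\in I}D_i$ (with $E_\emptyset=Y$). Since $(Y,E)$ is SNC, each $E_I$ is smooth projective. Enlarging $A$ if needed, we spread out all of the finitely many strata $E_I$ together with the pair $(Y,E)$ and the residue exact sequences attached to them.

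Next I would apply the ordinarity conjecture to the smooth projective scheme $\bigsqcup_I E_I$ (rather than to each stratum separately); this yields a Zariski-dense set $S\subseteq\Spec A$ of closed points such that, for every $s\in S$, every stratum $E_{I,s}$ is ordinary in the sense of Bloch--Kato, i.e.\ $H^j(E_{I,s},B\Omega^k_{E_{I,s}})=0$ for all $I$ and all $j,k\geq 0$. Working with the single disjoint union is the key device that avoids having to intersect separately chosen Zariski-dense sets, one per stratum.

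Then I would prove the claim by induction on $m$: for every $s\in S$,
\[
H^j(Y_s,B\Omega^i_{Y_s}(\log E_s))=0 \quad\text{for all } i,j\geq 0.
\]
The base case $m=0$ is immediate since $B\Omega^i_{Y_s}(\log 0)=B\Omega^i_{Y_s}$ and $Y_s=E_{\emptyset,s}$. For the inductive step, pick a component $D=D_m$ and invoke the residue short exact sequence of Lemma~\ref{lem:residue exact sequences}(2):
\[
0\to B\Omega^i_{Y_s}(\log (E-D)_s)\to B\Omega^i_{Y_s}(\log E_s)\to B\Omega^{i-1}_{D_s}(\log (E-D)|_{D,s})\to 0.
\]
The two outer pairs $(Y_s,(E-D)_s)$ and $(D_s,(E-D)|_{D,s})$ are SNC pairs with at most $m-1$ components, and all of their strata are strata of $(Y_s,E_s)$, so the induction hypothesis applies on the same set $S$. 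The associated long exact sequence then kills the middle term, giving the desired vanishing.

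The main obstacle is the compatibility of the density statement with the induction: the ordinarity conjecture as stated produces a Zariski-dense set of ordinary closed points for each smooth projective model, but the intersection of finitely many such sets is not automatically Zariski-dense. Using the disjoint union $\bigsqcup_I E_I$ circumvents this difficulty by producing one Zariski-dense set $S$ that simultaneously guarantees ordinarity for every stratum that appears in the induction. The remaining technicalities---enlarging $A$ so that the strata, the residue sequences, and their formation commute with base change to $k(s)$---are standard spreading-out arguments of the flavor of Lemma~\ref{lem:tersor and higher direct image}.
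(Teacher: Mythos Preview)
Your proof is correct and follows essentially the same approach as the paper: apply the ordinarity conjecture to the disjoint union of all strata of $(Y,E)$ to obtain a single Zariski-dense set on which every stratum is ordinary, then use the residue exact sequence of Lemma~\ref{lem:residue exact sequences}(2) to deduce the logarithmic vanishing by induction on the number of components. The paper's proof is more terse and does not spell out the induction, but the key device---packaging the strata into a single disjoint union to avoid intersecting Zariski-dense sets---is identical.
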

\begin{proof}
By the ordinarity conjecture \ref{conj:ordinarity} applied to the disjoint union of all the strata $V_1, \ldots, V_m$ of $E$ (including $Y$), we get that
    \[
H^j(V_{{l},s}, B\Omega^i_{V_{{l},s}})=0
    \]
    for all $i, j, l\geq 0$, where $V_{l,s}$ denotes the fibre over $s \in \Spec\, A$ of a model of $V_{l}$. Thus, we can conclude the proof by the residue exact sequence for $B^j_{Y_s}(\log\,E_s)$ (see Lemma \ref{lem:residue exact sequences} (2)).
\end{proof}

\begin{theorem}\label{thm:pre-k-Du Bois to pre-$F$-inj in p>0}
    {We take $x\in X$ as in} Setting \ref{setting:iso-charp} and suppose that $k$ is of characteristic $p>d-1$.
    Fix an open immersion $X\hookrightarrow \overline{X}$ into a normal projective variety $\overline{X}$.
    Suppose that 
    \begin{enumerate}
        \item there exists a log resolution $\overline{\pi}\colon \overline{Y}\to \overline{X}$ with $\overline{E}\coloneqq \Exc(\overline{\pi})$,
        \item there exists a sufficiently ample Cartier divisor $H_{\overline{X}}$ on $\overline{X}$ such that $\overline{H}+\overline{E}$ is simple normal crossing where $\overline{H}\coloneqq\pi^{*}H_{\overline{X}}$,
        \item $(\overline{H}, \overline{E}|_{\overline{H}})$ lifts to $W_2(k)$, and
        \item {$H^j(\overline{Y}, B\Omega^i_{\overline{Y}}(\log \overline{E}+\overline{H}))=0$ for all $i,j\geq 0$.}
    \end{enumerate}
    Set $Y\coloneqq \overline{\pi}^{-1}(X)$ and $\pi\coloneqq \overline{\pi}|_Y\colon Y\to X$.
    If $X$ is pre-$k$-Du Bois along $\pi$ for some integer $k>0$, then
    $C_{\pi}\colon H_{\m}^j(\Omega^i_{X,\pi})\to H_{\m}^j(G\Omega^i_{X,\pi})$ is injective when $0<i\leq k$ and $j\leq d-i$.
\end{theorem}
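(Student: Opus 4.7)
The plan is to translate the injectivity of $C^{-1}_{\pi}$ on local cohomology at $x$ into a global surjectivity statement on $\overline{Y}$, and then obtain the latter from the logarithmic Deligne--Illusie splitting on $\overline{H}$ (available via the $W_2(k)$-lift) combined with the vanishing hypothesis (4). First, since $X$ is pre-$k$-Du Bois along $\pi$, Proposition \ref{prop:iso-k-Du-Bois-C-surjective} gives that $B\Omega^{i}_{X,\pi}$, $Z\Omega^{i}_{X,\pi}$, and $G\Omega^{i}_{X,\pi}$ are concentrated in degree $0$ and agree with the pushforwards of their counterparts on $Y$ for $0\leq i\leq k$, and that $C_{\pi}$ is surjective. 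We then obtain a short exact sequence
\begin{equation*}
0 \to \Omega^{i}_{X,\pi} \xrightarrow{C^{-1}_{\pi}} G\Omega^{i}_{X,\pi} \to B\Omega^{i+1}_{X,\pi} \to 0,
\end{equation*}
so that the desired injectivity of $C^{-1}_{\pi}$ on $H^j_{\m}$ for $j\leq d-i$ becomes, via the long exact sequence in local cohomology, equivalent to the surjectivity of the connecting map $H^{j-1}_{\m}(G\Omega^{i}_{X,\pi})\twoheadrightarrow H^{j-1}_{\m}(B\Omega^{i+1}_{X,\pi})$.

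Next, I transfer the problem from $X$ to the compactification $\overline{Y}$: writing $H^j_{\m}$ of a coherent sheaf on the affine $X$ as a colimit of global cohomologies on $\overline{Y}$ twisted by multiples of the ample Cartier divisor $\overline{H}$, and invoking local/Grothendieck/Serre duality together with Lemma \ref{lem:preliminaries-duality} to dualise the $B$, $Z$, and $G$ subsheaves, the required surjectivity becomes equivalent to an injectivity of maps between global cohomology groups on $\overline{Y}$ with coefficients in Cartier sub-complexes of $F_{*}\Omega^{\bullet}_{\overline{Y}}(\log \overline{E})$. By ampleness of $\overline{H}$ and Serre vanishing for sufficiently high twists, these reduce to controlling the cohomology of $B\Omega^{\ast}_{\overline{Y}}(\log \overline{E})$ and $B\Omega^{\ast}_{\overline{Y}}(\log \overline{E}+\overline{H})$ directly.

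To extract the latter, I exploit the $W_2(k)$-lift of $(\overline{H},\overline{E}|_{\overline{H}})$: since $p>d-1\geq\dim\overline{H}$, the logarithmic Deligne--Illusie theorem gives a decomposition of $F_{*}\Omega^{\bullet}_{\overline{H}}(\log \overline{E}|_{\overline{H}})$ in $D(\overline{H})$, whence a splitting of the log Cartier short exact sequences on $\overline{H}$. Feeding this into the residue short exact sequences from Lemma \ref{lem:residue exact sequences} for the pair $(\overline{Y},\overline{E}+\overline{H})$,
\begin{equation*}
0 \to \mathcal{S}\Omega^{i}_{\overline{Y}}(\log \overline{E}) \to \mathcal{S}\Omega^{i}_{\overline{Y}}(\log \overline{E}+\overline{H}) \to \mathcal{S}\Omega^{i-1}_{\overline{H}}(\log \overline{E}|_{\overline{H}}) \to 0
\end{equation*}
for $\mathcal{S}\in\{B,Z,G\}$, together with the vanishing $H^j(\overline{Y},B\Omega^{i}_{\overline{Y}}(\log\overline{E}+\overline{H}))=0$ from assumption (4), yields the required vanishings of $H^{\ast}(\overline{Y},B\Omega^{i}_{\overline{Y}}(\log\overline{E}))$ in the relevant degrees, which dualise back to the sought-after surjectivity on $H^{j-1}_{\m}$.

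The hardest part will be the duality-based translation in the second paragraph: correctly matching $\Omega^{i}_{X,\pi}$ with $\overline{\pi}_{*}\Omega^{i}_{\overline{Y}}(\log \overline{E})(-\overline{E})$, carefully bookkeeping the twists by $\overline{H}$ together with the log structure along $\overline{E}$, and tracking which dual sheaves appear. A further subtlety is that the $W_2$-lift is only of $(\overline{H},\overline{E}|_{\overline{H}})$ rather than of $(\overline{Y},\overline{E})$, so the resulting splittings live on $\overline{H}$ and must be propagated to $\overline{Y}$ through the residue sequences together with the vanishing (4).
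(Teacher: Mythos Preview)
Your overall architecture is right—dualize the local cohomology statement to a surjectivity on higher direct images over the compactification, then prove that surjectivity using assumption (4) and the $W_2$-lift of $(\overline{H},\overline{E}|_{\overline{H}})$—but two of your intermediate steps do not work as written.

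First, the passage in your second paragraph is not valid: there is no identity expressing $H^j_{\m}$ of a coherent sheaf on the affine $X$ as a colimit of global cohomologies on $\overline{Y}$ twisted by $n\overline{H}$. The correct translation is precisely the one you allude to but then abandon: local duality turns injectivity of $C^{-1}_{\pi}$ on $H^j_{\m}$ into surjectivity of the dual map on $\Ext^{-j}(-,\omega^{\bullet}_X)$, and Grothendieck duality together with Lemma~\ref{lem:preliminaries-duality} identifies this with surjectivity of
\[
C\colon R^{d-j}\pi_{*}Z\Omega^{d-i}_{Y}(\log E)\longrightarrow R^{d-j}\pi_{*}\Omega^{d-i}_{Y}(\log E)
\]
for $(d-i)+(d-j)\geq d$. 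Only \emph{after} this step does one pass to $\overline{Y}$ and invoke the ample twist $\overline{H}$, via a Leray spectral sequence and Serre/Fujita vanishing, to reduce to a statement about $H^{j}(\overline{Y},-\otimes\cO_{\overline{Y}}(\overline{H}))$.

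Second, in your third paragraph you use the residue sequences of Lemma~\ref{lem:residue exact sequences} relating $\log\overline{E}$ to $\log(\overline{E}+\overline{H})$, together with a Deligne--Illusie \emph{splitting} on $\overline{H}$. Neither piece lines up with what is needed. After the correct dualization, one must compare $\Omega^{a}_{\overline{Y}}(\log\overline{E})\otimes\cO_{\overline{Y}}(\overline{H})$ with $\Omega^{a}_{\overline{Y}}(\log(\overline{E}+\overline{H}))$; this is the \emph{restriction} sequence
\[
0\to \Omega^{a}_{\overline{Y}}(\log(\overline{E}+\overline{H}))\to \Omega^{a}_{\overline{Y}}(\log\overline{E})\otimes\cO_{\overline{Y}}(\overline{H})\to \Omega^{a}_{\overline{H}}(\log\overline{E}|_{\overline{H}})\otimes\cO_{\overline{H}}(\overline{H})\to 0,
\]
not the residue sequence (which carries no $\cO(\overline{H})$-twist on the rightmost term and so gives no leverage from ampleness). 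The $W_2$-lift then enters not through a splitting of Cartier sequences but through \emph{Akizuki--Nakano vanishing} on $\overline{H}$ (valid since $p>d-1=\dim\overline{H}$): it kills $H^{j}(\overline{H},\Omega^{a}_{\overline{H}}(\log\overline{E}|_{\overline{H}})\otimes\cO_{\overline{H}}(\overline{H}))$ for $a+j\geq d$, so that $H^{j}$ of the middle term surjects onto $H^{j}$ of the left, and assumption~(4) then finishes the job. A mere splitting $Z\Omega^{a}_{\overline{H}}\cong B\Omega^{a}_{\overline{H}}\oplus\Omega^{a}_{\overline{H}}$ gives no vanishing of $H^{\ast}(B\Omega^{a}_{\overline{H}})$ and hence cannot produce the ``required vanishings of $H^{\ast}(\overline{Y},B\Omega^{i}_{\overline{Y}}(\log\overline{E}))$'' you claim.
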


\begin{proof}
We divide the proof into several claims:
\begin{claim}\label{cl1}
        The map $H^j(\overline{Y}, Z^i_{\overline{Y}}(\log \overline{E}+\overline{H}))
        \xrightarrow{C} 
        H^j(\overline{Y}, \Omega^i_{\overline{Y}}(\log \overline{E}+\overline{H}))$
        is surjective for all $i,j\geq 0$.
    \end{claim}
    \begin{proof}[Proof of Claim \ref{cl1}]
        The assertion follows from {(4)} and
        the short exact sequence
        \[
        0\to B\Omega^i_{\overline{Y}}(\log \overline{E}+\overline{H}) \to Z\Omega^i_{\overline{Y}}(\log \overline{E}+\overline{H})\xrightarrow{C} \Omega^i_{\overline{Y}}(\log \overline{E}+\overline{H}) \to 0. \qedhere
        \]
    \end{proof}
\begin{claim}\label{cl2}
        The map $C \colon H^j(\overline{Y}, Z\Omega^i_{\overline{Y}}(\log \overline{E})\otimes \sO_{\overline{Y}}(\overline{H}))
        \to 
        H^j(\overline{Y}, \Omega^i_{\overline{Y}}(\log \overline{E})\otimes \sO_{\overline{Y}}(\overline{H}))$
        is surjective for $i\geq 0$ and $i+j\geq d$.
    \end{claim}
    \begin{proof}[Proof of Claim \ref{cl2}]
       We have the following commutative diagram:
       \[
       \begin{tikzcd}
           Z\Omega^i_{\overline{Y}}(\log \overline{E}+\overline{H}) \arrow[r,"C"]\arrow[d] &\Omega_{\overline{Y}}^{i}(\log \overline{E}+\overline{H})\arrow[d] \\
           Z\Omega^i_{\overline{Y}}(\log \overline{E})\otimes \sO_{\overline{Y}}(\overline{H})\arrow[r,"C"] & \Omega_{\overline{Y}}^{i}(\log \overline{E})\otimes \sO_{\overline{Y}}(\overline{H})
       \end{tikzcd}
       \]
with vertical maps being natural inclusions.
Consider the restriction exact sequence:
\[
0\to \Omega_{\overline{Y}}^{i}(\log \overline{E}+\overline{H})\to \Omega_{\overline{Y}}^{i}(\log \overline{E})\otimes \sO_{\overline{Y}}(\overline{H})\to \Omega_{\overline{H}}^{i}(\log \overline{E}|_{\overline{H}})\otimes \sO_{\overline{H}}(\overline{H}) \to 0.
\]
Since $i+j\geq d>\dim\,\overline{H}$, we have $H^j(\overline{H}, \Omega_{\overline{H}}^{i}(\log \overline{E}|_{\overline{H}})\otimes \sO_{\overline{H}}(\overline{H}))=0$ by Akizuki-Nakano vanishing of $W_2(k)$-liftable pair \cite[Corollary 3.8]{Hara98}.
Combining with Claim \ref{cl1}, the assertion holds.
    \end{proof}

    \begin{claim}\label{cl3}
        The map
        $R^{j}\overline{\pi}_{*}Z_{\overline{Y}}^i(\log \overline{E})\xrightarrow{C}
    R^{j}\overline{\pi}_{*}\Omega^i_{\overline{Y}}(\log \overline{E})$
    is surjective for $i\geq 0$ and $i+j\geq d$.
    \end{claim}
    \begin{proof}[Proof of Claim \ref{cl3}]
    {Firstly, by Fujita vanishing, we can assume that \[R^{>0}\overline{\pi}_{*}\Omega^i_{\overline{Y}}(\log \overline{E})\otimes \sO_{\overline{Y}}(m\overline{H})=0\] for all $i\geq 0$ and $m>0$ since $H_{\overline X}$ was chosen to be sufficiently ample. Here, we note that the choice of $\overline{H}$ does not depend on $p$.
    By the exact triangles,
    \begin{align*}
        &R\overline{\pi}_{*}B\Omega^i_{\overline{Y}}(\log \overline{E})\to R\overline{\pi}_{*}Z\Omega^i_{\overline{Y}}(\log \overline{E})\to 
        R\overline{\pi}_{*}\Omega^i_{\overline{Y}}(\log \overline{E})\xrightarrow{+1}\\
        &R\overline{\pi}_{*}Z\Omega^{i-1}_{\overline{Y}}(\log \overline{E})\to F_{*}R\overline{\pi}_{*}\Omega^{i-1}_{\overline{Y}}(\log \overline{E})\to 
        R\overline{\pi}_{*}B\Omega^{i}_{\overline{Y}}(\log \overline{E})\xrightarrow{+1}
    \end{align*}
    we can see that $R^{>0}\overline{\pi}_{*}Z\Omega^i_{\overline{Y}}(\log \overline{E})\otimes \sO_{\overline{Y}}(\overline{H})=0$ for all $i\geq 0$.}
    
    Now, consider the spectral sequence
        \[
        E_2^{m,n}=H^m\Big(\overline{Y}, R^n\overline{\pi}_{*}(\Omega^{i}_{\overline{Y}}(\log \overline{E})\otimes \sO_{\overline{Y}}(\overline{H}))\Big)
        \Rightarrow H^{m+n}\Big(\overline{Y}, \Omega^{i}_{\overline{Y}}(\log \overline{E})\otimes \sO_{\overline{Y}}(\overline{H}))\Big).
        \]
        Since we have
        \[
        R^n\overline{\pi}_{*}(\Omega^{i}_{\overline{Y}}(\log \overline{E})\otimes \sO_{\overline{Y}}(\overline{H}))\cong R^n\overline{\pi}_{*}\Omega^{i}_{\overline{Y}}(\log \overline{E})\otimes \sO_{\overline{X}}(H_{\overline{X}}),
        \]
        we may assume $E^{m,n}=0$ when $m>0$. 
        Thus, 
        \[
        H^j(\overline{Y}, \Omega^{i}_{\overline{Y}}(\log \overline{E})\otimes \sO_{\overline{Y}}(\overline{H})))\cong H^0(\overline{Y}, R^j\overline{\pi}_{*}(\Omega^{i}_{\overline{Y}}(\log \overline{E})\otimes \sO_{\overline{Y}}(\overline{H}))).
        \]
        Similarly, we obtain 
        \[
        H^j(\overline{Y}, Z\Omega^{i}_{\overline{Y}}(\log \overline{E})\otimes \sO_{\overline{Y}}(\overline{H})))\cong H^0(\overline{Y}, R^j\overline{\pi}_{*}(Z\Omega^{i}_{\overline{Y}}(\log \overline{E})\otimes \sO_{\overline{Y}}(\overline{H}))).
        \]
        Thus, by Claim \ref{cl2}, the map
        \[
           H^0(\overline{Y}, R^j\overline{\pi}_{*}(Z\Omega^{i}_{\overline{Y}}(\log \overline{E})\otimes \sO_{\overline{Y}}(\overline{H})))\xrightarrow{C}
        H^0(\overline{Y}, R^j\overline{\pi}_{*}(Z\Omega^{i}_{\overline{Y}}(\log \overline{E})\otimes \sO_{\overline{Y}}(\overline{H})))
        \]
        is surjective.
        We have the following diagram:
        \[
        \begin{tikzcd}
            H^0(\overline{Y}, R^j\overline{\pi}_{*}(Z\Omega^{i}_{\overline{Y}}(\log \overline{E})\otimes \sO_{\overline{Y}}(\overline{H})))\arrow[r,"C"]\arrow[d]& H^0(\overline{Y}, R^j\overline{\pi}_{*}(\Omega^{i}_{\overline{Y}}(\log \overline{E})\otimes \sO_{\overline{Y}}(\overline{H})))\arrow[d] \\
 R^j\overline{\pi}_{*}(Z\Omega^{i}_{\overline{Y}}(\log \overline{E}))\otimes \sO_{\overline{Y}}(\overline{H})\arrow[r,"C"] & R^j\overline{\pi}_{*}(\Omega^{i}_{\overline{Y}}(\log \overline{E}))\otimes \sO_{\overline{Y}}(\overline{H}),
        \end{tikzcd}
        \]
where we can assume that the right vertical map is surjective as $\overline{H}$ is taken to be sufficiently ample.
Thus, the lower horizontal map is surjective, which shows the assertion.
\end{proof}

\begin{claim}\label{cl4}
    $X$ is pre-$k$-$F$-injective.
\end{claim}
 \begin{proof}[Proof of Claim \ref{cl4}]
     By the definition of pre-$k$-$F$-injectivity, we aim to show that for every integer $0\leq i\leq k$,
     \begin{enumerate}
         \item $C_{\pi}\colon Z\Omega^i_{X,\pi}\to \Omega^i_{X,\pi}$ is surjective, and
         \item $C_{\pi}\colon H_{\m}^j(\Omega^i_{X,\pi})\to H_{\m}^j(G\Omega^i_{X,\pi})$ is injective for $i+j\leq d$.
     \end{enumerate}
     By Proposition \ref{prop:iso-k-Du-Bois-C-surjective}, we obtain (1).
     We show (2). By Proposition \ref{prop:iso-k-Du-Bois-C-surjective} again, (2) is equivalent to the injectivity
     \[C\colon \mathbb{H}_{\m}^j(R\pi_{*}\Omega^i_{Y}(\log E)(-E))\to \mathbb{H}_{\m}^j(R\pi_{*}G\Omega^i_{Y}(\log E)(-E))
     \]
     for all $0\leq i\leq k$ and $i+j\leq d$, where $Y\coloneqq \pi^{-1}(X)$.
     By local duality, this is equivalent to the surjectivity of
     \[
     R^{-j}\text{Hom}(R\pi_*G\Omega^i_Y(\log E)(-E), \omega^\bullet_X) \to R^{-j}\text{Hom}(R\pi_*\Omega^{i}_Y(\log E)(-E), \omega^\bullet_X).
     \]
     By Grothendieck duality (Lemma \ref{lem:preliminaries-duality}), 
     this map becomes:
\[
R^{d-j}\pi_*Z\Omega^{d-i}_Y(\log E) \to R^{d-j}\pi_*\Omega^{d-i}_Y(\log E).
\] 
Since $d-i+d-j\geq d$, this is surjective by Claim \ref{cl3}.
 \end{proof}
\end{proof}

\begin{theorem}\label{thm:pre-k-F-inj type to pre-$k$-Du Bois}
    With the notation of Setting \ref{setting:iso}, suppose that $k$ is of characteristic zero and $X$ is pre-$k$-Du Bois for some integer $k>0$.
    Then,
    given a model of $X$ over a finitely generated $\mathbb{Z}$-subalgebra $A$ of $k$,
    there exists a Zariski-dense set of closed points $S\subseteq \Spec\,A$ such that 
    $X_s$ is pre-$k$-$F$-injective for all $s\in S$.
\end{theorem}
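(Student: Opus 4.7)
The plan is to verify, at a Zariski-dense set $S \subseteq \Spec A$ of closed points, all of the hypotheses needed to apply Theorem \ref{thm:pre-k-Du Bois to pre-$F$-inj in p>0} to the fibres $X_s$. First, choose once and for all a projective compactification $X \hookrightarrow \overline X$, a log resolution $\overline\pi \colon \overline Y \to \overline X$ extending the given resolution $\pi \colon Y \to X$ with exceptional divisor $\overline E$, and a sufficiently ample Cartier divisor $H_{\overline X}$ on $\overline X$ such that $\overline H + \overline E$ is simple normal crossing, where $\overline H \coloneqq \overline\pi^* H_{\overline X}$. After enlarging $A$, we obtain flat $A$-models of all of this data.

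Hypotheses (1) and (2) of Theorem \ref{thm:pre-k-Du Bois to pre-$F$-inj in p>0} then hold for every closed point $s$ by construction. Hypothesis (3), the $W_2(k(s))$-liftability of $(\overline H_s, \overline E_s|_{\overline H_s})$, holds for every closed point by simply reducing modulo $p^2$ from the characteristic-zero model. Hypothesis (4), the vanishing
\[
H^j(\overline Y_s, B\Omega^i_{\overline Y_s}(\log(\overline E_s + \overline H_s))) = 0 \quad \text{for all } i, j \geq 0,
\]
is the place where the ordinarity conjecture enters: by Lemma \ref{lem:ordinarity conj} applied to $\overline Y$ with the simple normal crossing divisor $\overline E + \overline H$, there is a Zariski-dense set $S_1$ of closed points of $\Spec A$ at which (4) holds.

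It remains to verify that $X_s$ is pre-$k$-Du Bois along $\pi_s$ for $s$ in a Zariski-open subset of $\Spec A$, since Theorem \ref{thm:pre-k-Du Bois to pre-$F$-inj in p>0} is stated under this hypothesis. By Definitions \ref{def:Du Bois(isolated)} and \ref{def:pre-k-Du Bois(isolated)}, this reduces to the vanishings $R^{>0}\pi_{s,*} \cO_{Y_s}(-E_s) = 0$ and $R^{>0}\pi_{s,*}\Omega^i_{Y_s/s}(\log E_s)(-E_s) = 0$ for $0 < i \leq k$. The characteristic-zero versions of these vanishings hold by the assumption that $X$ is pre-$k$-Du Bois; generic freeness together with Lemma \ref{lem:tersor and higher direct image} transfers them to every closed fibre over a Zariski-open subset $S_2 \subseteq \Spec A$.

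Setting $S$ to be the intersection of $S_1$, $S_2$, and the set of closed points of residue characteristic $> d - 1$, which remains Zariski-dense, Theorem \ref{thm:pre-k-Du Bois to pre-$F$-inj in p>0} provides the injectivity of the inverse Cartier operator on local cohomology for each $s \in S$. Combining this with the surjectivity of $C_{\pi_s} \colon Z\Omega^i_{X_s,\pi_s} \to \Omega^i_{X_s,\pi_s}$, which follows from Proposition \ref{prop:iso-k-Du-Bois-C-surjective} since $X_s$ is pre-$k$-Du Bois along $\pi_s$, we conclude that $X_s$ is pre-$k$-$F$-injective. The main obstacle in this argument is securing hypothesis (4), which is ultimately what forces the appeal to the ordinarity conjecture; the base-change arguments needed to propagate the pre-$k$-Du Bois property to positive characteristic are comparatively routine once generic freeness is invoked.
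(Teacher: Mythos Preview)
Your approach is essentially the same as the paper's, and the overall structure is correct. There is, however, one genuine gap: Theorem~\ref{thm:pre-k-Du Bois to pre-$F$-inj in p>0} as stated only yields the injectivity of $C_{\pi_s}^{-1}$ on local cohomology for $0 < i \leq k$, not for $i = 0$. Since pre-$k$-$F$-injectivity (Definition~\ref{def:prekFinj-iso}) requires the condition for all $0 \leq i \leq k$, you still owe the $i=0$ case, which is precisely $F$-injectivity of $X_s$. The paper handles this separately by invoking \cite[Theorem B]{Bhatt-Schwede-Takagi}, which shows that Du Bois singularities in characteristic zero have $F$-injective reductions on a Zariski-dense set under the weak ordinarity conjecture. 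You should add this sentence to close the argument.

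A minor stylistic remark: your justification of hypothesis (3) (``simply reducing modulo $p^2$'') is correct in spirit but slightly informal; the paper makes this precise by enlarging $A$ so that it is smooth over $\bZ$ and citing \cite[Proposition 2.5]{ABL}. Similarly, the paper is a bit more explicit about how to extend the given resolution $\pi$ to a log resolution of the compactification (by writing $\pi$ as a blow-up and extending the centre), though your formulation is fine in characteristic zero.
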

\begin{proof}
    Take an open immersion
    $X\hookrightarrow \overline{X}$ into a normal projective variety $\overline{X}$.
    Let $Z\subset X$ be the closed subscheme such that the blow-up along $Z$ coincides with $\pi$.
    Let $\overline{\pi}\colon \overline{Y}\to {\overline{X}}$ be a composition of the blow-up along the closure $\overline{Z}\subset \overline{X}$ and a resolution that is an isomorphism over the smooth locus.
    Take a sufficiently ample divisor $H_{\overline{X}}$ on $\overline{X}$.
    Set $\overline{H}=\overline{\pi}^{*}H_{\overline{X}}$.
    By Bertini theorem, we may assume that $\overline{E}+\overline{H}$ is simple normal crossing, where $\overline{E}\coloneqq \Exc(\overline{\pi})$.
    Now, by enlarging $A$, we can find a Zariski-dense set $S$ of closed points such that the assumptions of Theorem \ref{thm:pre-k-Du Bois to pre-$F$-inj in p>0} are satisfied. In fact, in the case of (3), the pair $(H_s,E_s|_{H_s})$ lifts to $W_2(k)$ by \cite[Proposition 2.5]{ABL} as we can assume that $A$ is smooth over $\Z$.
    {As for (4), we use Lemma \ref{lem:ordinarity conj}.}
    Therefore, $C_{\pi}\colon H_{\m}^j(\Omega^i_{X_s,\pi})\to H_{\m}^j(G\Omega^i_{X_s,\pi})$ is injective for all $0<i\leq k$ and $j\leq d-i$.
    Recall that $\Omega^0_{X_s,\pi}=\sO_{X_s}$ and $G\Omega^0_{X_s,\pi}=F_{*}\sO_
{X_s}$.
    Thus, by \cite[Theorem B]{Bhatt-Schwede-Takagi}, $C_{\pi}\colon H_{\m}^j(\Omega^0_{X_s,\pi})\to H_{\m}^j(G\Omega^0_{X_s,\pi})$ is injective for all $j\leq d$.
    Finally, $C_{\pi} \colon Z\Omega^i_{X_s,\pi} \to \Omega^i_{X_s,\pi}$ is surjective for all $0\leq i\leq k$ by Proposition \ref{prop:iso-k-Du-Bois-C-surjective}.
    Thus, we conclude that $X_s$ is pre-$k$-$F$-injective.
\end{proof}

\begin{corollary}\label{cor:k-Du Bois to k-F-inj}
    With the notation of Setting \ref{setting:iso}, suppose that $k$ is of characteristic zero.
    Further assume that $\Omega^i_X$ is reflexive for all $0\leq i\leq k$ and $X$ is $k$-Du Bois for some integer $k>0$.
    Then, given a model of $X$ over a finitely generated $\mathbb{Z}$-subalgebra $A$ of $k$,
    there exists a Zariski-dense set of closed points $S\subseteq \Spec\,A$ such that 
    $X_s$ is $k$-$F$-injective for all $s\in S$.
\end{corollary}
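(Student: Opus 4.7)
The plan is to combine Theorem \ref{thm:pre-k-F-inj type to pre-$k$-Du Bois} with Corollary \ref{cor:k-F-inj of reflexvie case(isorated)}, bridging them by the constructibility of reflexivity in a family. Thus, in outline: reduce $k$-Du Bois to pre-$k$-Du Bois using reflexivity of $\Omega^i_X$, spread out to a pre-$k$-$F$-injective locus of closed points, then upgrade pre-$k$-$F$-injective to $k$-$F$-injective on that locus after enlarging $A$ to preserve reflexivity fibrewise.

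First, I would verify that the characteristic-zero hypotheses imply that $X$ is pre-$k$-Du Bois along $\pi$ in the sense of Definition \ref{def:pre-k-Du Bois(isolated)}. Since $\Omega^i_X$ is reflexive for $0 \leq i \leq k$, the natural map $\Omega^i_X \to \Omega^i_{X,h}$ is an isomorphism in this range, so the assumption that $X$ is $k$-Du Bois (i.e.\ $\underline{\Omega}^i_X \cong \Omega^i_X$) gives $\underline{\Omega}^i_X \cong \Omega^i_{X,h}$. Combined with Lemma \ref{lem:independence of resolutions,i=0,iso} and Lemma \ref{lem:independence of resolutions,i>0,iso}, which identify $\underline{\Omega}^i_{X,\pi}$ with $\underline{\Omega}^i_{X,\eh} = \underline{\Omega}^i_X$ and $\Omega^i_{X,\pi}$ with $\Omega^i_{X,\eh}$ in characteristic zero, this shows $\underline{\Omega}^i_{X,\pi} \cong \Omega^i_{X,\pi}[0]$, i.e.\ $X$ is pre-$k$-Du Bois along $\pi$.

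Next, I apply Theorem \ref{thm:pre-k-F-inj type to pre-$k$-Du Bois} to this pre-$k$-Du Bois singularity $X$. Spreading out over a finitely generated $\Z$-subalgebra $A \subseteq k$, there exists a Zariski-dense set of closed points $S \subseteq \Spec A$ such that $X_s$ is pre-$k$-$F$-injective for every $s \in S$.

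Finally, I invoke the constructibility of reflexivity noted just after Corollary \ref{cor:k-F-inj to k-Du Bois} (using \cite[Appendix E]{Ulrich-Torsten} together with \cite[Proposition 1.1]{Har80}): after possibly shrinking $S$ (equivalently, enlarging $A$), we may assume that $\Omega^i_{X_s}$ is reflexive for every $s \in S$ and $0 \leq i \leq k$. Then Corollary \ref{cor:k-F-inj of reflexvie case(isorated)} applies to each such $X_s$ and converts pre-$k$-$F$-injectivity into $k$-$F$-injectivity in the sense of Definition \ref{def:intro-kFinj}. No step is a real obstacle; the mild subtlety is the constructibility of the reflexivity condition, but this is already isolated in the discussion preceding this corollary.
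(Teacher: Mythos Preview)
Your proof is correct and follows essentially the same route as the paper: reduce $k$-Du Bois to pre-$k$-Du Bois via reflexivity, apply Theorem \ref{thm:pre-k-F-inj type to pre-$k$-Du Bois} to obtain pre-$k$-$F$-injectivity on a dense set of fibres, then use constructibility of reflexivity together with Corollary \ref{cor:k-F-inj of reflexvie case(isorated)} to upgrade to $k$-$F$-injectivity. The paper's proof is more terse and leaves the first step implicit, but the content is identical.
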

\begin{proof}
    Recall that reflexivity is a constructible property (see Corollary \ref{cor:k-F-inj to k-Du Bois}).
    Now, by Theorem \ref{thm:pre-k-F-inj type to pre-$k$-Du Bois} and Corollary \ref{cor:k-F-inj of reflexvie case(isorated)} we get that $X_s$ is $k$-$F$-injective.
\end{proof}

\section{Applications to Frobenius liftable singularities}
In what follows we give a proof of Theorem \ref{thm:Frobenius-application-intro} from the introduction.

\begin{lemma}\label{lem:F-lift to pre-k-F-inj}
    Let $X=\Spec\,R$ be a normal affine variety with isolated singularities over a perfect field of positive characteristic. We fix an integer $k\geq 0$. {Assume that $\Omega^i_{X,\pi}$ is reflexive for all integers $0\leq i \leq  k$.}
    If $X$ is Frobenius liftable, then it is pre-$k$-$F$-injective for all $k\geq 0$.
\end{lemma}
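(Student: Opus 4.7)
The plan is to use the Frobenius lift modulo $p^2$ to produce, via the Deligne--Illusie construction, a sheaf-level section of the Cartier operator on the smooth locus, which then extends globally by the reflexivity hypothesis, and to deduce both conditions of Definition \ref{def:prekFinj-iso} from this splitting (iterated, if necessary).

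Concretely, choose a Frobenius lift $\widetilde F \colon \widetilde X \to \widetilde X$ over $W_2(k)$. Its restriction to the smooth locus $U = X \setminus \{x\}$ yields, by the classical Deligne--Illusie construction, an $\mathcal O_X$-linear map $\phi_i \colon \Omega^i_U \to F_* Z\Omega^i_U$ with $C \circ \phi_i = \mathrm{id}_{\Omega^i_U}$ for every $i$ (defined by $\phi_1(d\tilde f) = \widetilde F^*(d\tilde f)/p \bmod p$ and $\phi_i = \bigwedge^i \phi_1$). Under our hypothesis, $\Omega^i_{X,\pi}$ is reflexive for $0 \leq i \leq k$, and hence so is $F_*\Omega^i_{X,\pi}$; one then checks that $Z\Omega^i_{X,\pi} = \ker(d \colon F_*\Omega^i_{X,\pi} \to F_*\Omega^{i+1}_{X,\pi})$ is reflexive as well, since the kernel of a map from a reflexive sheaf to a torsion-free sheaf on a normal scheme is reflexive. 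As $\{x\} \subseteq X$ has codimension at least $2$, $\phi_i$ extends uniquely to $\phi_i \colon \Omega^i_{X,\pi} \to Z\Omega^i_{X,\pi}$, and the identity $C_\pi \circ \phi_i = \mathrm{id}$ persists on $X$ by torsion-freeness. This immediately gives condition (1) of Definition \ref{def:prekFinj-iso}.

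For condition (2), the iterated Frobenius lift $\widetilde F^n$ (still a lift of $F^n$ modulo $p^2$) and the compatibilities $C_{n,n-1} \circ (F_*^{n-1}\phi_i) = \mathrm{id}$ produce coherent sections $\phi_i^{(n)} \coloneqq F_*^{n-1}\phi_i \circ \cdots \circ \phi_i \colon \Omega^i_{X,\pi} \to Z_n\Omega^i_{X,\pi}$ of each $C_{n,\pi}$, with $C_{n,n-1,\pi} \circ \phi_i^{(n)} = \phi_i^{(n-1)}$. In the limit these assemble into a section of $C_{\mathrm{perf},\pi} \colon Z\Omega^{i,\mathrm{perf}}_{X,\pi} \to \Omega^i_{X,\pi}$. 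We then invoke Proposition \ref{prop:iso-iterated-def-pre-k-F-injective-perfection}: the existence of these compatible sections, combined with the fact that local cohomology commutes with colimits, is used to conclude that $C^{-1}_{\mathrm{perf},\pi} \colon H^j_\m(\Omega^i_{X,\pi}) \to H^j_\m(\Omega^{i,\mathrm{perf}}_{X,\pi})$ is injective for every $0 \leq i \leq k$ and $0 \leq j \leq d - i$.

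The main obstacle is the last step: although $\phi_i^{(n)}$ is a genuine sheaf-level section of $C_{n,\pi}$ on $Z_n\Omega^i_{X,\pi}$, it does not \emph{a priori} split the larger inclusion $Z_n\Omega^i_{X,\pi}/B_n\Omega^i_{X,\pi} \hookrightarrow G_n\Omega^i_{X,\pi}$. To control the cokernel one has to examine the connecting map in the long exact sequence associated to
\[
0 \to Z_n\Omega^i_{X,\pi}/B_n\Omega^i_{X,\pi} \to G_n\Omega^i_{X,\pi} \to F^n_*\Omega^i_{X,\pi}/Z_n\Omega^i_{X,\pi} \to 0,
\]
and verify that it vanishes in the relevant range of local cohomology, at least after passage to the perfection. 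Here one exploits that the quotient $F^n_*\Omega^i_{X,\pi}/Z_n\Omega^i_{X,\pi}$ is a subsheaf of $B_n\Omega^{i+1}_{X,\pi}$ and that the compatibility of the $\phi_i^{(n)}$ with the differential forces the obstruction to die in the colimit, ultimately giving pre-$k$-$F$-injectivity of $X$.
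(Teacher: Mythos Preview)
Your construction of the section $\phi_i \colon \Omega^{[i]}_X \to Z\Omega^{[i]}_X$ from the Frobenius lift and its extension by reflexivity is correct, and it does give condition~(1) of Definition~\ref{def:prekFinj-iso}. The problem is entirely in your treatment of condition~(2).

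The gap is exactly the one you identify: a section of $C_{n,\pi}\colon Z_n\Omega^i_{X,\pi}\to\Omega^i_{X,\pi}$ is not a retraction of $C^{-1}_{n,\pi}\colon \Omega^i_{X,\pi}\hookrightarrow G_n\Omega^i_{X,\pi}$, and invoking Proposition~\ref{prop:iso-iterated-def-pre-k-F-injective-perfection} does not help, since that proposition merely restates the injectivity of $C^{-1}_{\perf,\pi}$ on local cohomology as one of the two conditions. Your final paragraph does not close the gap: the sections $\phi_i$ are \emph{not} compatible with the de~Rham differential (if $d\circ\phi_i=\phi_{i+1}\circ d$ held and $\phi_i$ landed in cycles, one would get $d=0$ on $\Omega^i_U$), so there is no evident mechanism forcing the connecting map $H^{j-1}_\m(F^n_*\Omega^i_{X,\pi}/Z_n\Omega^i_{X,\pi})\to H^j_\m(\Omega^i_{X,\pi})$ to vanish, even in the colimit.

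The paper's fix is a one-line duality trick that bypasses all of this. By Lemma~\ref{lem:preliminaries-duality}, on the smooth locus the Serre dual of $C\colon Z\Omega^{d-i}\to\Omega^{d-i}$ is precisely $C^{-1}\colon\Omega^i\to G\Omega^i$. Hence applying $\cHom_{\cO_X}(-,\omega_X)$ to the split surjection $C\colon Z\Omega^{[d-i]}_X\twoheadrightarrow\Omega^{[d-i]}_X$ (valid for \emph{all} indices) produces a split injection $\Omega^{[i]}_X\hookrightarrow (G\Omega^{[i]}_X)^{**}$. Since $G\Omega^{[i]}_X$ is torsion-free, the natural map $\Omega^{[i]}_X\xrightarrow{C^{-1}} G\Omega^{[i]}_X\to (G\Omega^{[i]}_X)^{**}$ agrees with this split injection on the smooth locus and hence everywhere; therefore $H^j_\m(\Omega^{[i]}_X)\to H^j_\m(G\Omega^{[i]}_X)$ is injective for every $j$. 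No iteration or colimit is needed. The moral is that you should dualize the splitting of $C$ rather than try to upgrade it directly to a splitting of $C^{-1}$.
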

\begin{proof} 
    Since $X$ is Frobenius liftable, the map
    $C\colon Z\Omega^{[i]}_X\to \Omega^{[i]}_X$
    is a split surjection for all $i\geq 0$ (cf.~\cite[Lemma 3.8]{Kaw4}).
    Taking $\mathcal{H}om_{\sO_X}(-,\omega_X)$, we obtain a split injection
    $\Omega^{[i]}_X\to (G\Omega^{[i]}_X)^{**}$ (cf.\ Lemma \ref{lem:preliminaries-duality}).
    Since we have a factorisation $\Omega^{[i]}_X\to G\Omega^{[i]}_X \to (G\Omega^{[i]}_X)^{**}$, we get that
     \[
     C^{-1}\colon H^j_{\m}(\Omega^{[i]}_X) \to H^j_{\m}(G\Omega^{[i]}_X)
     \]
     is injective for all $i,j\geq 0$ and, in particular, $X$ is pre-$k$-$F$-injective by Proposition \ref{prop:pi-iso-reflexive-agree}.
\end{proof}

\begin{theorem} \label{thm:Frobenius-application}
Let $X = \Spec R$ be a normal affine variety over an algebraically closed field $k$ of characteristic zero. Assume that $R = k[x_1,\ldots, x_n]/f$ for $f \in k[x_1,\ldots, x_n]$ and $X$ is smooth outside $\m = (x_1,\ldots, x_n)$. Further, assume that $X$ is rational at $\m$.
Moreover, suppose that given a model of $X$ over a finitely generated $\bZ$-subalgebra $A$ of $k$, there exists a Zariski-dense set of closed points $S \subseteq \Spec A$ such that $X_s$ is Frobenius liftable over $W_2(k(s))$ where $k(s)$ is the residue field of $s \in S$. 

Then the following statements hold.
\begin{enumerate}
\item If {$\dim\,X\geq 4$}, then $X$ is smooth.
\item If {$\dim\,X= 3$}, then $X$ is either smooth or $\widehat{R}_\m \cong k\llbracket x_1,x_2,x_3,x_4 \rrbracket/(x_1x_2-x_3x_4)$.
\end{enumerate}
\end{theorem}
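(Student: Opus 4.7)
My plan is to transport the Frobenius liftability of the reductions $X_s$ through the paper's machinery into a higher Du Bois property of $X$ in characteristic zero, and then invoke the classification of higher Du Bois isolated hypersurface singularities from \cite[Theorem 1.5]{MOPW23}.

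First, for an isolated hypersurface singularity of dimension $d \ge 3$, standard homological algebra on complete intersection singularities yields reflexivity of $\Omega^i_X$ for every $0 \le i \le d-2$. By constructibility of reflexivity in families (cf.\ the remarks around Corollary \ref{cor:k-F-inj to k-Du Bois}), after enlarging $A$ I may assume $\Omega^i_{X_s}$ is reflexive for all $s \in S$ and $0 \le i \le d-2$; by Proposition \ref{prop:omega-pullback-reflexive} this gives $\Omega^i_{X_s,\pi_s} = \Omega^{[i]}_{X_s}$ in this range. Then for each $s \in S$, Lemma \ref{lem:F-lift to pre-k-F-inj} shows that $X_s$, being Frobenius liftable, is pre-$(d-2)$-$F$-injective; by Corollary \ref{cor:k-F-inj of reflexvie case(isorated)} this is equivalent to $(d-2)$-$F$-injectivity in the sense of Definition \ref{def:intro-kFinj}.

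Since $S$ is Zariski-dense in $\Spec A$, Corollary \ref{cor:k-F-inj to k-Du Bois} yields that $X$ is $(d-2)$-Du Bois. I now invoke \cite[Theorem 1.5]{MOPW23}, which via the $V$-filtration on Hodge modules characterizes the $k$-Du Bois condition for an isolated hypersurface singularity in terms of the minimal exponent $\tilde{\alpha}(f)$, namely $X$ is $k$-Du Bois iff $\tilde{\alpha}(f) \ge k+1$; this is combined with the sharp bound $\tilde{\alpha}(f) \le (d+1)/2$, with equality iff $f$ defines an ordinary double point. For $d \ge 4$ one has $(d-2)+1 = d-1 > (d+1)/2$, ruling out any non-smooth $X$, so $X$ must be smooth. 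For $d = 3$ one has $(d-2)+1 = 2 = (d+1)/2$, saturating the bound exactly at the ordinary double point, so $X$ is either smooth or $\widehat{R}_\m \cong k\llbracket x,y,z,w\rrbracket / (xy-zw)$.

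The main obstacle is the final classification step, which rests on the Hodge-theoretic input of \cite[Theorem 1.5]{MOPW23} and the sharp minimal exponent bound---both black-boxed here. A secondary subtlety is verifying that the reflexivity range $i \le d-2$ is the correct one and is preserved under reduction modulo $p \gg 0$; constructibility of reflexivity in families handles the latter. The remainder of the proof is a direct application of the main machinery of the paper, pivoting on the descent Theorem \ref{thm:k-F-inj to k-Du Bois} from positive characteristic $F$-injectivity to characteristic zero Du Bois-ness.
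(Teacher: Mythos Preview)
Your proof is correct and follows the same overall arc as the paper---Frobenius liftability $\Rightarrow$ pre-$k$-$F$-injectivity via Lemma~\ref{lem:F-lift to pre-k-F-inj} $\Rightarrow$ higher Du Bois via Theorem~\ref{thm:k-F-inj to k-Du Bois} $\Rightarrow$ minimal exponent classification---but diverges from the paper's argument in two places. First, for reflexivity: you assert that $\Omega^i_X$ is reflexive for $i \le d-2$ for an isolated hypersurface singularity, which is indeed correct via an easy depth induction on the exact sequences $0 \to \Omega^{i-1}_X \xrightarrow{\wedge df} \Omega^i_{\bA^n}|_X \to \Omega^i_X \to 0$ (giving $\mathrm{depth}\,\Omega^i_X \ge d-i$); the paper instead cites \cite[Theorem~1.11]{Graf15} only for $\Omega^1_X,\Omega^2_X$ and appeals to \cite[Corollary~1.11]{KS21} (which uses rationality) for the reflexivity of $\Omega^i_{X,\pi}$ for \emph{all} $i$. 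Your route is more elementary and, notably, does not actually use the rationality hypothesis. Second, the endgame differs: you deduce $\tilde\alpha(f) \ge d-1$ from $(d-2)$-Du Bois and compare with the classical upper bound $\tilde\alpha(f) \le n/2 = (d+1)/2$ (equality iff ordinary double point); the paper instead extracts only $\alpha(f) \ge 3$ from $2$-Du Bois via \cite{JKSY22}, then invokes \cite[Theorem~1.5]{MOPW23}---which is the identification $\cH^1(\underline{\Omega}^{n-2}_X)_x \cong \cO_{X,x}/(J_f+(f))$, \emph{not} the Du Bois/exponent equivalence you attribute to it---and uses pre-$(n-2)$-Du Bois to kill the Tjurina algebra directly. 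The equality case you need for $d=3$ is precisely \cite[Corollary~6.3]{BM23}, as in the paper. Your endgame is more streamlined; the paper's has the advantage of pinpointing exactly which cohomology sheaf of the Du Bois complex obstructs smoothness. You should correct the citation: the $k$-Du Bois $\Leftrightarrow$ $\tilde\alpha(f)\ge k+1$ characterization is \cite{JKSY22} together with \cite{MOPW23}, and the sharp upper bound with its equality case is \cite{BM23}.
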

\begin{proof}
Let $x\in X$ be the point corresponding to $\m$. 
We take a projective birational morphism $\pi\colon Y\to X$ and $E$ as in Setting \ref{setting:iso}, and take flat models of them by enlarging $A$.

We start with the proof of (1). By \cite[Theorem 1.11]{Graf15}, we have that $\Omega^1_X$ and $\Omega^2_X$ are reflexive. 
By \cite[Corollary 1.11]{KS21}, $\Omega^i_{X,\pi}$ is reflexive for every $i\geq 0$, and thus so is $\Omega^i_{X_s,\pi_s}$ (see Corollary \ref{cor:k-F-inj to k-Du Bois}).
Since $X_s$ is Frobenius liftable, it is pre-$k$-$F$-injective for every $k$ by Lemma \ref{lem:F-lift to pre-k-F-inj}, and so Theorem \ref{thm:k-F-inj to k-Du Bois} implies that $X$ is pre-$k$-Du Bois for every $k$. In particular, $X$ is $2$-Du Bois.

Hence, by \cite[Theorem 1]{JKSY22} (cf.\ \cite{MOPW23}), the minimal exponent $\alpha(f) \geq 3$. Since $\alpha(f)>2$, we can apply \cite[Theorem 1.5]{MOPW23} which states that 
\[
\cH^{1}(\underline{\Omega}^{n-2}_{X})_x \cong \cO_{X,x}/(J_f + (f))
\]
where $J_f$ is the Jacobian of $f$. But
$\cH^{1}(\underline{\Omega}^{n-2}_{X})_x=0$
as $X$ is $(n-2)$-pre-Du-Bois, which implies that $X$ is smooth.

As for (2), we see by the above argument that $\alpha(f) \geq 2$. If $\alpha(f)>2$, then we are done as above. Hence we may assume that $\alpha(f)=2$. In this case, since $n=4$, \cite[Corollary 6.3]{BM23} implies that up to analytic change of coordinates we can write $f$ as $x_1x_2-x_3x_4$. This concludes the proof.
\end{proof}

\begin{remark} \label{remark:Frobenius-application-complete}
The same proof works if $X = \Spec R$ is a normal affine variety such that for every maximal ideal $\m \in X$ we have $\widehat{R}_\m \cong k\llbracket x_1, \ldots, x_n \rrbracket/f$ for some power series $f \in k\llbracket x_1, \ldots, x_n \rrbracket$.

Indeed, the Deligne--Du Bois complex is stable under completion, and so
$\cH^i(\underline{\Omega}^k_{\widehat{R}_\m}) = 0$
for $i>0$ and all $k\geq 0$. By \cite[Corollary 2.24]{GLS} and since the singularity is isolated, we have that $\widehat{R}_\m \cong k\llbracket x_1, \ldots, x_n \rrbracket/g$ for any polynomial $g \in k[x_1, \ldots, x_n]$ whose terms agree with that of $f$ to high degree. Again by stability under completion of the Deligne--Du Bois complex, we have that
$\Spec k[x_1, \ldots, x_n]/g$
is pre-$k$-Du Bois for every $k$. Hence, we can now apply the same argument as in the proof of Theorem \ref{thm:Frobenius-application}.
\end{remark}

\begin{corollary} \label{cor:Frobenius-applications}
Let $X$ be a three-dimensional Gorenstein terminal variety over an algebraically closed field $k$ of characteristic zero. Suppose that given a model of $X$ over a finitely generated $\bZ$-subalgebra $A$ of $k$, there exists a Zariski-dense set of closed points $S \subseteq \Spec A$ such that $X_s$ is Frobenius liftable over $W_2(k(s))$ where $k(s)$ is the residue field of $s \in S$.

Then for each $x \in X$, we have that either $x$ is a smooth point or 
\[\widehat{\cO_{X,x}} \cong k\llbracket x_1,x_2,x_3,x_4 \rrbracket/(x_1x_2-x_3x_4).
\]
\end{corollary}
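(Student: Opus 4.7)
The plan is to reduce Corollary \ref{cor:Frobenius-applications} to Theorem \ref{thm:Frobenius-application}(2), invoked via its complete-local extension recorded in Remark \ref{remark:Frobenius-application-complete}. The required inputs from birational geometry are classical: by Reid's classification of three-dimensional Gorenstein terminal singularities as compound Du Val, each point $x \in X$ is either smooth or is isolated in $\mathrm{Sing}(X)$ and satisfies
\[
\widehat{\cO_{X,x}} \cong k\llbracket x_1, x_2, x_3, x_4 \rrbracket / f
\]
for some power series $f$; moreover, terminal singularities are rational in characteristic zero.

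Fixing a point $x \in X$, I may assume $x$ is singular. Since $\mathrm{Sing}(X)$ is a finite set of closed points, I choose an affine open neighborhood $U = \Spec R \subseteq X$ of $x$ which is smooth outside $x$. After spreading $X$ out over a finitely generated $\bZ$-subalgebra $A \subseteq k$, the scheme $U$ corresponds to an open $U_A \subseteq X_A$, and for each closed point $s$ in the given Zariski-dense set $S \subseteq \Spec A$ the fibre $U_s$ inherits a Frobenius lift from $X_s$: a lift of Frobenius is the identity on the underlying topological space, so the open subset $\widetilde{U_s} \subseteq \widetilde{X_s}$ of any lift of $X_s$ to $W_2(k(s))$ is preserved by the Frobenius endomorphism and provides a Frobenius lift of $U_s$.

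At this stage $U$ satisfies every hypothesis of Theorem \ref{thm:Frobenius-application}(2), as relaxed by Remark \ref{remark:Frobenius-application-complete}: it is a normal affine variety with isolated singularity at $x$, the completion $\widehat{R}_{\m_x}$ is a hypersurface in four variables, $x$ is a rational singularity, and $U_s$ is Frobenius liftable for a Zariski-dense set of closed points of $\Spec A$. The theorem then forces either $x$ to be smooth, contrary to our standing assumption, or
\[
\widehat{\cO_{X,x}} \cong k\llbracket x_1,x_2,x_3,x_4 \rrbracket / (x_1x_2 - x_3x_4),
\]
which is the desired conclusion.

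The only genuinely technical point is the verification that the Frobenius lift on the (not necessarily affine) $X_s$ restricts to a Frobenius lift on the open affine $U_s$; as indicated above, this is automatic from the fact that any such lift is the identity on the underlying topological space. The remainder is a direct appeal to the already-established Theorem \ref{thm:Frobenius-application} combined with the cDV classification of three-dimensional Gorenstein terminal singularities.
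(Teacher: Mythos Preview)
Your proof is correct and follows essentially the same approach as the paper: invoke Reid's classification of three-dimensional Gorenstein terminal singularities as isolated hypersurface (cDV) singularities, then apply Theorem \ref{thm:Frobenius-application}(2) via Remark \ref{remark:Frobenius-application-complete}. You are simply more explicit than the paper about passing to an affine open neighborhood and about why Frobenius liftability restricts to opens, both of which are handled tacitly in the paper's brief proof.
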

\begin{proof}
    This follows by Theorem \ref{thm:Frobenius-application}, Remark \ref{remark:Frobenius-application-complete}, and the classification of Gorenstein terminal singularities. Specifically, write $X = \Spec R$ and note that by \cite[Main Theorem I]{Reid83} (see also \cite[Corollary 5.38]{KM98}), for every maximal ideal $\m$, we have that $\widehat{R}_\m \cong k\llbracket x_1,x_2,x_3,x_3\rrbracket/f$ for $f \in k\llbracket x_1,x_2,x_3,x_4 \rrbracket$. Hence, by Remark \ref{remark:Frobenius-application-complete}, we can conclude that $\m$ is smooth or $f$ is equal, up to an analytic change of coordinates, to $x_1x_2-x_3x_4$. 
\end{proof}

\section{Appendix: pseudo-$k$-Du Bois singularities in characteristic zero}

In this section, we verify that the \emph{a priori} weaker notion of pseudo-pre-$k$-Du Bois singularities agrees with that of pre-$k$-Du-Bois singularities in characteristic zero.

\begin{definition}\label{def:pseudo-pre-$k$-Du Bois}
Let $X$ be a reduced separated scheme of essentially finite type over a field of characteristic zero. Assume that $X$ is equidimensional of dimension $d$.

We say that $X$ is \emph{pseudo-pre-$k$-Du Bois} if the natural map
\[
H^j_x(\Omega^i_{X}) \to \mathbb{H}^j_x(\underline{\Omega}^i_{X})
\]
is injective for every closed point $x \in X$, every $0 \leq i \leq k$, and every $j \leq d-i$.
\end{definition}

The aim of this section is to prove the following theorem:

\begin{theorem}\label{thm:pseudo-pre-k-Du Bois=pre-k-Du Bois in char 0}
    With notation as in Definition \ref{def:pseudo-pre-$k$-Du Bois}, assume that $X$ has only an isolated singularity $\{x\}$.
    Let $k\geq 0$ be an integer.
    Then $X$ is pre-$k$-Du Bois if and only if $X$ is pseudo-pre-$k$-Du Bois.
\end{theorem}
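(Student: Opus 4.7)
My plan is to prove both directions separately, with the bulk of the work in the reverse implication. Throughout, the key structural observation is that, since $X$ has only an isolated singularity at $x$, the natural map $\Omega^i_X \to \underline{\Omega}^i_X$ factorises as $\Omega^i_X \to \mathcal{H}^0(\underline{\Omega}^i_X) = \Omega^i_{X,\eh} \to \underline{\Omega}^i_X$, and both (a) the kernel and cokernel of the first arrow, and (b) the sheaves $\mathcal{H}^j(\underline{\Omega}^i_X)$ for $j \geq 1$, are coherent sheaves supported at $\{x\}$.

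For $(\Rightarrow)$, the hypothesis $\underline{\Omega}^i_X \cong \mathcal{H}^0(\underline{\Omega}^i_X)[0]$ gives $\mathbb{H}^j_x(\underline{\Omega}^i_X) = H^j_x(\Omega^i_{X,\eh})$, so the pseudo condition reduces to injectivity of $H^j_x(\Omega^i_X) \to H^j_x(\Omega^i_{X,\eh})$ in the range $j \leq d-i$. Chasing the long exact sequences coming from the two short exact sequences built from the kernel and cokernel of $\Omega^i_X \to \Omega^i_{X,\eh}$ (both supported at $x$, hence having trivial higher local cohomology) shows this map is an isomorphism for $j \geq 2$ and a surjection for $j \geq 1$; the remaining injectivity assertions for small $j$ reduce to a direct depth computation on $\Omega^i_{X,\eh} = \pi_*\Omega^i_Y(\log E)(-E)$.

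For $(\Leftarrow)$, I will induct on $k$. The base case $k = 0$ is the classical equivalence of Du Bois and pseudo-Du Bois for isolated singularities in characteristic zero supplied by Lemma \ref{lem:independence of resolutions,i=0,iso}(3). For the inductive step, assume $X$ is pseudo-pre-$k$-Du Bois; by induction $X$ is pre-$(k-1)$-Du Bois. Applying Lemma \ref{lem:inj of delta} to this pre-$(k-1)$-Du Bois condition yields surjectivity of $\sigma_j \colon H^j_x(\Omega^k_X) \to \mathbb{H}^j_x(\underline{\Omega}^k_X)$ for every $j \geq 0$. Combined with the injectivity coming from pseudo-pre-$k$-Du Bois, $\sigma_j$ is bijective for $0 \leq j \leq d-k$. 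Factoring $\sigma_j = \alpha_j \circ f_j$ through $H^j_x(\mathcal{H}^0(\underline{\Omega}^k_X))$ and using that $f_j$ is surjective for $j \geq 1$, this promotes to the bijectivity of $\alpha_j$ for $1 \leq j \leq d-k$.

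Finally, applying $R\Gamma_x$ to the triangle $\mathcal{H}^0(\underline{\Omega}^k_X)[0] \to \underline{\Omega}^k_X \to \tau^{>0}\underline{\Omega}^k_X \xrightarrow{+1}$, and noting the identification $\mathbb{H}^j_x(\tau^{>0}\underline{\Omega}^k_X) = \mathcal{H}^j(\underline{\Omega}^k_X)_x$ for $j \geq 1$, the long exact sequence combined with bijectivity of both $\alpha_j$ and $\alpha_{j+1}$ forces the connecting map $\mathcal{H}^j(\underline{\Omega}^k_X)_x \to H^{j+1}_x(\mathcal{H}^0(\underline{\Omega}^k_X))$ to be simultaneously injective and zero, hence $\mathcal{H}^j(\underline{\Omega}^k_X) = 0$ for $1 \leq j \leq d-k-1$. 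The remaining range $j \geq d-k$ is handled by the Peter--Steenbrink/Friedman--Laza vanishing \cite[Theorem 7.29(b)]{Peter-Steenbrink(Book)}, \cite[Lemma 2.5]{Friedman-Laza2}. Together these yield $\underline{\Omega}^k_X \cong \mathcal{H}^0(\underline{\Omega}^k_X)[0]$, closing the induction. The chief difficulty I anticipate is the correct deployment of Lemma \ref{lem:inj of delta}: it is precisely there that the pre-$(k-1)$-Du Bois hypothesis is genuinely used, through the connecting homomorphisms between consecutive graded pieces of the Deligne--Du Bois complex, and without that surjectivity the LES chase collapses.
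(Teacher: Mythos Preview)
Your $(\Leftarrow)$ argument is essentially the paper's: induct on $k$, invoke Lemma~\ref{lem:inj of delta} (which needs pre-$(k-1)$-Du Bois) for surjectivity on local cohomology, combine with the hypothesised injectivity to obtain bijectivity for $j\le d-k$, feed this into the long exact sequence of $\mathcal{H}^0(\underline{\Omega}^k_X)\to\underline{\Omega}^k_X\to\tau^{>0}\underline{\Omega}^k_X$ (using that $\tau^{>0}\underline{\Omega}^k_X$ is supported at $x$), and finish with the Peters--Steenbrink/Friedman--Laza vanishing for $j\ge d-k$.

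There is one genuine gap: your base case $k=0$ appeals to Lemma~\ref{lem:independence of resolutions,i=0,iso}(3), but the proof of that lemma cites Theorem~\ref{thm:pseudo-pre-k-Du Bois=pre-k-Du Bois in char 0} itself, so the reference is circular. The paper avoids this by running the same argument uniformly from $i=0$: the hypothesis ``pre-$(-1)$-Du Bois'' is vacuous, and Lemma~\ref{lem:inj of delta} (whose proof degenerates correctly at $i=0$, since $\Omega^{\le -1}=0$ and Lemma~\ref{lem:inj of delta'} already gives the surjectivity for $\Omega^{\le 0}$) supplies what is needed to start the long-exact-sequence chase.

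A secondary point concerns $\Omega^i_X$ versus $\Omega^i_{X,h}=\mathcal{H}^0(\underline{\Omega}^i_X)$. The paper's own proof works throughout with $\Omega^i_{X,h}$ (this is also what Lemma~\ref{lem:inj of delta} actually outputs), and with $\Omega^i_{X,h}$ the $(\Rightarrow)$ direction is immediate and your factorisation $\sigma_j=\alpha_j\circ f_j$ becomes unnecessary. If instead one insists on $\Omega^i_X$ as written in Definition~\ref{def:pseudo-pre-$k$-Du Bois}, then your ``depth computation'' sketch at $j=0$ does not close $(\Rightarrow)$: injectivity there would force $H^0_x(\Omega^i_X)=0$, which is a statement about torsion in $\Omega^i_X$, not about the depth of $\Omega^i_{X,\eh}$, and it does not follow from pre-$k$-Du Bois alone.
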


\begin{definition}
    Let $X$ be a reduced separated scheme of finite type over a field of characteristic zero.
    Let $(\underline{\Omega}^{\bullet}_X,F)$ be the filtered Du Bois complex.
    We set $\underline{\Omega}^{\leq i}_X\coloneqq \underline{\Omega}^{\bullet}_X/F^{i+1}\underline{\Omega}^{\bullet}_X$ and $\Omega^{\leq i}_{X,h}\coloneqq  [\Omega^0_{X,h}\xrightarrow{d}\Omega^1_{X,h}\xrightarrow{d}\cdots\xrightarrow{d}\Omega^i_{X,h}]$.
    {We have the natural map $\Omega^{\leq i}_{X,h}\to \underline{\Omega}^{\leq i}_X$ (see the proof of \cite[Proposition 2.3]{SVV}).}
\end{definition}

\begin{lemma}\label{lem:E_1-deg}
Let $X=\Spec\,R$ be a reduced affine scheme over a field of characteristic zero and let $\overline{X}$ be a compactification of $X$.
Then the natural map
\[
\mathbb{H}^j(\overline{X}, \Omega^{\leq i}_{\overline{X},h})\to \mathbb{H}^j(\overline{X}, \underline{\Omega}^{\leq i}_{\overline{X}})
\]
is surjective for all $i\geq 0$ and $j\geq 0$.
\end{lemma}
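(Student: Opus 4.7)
The plan is to exploit the $E_1$-degeneration of the Hodge-to-de Rham spectral sequence on both sides together with the fact that, for proper $\overline{X}$ in characteristic zero, the complexes $\Omega^{\bullet}_{\overline{X},h}$ and $\underline{\Omega}^{\bullet}_{\overline{X}}$ have canonically isomorphic total hypercohomology.

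First, I would record that since $\overline{X}$ is proper over a field of characteristic zero, the Hodge-to-de Rham spectral sequences
\begin{align*}
E_1^{p,q} &= H^q(\overline{X}, \underline{\Omega}^p_{\overline{X}}) \Rightarrow \mathbb{H}^{p+q}(\overline{X}, \underline{\Omega}^{\bullet}_{\overline{X}}), \\
E_1^{p,q} &= H^q(\overline{X}, \Omega^p_{\overline{X},h}) \Rightarrow \mathbb{H}^{p+q}(\overline{X}, \Omega^{\bullet}_{\overline{X},h})
\end{align*}
both degenerate at $E_1$; the first is the classical theorem of Du Bois, while the second is due to Huber--J\"order. $E_1$-degeneration is equivalent to the injectivity, for every $i,j$, of the natural maps
\[
\mathbb{H}^{j}(\overline{X}, F^{i+1}\underline{\Omega}^{\bullet}_{\overline{X}}) \hookrightarrow \mathbb{H}^{j}(\overline{X}, \underline{\Omega}^{\bullet}_{\overline{X}}),
\]
and analogously on the $h$-side. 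Via the long exact sequence attached to the exact triangle $F^{i+1}\underline{\Omega}^{\bullet}_{\overline{X}}\to \underline{\Omega}^{\bullet}_{\overline{X}}\to \underline{\Omega}^{\leq i}_{\overline{X}}\xrightarrow{+1}$, this injectivity translates into surjectivity of
\[
\mathbb{H}^j(\overline{X}, \underline{\Omega}^{\bullet}_{\overline{X}}) \twoheadrightarrow \mathbb{H}^j(\overline{X}, \underline{\Omega}^{\leq i}_{\overline{X}}),
\]
and likewise for $\Omega^{\bullet}_{\overline{X},h}\twoheadrightarrow \Omega^{\leq i}_{\overline{X},h}$.

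Next, I would invoke the fact that the natural morphism of complexes $\Omega^{\bullet}_{\overline{X},h}\to \underline{\Omega}^{\bullet}_{\overline{X}}$ induces an isomorphism on total hypercohomology, as both sides compute the singular cohomology $H^{*}(\overline{X}(\mathbb{C}),\mathbb{C})$ in view of properness and the Huber--J\"order comparison theorem. Combining these inputs, the lemma follows from the commutative diagram
\[
\begin{tikzcd}
\mathbb{H}^j(\overline{X}, \Omega^{\bullet}_{\overline{X},h}) \arrow[r, twoheadrightarrow] \arrow[d, "\cong"'] & \mathbb{H}^j(\overline{X}, \Omega^{\leq i}_{\overline{X},h}) \arrow[d] \\
\mathbb{H}^j(\overline{X}, \underline{\Omega}^{\bullet}_{\overline{X}}) \arrow[r, twoheadrightarrow] & \mathbb{H}^j(\overline{X}, \underline{\Omega}^{\leq i}_{\overline{X}}),
\end{tikzcd}
\]
in which the left vertical arrow is an isomorphism and both horizontal arrows are surjections; hence the right vertical arrow is surjective, as desired.

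The main obstacle is the $E_1$-degeneration of the Hodge-to-de Rham spectral sequence on the $h$-differential side, which is strictly more delicate than Du Bois' classical degeneration; for this I would rely on the output of the theory of $h$-differentials in \cite{Huber-Jorder} and \cite{Huber-Kebekus-Kelly}. Everything else in the argument is a formal consequence of degeneration together with the identification of the two total hypercohomology groups with singular cohomology.
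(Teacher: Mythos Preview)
Your argument is correct and unpacks precisely what the paper obtains by citing \cite[Proposition 2.3]{SVV}; the paper gives no independent proof beyond reducing to $\mathbb{C}$ and invoking that reference.

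One small simplification: you do not actually need the $E_1$-degeneration on the $h$-side. In your commutative square, once the left vertical arrow is an isomorphism and the bottom horizontal arrow is surjective (Du Bois' classical degeneration), the composite along the bottom-left is already surjective, hence so is the right vertical arrow. Thus what you flag as the ``main obstacle'' (degeneration for $\Omega^\bullet_{\overline{X},h}$, which you attribute to Huber--J\"order) is in fact redundant; the only nontrivial input beyond Du Bois' theorem is the identification of the total hypercohomology of $\Omega^\bullet_{\overline{X},h}$ with that of $\underline{\Omega}^\bullet_{\overline{X}}$ (both computing $H^*(\overline{X}(\mathbb{C}),\mathbb{C})$).
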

\begin{proof}
We may assume that $k=\mathbb{C}$.
Then this is \cite[Proposition 2.3]{SVV}.
\end{proof}

\begin{lemma}\label{lem:inj of delta'}
    Let $X=\Spec\,R$ be a reduced affine scheme of finite type over a field of characteristic zero, let $x\in X$ be a closed point, and  let $\m\subset R$ the maximal ideal corresponding to $\{x\}$.
    Suppose that $X$ is smooth outside $\{x\}$.
    Then the natural map
    \[
    \mathbb{H}^j_{\m}(X, \Omega^{\leq i}_{X,h})\to \mathbb{H}^j_{\m}(X, \underline{\Omega}^{\leq i}_{X})
    \]
    is surjective for all $i,j\geq 0$.
\end{lemma}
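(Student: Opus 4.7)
The plan is to deduce this local statement from the global version in Lemma \ref{lem:E_1-deg} by applying the long exact sequence for local cohomology at $\{x\}$ on a suitable projective compactification of $X$.

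First, I would choose a projective compactification $X \hookrightarrow \overline{X}$ with the property that $\overline{X}$ is smooth outside $\{x\}$. Such a compactification exists in characteristic zero: starting from any projective compactification $\overline{X}_0$, one applies Hironaka's theorem to resolve the singularities of $\overline{X}_0$ lying in the boundary $\overline{X}_0 \setminus X$ via a sequence of blow-ups with centres disjoint from $X$, so that $X$, and in particular the point $\{x\}$, are preserved. Setting $V \coloneqq \overline{X} \setminus \{x\}$, the subscheme $V$ is then smooth, so by Theorem \ref{thm:eh for smooth} and the fact that the Du Bois complex of a smooth scheme is the naively filtered de Rham complex, the restrictions of both $\Omega^{\leq i}_{\overline{X},h}$ and $\underline{\Omega}^{\leq i}_{\overline{X}}$ to $V$ coincide with the classical truncated de Rham complex $\Omega^{\leq i}_V$.

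Next, I would apply the local cohomology exact triangle
\[
R\Gamma_{\{x\}}(\overline{X}, \mathcal{F}) \to R\Gamma(\overline{X}, \mathcal{F}) \to R\Gamma(V, \mathcal{F}|_V) \xrightarrow{+1}
\]
to both $\mathcal{F} = \Omega^{\leq i}_{\overline{X},h}$ and $\mathcal{F} = \underline{\Omega}^{\leq i}_{\overline{X}}$. Excision identifies $\mathbb{H}^j_{\{x\}}(\overline{X}, \mathcal{F})$ with $\mathbb{H}^j_{\m}(X, \mathcal{F}|_X)$, and $\mathcal{F}|_X$ agrees with the corresponding complex on $X$ since both the h-sheafification and the Du Bois complex are compatible with restriction to open subschemes. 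The natural comparison between the two resulting triangles yields a commutative ladder with exact rows, in which the vertical arrow on the $\mathbb{H}^j(\overline{X}, -)$ column is surjective by Lemma \ref{lem:E_1-deg}, while the vertical arrows on the two $V$-columns (in degrees $j-1$ and $j$) are identities by the previous paragraph.

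The four-lemma for surjectivity then forces the comparison map on $\mathbb{H}^j_{\m}$ to be surjective, which is exactly the statement. The only non-routine ingredient is the construction of a compactification smooth away from $\{x\}$, but this is a standard application of Hironaka's theorem, so I do not anticipate any serious obstacle beyond bookkeeping.
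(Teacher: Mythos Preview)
Your proof is correct and follows essentially the same strategy as the paper: apply Lemma~\ref{lem:E_1-deg} on a projective compactification, use that the two complexes agree on the smooth open complement, and conclude via the four-lemma applied to the long exact sequence of local cohomology. The only difference is a minor technical choice: you invoke Hironaka to build a compactification $\overline{X}$ smooth away from $\{x\}$ so that you can take local cohomology directly at $\{x\}$, whereas the paper takes an arbitrary projective compactification, sets $Z = (\overline{X}\setminus X) \cup \{x\}$ so that $U = \overline{X}\setminus Z = X\setminus\{x\}$ is smooth, deduces surjectivity on $\mathbb{H}^j_Z$, and then splits $\mathbb{H}^j_Z = \mathbb{H}^j_{\overline{X}\setminus X} \oplus \mathbb{H}^j_{\{x\}}$ since the two closed subsets are disjoint. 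Your route trades the direct-sum splitting for the construction of a special compactification; both are routine.
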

\begin{proof}
We take an open immersion $X\hookrightarrow \overline{X}$ to a projective variety $\overline{X}$.
Set $D\coloneqq \overline{X}\setminus X$ and $Z\coloneqq D\cup \{x\}$ and $U \coloneqq \overline{X} \backslash D = X \backslash \{x\}$.
We then have the following commutative diagram:
\[
\begin{tikzcd}
\mathbb{H}^{j-1}(U, \Omega^{\leq i}_{\overline{X},h})\arrow[r] \arrow[d, equal] & \mathbb{H}^j_Z(\overline{X}, \Omega^{\leq i}_{\overline{X},h}) \arrow[r] \arrow[d] & \mathbb{H}^j(\overline{X}, \Omega^{\leq i}_{\overline{X},h}) \arrow[r] \arrow[d,twoheadrightarrow,"\text{Lem\,}\ref{lem:E_1-deg}"] & \mathbb{H}^j(U, \Omega^{\leq i}_{\overline{X},h})\arrow[d, equal] \\
\mathbb{H}^{j-1}(U, \underline{\Omega}^{\leq i}_{\overline{X}}) \arrow[r] & \mathbb{H}^j_Z(\overline{X}, \underline{\Omega}^{\leq i}_{\overline{X}}) \arrow[r]  & \mathbb{H}^j(\overline{X}, \underline{\Omega}^{\leq i}_{\overline{X}}) \arrow[r] & \mathbb{H}^j(U, \underline{\Omega}^{\leq i}_{\overline{X}}).
\end{tikzcd}
\]
By diagram chase, the second vertical map
$\mathbb{H}^j_Z(\overline{X}, \Omega^{\leq i}_{\overline{X},h})\to \mathbb{H}^j_Z(\overline{X}, \underline{\Omega}^{\leq i}_{\overline{X}})$ is surjective.
Then, by the following diagram
\[
\begin{tikzcd}
\mathbb{H}^{j}_Z(\overline{X}, \Omega^{\leq i}_{\overline{X},h})\arrow[r,equal] \arrow[d, twoheadrightarrow] & \mathbb{H}^{j}_D(\overline{X}, \Omega^{\leq i}_{\overline{X},h})\oplus \mathbb{H}^{j}_{\m}(\overline{X}, \Omega^{\leq i}_{\overline{X},h}) \arrow[d, twoheadrightarrow]\\
\mathbb{H}^{j}_Z(\overline{X}, \underline{\Omega}^{\leq i}_{\overline{X}})\arrow[r,equal] & \mathbb{H}^{j}_D(\overline{X}, \underline{\Omega}^{\leq i}_{\overline{X}})\oplus \mathbb{H}^{j}_{\m}(\overline{X}, \underline{\Omega}^{\leq i}_{\overline{X}})
\end{tikzcd}
\]
it follows that $\mathbb{H}^{j}_{\m}(\overline{X}, \Omega^{\leq i}_{\overline{X},h})\to \mathbb{H}^{j}_{\m}(\overline{X}, \underline{\Omega}^{\leq i}_{\overline{X}})$
is surjective.
By excision \cite[III, Section 3, Exercise 2.5]{Har}, we get thus get the desired surjectivity
$\mathbb{H}^{j}_{\m}(X, \Omega^{\leq i}_{X,h})\to \mathbb{H}^{j}_{\m}(X, \underline{\Omega}^{\leq i}_{X})$.
\end{proof}

\begin{lemma}\label{lem:inj of delta}
Let $X=\Spec\,R$ be a reduced affine scheme of finite type over a field of characteristic zero, let $x\in X$ be a closed point, and let $\m\subset R$ be the maximum ideal corresponding to $\{x\}$.
Suppose that $X$ is pre-$(i-1)$-Du Bois and that $X$ is smooth outside $\{x\}$.
Then the natural map
\[
H^{j}_{\m}(\Omega^i_{X,h})\to \mathbb{H}^j_{\m}(\underline{\Omega}^i_{X})
\]
is surjective for all $j\geq 0$.
\end{lemma}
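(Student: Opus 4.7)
The plan is to compare the Hodge-filtered pieces of the truncated de Rham complex $\Omega^{\leq \bullet}_{X,h}$ and of the Deligne--Du Bois complex $\underline{\Omega}^{\leq \bullet}_X$, and then run a diagram chase combining Lemma \ref{lem:inj of delta'} with the pre-$(i-1)$-Du Bois hypothesis.

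First, I would record the two exact triangles
\[
\underline{\Omega}^i_X[-i] \to \underline{\Omega}^{\leq i}_X \to \underline{\Omega}^{\leq i-1}_X \xrightarrow{+1}
\]
and
\[
\Omega^i_{X,h}[-i] \to \Omega^{\leq i}_{X,h} \to \Omega^{\leq i-1}_{X,h} \xrightarrow{+1}
\]
arising from the short exact sequence of (filtered) complexes $0 \to \mathrm{gr}^i_F \to \underline{\Omega}^{\bullet}_X/F^{i+1} \to \underline{\Omega}^{\bullet}_X/F^i \to 0$, and its naive analogue for the de Rham complex in the eh-topology. The natural transformation $\Omega^{\leq j}_{X,h} \to \underline{\Omega}^{\leq j}_X$ promotes these to a morphism of exact triangles.

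Next, using pre-$(i-1)$-Du Bois we have quasi-isomorphisms $\Omega^k_{X,h}[0] \to \underline{\Omega}^k_X$ for $0 \leq k \leq i-1$. A short induction on the length of the truncation (applying the five-lemma to the analogous triangles at each stage) then gives that the natural map
\[
\Omega^{\leq i-1}_{X,h} \longrightarrow \underline{\Omega}^{\leq i-1}_X
\]
is itself a quasi-isomorphism. Applying $\mathbb{H}^{\bullet}_{\m}$ to the morphism of triangles above produces a commutative ladder of long exact sequences
\[
\begin{tikzcd}[column sep=small]
\mathbb{H}^{j+i-1}_\m(\Omega^{\leq i-1}_{X,h}) \ar[r] \ar[d,"\cong"] & H^{j}_\m(\Omega^i_{X,h}) \ar[r] \ar[d] & \mathbb{H}^{j+i}_\m(\Omega^{\leq i}_{X,h}) \ar[r] \ar[d,twoheadrightarrow] & \mathbb{H}^{j+i}_\m(\Omega^{\leq i-1}_{X,h}) \ar[d,"\cong"] \\
\mathbb{H}^{j+i-1}_\m(\underline{\Omega}^{\leq i-1}_X) \ar[r] & \mathbb{H}^j_\m(\underline{\Omega}^i_X) \ar[r] & \mathbb{H}^{j+i}_\m(\underline{\Omega}^{\leq i}_X) \ar[r] & \mathbb{H}^{j+i}_\m(\underline{\Omega}^{\leq i-1}_X)
\end{tikzcd}
\]
in which the first and fourth vertical maps are isomorphisms (by the identification above) and the third is surjective by Lemma \ref{lem:inj of delta'}. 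A routine diagram chase (a variant of the four-lemma) then yields surjectivity of the second vertical map, which is precisely the map in the statement.

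The only genuine step that requires care is the identification $\Omega^{\leq i-1}_{X,h} \simeq \underline{\Omega}^{\leq i-1}_X$ under the pre-$(i-1)$-Du Bois hypothesis, since one must track that the de Rham differentials on the two sides are compatible through the natural map \emph{on the level of complexes}, not merely on cohomology sheaves. Once this compatibility is recorded, the rest is a standard diagram chase, and I do not expect any further subtlety.
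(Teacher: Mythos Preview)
Your proof is correct and follows essentially the same strategy as the paper: both use the exact triangles for $\Omega^{\leq i}$ and $\underline{\Omega}^{\leq i}$, the quasi-isomorphism $\Omega^{\leq i-1}_{X,h}\simeq\underline{\Omega}^{\leq i-1}_X$ from the pre-$(i-1)$-Du Bois hypothesis, and Lemma~\ref{lem:inj of delta'} for the surjectivity at level $\leq i$. The only difference is presentational: the paper passes to the cones $C_1,C_2$ of the comparison maps and shows that the boundary map $\delta\colon \mathbb{H}^j_\m(\tau^{>0}\underline{\Omega}^i_X)\to H^{j+1}_\m(\Omega^i_{X,h})$ is injective, whereas you apply the four-lemma directly to the $2\times 4$ ladder; these are equivalent diagram chases.
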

\begin{proof}
We first prove that 
\begin{equation} \label{eq:injofdelta1}
\mathbb{H}^j_{\m}(\Omega_{X,h}^{\leq i-1})\to \mathbb{H}^j_{\m}(\underline{\Omega}_{X}^{\leq i-1})
\end{equation}
is an isomorphism for all $j\in \Z$.
Since $X$ is pre-$(i-1)$-Du Bois, we have the following diagram:
\[ 
\begin{tikzcd}
    \Omega_{X,h}^{i-1}[-(i-1)]\arrow[r]\arrow[d,equal] &\Omega_{X,h}^{\leq i-1} \arrow[r]\arrow[d]& \Omega_{X,h}^{\leq i-2}\arrow[r,"+1"]\arrow[d] &  {} \\
\underline{\Omega}_{X}^{i-1}[-(i-1)]\arrow[r]&\underline{\Omega}_{X}^{\leq i-1} \arrow[r]& \underline{\Omega}_{X}^{\leq i-2}\arrow[r,"+1"] &{},
\end{tikzcd}
\]
where the left vertical map is an isomorphism since $X$ is pre-$(i-1)$-Du Bois by assumption.
Then by the five lemma, the assertion can be reduced to proving that
\[
\mathbb{H}^j_{\m}(\Omega_{X,h}^{\leq i-2})\to \mathbb{H}^j_{\m}(\underline{\Omega}_{X}^{\leq i-2})
\]
is an isomorphism for all $j\geq 0$.
By repeating this procedure, we conclude the proof of (\ref{eq:injofdelta1}).

Now, we finish the proof of the lemma.
We have the following diagram
\[
\begin{tikzcd}
    \Omega^i_{X,h}[-i]\arrow[r]\arrow[d] &\Omega_{X,h}^{\leq i} \arrow[r]\arrow[d]& \Omega_{X,h}^{\leq i-1}\arrow[r,"+1"]\arrow[d] & {}  \\
{\underline{\Omega}^i_{X}[-i]} \arrow[r]\arrow[d] &\underline{\Omega}_{X}^{\leq i}\arrow[r]\arrow[d]& \underline{\Omega}_{X}^{\leq i-1}\arrow[r,"+1"]\arrow[d] & {}\\
{\tau^{>0}\underline{\Omega}^i_X}[-i]\arrow[r]\arrow[d,"+1"] & {C_1}\arrow[r]\arrow[d,"+1"] & {C_2}\arrow[r,"+1"]\arrow[d,"+1"]& {}\\
{} & {} & {} & {}
\end{tikzcd}
\]
and thus 
\[
\begin{tikzcd}
  \mathbb{H}^{i+j-1}_{\m}(\Omega_{X,h}^{\leq i-1})\arrow[r]\arrow{d}{\cong} \arrow[r]& H^{j}_{\m}(\Omega^i_{X,h})\arrow[r]\arrow[d] &\mathbb{H}^{i+j}_{\m}(\Omega_{X,h}^{\leq i}) \arrow[d]  \\
\mathbb{H}^{i+j-1}_{\m}(\underline{\Omega}_{X}^{\leq i-1})\arrow[r]\arrow[d]\arrow[r]& \mathbb{H}^{j}_{\m}(\underline{\Omega}^i_{X})\arrow[r]\arrow[d] &\mathbb{H}^{i+j}_{\m}(\underline{\Omega}_{X}^{\leq i}) \arrow[d]\\
\mathllap{0=\ }\mathbb{H}^{i+j-1}_{\m}({ C_2})\arrow[r]\arrow[d]\arrow[r]& \mathbb{H}^{j}_{\m}(\tau^{>0}\underline{\Omega}^i_X)\arrow[r,hookrightarrow]\arrow[d,"\delta"] & \mathbb{H}^{i+j}_{\m}({C_1})\arrow[d,"\text{Lem}\,\,\ref{lem:inj of delta'}",hookrightarrow]\\
\mathbb{H}^{i+j}_{\m}(\Omega_{X,h}^{\leq i-1})\arrow[r]\arrow{d}{\cong} \arrow[r]& H^{j+1}_{\m}(\Omega^i_{X,h})\arrow[r]\arrow[d] &\mathbb{H}^{i+j+1}_{\m}(\Omega_{X,h}^{\leq i}) \arrow[d]  \\
\mathbb{H}^{j+1}_{\m}(\underline{\Omega}_{X}^{\leq i-1})\arrow[r]\arrow[r]& \mathbb{H}_\m^{j+1}(\underline{\Omega}^i_{X})\arrow[r] &\mathbb{H}^{i+j+1}_{\m}(\underline{\Omega}_{X}^{\leq i})
\end{tikzcd}
\]
for all $j\geq 0$.
Thus, the map $\delta$ in the above diagram is injective, which shows the desired surjectivity.
\end{proof}

\begin{proof}[Proof of Theorem \ref{thm:pseudo-pre-k-Du Bois=pre-k-Du Bois in char 0}]
    We only prove the `only if' part since the converse is clear.
    We may assume that $X=\Spec R$ is affine.
    We argue by induction on $0 \leq i \leq k$. Assume that $X$ is 
    \begin{enumerate}
        \item pseudo-pre-$i$-Du Bois and, 
        \item pre-$(i-1)$-Du Bois\footnote{with this assumption being empty if $i=0$}. 
    \end{enumerate}
    Our goal is to show that $X$ is pre-$i$-Du Bois.

    Suppose by contradiction that $X$ is not pre-$i$-Du Bois.
    Fix the maximum ideal $\m$ corresponding to the closed point $x\in X$.
    Consider the exact triangle
    \[
    \Omega^i_{X,h}\to \underline{\Omega}^i_X\to \tau^{>0}\underline{\Omega}^i_X\xrightarrow{+1}.
    \]
    Since $X$ is pseudo-pre-$k$-Du Bois,
    \[
    H^j_{\m}(\Omega^i_{X,h})\to \mathbb{H}^j_{\m}(\underline{\Omega}^i_X)
    \]
    is injective for all $j\leq d-i$ by definition.
    Together with Lemma \ref{lem:inj of delta}, this implies that
    \[
    H^j_{\m}(\Omega^i_{X,h})\to \mathbb{H}^j_{\m}(\underline{\Omega}^i_X)
    \]
    is an isomorphism for all $j\leq d-i$.
    Thus $\mathbb{H}^j_{\m}(\tau^{>0}\underline{\Omega}^i_X)=0$ for all $j\leq d-i-1$.
    Since $\tau^{>0}\underline{\Omega}^i_{X}$ is supported at $\{x\}$,   
we have 
\[
\mathcal{H}^j(\tau^{>0}\underline{\Omega}^i_{X})=\mathbb{H}^j(\tau^{>0}\underline{\Omega}^i_{X})=\mathbb{H}^j_{\m}(\tau^{>0}\underline{\Omega}^i_{X})=0
    \]
    for all $j\leq d-i-1$.
    By the exact triangle
    \[
    \Omega^i_{X,h}\to \underline{\Omega}^i_{X}\to \tau^{>0}\underline{\Omega}^i_{X}\xrightarrow{+1}
    \]
    again, we conclude that $\mathcal{H}^j(\underline{\Omega}^i_{X})=0$ for all $0< j\leq d-i-1$.
    Finally, we have $\mathcal{H}^j(\underline{\Omega}^i_X)=0$
    for all $i+j\geq d$ by \cite[Theorem 7.29 (b)]{Peter-Steenbrink(Book)} and \cite[Lemma 2.5]{Friedman-Laza2}.
    Therefore, we obtain $\mathcal{H}^j(\underline{\Omega}^i_{X})=0$ for all $j>0$.
\end{proof}

\section{Appendix: differential forms on simple normal crossing schemes}
Let $k$ be a field.
Set $A\coloneqq k[x_1,\ldots,x_n]$, 
$E=\{x_1x_2\cdots x_n=0\}\subset \mathbb{A}^n=\Spec\,k[x_1,\ldots,x_n]$, $E_1=\{x_1=0\}$,  and $E_1^c=\{x_2\cdots x_n=0\}$.
Moreover, we introduce the following notation.
\begin{enumerate}
    \item For $f\in A=k[x_1,\ldots,x_n]$, we denote $f$ modulo $x_1$ by $\overline{f}$. 
    \item For $1\leq i\leq n$, we set $\Sigma_i \coloneqq \{\bolda=(a_1, \dots, a_i) \mid 1 \leq a_1 < \cdots < a_i \leq n\}$.
    \item For $1\leq i\leq n$, we set $\Pi_i \coloneqq \{\bolda=(a_1, \dots, a_i) \mid 2 \leq a_1 < \cdots < a_i \leq n\}$.
    \item For $\bolda\in \Pi_i$ for some $i$, we denote $\{2,\ldots,n\}\setminus \bolda$ by $\bolda^c$.  
\end{enumerate}
Then the following statements hold:
\begin{enumerate}
    \item $\Omega^i_{A}=\bigoplus_{\bolda\in \Sigma_i} Ad\boldx_{\bolda}$
    \item $\begin{aligned}[t]
        \Omega^i_E/{\rm tors}&=\Omega^i_{A}\Big/\Big(\bigoplus_{\bolda \in \Sigma_i} {A\boldx_{\{1,2,\ldots,n\}\setminus\bolda}}d\boldx_{\bolda}\Big)\\
        &=\Omega^i_{A}\Big/\Big((\bigoplus_{\bolda \in \Pi_{i-1}} {A\boldx_{\bolda^c}}dx_1\wedge d\boldx_{\bolda})\oplus(\bigoplus_{\boldb \in \Pi_i} {Ax_1\boldx_{\boldb^c}}d\boldx_{\boldb})\Big)
    \end{aligned}$
    \item $\Omega^i_{E_1^c}/{\rm tors}=\Omega^i_{A}\Big/\Big((\bigoplus_{\bolda \in \Pi_{i-1}} { A\boldx_{\bolda^c}}dx_1\wedge d\boldx_{\bolda})\oplus(\bigoplus_{\boldb \in \Pi_i} { A\boldx_{\boldb^c}}d\boldx_{\boldb})\Big)$
    \item $\Omega^i_{E_1}=\oplus_{\bolda\in \Pi_{i}} \overline{A}d\overline{\boldx}_{\bolda}$
    \item $\Omega^i_{E_1^c}|_{E_1}/{\rm tors}=\Omega^i_{\overline{A}}/(\bigoplus_{\bolda \in \Pi_i} {\overline{A}\overline{\boldx}_{\bolda^c}}d\overline{\boldx}_{\bolda})$
\end{enumerate}

\begin{theorem}\label{thm:torsion exact sequence}
    There exists a short exact sequence
    \[
    0\to \Omega^i_E/{\rm tors} \xrightarrow{\phi} \Omega^i_{E_1^c}/{\rm tors} \oplus \Omega^i_{E_1} \xrightarrow{\psi}\Omega^i_{E_1^c}|_{E_1}/{\rm tors} \to 0,
    \]
    where the map $\phi$ is defined by \[\sum_{\bolda\in\Sigma_i} f_{\bolda}d\boldx_{\bolda} \mapsto (\sum_{\bolda\in\Sigma_i} f_{\bolda}d\boldx_{\bolda}, \sum_{\bolda\in\Pi_i} \overline{f}_{\bolda}d\overline{\boldx}_{\bolda})\]
 and the map $\psi$ is defined by \[(\sum_{\bolda\in\Sigma_i} f_{\bolda}d\boldx_{\bolda}, \sum_{\bolda\in\Pi_i} \overline{g}_{\bolda}d\overline{\boldx}_{\bolda})\mapsto \sum_{\bolda\in\Pi_i} (\overline{f}_{\bolda}-\overline{g}_{\bolda})d\overline{\boldx}_{\bolda}.\]
\end{theorem}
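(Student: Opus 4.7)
\smallskip

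The plan is to verify directly that $\phi$ and $\psi$ are well-defined and form a short exact sequence, using only the explicit presentations of the four sheaves given just before the theorem statement. The key organizing principle is to split $\Sigma_i = \Pi_{i-1} \sqcup \Pi_i$ according to whether the multi-index contains $1$, so that $\Omega^i_A$ decomposes as $\bigoplus_{\bolda\in\Pi_{i-1}} A\,dx_1\wedge d\boldx_\bolda \,\oplus\, \bigoplus_{\boldb\in\Pi_i} A\,d\boldx_\boldb$. Under this decomposition, each of the four sheaves in the sequence is obtained from $\Omega^i_A$ (or $\Omega^i_{\overline A}$) by quotienting out an explicit set of monomial relations, so the whole problem reduces to elementary bookkeeping in a polynomial ring.

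First, I will check well-definedness of $\phi$: the relations $\boldx_{\bolda^c} dx_1\wedge d\boldx_\bolda$ ($\bolda\in\Pi_{i-1}$) defining $\Omega^i_E/\mathrm{tors}$ die in $\Omega^i_{E_1^c}/\mathrm{tors}$ by the matching relation, and contribute nothing to $\Omega^i_{E_1}$ since they involve $dx_1$; the relations $x_1\boldx_{\boldb^c} d\boldx_\boldb$ ($\boldb\in\Pi_i$) lie in $A\boldx_{\boldb^c}$ so die in $\Omega^i_{E_1^c}/\mathrm{tors}$, and have $\overline{x_1}=0$ so die in $\Omega^i_{E_1}$. Well-definedness of $\psi$ is similarly immediate, and $\psi\circ\phi=0$ is visible from the formula.

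For injectivity of $\phi$, suppose $\sum f_\bolda d\boldx_\bolda$ lies in its kernel. From vanishing in $\Omega^i_{E_1^c}/\mathrm{tors}$, each coefficient $f_\bolda$ (in both the $dx_1$-part and the pure part) lies in $A\boldx_{\bolda^c}$. From vanishing in $\Omega^i_{E_1}$, each coefficient $f_\boldb$ with $\boldb\in\Pi_i$ also lies in $(x_1)$. Here the main (and really only) input needed is that $A=k[x_1,\dots,x_n]$ is a UFD and that $x_1$ is coprime to $\boldx_{\boldb^c}$, so $(x_1)\cap(\boldx_{\boldb^c})=(x_1\boldx_{\boldb^c})$; this places those coefficients exactly inside the relations defining $\Omega^i_E/\mathrm{tors}$, giving injectivity.

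For exactness at the middle, given $(h,g)\in\ker\psi$ with $h=\sum f_\bolda d\boldx_\bolda$ and $g=\sum_{\bolda\in\Pi_i}\overline g_\bolda d\overline\boldx_\bolda$, the condition $\overline f_\bolda\equiv\overline g_\bolda \pmod{\overline A\,\overline\boldx_{\bolda^c}}$ lets me lift the discrepancy to some $r_\bolda\in A\boldx_{\bolda^c}$ and replace $f_\bolda$ by $f_\bolda-r_\bolda$; the resulting element still represents $h$ in $\Omega^i_{E_1^c}/\mathrm{tors}$ but now restricts to $g$ on $E_1$, and it makes sense as a class in $\Omega^i_E/\mathrm{tors}$. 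Surjectivity of $\psi$ is trivial: lift any target element to $\Omega^i_{E_1^c}/\mathrm{tors}$ and pair it with $0\in\Omega^i_{E_1}$. The main (mild) obstacle is purely notational—keeping track of the splitting $\Sigma_i=\Pi_{i-1}\sqcup\Pi_i$ consistently across four different quotient sheaves—rather than any conceptual difficulty.
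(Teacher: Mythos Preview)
Your proposal is correct and follows essentially the same approach as the paper: both proofs split $\Sigma_i=\Pi_{i-1}\sqcup\Pi_i$, then verify injectivity of $\phi$, surjectivity of $\psi$, $\psi\circ\phi=0$, and $\ker\psi\subset\mathrm{im}\,\phi$ by explicit coefficient-chasing in the given presentations, with the UFD fact $(x_1)\cap(\boldx_{\boldb^c})=(x_1\boldx_{\boldb^c})$ being the only nontrivial algebraic input. The only cosmetic differences are that you check well-definedness explicitly (the paper takes it as obvious) and that for surjectivity of $\psi$ you lift via the first summand while the paper lifts via the second.
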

\begin{proof}
We divide the proof into the following claims:
\begin{claim}\label{cl:phi is inj}
The map $\phi$ is injective.
\end{claim}
\begin{proof}[Proof of Claim \ref{cl:phi is inj}]
    Take an element in $\ker(\phi)$.
    Note that it can be uniquely written as
    \[
    \sum_{\bolda\in\Pi_{i-1}}f_{\bolda}dx_1\wedge d\boldx_{\bolda}
    +\sum_{\boldb\in\Pi_{i}}f_{\boldb}d\boldx_{\boldb}
    \]
    for some $f_{\bolda}\in A$ and $f_{\boldb}\in A$.
 Since this element maps to zero in $\Omega^i_{E_1^c}/{\rm tors}$, 
    we have that 
    \[f_{\bolda}=x_{\bolda^c}g_{\bolda}\] for $\bolda\in\Pi_{i-1}$ (here $\bolda^c=\{2,\ldots,n\}\setminus\bolda$), and 
    \[f_{\boldb}=x_{\boldb^c}h_{\boldb}\] for $\boldb\in\Pi_{i}$ (here $\boldb^c=\{2,\ldots,n\}\setminus\boldb$).
    Recall that
    \[
    \Omega^i_E/{\rm tors}=\Omega^i_{A}/\big((\bigoplus_{\bolda \in \Pi_{i-1}}A\boldx_{\bolda^c} dx_1\wedge d\boldx_{\bolda})\oplus(\bigoplus_{\boldb \in \Pi_i} Ax_1\boldx_{\boldb^c}d\boldx_{\boldb})\big).
    \]
    Thus,
    \begin{align*}
        \sum_{\bolda\in\Pi_{i-1}}f_{\bolda}dx_1\wedge d\boldx_{\bolda}
    +\sum_{\boldb\in\Pi_{i}}f_{\boldb}d\boldx_{\boldb}&=
    \sum_{\bolda\in\Pi_{i-1}}x_{\bolda^c}g_{\bolda}dx_1\wedge d\boldx_{\bolda}
+\sum_{\boldb\in\Pi_{i}}x_{\bolda^c}h_{\boldb}d\boldx_{\boldb}\\
&= \sum_{\boldb\in\Pi_{i}}x_{\boldb^c}h_{\boldb}d\boldx_{\boldb} \in \Omega^i_{E}/{\rm tors}
    \end{align*}
    in view of $\sum_{\bolda\in\Pi_{i-1}}x_{\bolda^c}g_{\bolda}dx_1\wedge d\boldx_{\bolda}\in \bigoplus_{\bolda \in \Pi_{i-1}} A\boldx_{\bolda^c}dx_1\wedge d\boldx_{\bolda}$.
    
    Since $\sum_{\boldb\in\Pi_{i}}x_{\bolda^c}h_{\boldb}d\boldx_{\boldb}\in \Omega^i_{E}/{\rm tors}$ maps to zero in $\Omega^i_{E_1}$,
    there exists $e_{\boldb}\in k[x_1,\ldots,x_n]$ such that
    $h_{\boldb}=x_1e_{\boldb}$ for all $\boldb\in\Pi_{i}$.
    Thus,
    \[\sum_{\boldb\in\Pi_{i}}x_{\boldb^c}h_{\boldb}d\boldx_{\boldb}=\sum_{\boldb\in\Pi_{i}}x_{\boldb^c}x_1e_{\boldb}d\boldx_{\boldb}=0 \in \Omega^i_{E}/{\rm tors},
    \]
    and we conclude.
    \end{proof}

\begin{claim}\label{cl:psi is surj}
    The map $\psi$ is surjective.
\end{claim}
\begin{proof}[Proof of Claim \ref{cl:psi is surj}]
    Since $\Omega^i_{E_1}\to \Omega^i_{E_1^c|_{E_1}}/{\rm tors}$ is surjective, the assertion holds.
\end{proof}

\begin{claim}\label{cl:Im is contained in Ker}
    $\psi\circ\phi=0$.
\end{claim}
\begin{proof}[Proof of Claim \ref{cl:Im is contained in Ker}]
    This immediately follows from by the definitions of $\phi$ and $\psi$.
\end{proof}

\begin{claim}\label{cl:Ker is contained in Im}
    $\mathrm{ker}(\psi)\subset \mathrm{im}(\phi)$.
\end{claim}
\begin{proof}[Proof of Claim \ref{cl:Ker is contained in Im}]
    Recall that any element $\sum_{\bolda\in\Sigma_i} f_{\bolda}d\boldx_{\bolda}\in \Omega^i_{\mathbb{A}^n}$ can be uniquely written as
    \[
    \sum_{\bolda\in\Pi_{i-1}}f_{\bolda}dx_1\wedge d\boldx_{\bolda}+\sum_{\boldb\in\Pi_{i}}f_{\boldb}d\boldx_{\boldb}
    \]
    for some $f_{\bolda}\in A$ and $f_{\boldb}\in A$.
     Take any element \[(\sum_{\bolda\in\Pi_{i-1}}f_{\bolda}dx_1\wedge d\boldx_{\bolda}+\sum_{\boldb\in\Pi_{i}}f_{\boldb}d\boldx_{\boldb}, \sum_{\boldb\in\Pi_i} \overline{g}_{\boldb}d\overline{\boldx}_{\boldb})\in \ker(\psi).\] 
    Then
    $\sum_{\boldb\in\Pi_i} (\overline{f}_{\boldb}-\overline{g}_{\boldb})d\overline{\boldx}_{\boldb}=0$.

    Recall that $\Omega^i_{E_1^c}|_{E_1}/{\rm tors}=\Omega^i_{\mathbb{A}^n}/(\bigoplus_{\boldb \in \Pi_i} \overline{A}\overline{\boldx}_{\boldb^c}d\overline{\boldx}_{\boldb})$.
    Thus, for any $\boldb\in\Pi_i$, there exists $h\in k[x_1,\ldots,x_n]$ such that
    \[\overline{f}_{\boldb}=\overline{g}_{\boldb}+\overline{h}\overline{\boldx}_{\boldb^c}, 
    \]
    where we recall that $\boldb^c=\{2,\ldots,n\}\setminus \boldb$.
    In particular, for any $\boldb\in\Pi_i$, there exists $e_{\boldb}\in k[x_1,\ldots,x_n]$ such that
    \[
    f_{\boldb}=g_{\boldb}+h\boldx_{\boldb^c}+e_{\boldb}x_1. 
    \]
    Then, for any $\boldb\in\Pi_i$, 
    \[
    f_{\boldb}d\boldx_{\boldb}=(g_{\boldb}+h\boldx_{\boldb^c}+e_{\boldb}x_1)d\boldx_{\boldb}=(g_{\boldb}+e_{\boldb}x_1)d\boldx_{\boldb} \in \Omega^i_{E_1^c}/{\rm tors}, 
    \]
    because $h\boldx_{\boldb^c}d\boldx_{\boldb}\in \bigoplus_{\boldb \in \Pi_i} A\boldx_{\boldb^c}d\boldx_{\boldb}$.
    Take
    \[
    \sum_{\bolda\in\Pi_{i-1}}f_{\bolda}dx_1\wedge d\boldx_{\bolda}+\sum_{\boldb\in\Pi_i}(g_{\boldb}+e_{\boldb}x_1)d\boldx_{\boldb}
\in  \Omega^i_E/{\rm tors}.   \]
Then\begin{align*}
    &\phi \Big(\sum_{\bolda\in\Pi_{i-1}}f_{\bolda}dx_1\wedge d\boldx_{\bolda}+\sum_{\boldb\in\Pi_i}(g_{\boldb}+e_{\boldb}x_1)d\boldx_{\boldb}
\Big)\\&=\Big(\sum_{\bolda\in\Pi_{i-1}}f_{\bolda}dx_1\wedge d\boldx_{\bolda}+\sum_{\boldb\in\Pi_i}(g_{\boldb}+e_{\boldb}x_1)d\boldx_{\boldb}\in \Omega^i_{E_1^c}/{\rm tors},\sum_{\boldb\in\Pi_i}\overline{g}_{\boldb}d\overline{\boldx}_{\boldb}\in \Omega^i_{E_1}\Big)\\
&=\Big(\sum_{\bolda\in\Pi_{i-1}}f_{\bolda}dx_1\wedge d\boldx_{\bolda}+\sum_{\boldb\in\Pi_i}f_{\boldb}d\boldx_{\boldb}\in \Omega^i_{E_1^c}/{\rm tors} ,\sum_{\bolda\in\Pi_i}\overline{g}_{\boldb}d\overline{\boldx}_{\boldb}\in \Omega^i_{E_1}\Big), 
\end{align*}
which concludes the proof.
\end{proof}
\end{proof}

\begin{bibdiv}
\begin{biblist}

\bib{ABL}{article}{
      author={Arvidsson, Emelie},
      author={Bernasconi, Fabio},
      author={Lacini, Justin},
       title={On the {K}awamata-{V}iehweg vanishing theorem for log del {P}ezzo surfaces in positive characteristic},
        date={2022},
        ISSN={0010-437X},
     journal={Compos. Math.},
      volume={158},
      number={4},
       pages={750\ndash 763},
         url={https://doi.org/10.1112/S0010437X22007394},
      review={\MR{4438290}},
}

\bib{AWZ}{article}{
      author={Achinger, Piotr},
      author={Witaszek, Jakub},
      author={Zdanowicz, Maciej},
       title={Global {F}robenius liftability {I}},
        date={2021},
        ISSN={1435-9855,1435-9863},
     journal={J. Eur. Math. Soc. (JEMS)},
      volume={23},
      number={8},
       pages={2601\ndash 2648},
         url={https://doi.org/10.4171/jems/1063},
      review={\MR{4269423}},
}

\bib{AWZ2}{article}{
      author={Achinger, Piotr},
      author={Witaszek, Jakub},
      author={Zdanowicz, Maciej},
       title={Global {F}robenius liftability {II}: surfaces and {F}ano threefolds},
        date={2023},
        ISSN={0391-173X},
     journal={Ann. Sc. Norm. Super. Pisa Cl. Sci. (5)},
      volume={24},
      number={1},
       pages={329\ndash 366},
         url={https://doi-org.kyoto-u.idm.oclc.org/10.2422/2036-2145.202005_003},
      review={\MR{4587749}},
}

\bib{Bhatt_2012}{article}{
      author={Bhatt, Bhargav},
       title={Derived splinters in positive characteristic},
        date={2012},
        ISSN={0010-437X,1570-5846},
     journal={Compos. Math.},
      volume={148},
      number={6},
       pages={1757\ndash 1786},
         url={https://doi.org/10.1112/S0010437X12000309},
      review={\MR{2999303}},
}

\bib{fbook}{book}{
      author={Brion, Michel},
      author={Kumar, Shrawan},
       title={Frobenius splitting methods in geometry and representation theory},
      series={Progress in Mathematics},
   publisher={Birkh\"{a}user Boston, Inc., Boston, MA},
        date={2005},
      volume={231},
        ISBN={0-8176-4191-2},
      review={\MR{2107324}},
}

\bib{borger2009lambdaringsfieldelement}{article}{
      author={Borger, James},
       title={Lambda-rings and the field with one element},
        date={2009},
     journal={arXiv:0906.3146},
         url={https://arxiv.org/abs/0906.3146},
}

\bib{Bhatt-Schwede-Takagi}{incollection}{
      author={Bhatt, Bhargav},
      author={Schwede, Karl},
      author={Takagi, Shunsuke},
       title={The weak ordinarity conjecture and {$F$}-singularities},
        date={2017},
   booktitle={Higher dimensional algebraic geometry---in honour of {P}rofessor {Y}ujiro {K}awamata's sixtieth birthday},
      series={Adv. Stud. Pure Math.},
      volume={74},
   publisher={Math. Soc. Japan, Tokyo},
       pages={11\ndash 39},
         url={https://doi.org/10.2969/aspm/07410011},
      review={\MR{3791207}},
}

\bib{DuBois}{article}{
      author={Du~Bois, Philippe},
       title={Complexe de de {R}ham filtr\'e{} d'une vari\'et\'e{} singuli\`ere},
        date={1981},
        ISSN={0037-9484},
     journal={Bull. Soc. Math. France},
      volume={109},
      number={1},
       pages={41\ndash 81},
         url={http://www.numdam.org/item?id=BSMF_1981__109__41_0},
      review={\MR{613848}},
}

\bib{BM23}{article}{
      author={Dirks, Bradley},
      author={Musta\c~t\u a, Mircea},
       title={Minimal exponents of hyperplane sections: a conjecture of {T}eissier},
        date={2023},
     journal={J. Eur. Math. Soc. (JEMS)},
      volume={25},
      number={12},
       pages={4813\ndash 4840},
}

\bib{EV92}{book}{
      author={Esnault, H\'{e}l\`ene},
      author={Viehweg, Eckart},
       title={Lectures on vanishing theorems},
      series={DMV Seminar},
   publisher={Birkh\"{a}user Verlag, Basel},
        date={1992},
      volume={20},
        ISBN={3-7643-2822-3},
         url={https://doi.org/10.1007/978-3-0348-8600-0},
      review={\MR{1193913}},
}

\bib{Friedman-Laza2}{article}{
      author={Friedman, Robert},
      author={Laza, Radu},
       title={The higher {D}u {B}ois and higher rational properties for isolated singularities},
        date={2024},
        ISSN={1056-3911,1534-7486},
     journal={J. Algebraic Geom.},
      volume={33},
      number={3},
       pages={493\ndash 520},
      review={\MR{4739666}},
}

\bib{Friedman-Laza1}{article}{
      author={Friedman, Robert},
      author={Laza, Radu},
       title={Higher {D}u {B}ois and higher rational singularities},
        date={2024},
        ISSN={0012-7094,1547-7398},
     journal={Duke Math. J.},
      volume={173},
      number={10},
       pages={1839\ndash 1881},
         url={https://doi.org/10.1215/00127094-2023-0051},
        note={Appendix by Morihiko Saito},
      review={\MR{4776417}},
}

\bib{Geisser}{article}{
      author={Geisser, Thomas},
       title={Arithmetic cohomology over finite fields and special values of {$\zeta$}-functions},
        date={2006},
        ISSN={0012-7094,1547-7398},
     journal={Duke Math. J.},
      volume={133},
      number={1},
       pages={27\ndash 57},
         url={https://doi.org/10.1215/S0012-7094-06-13312-4},
      review={\MR{2219269}},
}

\bib{GLS}{book}{
      author={Greuel, G.-M.},
      author={Lossen, C.},
      author={Shustin, E.},
       title={Introduction to singularities and deformations},
      series={Springer Monographs in Mathematics},
   publisher={Springer, Berlin},
        date={2007},
        ISBN={978-3-540-28380-5; 3-540-28380-3},
      review={\MR{2290112}},
}

\bib{GNPP}{book}{
      author={Guill\'{e}n, F.},
      author={Navarro~Aznar, V.},
      author={Pascual~Gainza, P.},
      author={Puerta, F.},
       title={Hyperr\'{e}solutions cubiques et descente cohomologique},
      series={Lecture Notes in Mathematics},
   publisher={Springer-Verlag, Berlin},
        date={1988},
      volume={1335},
        ISBN={3-540-50023-5},
         url={https://doi-org.kyoto-u.idm.oclc.org/10.1007/BFb0085054},
        note={Papers from the Seminar on Hodge-Deligne Theory held in Barcelona, 1982},
      review={\MR{972983}},
}

\bib{Graf15}{article}{
      author={Graf, Patrick},
       title={The generalized {L}ipman-{Z}ariski problem},
        date={2015},
        ISSN={0025-5831,1432-1807},
     journal={Math. Ann.},
      volume={362},
      number={1-2},
       pages={241\ndash 264},
         url={https://doi.org/10.1007/s00208-014-1112-9},
      review={\MR{3343876}},
}

\bib{Ulrich-Torsten}{book}{
      author={G\"ortz, Ulrich},
      author={Wedhorn, Torsten},
       title={Algebraic geometry {I}. {S}chemes---with examples and exercises},
     edition={Second},
      series={Springer Studium Mathematik---Master},
   publisher={Springer Spektrum, Wiesbaden},
        date={2020},
        ISBN={978-3-658-30732-5; 978-3-658-30733-2},
         url={https://doi.org/10.1007/978-3-658-30733-2},
      review={\MR{4225278}},
}

\bib{Har}{book}{
      author={Hartshorne, Robin},
       title={Algebraic geometry},
   publisher={Springer-Verlag, New York-Heidelberg},
        date={1977},
        ISBN={0-387-90244-9},
        note={Graduate Texts in Mathematics, No. 52},
      review={\MR{0463157}},
}

\bib{Har80}{article}{
      author={Hartshorne, Robin},
       title={Stable reflexive sheaves},
        date={1980},
        ISSN={0025-5831},
     journal={Math. Ann.},
      volume={254},
      number={2},
       pages={121\ndash 176},
         url={https://doi.org/10.1007/BF01467074},
      review={\MR{597077}},
}

\bib{Hara98}{article}{
      author={Hara, Nobuo},
       title={A characterization of rational singularities in terms of injectivity of {F}robenius maps},
        date={1998},
        ISSN={0002-9327},
     journal={Amer. J. Math.},
      volume={120},
      number={5},
       pages={981\ndash 996},
         url={http://muse.jhu.edu/journals/american_journal_of_mathematics/v120/120.5hara.pdf},
      review={\MR{1646049}},
}

\bib{Huber-Jorder}{article}{
      author={Huber, Annette},
      author={J\"{o}rder, Clemens},
       title={Differential forms in the h-topology},
        date={2014},
        ISSN={2313-1691,2214-2584},
     journal={Algebr. Geom.},
      volume={1},
      number={4},
       pages={449\ndash 478},
         url={https://doi.org/10.14231/AG-2014-020},
      review={\MR{3272910}},
}

\bib{Huber-Kelly}{article}{
      author={Huber, Annette},
      author={Kelly, Shane},
       title={Differential forms in positive characteristic, {II}: cdh-descent via functorial {R}iemann-{Z}ariski spaces},
        date={2018},
        ISSN={1937-0652,1944-7833},
     journal={Algebra Number Theory},
      volume={12},
      number={3},
       pages={649\ndash 692},
         url={https://doi.org/10.2140/ant.2018.12.649},
      review={\MR{3815309}},
}

\bib{Huber-Kebekus-Kelly}{article}{
      author={Huber, Annette},
      author={Kebekus, Stefan},
      author={Kelly, Shane},
       title={Differential forms in positive characteristic avoiding resolution of singularities},
        date={2017},
        ISSN={0037-9484,2102-622X},
     journal={Bull. Soc. Math. France},
      volume={145},
      number={2},
       pages={305\ndash 343},
         url={https://doi.org/10.24033/bsmf.2739},
      review={\MR{3749788}},
}

\bib{JKSY22}{article}{
      author={Jung, Seung-Jo},
      author={Kim, In-Kyun},
      author={Saito, Morihiko},
      author={Yoon, Youngho},
       title={Higher {D}u {B}ois singularities of hypersurfaces},
        date={2022},
        ISSN={0024-6115,1460-244X},
     journal={Proc. Lond. Math. Soc. (3)},
      volume={125},
      number={3},
       pages={543\ndash 567},
         url={https://doi.org/10.1112/plms.12464},
      review={\MR{4480883}},
}

\bib{JS23}{article}{
      author={Jeffries, Jack},
      author={Singh, Anurag~K.},
       title={Differential operators on classical invariant rings do not lift modulo {$p$}},
        date={2023},
        ISSN={0001-8708,1090-2082},
     journal={Adv. Math.},
      volume={432},
       pages={Paper No. 109276, 53},
         url={https://doi.org/10.1016/j.aim.2023.109276},
      review={\MR{4634904}},
}

\bib{Kat70}{article}{
      author={Katz, Nicholas~M.},
       title={Nilpotent connections and the monodromy theorem: {A}pplications of a result of {T}urrittin},
        date={1970},
        ISSN={0073-8301},
     journal={Inst. Hautes \'{E}tudes Sci. Publ. Math.},
      number={39},
       pages={175\ndash 232},
         url={http://www.numdam.org/item?id=PMIHES_1970__39__175_0},
      review={\MR{291177}},
}

\bib{Kaw4}{article}{
      author={Kawakami, Tatsuro},
       title={Extendability of differential forms via {C}artier operators},
        date={2022},
     journal={arXiv preprint arXiv:2207.13967, to appear in J.~Eur.~Math.~Soc.~(JEMS)},
}

\bib{KM98}{book}{
      author={Koll\'{a}r, J\'{a}nos},
      author={Mori, Shigefumi},
       title={Birational geometry of algebraic varieties},
      series={Cambridge Tracts in Mathematics},
   publisher={Cambridge University Press, Cambridge},
        date={1998},
      volume={134},
        ISBN={0-521-63277-3},
         url={https://doi.org/10.1017/CBO9780511662560},
        note={With the collaboration of C. H. Clemens and A. Corti, Translated from the 1998 Japanese original},
      review={\MR{1658959}},
}

\bib{KS21}{article}{
      author={Kebekus, Stefan},
      author={Schnell, Christian},
       title={Extending holomorphic forms from the regular locus of a complex space to a resolution of singularities},
        date={2021},
        ISSN={0894-0347},
     journal={J. Amer. Math. Soc.},
      volume={34},
      number={2},
       pages={315\ndash 368},
         url={https://doi.org/10.1090/jams/962},
      review={\MR{4280862}},
}

\bib{Kovacs-Schwede-Smith}{article}{
      author={Kov\'acs, S\'andor~J.},
      author={Schwede, Karl},
      author={Smith, Karen~E.},
       title={The canonical sheaf of {D}u {B}ois singularities},
        date={2010},
        ISSN={0001-8708,1090-2082},
     journal={Adv. Math.},
      volume={224},
      number={4},
       pages={1618\ndash 1640},
         url={https://doi.org/10.1016/j.aim.2010.01.020},
      review={\MR{2646306}},
}

\bib{Kawakami-Takamatsu}{article}{
      author={Kawakami, Tatsuro},
      author={Takamatsu, Teppei},
       title={On {F}robenius liftability of surface singularities},
        date={2024},
     journal={arXiv:2402.08152},
         url={https://arxiv.org/abs/2402.08152},
}

\bib{KTTWYY1}{article}{
      author={Kawakami, Tatsuro},
      author={Takamatsu, Teppei},
      author={Tanaka, Hiromu},
      author={Witaszek, Jakub},
      author={Yobuko, Fuetaro},
      author={Yoshikawa, Shou},
       title={Quasi-${F}$-splittings in birational geometry},
        date={2022},
     journal={arXiv preprint arXiv:2208.08016, to appear in Ann. Sci.~\'Ec.~Norm.~Sup\'er.},
}

\bib{Mustata-Popa22}{article}{
      author={Musta\c{t}\u{a}, Mircea},
      author={Popa, Mihnea},
       title={Hodge filtration on local cohomology, {D}u {B}ois complex and local cohomological dimension},
        date={2022},
     journal={Forum Math. Pi},
      volume={10},
       pages={Paper No. e22, 58},
         url={https://doi-org.kyoto-u.idm.oclc.org/10.1017/fmp.2022.15},
      review={\MR{4491455}},
}

\bib{MOPW23}{article}{
      author={Musta\c~t\u a, Mircea},
      author={Olano, Sebasti\'an},
      author={Popa, Mihnea},
      author={Witaszek, Jakub},
       title={The {D}u {B}ois complex of a hypersurface and the minimal exponent},
        date={2023},
     journal={Duke Math. J.},
      volume={172},
      number={7},
       pages={1411\ndash 1436},
}

\bib{MP19}{article}{
      author={Musta\c~t\u a, Mircea},
      author={Popa, Mihnea},
       title={Hodge ideals},
        date={2019},
        ISSN={0065-9266,1947-6221},
     journal={Mem. Amer. Math. Soc.},
      volume={262},
      number={1268},
       pages={v+80},
}

\bib{Peter-Steenbrink(Book)}{book}{
      author={Peters, Chris A.~M.},
      author={Steenbrink, Joseph H.~M.},
       title={Mixed {H}odge structures},
      series={Ergebnisse der Mathematik und ihrer Grenzgebiete. 3. Folge. A Series of Modern Surveys in Mathematics [Results in Mathematics and Related Areas. 3rd Series. A Series of Modern Surveys in Mathematics]},
   publisher={Springer-Verlag, Berlin},
        date={2008},
      volume={52},
        ISBN={978-3-540-77015-2},
      review={\MR{2393625}},
}

\bib{Reid83}{incollection}{
      author={Reid, Miles},
       title={Minimal models of canonical {$3$}-folds},
        date={1983},
   booktitle={Algebraic varieties and analytic varieties ({T}okyo, 1981)},
      series={Adv. Stud. Pure Math.},
      volume={1},
   publisher={North-Holland, Amsterdam},
       pages={131\ndash 180},
}

\bib{Schwede}{article}{
      author={Schwede, Karl},
       title={{$F$}-injective singularities are {D}u {B}ois},
        date={2009},
        ISSN={0002-9327},
     journal={Amer. J. Math.},
      volume={131},
      number={2},
       pages={445\ndash 473},
         url={https://doi.org/10.1353/ajm.0.0049},
      review={\MR{2503989}},
}

\bib{stacks-project}{misc}{
      author={{Stacks Project Authors}, The},
       title={\itshape {S}tacks {P}roject},
         how={\url{http://stacks.math.columbia.edu}},
        date={2024},
}

\bib{SVV}{article}{
      author={Shen, Wanchun},
      author={Venkatesh, Sridhar},
      author={Vo, Anh~Duc},
       title={On $ k $-du bois and $ k $-rational singularities},
        date={2023},
     journal={arXiv preprint arXiv:2306.03977},
}

\bib{Zda18}{article}{
      author={Zdanowicz, Maciej},
       title={Liftability of singularities and their {F}robenius morphism modulo {$p^2$}},
        date={2018},
        ISSN={1073-7928,1687-0247},
     journal={Int. Math. Res. Not. IMRN},
      number={14},
       pages={4513\ndash 4577},
         url={https://doi.org/10.1093/imrn/rnw297},
      review={\MR{3830576}},
}

\end{biblist}
\end{bibdiv}


\end{document}